\DeclareMathAlphabet{\mathpzc}{OT1}{pzc}{m}{it}
   \edef\Gin@extensions{\Gin@extensions,.mps}
\newcommand{\bs}[1]{\boldsymbol{#1}}
\newtheorem{theorem}{Theorem}[section]
\newtheorem{lemma}[theorem]{Lemma}
\newtheorem{corollary}[theorem]{Corollary}
\newtheorem{proposition}[theorem]{Proposition}
\newtheorem{conjecture}[theorem]{Conjecture}
\newtheorem{hypothesis}[theorem]{Hypothesis}
\newtheorem{question}[theorem]{Question}
\theoremstyle{definition}
\newtheorem{definition}[theorem]{Definition}
\newtheorem{example}[theorem]{Example}
\newtheorem{notation}[theorem]{Notation}
\newtheorem{warning}[theorem]{Warning}
\theoremstyle{remark}
\newtheorem{remark}[theorem]{Remark}
\definecolor{DarkBlue}{rgb}{0,0.1,0.55}
\numberwithin{equation}{section}
\newcommand {\hide}[1]{}
 \newcommand {\sign} {\mbox{\bf sign}}
 \newcommand{\zero}{\mbox{\bf zero}}
\newcommand {\junk}[1]{}
\newcommand {\R} {\mathbb{R}}
\newcommand {\D}     {\mbox{\rm D}}
\newcommand {\A}     {\mbox{\rm A}}
\newcommand {\C}     {\mathbb{C}}
\newcommand {\Sphere}{\mbox{${\bf S}$}}     
\newcommand {\Ball}{\mbox{${\bf B}$}}     
\newcommand {\Z}  {\mathbb{Z}}
 \newcommand {\N}         {\mathbb{N}}
\newcommand {\Q}         {\mathbb{Q}}
\newcommand {\kk}         {\mathbf{k}}
\newcommand {\ZZ} {{\rm Z}}
\newcommand {\RR} {{\mathcal R}}
\newcommand {\eps} {{\varepsilon}}
\newcommand {\PP}     {\mathbb{P}} 
\newcommand{\card}{\mathrm{card}}
\newcommand{\rank}{\mathrm{rank}}
\newcommand{\wdet}{\mbox{\rm weighted-det}}
\newcommand{\RHom}{R\mathpzc{Hom}}
\def\addots{\mathinner{\mkern1mu
\raise1pt\vbox{\kern7pt\hbox{.}}
\mkern2mu\raise4pt\hbox{.}\mkern2mu
\raise7pt\hbox{.}\mkern1mu}}
\newcommand{\HH}  {\mbox{\rm H}}
\newcommand {\hyperH}{\mathbb{H}}
\newcommand{\Sym}  {\mathrm{Sym}}
\newcommand{\coker}{\mathrm{coker}}
\newcommand{\coimage}{\mathrm{coimage}}
\newcommand{\Hom}{\mathrm{Hom}}
\newcommand{\image}{\mathrm{Im}}
\newcommand{\defeq}{\;{\stackrel{\text{\tiny def}}{=}}\;}
\newcommand{\x}{\mathbf{x}}
\newcommand{\X}{\mathbf{X}}
\newcommand{\y}{\mathbf{y}}
\newcommand{\Y}{\mathbf{Y}}
\newcommand{\vb}{\mathbf{v}}
\newcommand{\z}{\mathbf{z}}
\newcommand{\tb}{\mathbf{t}}
\newcommand{\Ob}{\mathrm{Ob}}
\newcommand{\supp}{\mathrm{Supp}}
\newcommand{\CF}{\mathrm{CF}}
\newcommand{\Sh}{\mathbf{Sh}}
\newcommand{\constrD}{\mathbf{D}_{\mathrm{sa}}}
\newcommand{\size}{\mathrm{size}}
\newcommand{\Kom}{\mathbf{Kom}}
\newcommand{\K}{\mathbf{K}}
\newcommand{\boldmod}{\textbf{-}\mathbf{mod}}
\newcommand{\Par}{\mathrm{Par}}
\newcommand{\Var}{\mathbf{Var}}
\newcommand{\sa}{\mathbf{sa}}
\newcommand{\kA}{\mathbf{A}}
\newcommand{\inv}{\mathrm{inv}}
\newcommand{\vP}{{\mathbf{P}_{\mathrm{virtual}}}}
\newcommand{\dd}{\mathbf{d}}
\newcommand{\level}{\mathrm{level}}
\newcommand{\Cc}{\mathrm{Cc}}
\newcommand{\pt}{\mathrm{pt}}
\begin{document}
\title[A complexity theory of constructible functions
and sheaves]
{A complexity theory of constructible functions
and sheaves}
\author{Saugata Basu}
\address{Department of Mathematics,
Purdue University, West Lafayette, IN 47906, U.S.A.}
\email{sbasu@math.purdue.edu}
\thanks{The author was supported in part by NSF grants
  CCF-0915954,  CCF-1319080 and DMS-1161629 while working on this paper. \\
  Communicated by Teresa Krick and James Renegar.}

\subjclass{Primary 14P10, 14P25; Secondary 68W30}

\dedicatory{Dedicated to Mike Shub on his 70-th birthday.}

\keywords{constructible functions, constructible sheaves, polynomial hierarchy, complexity classes, adjoint functors}

\begin{abstract}
In this paper we introduce constructible analogs of the discrete complexity classes  $\mathbf{VP}$ and $\mathbf{VNP}$ of sequences of functions. The  functions in the new definitions 
are  constructible functions on $\R^n$ or $\C^n$. We define a class of sequences of constructible functions that play a role analogous 
to that of $\mathbf{VP}$ in the more classical theory. The class analogous to $\mathbf{VNP}$ is defined
using Euler integration.  We discuss several examples, develop a theory of completeness, and pose a conjecture analogous to the  $\mathbf{VP}$ vs. $\mathbf{VNP}$ conjecture in the classical case. In the second part of the paper we extend the notions
of complexity classes to sequences of constructible sheaves over $\R^n$ (or its one point compactification). We introduce a class of sequences of simple constructible sheaves, that could be seen
as the sheaf-theoretic analog of the Blum-Shub-Smale class $\mathbf{P}_\R$. We also define a hierarchy
of complexity classes of sheaves mirroring the polynomial hierarchy, $\mathbf{PH}_\R$, in the B-S-S theory. We
prove a singly exponential upper bound on the topological complexity of the sheaves in this hierarchy mirroring a similar result in the B-S-S setting. We obtain as a result an algorithm with  singly exponential complexity for a sheaf-theoretic variant of the real quantifier elimination problem. We pose the natural sheaf-theoretic analogs of the classical $\mathbf{P}$ vs. $\mathbf{NP}$ question,  and also discuss a connection with Toda's theorem from discrete complexity theory in the context of constructible sheaves.  We also discuss possible  generalizations of  the questions in complexity theory related to separation of complexity classes  to more general categories via sequences of adjoint pairs of functors.
\end{abstract}

\maketitle
\tableofcontents
\section{Introduction}
The $\mathbf{P}$ vs. $\mathbf{NP}$ problem is considered a central problem in theoretical computer science. There have been several reformulations and generalizations of the classical discrete version
of this problem. Most notably Blum, Shub and Smale \cite{BSS89} generalized the $\mathbf{P}$ vs. $\mathbf{NP}$ problem to arbitrary fields, and posed the conjecture that $\mathbf{P}_\R\neq
\mathbf{NP}_\R$, and that   $\mathbf{P}_\C \neq
\mathbf{NP}_\C$. In another direction, Valiant \cite{Valiant79,Valiant82,Valiant84}
 introduced a non-uniform function
theoretic analog. Valiant's definition 
concerned 
classes of functions as opposed to sets  (see \cite{Burgisser-book2} 
for results on the exact relationship between Valiant's conjecture and the classical
complexity questions between complexity classes of sets). The class $\mathbf{VP}$ 
and its variants, such as the class $\mathbf{VQP}$  (see \cite{Burgisser-book2} for many subtle details), are supposed to represent functions
that are easy to compute and plays a role analogous to the role of class $\mathbf{P}$ in the
case of complexity classes of sets. While the class of functions, $\mathbf{VNP}$,  was supposed to 
play a role analogous to that of the class of languages $\mathbf{NP}$.  Valiant's theory leads to an  elegant reduction of the question 
whether  $\mathbf{VQP}= \mathbf{VNP}$ to a purely algebraic one -- 
namely, whether the 
polynomial given by the  permanent of an $n \times n$ matrix 
(with indeterminate entries) can be expressed
as the determinant of another (possibly polynomially larger) 
matrix whose entries are 
are linear combinations of the entries of the original matrix. Thus, the
question of whether $\mathbf{VQP}= \mathbf{VNP}$ reduces to a purely mathematical
question about polynomials and mathematical tools from representation theory and
algebraic geometry can be made to bear on this subject  (see \cite{MS01,BLMW2011}). 

The aim of this paper is to lay a foundation of studying complexity questions similar to the the various 
existing $\mathbf{P}$ vs. $\mathbf{NP}$ type questions in a more geometric setting. Fundamentally, the
question whether $\mathbf{P} = \mathbf{NP}$ concerns the behavior under projections $\pi_{m+n}: \kk^{m+n} \rightarrow \kk^n$ 
of sequences of sets $(S_n \subset \kk^n)_{n > 0}$ (where each $S_n$ is  
constructible or semi-algebraic, depending on whether the underlying field $\kk=\C$ or $\R$ respectively) whose membership is 
easy to test (that is admits a \emph{polynomial-time} algorithm for testing whether a given point belongs to a set in the sequence). The conjecture that $\mathbf{P}_\kk \neq \mathbf{NP}_\kk$ can then
be understood as saying that there exists sequences $(S_n \subset \kk^n)_{n > 0}$ admitting polynomial-time membership testing for which the sequence $(\pi_{m(n),n}(S_{m(n)+n}) \subset \kk^n)_{n > 0}$ does not admit polynomial time membership testing, where $m(n)$ is some non-negative polynomial in $n$ . Notice that membership of a point $\x$ in the image $\pi_{m(n)+n}(S_{m(n)+n})$ only
records information about where the fiber $\pi_{m(n)+n}^{-1}(\x) \cap S_{m(n)+n}$ is empty or not. However, in many geometric situations  it is useful to know more. In fact, one would like to have a partition of
the base space $\kk^n$ such that over a set $C$ in the partition the fibers 
$\pi_{m(n)+n}^{-1}(\x) \cap S_{m(n)+n}, \x\in C$ maintains some invariant (which could be topological or algebro-geometric in nature). Instead of studying the complexity of just the image $\pi_{m(n)+n}(S_{m(n)+n})$ one would like to understand the complexity of this partition. Thus, in our view the main geometric objects whose
complexity we study are no longer individual subsets of $\kk^n$, but rather finite (constructible or 
semi-algebraic) partitions. We then define natural notions of ``push-forwards'' of such partitions. The geometric analog of $\mathbf{P}$ vs. $\mathbf{NP}$ problem then asks whether such a push-forward
can increase (exponentially) the complexity of a sequence of partitions. 

We develop this theory in two settings.  In the first part of the paper we develop a
geometric analog  of the complexity classes $\mathbf{VP}$ and $\mathbf{VNP}$ introduced by Valiant \cite{Valiant79,Valiant82,Valiant84}.
Recall that the classes $\mathbf{VP}$ and $\mathbf{VNP}$ as defined by Valiant are \emph{non-uniform}.
The circuits or formulas whose sizes measure the complexity of functions are allowed to be
different for different sizes of the input. Also recall that  
unlike in the classical theory the elements 
of the classes $\mathbf{VP}$ and $\mathbf{VNP}$ are not languages but 
sequences of functions. In Valiant's
work the emphasis was on functions $f:\{0,1\}^n \rightarrow \kk$ (for some field
$\kk$).  Since any function on $\{0,1\}^n$ can be expressed as a polynomial, it 
makes sense to consider only  polynomial functions. 
In particular, characteristic 
functions of subsets of $\{0,1\}^n$ are also expressible as polynomials -- and
this provides a crucial link between the function viewpoint and the classical
question about languages.

\subsection{Complexity theory of constructible functions}
We formulate a certain geometric analog of Valiant's complexity classes. 
The first obstacle to overcome is that unlike in the
classical (Boolean) case, when the underlying field is infinite 
(say $\kk = \R$ or $\C$), the characteristic function of a definable set
(i.e., a constructible set in the case $\kk = \C$ and a semi-algebraic set
in case $\kk = \R$) is no longer expressible as a polynomial. 
But a class of functions that appears very naturally in the algebraic
geometry over real and complex numbers are the so called \emph{constructible 
functions}. We will see later that many discrete valued functions that appear in 
complexity theory including functions such as the characteristic functions of 
constructible as well as semi-algebraic sets, ranks of matrices and higher dimensional tensors, 
topological invariants such 
as the Betti numbers or Euler-Poincar\'e characteristics, local dimensions of 
semi-algebraic sets are all examples of such functions. 
Constructible functions in the place of so called ``counting'' functions have already
appeared in B-S-S style complexity theory over $\R$ and $\C$. For example, in \cite{BZ09} 
(respectively, \cite{Basu-complex-toda}) 
a real (respectively, complex) analog of Toda's theorem of discrete complexity theory
was obtained. The notion of counting the number of satisfying assignments of a Boolean
equation was replaced in these papers by the problem of computing the Poincar\'e polynomial
of a semi-algebraic/constructible set (see also \cite{Burgisser-Cucker06, Burgisser-Cucker-survey}). 
A first goal of this paper is to build  a (non-uniform) 
complexity theory for constructible functions over real as well as complex 
numbers that mirror Valiant's theory in the discrete case. 

The choice of constructible functions as a ``good'' class of functions is also 
motivated from another direction. First recall that in the case of languages, the languages
in the class $\mathbf{NP}$ can be thought of as the images under projections of the
languages in the class $\mathbf{P}$ (see Section \ref{subsec:recall-PH} for more precise definitions of these classes).
For classes of functions such as the class $\mathbf{VP}$, in order to define an analog of the class
$\mathbf{NP}$ one needs a way of ``pushing forward'' a function under a projection. It is 
folklore that functions (or more generally maps) can be pulled-back tautologically, but pushing
forward requires some effort.
The standard technique in mathematics is
to define such a push-forward using ``fiber-wise integration''.  In Valiant's original
definition of the class $\mathbf{VNP}$ this push-forward was implemented by taking
the sum of the function to be integrated over the Boolean cube $\{0,1\}^n$. This operation
is not very geometric and thus not completely  satisfactory in a geometric setting.  On the other hand  integration against most normal  measures
(other than finite atomic ones such as the one used by Valiant in his definition) 
will not be computable exactly as the results will not be algebraic.  It thus becomes a
subtle problem to choose the right class of functions and the corresponding 
push-forward. It turns out that the class of constructible
functions is particularly suited for this purpose, where a discrete notion of 
integration (with respect to additive invariants such as the  Euler-Poincar\'e characteristic) already exists. It makes
sense now to put these together and develop 
an analog of Valiant theory for this class, which is what we begin to do in this paper.
The complexity classes
of constructible functions and their corresponding ``$\mathbf{P}$ vs. 
$\mathbf{NP}$''-type questions that will arise in these new models, should be
considered as the ``constructible''  versions of the corresponding questions in 
Valiant's theory.
We define formally these new classes, prove the existence of certain
``complete''  sequences of functions,  give some
examples, and finally pose a ``constructible'' analog of the  $\mathbf{VP}$ vs.  $\mathbf{VNP}$ question.

Constructible functions on $\kk^n$ produce a constructible or semi-algebraic partition of $\kk^n$. Push-forwards of constructible
functions being themselves constructible also induce similar partitions. Note that 
the complexity of semi-algebraic partitions defined by the constancy of some topological or algebraic
invariant of the fibers of a map is not a new notion. Indeed, Hardt's triviality theorem in semi-algebraic
geometry \cite{Hardt} (see also \cite[Theorem 9.3.2.]{BCR}) implies the existence of such a partition where the topological invariant being preserved is the semi-algebraic homeomorphism type of the fibers. Unfortunately, the best known upper bound on the complexity of the partition in Hardt's theorem in terms of the number and degrees of the
defining polynomials of the semi-algebraic sets involved is doubly-exponential, while the best lower bound known is singly exponential \cite{BV06}. 
A doubly exponential lower bound on the complexity
on the partition of the Hardt's triviality theorem could be in principle a step towards proving a version of the ``$\mathbf{P} \neq \mathbf{NP}$'' problem (more precisely the conjecture that  $\mathbf{VP}_\R^\R \neq \mathbf{VNP}_\R^\chi$ posed below). However, such a doubly exponential lower bound is considered unlikely.
A singly exponential upper bound on the number of \emph{homotopy types} of fibers of a semi-algebraic
map $f:X \rightarrow Y$ was proved in \cite{BV06} . Even though a partition of the base space $Y$ was not constructed explicitly in that paper, a singly exponential sized partition is implicit in the proof of the main theorem in that paper.  

Another result in a similar direction that is worth mentioning here is a theorem on bounding 
the number of possible vectors of multiplicities (defined below) of the zeros of a polynomial system $\mathcal{P} = \{P_1,\ldots,P_n\} \subset\C[X_1,\ldots,X_n]$, in terms of the degrees of the $P_i$'s proved in \cite{Grigoriev-Vorobjov2000}. In this paper the authors prove a doubly exponential upper bound on the number of possible vectors of multiplicities of the zeros of $\mathcal{P}$ (assuming that the system $\mathcal{P}$ is zero-dimensional)  in terms of the degrees of the polynomials in $\mathcal{P}$. A \emph{vector of  multiplicity}  $(m_1,m_2,\ldots)$, $m_i \in \Z_{>0}$ where each $m_i$ is the multiplicity of a unique zero of the 
system $\mathcal{P}$.  In \cite{Grigoriev2001}, an explicit construction of a sequence of parametrized polynomial systems with a polynomially bounded
straight-line complexity is given that realizes a doubly exponential number (in the number of parameters) of multiplicity vectors. In the context of the current paper, this would 
imply that the partition of the base space (i.e the space of parameters) would need to have at least a doubly exponential number of elements in its partition,  so as to maintain the constancy of the vector of multiplicities in the fibers over each element of the partition. This would prove a version of ``$\mathbf{P} \neq \mathbf{NP}$'' for our model -- except that  ``multiplicity'' is not an \emph{additive invariant}, and many solutions having different vectors of multiplicities
would have the same value of additive invariants (such as the generalized Euler-Poincar\'e characteristic or the virtual Poincar\'e polynomial defined
later in the paper in Sections \ref{subsubsec:generalizedP} and \ref{subsubsec:virtualP} respectively). 
This shows that pushing forward of functions
using fiber-wise integration with respect to an \emph{additive invariant} is a very important element of the theory developed in this paper.

\subsection{Complexity theory of constructible sheaves}
The second part of the paper has no analog in discrete complexity theory but is strongly motivated
by the first part, and prior results on algorithmic complexity of various problems in semi-algebraic geometry.
One way that constructible functions appear in various applications is as the fiber-wise Euler-Poincar\'e 
characteristic of certain sheaves of complexes with bounded cohomology. The right generality to consider these objects --
namely a bounded derived category of sheaves of modules, which are locally constant on each element of
a semi-algebraic partition of the ambient manifold into locally closed semi-algebraic sets -- lead naturally to the category 
of \emph{constructible sheaves}. Constructible sheaves are a particularly simple kind of sheaves arising in algebraic geometry \cite{SGA4} and have found many applications in mathematics
(in the theory of linear systems of partial differential equations and micro-local analysis \cite{KS}, 
in the study of singularities that appear in linear differential equations with meromorphic
coefficients \cite{Deligne-regulier-singulier,Pham}, study of local systems in algebraic geometry \cite{Deligne-SGA4-half}, intersection cohomology theory \cite{Borel} amongst many others) but to our knowledge  they have not been studied yet from the structural complexity point of view. Constructible functions have also being studied by many authors from different perspectives, such as 
\cite{Parusinski-Mccrory,Cluckers-Edmundo,Cluckers-Loeser}. Recently they have also
found applications in more applied areas such as signal processing and data analysis \cite{Ghrist2010}, but to our knowledge they have not being studied from the point of view of
complexity.

The category of constructible sheaves is closed under the so called ``six
operations of Grothendieck''  -- namely $\stackrel{L}{\otimes}, \RHom, Rf_*,Rf_{!}, f^{-1},f^{!}$ \cite{SGA4} (see
\cite[Theorem 4.1.5]{Dimca-sheaves}). The closure under these operations is reminiscent of the closure of the class
of semi-algebraic sets under similar operations -- namely, set theoretic operations, direct products, pull-backs and direct images under semi-algebraic maps. Of this the closure under the last operation -- that
is the fact that the image of a semi-algebraic set is also semi-algebraic -- is the most non-trivial property
and corresponds to the Tarski-Seidenberg principle (see for example, \cite[Chapter 2]{BPRbook2} for an exposition). The computational difficulty of this
last operation -- i.e., elimination of an existential block of quantifiers -- is also at the heart of the $\mathbf{P}_\R$ vs. $\mathbf{NP}_\R$ problem in the B-S-S theory \cite{BSS89, BCSS98}. 

As mentioned above the category of constructible sheaves
is closed under taking direct sums, tensor products and pull-backs. These should be considered as the 
``easy'' operations. The statement analogous to the Tarski-Seidenberg principle is the stability under taking direct images. These observations hint at a complexity theory of such sheaves that will subsume the ordinary set theoretic complexity classes as special cases. Starting with a properly defined class, $\bs{\mathcal{P}_\R}$, of ``simple'' sheaves, a conjectural hierarchy can be built up by taking successive direct images followed by truncations, tensor products etc.  which resembles the polynomial hierarchy in the B-S-S model. The class  $\bs{\mathcal{P}_\R}$ corresponds roughly to the sequences  of constructible sheaves for which there is a compatible stratification of each underlying ambient space (which we will assume to
be spheres of various dimensions in this paper) which is singly exponential in size, and where point location
can be accomplished in polynomial time (see Definition \ref{def:sheaf-P} below for a precise definition).
In this paper we lay the foundations of such a theory. We give several examples and also prove a result on the topological
complexity of sequences of sheaves belonging to such a hierarchy.
Even though constructible sheaves can be defined over any fields -- for the purposes of this paper 
we restrict ourselves to the field of real numbers.

One important unifying theme behind the two parts of this paper is a certain 
shift of view-point -- from 
considering sequences of polynomial functions (as in Valiant's model) or sequences of  subsets of $\R^n$ or  $\C^n$ as in the B-S-S notion of a ``language'' -- to considering sequences of semi-algebraic or constructible
partitions of $\R^n$ and $\C^n$ compatible  with a sequence constructible functions or sheaves. The goal is
then to study how the ``complexity'' of such partitions increase under an appropriate push-forward
operation giving rise to constructive analogs of $\mathbf{P} \mbox{ vs. } \mathbf{NP}$ question. This ``push-forward'' operation which corresponds to ``summing fibers'' in the Valiant model, and taking the image under projection in the B-S-S model -- for us is defined geometrically as
``Euler integration'' in case of constructible functions, and taking the ``higher direct images'' in 
the case of constructible sheaves. The extra structure of a constructible functions or sheaves allow us 
to define this push-forward operation -- while the complexities of these objects are measured in terms
of the ``complexity'' of the underlying geometric partitions. 

\subsection{A functorial view of complexity questions and the role of adjoint functors}
\label{subsec:adjoint}
Another aspect of complexity theory that the sheaf-theoretic point of view brings to the forefront
(but which we do not explore in full generality in this paper)
is the role of \emph{adjoint functors}.

Recall that any map $f:X \rightarrow Y$ between sets $X$ and $Y$  induces three functors 

\[
\textbf{Pow}(X) {\xrightarrow{f^\exists} \atop {\xleftarrow{f^*}  \atop \xrightarrow{f^\forall}}}  \textbf{Pow}(Y).
\]
in the poset categories of their respective power sets $\textbf{Pow}(X), \textbf{Pow}(Y)$.
The functors
$f^*,f^\exists,f^\forall$  are defined as follows.  For all 
$A \in \Ob(\textbf{Pow}(X))$ and
$B \in \Ob(\textbf{Pow}(Y))$,  
\begin{eqnarray*}
f^*(B) & =&  f^{-1}(B), \\
f^\exists(A) &=& \{y \in Y \mid (\exists x \in X)((f(x) = y) \wedge (x \in A)) \},\\
f^\forall(A)  &=&  \{ y \in Y \mid (\forall x\in X) ( (f(x) = y)  \implies (x \in A)) \}.
\end{eqnarray*}
It is a well-known observation (see for example, \cite[page 58, Theorem~2]{Maclane-book})
that $f^\exists$ is left adjoint to $f^*$, and $f^*$  is left adjoint to $f^\forall$, i.e.,
$f^\exists \dashv f^* \dashv f^\forall$ as functors between the poset categories
$\textbf{Pow}(X), \textbf{Pow}(Y)$.

The adjunctions defined above appear in complexity theory in the following guise. 
Let $\kk$ be either a finite field or the field of real or complex numbers.
Suppose that  $(S_n \subset \kk^n)_{n >0}$  is a sequence belonging to the complexity
class $\mathbf{P}_\kk$, and let $\pi_{2n,n}: \kk^{2n} \rightarrow \kk^n, n >0$ be the 
projection maps on the first $n$ coordinates. Then,
the sequence $(\pi_{2n,n}^*(S_n) \subset \kk^{2n})_{n >0}$ is clearly also in $\textbf{P}_\kk$. The question
whether the sequences $(\pi_{2n,n}^\exists(S_{2n}) \subset \kk^{n})_{n >0}$ (respectively,
$(\pi_{2n,n}^\forall(S_{2n}) \subset \kk^{n})_{n >0}$), belong to $\mathbf{P}_\kk$ corresponds to the
$\textbf{P}_\kk$   vs. $\textbf{NP}_\kk$ (respectively,  the $\textbf{P}_\kk$   vs. $\textbf{co-NP}_\kk$) problem. 
Thus, basic questions in complexity theory  can be posed as questions 
about sequences of functors and their adjoints. 

Perhaps not surprisingly, adjoint pairs of functors appear as well in the sheaf-theoretic version of complexity theory 
studied in this paper -- in particular,  the adjoint pairs 
$f^{-1} \dashv Rf_*$,  and $ \mathcal{F}^\bullet\stackrel{L}{\otimes} \cdot  \dashv \RHom(\mathcal{F}^\bullet,\cdot)$ (see Section \ref{subsec:background-on-sheaves}  for precise definitions).  The role of  $\textbf{P}$ is played by the class
$\bs{\mathcal{P}}_\R$ defined later (see  Section \ref{subsec:define-P-sheaves} for definition), 
and we prove that it is stable under the left adjoint functor sequences
$(\pi_{2n,n}^{-1})_{n>0}$ and $(\mathcal{F}^\bullet_n\stackrel{L}{\otimes} \cdot)_{n>0}$, 
where $(\pi_{2n,n})_{n>0}$ is  an appropriate sequence of projection maps as above, 
and $( \mathcal{F}^\bullet_n)_{n > 0}$ is any
sequence of constructible sheaves belonging to the class $\bs{\mathcal{P}}_\R$
(see Proposition \ref{prop:stability-sheaf-P}  below).
The stability of $\bs{\mathcal{P}}_\R$ under the  two sequences of right adjoints of these functors, namely 
$(R\pi_{2n,n,*})_{n > 0}$ and $(\RHom(\mathcal{F}^\bullet_n,\cdot))_{n>0}$,
are left as open problems
(see Conjecture \ref{conj:main} and Question \ref{question:closure-under-hom} below), and we believe that stability does not hold in these cases. Indeed  Conjecture \ref{conj:main} is a sheaf-theoretic version of the usual
$\textbf{P}$ vs. $\textbf{NP}$, as well as the $\mathbf{P}$ vs. $\textbf{co-NP}$ problem,
in the classical theory.

Guided by the above examples, it is  interesting to speculate  whether one can define useful notions of  ``polynomially bounded complexity'' for sequences of objects in more  general categories and functor sequences.  Once the notion of ``complexity'' and the class 
of objects having ``polynomial complexity'' (i.e. the class $\textbf{P}$) are defined, one can ask,
given a sequence of functors that  preserves the class $\mathbf{P}$, whether the sequences of its (left or right)  adjoints (if they exist) also preserve the class $\textbf{P}$ (and thus obtain categorical generalizations of the classical   $\textbf{P}$ vs. $\textbf{NP}$ problem).  All this suggests the interesting  possibility of 
\emph{categorification  of complexity theory}. We do not pursue these ideas further in the current paper.
\\

The rest of the paper is organized as follows. In Section \ref{sec:constructible-functions}
we define new complexity classes of constructible functions, give some basic examples and
pose a question analogous to the $\mathbf{VP}$ vs. $\mathbf{VNP}$ conjecture in the discrete case.
In Section \ref{sec:sheaf-formulation}, we extend these notions to the category of constructible sheaves. We begin by giving in Section \ref{subsec:background-on-sheaves} a brief introduction to the basic definitions and results of sheaf
theory, especially those related to cohomology of sheaves, and derived category of complexes of sheaves with bounded cohomology, that we will need. The reader is referred to the books
\cite{KS, Dimca-sheaves, Iversen, Borel,Schurmann} for the missing details. In Section \ref{subsec:recall-PH} we recall the definitions
of the main complexity classes in the classical B-S-S setting. In Section \ref{subsec:define-P-sheaves} we define the new sheaf-theoretic complexity class $\bs{\mathcal{P}}_\R$. 
In Section \ref{subsec:define-PH-sheaves}, we extend 
the definition of $\bs{\mathcal{P}}_\R$ to a hierarchy, $\bs{\mathcal{PH}}_\R$, which mirrors the compact 
polynomial hierarchy $\mathbf{PH}^c_\R$. We also formulate the conjectures on separations
of sheaf-theoretic complexity classes analogous to the classical one and prove a relationship between these conjectures in Section \ref{subsec:inclusions}. 
In Section \ref{sec:topological-complexity-sheaves},
we prove a complexity result (Theorem \ref{thm:topological-complexity-sheaves})
bounding from above the \emph{topological complexity} (see Definition \ref{def:topological-complexity-sheaves} below)  of a sequence in the class $\bs{\mathcal{PH}}_\R$. More precisely, we prove that the topological complexity of sheaves in 
$\bs{\mathcal{PH}}_\R$ is bounded singly exponentially, mirroring a similar result in the classical
case. As a result we also obtain a singly exponential upper bound on the complexity of the ``direct image functor'' (Theorem \ref{thm:effective}) which is analogous to singly exponential upper bound results for effective quantifier elimination in the first order theory of the reals. This last result might be of interest independent of complexity theory because of its generality. Finally, in Section \ref{sec:connection-to-Toda}, we revisit Toda's theorem in the discrete as well as B-S-S setting, and conjecture a similar theorem in the sheaf-theoretic setting.

\section{Complexity theory of constructible functions}
\label{sec:constructible-functions}

\subsection{Main definitions}
\label{sec:main-definitions}

Our first goal is to develop a complexity theory
for constructible functions on $\R^n$ and $\C^n$  in the style of
Valiant's algebraic complexity theory. In particular, we define in the case of real and
complex numbers separately, and for each ``additive'' invariant on the classes of semi-algebraic
and constructible sets respectively (see Definition \ref{def:additive-invariant} below for the definition of additive invariants), 
two new 
complexity classes of sequences of such functions which should be considered
as ``constructible analogs'' of the classes $\mathbf{VP}$ and $\mathbf{VNP}$ defined by Valiant. 
Since our goal is to be as geometric as possible, the underlying geometric objects in these complexity classes are certain semi-algebraic (respectively, constructible) partitions of $\R^n$ (respectively, $\C^n$).
We formulate a series of conjectures (depending on the additive invariant that is chosen) that the class
of partitions corresponding to the second class (namely the analog of $\mathbf{VNP}$) is strictly larger
than the class of partitions corresponding to the first (namely the analog of $\mathbf{VP}$). Since
we are interesting in measuring complexity of partitions (induced by constructible functions) and not
in the complexity of computing polynomials our approach in defining these classes is different from that
of Valiant. Still, the definitions of the new classes which correspond to the Valiant's class $\mathbf{VP}$ is
very closely related to the original definition of Valiant. It is in the passage of going from the class $\mathbf{VP}$ to the potentially larger $\mathbf{VNP}$ where our theory diverges and is hopefully more geometric. 
We also develop a theory of completeness parallel to Valiant's theory and prove the existence of certain complete sequences of functions.   

We begin with a few definitions and some notation.

\begin{definition}[Constructible and semi-algebraic sets]
A \emph{constructible} subset of $\C^n$ is a finite union of subsets of $\C^n$, each  defined by a finite number of polynomial equations and inequations.
A  \emph{semi-algebraic} subset of $\R^n$ is a finite union of subsets of $\R^n$, each defined by a finite number of polynomial equations and \emph{inequalities}.
\end{definition}

\begin{remark}
\label{rem:constructible-is-sa}
Notice that by identifying $\C^n$ with $\R^{2n}$ by separating the real and imaginary parts, any constructible subset of $\C^n$ can
be thought of as a semi-algebraic subset of $\R^{2n}$.
\end{remark}

\begin{definition}[The value algebra]
For $\kk=\R,\C$, we say that a $\Z$-graded $\kk$-algebra $\kA=\oplus_{i \geq 0} \kA_i$ is polynomially bounded if for each $n\geq0$,
$\kA_{\leq n} = \oplus_{0 \leq i \leq n} \kA_i$ is finite dimensional as a $\kk$-vector space, and 
the Hilbert function of $\kA$, namely $\mathrm{Hilb}_{\kA}(n) = \dim_\kk(\kA_{n})$, is bounded by a polynomial in $n$. 
\end{definition}

\begin{remark}
In fact the only two graded algebras that will be important for us are the algebra $\kk$ itself (with the trivial grading), and the
algebra of polynomials $\kk[T]$ graded by degree.
\end{remark}

\begin{definition}[Constructible functions]
\label{def:constructible-function}
For $\kk=\R,\C$, let $\kA$ denote a polynomially bounded graded $\kk$-algebra.

A function $f:\C^n \rightarrow \kA$ is said to be a 
\emph{$\kA$-valued constructible function}  if it is a $\kA$-linear combination of 
the characteristic functions of a finite number of constructible subsets of $\C^n$.
Similarly,
a function $f:\R^n \rightarrow \kA$ is said to be a 
\emph{ $\kA$-valued constructible function} if it is a $\kA$-linear combination of 
the characteristic functions of a finite number of semi-algebraic subsets of $\R^n$.

For any semi-algebraic or a constructible set $S$, we will denote by $\mathbf{1}_S$ the characteristic function of the set $S$, which is a constructible function for any $\kA$.
\end{definition}

\begin{notation}
\label{not:partition-function}
Let $\kk$ be the field $\R$ or $\C$, and let $\kA$ denote a polynomially bounded graded $\kk$-algebra.
For any constructible function $f:\kk^n\rightarrow \kA$, we will denote by $\Par(f)$ the finite partition of $\kk^n$ into the different
level sets of $f$. Note that these level sets are semi-algebraic  subsets of $\R^n$ in the case $\kk=\R$ 
(respectively, constructible subsets of $\C^n$ in the case $\kk=\C$). 
If $\mathcal{A},\mathcal{A}'$ are two finite partitions of $\kk^n$, then we say $\mathcal{A}'$ is \emph{finer} than
$\mathcal{A}$ (denoted $\mathcal{A}'\prec\mathcal{A}$), if for every set  $A'$ belonging to the partition $\mathcal{A}'$, there exists a (unique) set $A$ belonging to $\mathcal{A}$, with $A'\subset A$. 
\end{notation}

\begin{remark}
\label{rem:finite-values}
Note that it is an immediate consequence of Definition \ref{def:constructible-function} that a $\kA$-valued constructible function $f:\kk^n\rightarrow \kA$ 
takes only a finite number of values in $\kk$. Moreover, it is often the induced partition, $\Par(f)$,  that
is of geometric interest, and not the precise values that the function takes. 
Given two $\kA$-valued constructible functions $f,\bar{f}:\kk^n \rightarrow \kA$, 
$\Par(\bar{f}) \prec \Par(f)$  if and only if for all $\x,\y\in\kk^n$,  $\bar{f}(\x) = \bar{f}(\y) \implies f(\x) = f(\y)$. Notice that if $\Par(\bar{f}) \prec \Par(f)$, then there exists a
function $h:\kA \rightarrow \kA$, such that $f = h\circ \bar{f}$. Moreover, in case 
$\kA = \kk$, since $f$ takes only a finite number of values, by Lagrange
interpolation formula, $h$ can be chosen to be a polynomial in $\kk[T]$, whose degree equals the number of distinct values taken by $\bar{f}$.
\end{remark}

\begin{remark}
Since the sum, product and constant multiples of 
constructible functions are again 
constructible, 
the set of constructible functions on $\R^n$ (respectively, $\C^n$)  is 
an (infinite-dimensional)
$\R$-algebra (respectively, $\C$-algebra).
\end{remark}

\begin{example}
The constant function $\mathbf{1}_{\R^n}$  (respectively, $\mathbf{1}_{\C^n}$)
as well as any multiple of it, are constructible.
\end{example}

\begin{example}[Rank function on matrices and tensors]
The function $\mbox{rk}_{m,n}: \R^{m \times n} \rightarrow \R$  (respectively, $\mbox{rk}_{m,n}: \C^{m \times n} \rightarrow \C$) which evaluates
to the rank of an $m \times n$ matrix with entries in $\R$  (respectively, $\C$) 
is an example of $\R$-valued (respectively, $\C$-valued) constructible function.
Similarly, the rank function of higher order tensors (see Definition \ref{def:ranksoftensors} below)  are also constructible.
\end{example}

We next define a notion of \emph{size} of a formula defining a constructible function that will be used
in defining different complexity classes. We first need some notation.

\begin{notation}
For $x\in\R$, we denote 
\begin{eqnarray*}
\sign(x) &=& 1, \mbox{ if } x>0,\\
             &=&  -1, \mbox{ if } x < 0,\\
             &=&  0,  \mbox{ if } x=0.
 \end{eqnarray*}
 For $x\in\C$, we denote 
\begin{eqnarray*}
\zero(x) &=& 1, \mbox{ if } x\neq 0,\\
            &=&  0,  \mbox{ if } x=0.
 \end{eqnarray*}
 \end{notation}

\begin{notation}
\label{not:realization}
Let $\mathcal{P} \subset \R[X_1,\ldots,X_n]$ be a finite family. We call $\sigma \in \{0,1,-1\}^{\mathcal{P}}$ to be a \emph{sign condition} on $\mathcal{P}$. 
Similarly, for a finite family  $\mathcal{P} \subset \C[X_1,\ldots,X_n]$, we call $\rho \in \{0,1\}^{\mathcal{P}}$ to be a \emph{zero pattern} on $\mathcal{P}$.
Given a sign condition $\sigma \in \{0,1,-1\}^{\mathcal{P}}$ and a semi-algebraic subset $S \subset\R^n$, we denote by $\RR(\sigma,S)$  the semi-algebraic set defined by 
\[
\RR(\sigma,S) = \{ \x \in S \; \mid \; \sign(P(\x)) = \sigma(P), \forall P\in\mathcal{P}\},
\]
and call $\RR(\sigma,S)$ the \emph{realization} of $\sigma$ on $S$.

Similarly, 
given a zero pattern $\rho \in \{0,1\}^{\mathcal{P}}$ and a constructible  subset $S \subset\C^n$, we denote by $\RR(\rho,S)$  the constructible set defined by 
\[
\RR(\rho,S) = \{ \x \in S \; \mid \; \zero(P(\x)) = \rho(P), \forall P\in\mathcal{P}\},
\]
and call $\RR(\rho,S)$ the \emph{realization} of $\rho$ on $S$.
We say that a sign  condition  $\sigma$ (respectively, zero pattern $\rho$) is \emph{realizable} on a semi-algebraic (respectively, constructible) set $S$ if $\RR(\sigma,S) \neq \emptyset$ (respectively, $\RR(\rho,S)\neq \emptyset$). 

More generally, for any first order formula $\phi$ with atoms of the form $P \{=,>,<\}0, P\in \mathcal{P}$ 
(respectively, $P \{=,\neq\}0, P\in \mathcal{P}$)
we denote by $\RR(\phi,S)$ (the \emph{realization of $\phi$ on $S$}) the semi-algebraic (respectively, constructible) set defined  by
\[
\RR(\phi,S) = \{ \x\in S \;\mid\; \phi(\x)\}.
\]

Given any polynomial $P \in \R[X_1,\ldots,X_n]$ (respectively, $P \in \C[X_1,\ldots,X_n]$),
 and a semi-algebraic set $S \subset \R^n$ (respectively, constructible subset $S \subset \C^n$), we will
denote by $\ZZ(P,S)$ the set of zeros of $P$ in $S$. More generally, for any
finite family of polynomials $\mathcal{P} \subset \R[X_1,\ldots,X_n]$ (respectively, $\mathcal{P} \subset \C[X_1,\ldots,X_n]$)
we will denote by 
$\ZZ(\mathcal{P},S)$ the set of common zeros of $\mathcal{P}$ in $S$. 
\end{notation}

\begin{definition}[Formulas defining constructible functions]
Let $\kA$ denote a polynomially bounded graded $\R$-algebra.
Formulas for $\kA$-valued constructible functions defined over $\R$ are defined inductively as follows.
\begin{enumerate}
\item
If $P\in\R[X_1,\ldots,X_n]$, 
then
$\mathbf{1}_{P = 0}, \mathbf{1}_{P > 0},\mathbf{1}_{P < 0}$ are formulas 
defining the characteristic function 
of the semi-algebraic sets $\RR(P=0,\R^n), \RR(P<0,\R^n), \RR(P>0,\R^n)$ respectively. 
\item
If $F_1,F_2$ are formulas defined over $\R$, and $c \in \kA$,  then so are $F_1 + F_2, F_1\cdot F_2, 
c \cdot F_1$.
\end{enumerate}
Formulas for constructible functions defined over $\C$ are defined similarly.
Let $\kA$ now denote a polynomially bounded graded $\C$-algebra.

\begin{enumerate}
\item
If $P\in\C[X_1,\ldots,X_n]$,  
then
$\mathbf{1}_{P = 0}, \mathbf{1}_{P \neq 0}$ are formulas 
defining the characteristic function 
of the constructible sets $\RR(P=0,\C^n), \RR(P\neq0,\C^n)$ respectively. 
\item
If $F_1,F_2$ are formulas defined over $\C$, and $c \in \kA$,  then so are $F_1 + F_2, F_1\cdot F_2, 
c \cdot F_1$.
\end{enumerate}
\end{definition}

We now define the size of a formula defining a constructible function.
We begin by defining the size of a polynomial over $\R$ and $\C$.

\begin{definition}[Size of a polynomial $P$]
\label{def:size-of-polynomial}
For $P \in \R[X_1,\ldots,X_n]$ (respectively, $P\in \C[X_1,\ldots,X_n]$), we define 
$\size(P)$ as the maximum of $\deg(P)$ and the length of the smallest straight-line program 
(see \cite[page 105, Definition 4.2]{Burgisser-book1} for definition) computing $P$.
\end{definition}

\begin{remark}
In particular, note that $\deg(P)$ is bounded by $\size(P)$, and thus for any sequence of polynomials 
$(P_n \in \kk[X_1,\ldots,X_n])_{n >0}$, such that the sequence $(\size(P_n))_{n > 0}$ is bounded by a polynomial in $n$,  we 
automatically have that the sequence $(\deg(P_n))_{n > 0}$ is also bounded by the same polynomial.
\end{remark}

We can now define a notion of size of a formula defining a constructible function.

\begin{definition}[Size of a formula defining a constructible function]
\label{def:size-of-formula}
Let $\kA_\R$ (respectively, $\kA_\C$) denote a polynomially bounded $\R$-algebra (respectively, 
$\C$-algebra).
The size of a formula defining a $\kA_\R$-valued (respectively, $\kA_\C$-valued) 
constructible function is defined inductively as follows.

\begin{enumerate}
\item
If $P \in \R[X_1,\ldots,X_n]$ (respectively, $P \in \C[X_1,\ldots,X_n]$), then
\[
\size(\mathbf{1}_{P=0})=\size(\mathbf{1}_{P>0})=\size(\mathbf{1}_{P<0}) := \size(P)
\] 
(respectively,
$
\size(\mathbf{1}_{P=0})=\size(\mathbf{1}_{P\neq0})\leq \size(P)
$). 
\item
If $F_1,F_2$ are formulas defined over $\R$  (respectively, $\C$), then
\[
\size(F_1 + F_2), \size(F_1\cdot F_2) :=  \size(F_1) +\size(F_2),
\]
\item
if $F$ is a formula defined over $\R$  (respectively, $\C$), and $h \in \kA_\R[T]$ (respectively, $h \in \kA_\C[T])$, then
\[
\size(h(F)) := \size(F).
\]
\end{enumerate}
\end{definition}

\begin{remark}
The last item in the above definition requires a remark.
Note that if $f$ is a constructible function defined over $\kk=\R,\C$, and $h \in \kA_\R[T],\kA_\C[T]$, then 
$\Par(f) \prec \Par(h\circ f)$. Since, it is the complexity of $\Par(f)$ that is geometrically more meaningful, rather than the
values taken by $f$, we choose to ignore the cost of the last operation. It also makes the theory simpler in places.  
\end{remark}

\subsection{Constructible analogs of $\mathbf{VP}_\kk$}
We now define sequences of constructible functions that will play a role similar to that
of $\mathbf{VP}_\R$ or $\mathbf{VP}_\C$ in Valiant's theory.

\begin{definition}[The classes  $\mathbf{VP}_\R^\kA$, $\mathbf{VP}_\C^\kA$]
Let $\kk$ denote either the field $\R$ or the field $\C$, and $\kA$ a polynomially bounded 
$\R$ or $\C$-algebra.
Let $m(n),\bar{m}(n)\in \Z[n]$ be any non-negative polynomial. 
We say that a sequence of $\kA$-valued constructible functions 
$(f_n:\kk^{m(n)} \rightarrow \kA_{\leq \bar{m}(n)})_{n>0}$ is in the class $\mathbf{VP}_{\kk}^\kA$
if for each $n > 0$ there exists a formula $F_n$ defining the constructible function $f_n$,
and such that $\size(F_n)$ is bounded polynomially in $n$.
\end{definition}

\begin{proposition}
\label{prop:equivalent-def}
Let $\kk$ denote either the field $\R$ or the field $\C$.
Let $m(n)\in \Z[n]$ be any non-negative polynomial. 
A sequence of constructible functions 
$(f_n:\kk^{m(n)} \rightarrow \kk)_{n>0}$ is in the class $\mathbf{VP}_{\kk}^\kk$
if for each $n > 0$ there exists a formula $F_n$ defining the constructible function $\bar{f}_n:\kk^{m(n)}\rightarrow \kk$,
such that $\size(F_n)$ is bounded polynomially in $n$,
and $\Par(\bar{f}_n) \prec \Par(f_n)$.
\end{proposition}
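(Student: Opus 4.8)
The plan is to prove the two implications separately: the implication from membership in $\mathbf{VP}_\kk^\kk$ (in the original sense of the defining formula computing $f_n$ itself) to the condition stated in the proposition is immediate, while the converse implication is the substantive one, and it is obtained by post-composing the given formula with a Lagrange interpolation polynomial and invoking clause (3) of Definition \ref{def:size-of-formula}.

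For the easy direction, suppose $(f_n)_{n>0} \in \mathbf{VP}_\kk^\kk$; then for each $n>0$ there is a formula $F_n$ of size polynomial in $n$ defining $f_n$ itself. Setting $\bar{f}_n := f_n$, we trivially have $\Par(\bar{f}_n) = \Par(f_n) \prec \Par(f_n)$ (a partition refines itself), so the condition of the proposition holds with this choice of $F_n$ and $\bar{f}_n$.

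For the converse, suppose that for each $n>0$ we are given a formula $F_n$ with $\size(F_n)$ bounded polynomially in $n$, defining a constructible function $\bar{f}_n:\kk^{m(n)}\to\kk$ with $\Par(\bar{f}_n)\prec\Par(f_n)$. By Remark \ref{rem:finite-values}, $\bar{f}_n$ takes only finitely many values in $\kk$, and since $\Par(\bar{f}_n)\prec\Par(f_n)$ there is a function $h_n:\kk\to\kk$ with $f_n = h_n\circ\bar{f}_n$; because $\bar{f}_n$ takes only finitely many values, the Lagrange interpolation formula (as recorded in Remark \ref{rem:finite-values}) lets us take $h_n\in\kk[T]$, of degree equal to the number of distinct values of $\bar{f}_n$. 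Writing $h_n(T)=\sum_{i} a_i T^i$, the expression $h_n(F_n) = \sum_i a_i \cdot F_n^{\,i}$ is a formula in the sense of Definition \ref{def:constructible-function} (built from $F_n$ using only constant multiples, products and sums, with $F_n^{0}$ the constant formula $\mathbf{1}_{\kk^{m(n)}}$), and it defines $h_n\circ\bar{f}_n = f_n$. By clause (3) of Definition \ref{def:size-of-formula}, $\size(h_n(F_n)) = \size(F_n)$, which is polynomially bounded in $n$. Hence $(f_n)_{n>0}\in\mathbf{VP}_\kk^\kk$. (Note that the codomain condition in the definition of $\mathbf{VP}_\kk^\kA$ is vacuous here, since $\kA=\kk$ carries the trivial grading.)

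There is essentially no obstacle in this argument; the only point worth flagging is that the degree of $h_n$ — i.e., the number of distinct values taken by $\bar{f}_n$ — need not be polynomially bounded in $n$. This is exactly why clause (3) of Definition \ref{def:size-of-formula} is set up not to charge for post-composition with a univariate polynomial, and Proposition \ref{prop:equivalent-def} is precisely the statement that this convention propagates to the level of the complexity class.
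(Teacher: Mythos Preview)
Your proposal is correct and follows essentially the same approach as the paper: invoke Remark~\ref{rem:finite-values} to obtain $h_n\in\kk[T]$ with $f_n=h_n\circ\bar f_n$, and then use clause~(3) of Definition~\ref{def:size-of-formula} to conclude $\size(h_n(F_n))=\size(F_n)$. The paper's proof is just these two sentences; your additional remarks (the trivial direction, the explicit Lagrange interpolation, and the observation that $\deg h_n$ need not be polynomially bounded) are all correct elaborations but not required.
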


 \begin{proof}
 It follows from Remark \ref{rem:finite-values} that for each $n$ there exists $h_n \in \kk[T]$, such that
 $f_n = h_n \circ \bar{f}_n$. Moreover, from Definition \ref{def:size-of-formula}, it follows that 
 $\size(h_n \circ F_n) = \size(F_n)$.
 \end{proof} 
 
Let $\kk$ be the field $\R$ or $\C$. As an example of a sequence in the class $\mathbf{VP}_\kk^\kk$ we have the following. 
Recall that for $m,n \geq 0$, we denote by $\mathrm{rk}_{m,n}: \kk^{m \times n} \rightarrow \kk$ the function that
maps an $m\times n$ matrix $A$ to its rank.

\begin{theorem}
\label{thm:rank}
The sequence of functions 
$
\left(\mathrm{rk}_{n,n}: \kk^{n \times n} \rightarrow \kk\right)_{n > 0}
$
belongs to the class $\mathbf{VP}_\kk^\kk$.
\end{theorem}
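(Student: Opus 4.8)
The plan is to reduce, via Proposition~\ref{prop:equivalent-def}, to producing for each $n$ a family of polynomials $P_1,\dots,P_N\in\kk[X_{1,1},\dots,X_{n,n}]$ with $N$ and every $\size(P_i)$ bounded polynomially in $n$, such that the vanishing pattern $\bigl(\mathbf{1}_{P_i(A)\neq 0}\bigr)_{1\le i\le N}$ determines $\rank(A)$; then for a sufficiently generic tuple $(\lambda_i)\in\kk^N$ (so that distinct vanishing patterns give distinct values) the formula $F_n:=\sum_i\lambda_i\,G_i$, where $G_i$ is a formula of size $\le 2\,\size(P_i)$ defining $\mathbf{1}_{P_i\neq 0}$, has size $\sum_i O(\size(P_i))$ by Definition~\ref{def:size-of-formula} and defines a constructible function $\bar f_n$ with $\Par(\bar f_n)\prec\Par(\mathrm{rk}_{n,n})$, as needed. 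Since $\rank(A)=\max\{\,j:\text{some }j\times j\text{ minor of }A\text{ is nonzero}\,\}$, it suffices to find, for each $j\in\{1,\dots,n\}$, a polynomially large family $\mathcal Q_j$ of polynomials of polynomial size whose common zero set is $\{\,A:\rank(A)<j\,\}$.

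The $\binom{n}{j}^2$ minors of size $j$ are too numerous to list directly, so I would cut out the determinantal variety by generic projections. For $U\in\kk^{j\times n}$ and $V\in\kk^{n\times j}$ the polynomial $A\mapsto\det(UAV)$ has degree $j$ and admits a division-free straight-line program of size polynomial in $n$: the entries of $UAV$ are $\kk$-linear forms in the entries of $A$, and the determinant of a $j\times j$ matrix of indeterminates lies in $\mathbf{VP}$ (Berkowitz's division-free algorithm; cf.\ \cite{Burgisser-book1}). For any fixed $U_0,V_0$ one has $\det(U_0AV_0)\neq 0\Rightarrow\rank(A)\ge j$, and writing $V_0$ with $j$-dimensional column space $X$ and $U_0$ with $(n-j)$-dimensional kernel $K$, $\det(U_0AV_0)\neq0$ holds exactly when $X\cap\ker A=0$ and $A(X)\cap K=0$. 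The crucial point -- and, I expect, the main obstacle -- is to choose, once and for all and non-uniformly in $n$ (which is permitted, since $\mathbf{VP}_\kk^\kk$ asks only for the existence of a formula for each $n$), polynomially many pairs $(U_0,V_0)$ so that \emph{some} chosen pair detects $\rank(A)\ge j$ for \emph{every} $A$, not merely for generic $A$. For this I would fix $O(n^2)$ subspaces $X$ of dimension $j$ and $O(n^2)$ subspaces $K$ of dimension $n-j$, each family in sufficiently general position: a standard dimension count in Grassmannians (every subspace of dimension $\le n-j$ meets a generic $j$-plane only at $0$, and the intersection of more than $\dim\mathrm{Gr}$ many of the relevant codimension-$\ge 1$ Schubert varieties is empty) shows that then every subspace of dimension $\le n-j$ meets some $X$ trivially and every $j$-dimensional subspace meets some $K$ trivially. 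Taking $\mathcal Q_j$ to be the $\det(U_0AV_0)$ over the resulting $O(n^4)$ pairs then does the job, with $\bigcup_j\mathcal Q_j$ of size $O(n^5)$.

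Over $\kk=\R$ there is a more transparent route avoiding the Grassmannian argument: $\rank(A)=\rank(AA^{\mathrm t})$, and since $AA^{\mathrm t}$ is symmetric positive semidefinite it has exactly $\rank(A)$ positive eigenvalues, so $\rank(A)=\max\{\,j:e_j(AA^{\mathrm t})>0\,\}$, where $e_j(AA^{\mathrm t})$ is the $j$-th elementary symmetric function of the eigenvalues of $AA^{\mathrm t}$, equal to $(-1)^j$ times the coefficient of $T^{\,n-j}$ in its characteristic polynomial. These are nonnegative polynomials of polynomial size in the entries of $A$ (coefficients of the characteristic polynomial of a matrix with polynomially sized entries lie in $\mathbf{VP}$), so $\mathrm{rk}_{n,n}=\sum_{j=1}^n\mathbf{1}_{e_j(AA^{\mathrm t})>0}$ is itself a formula of polynomial size. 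Over $\C$ this positivity fails ($AA^{\mathrm t}$ may have strictly smaller rank than $A$), which is exactly why I expect the generic-projection argument, and in particular its uniformity over all $A$, to be the technical heart of the general case.
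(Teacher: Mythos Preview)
Your argument is correct in both cases, but the paper's own proof is a one-line citation: it simply invokes Mulmuley's result that matrix rank admits efficient (division-free, polynomial-size) circuits over an arbitrary field, and treats the theorem as an immediate consequence.

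Your route is genuinely different and more explicit. Over $\R$, the identity $\mathrm{rk}_{n,n}(A)=\sum_{j=1}^n \mathbf{1}_{e_j(AA^{\mathrm t})>0}$ is particularly clean: it produces the rank function itself as a polynomial-size formula, not merely a refinement of its partition, and uses nothing beyond Berkowitz for the characteristic polynomial. Over $\C$, your generic-projection argument also works; the dimension count you sketch is correct (for $N>j(n-j)$ generic $j$-planes $X_1,\dots,X_N$, the incidence locus $\{(L,X_1,\dots,X_N):L\cap X_i\neq 0\text{ for all }i\}$ has dimension $j(n-j)+N(j(n-j)-1)<N\cdot j(n-j)$, so it cannot dominate $(\mathrm{Gr}(j,n))^N$), though in a polished version you would want to write this out rather than call it ``standard.'' You could also bypass the genericity of the $\lambda_i$ entirely by taking $\bar f_n=\sum_j\bigl(1-\prod_{P\in\mathcal Q_j}\mathbf{1}_{P=0}\bigr)$, which equals $\mathrm{rk}_{n,n}$ on the nose.

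What each buys: Mulmuley's approach is field-uniform and avoids the Grassmannian bookkeeping altogether (his trick amounts to a single explicit perturbation after which the rank is read off one characteristic polynomial), at the cost of importing a nontrivial external result as a black box. Your approach is self-contained modulo Berkowitz, but splits into two cases and requires the non-uniform choice of generic data over $\C$---which, as you correctly note, is permitted in $\mathbf{VP}_\kk^\kk$.
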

\begin{proof}
Follows from the existence of efficient circuits for computing ranks of matrices \cite{Mulmuley}.
\end{proof}

Another illustrative example of the power  of this class is given by the following example.

\begin{example}
Let $\kk$ be either the field $\R$ or $\C$.
For $0 \leq i \leq n$, let $S_{n,i} \subset \R^n$ be 
the set of points of $\kk^n$ with exactly $i$ non-zero coordinates. Then 
the sequence of functions
\[ 
\left(f_n = \mathbf{1}_{S_{n,\lceil{n/2}\rceil}}\right)_{n > 0} 
\]
belongs to the class $\mathbf{VP}_{\kk}^\kk$.
To see this observe that  the constructible function 
\[
\bar{f}_n = \sum_{i=1}^{n} \mathbf{1}_{(X_i \neq 0)},
\]
is defined by a formula of linear size, and clearly $\Par(\bar{f}_n) \prec \Par(f_n)$. Now apply Proposition \ref{prop:equivalent-def}.
\end{example}

A small modification of the above example shows that a sequence of constructible functions 
$(f_n: \kk^n \rightarrow \kk)_{n>0}$ can belong to the class $\mathbf{VP}_{\kk}^\kk$, even
though $\card(\Par(f_n))$ grows exponentially.

\begin{example}
Let $f_n: \kk^n \rightarrow \kk$ be defined by 
\[
f_n = \sum_{i=0}^{n-1} 2^i \cdot\mathbf{1}_{X_i \neq 0}.
\]
Then, clearly the sequence 
$(f_n)_{n>0}$ belongs to the class $\mathbf{VP}_{\kk}^\kk$,
but $\card(\Par(f_n)) = 2^n$.
\end{example}

\begin{remark}
Notice that our definition of the classes $\mathbf{V}_\kk^\kA$ is very geometric, and it is the partitions induced by
a sequence of functions that play the key role in this definition. Consider the following example.
Let for each $n >0$
\[
P_n = \sum_{i=1}^{n} X_i^2(X_i-1)^2,
\]
and consider
the following sequence of constructible functions
$(f_n: \R^n \rightarrow \R)_{n>0}$ defined by  
\[
f_n = \mathbf{1}_{P_n = 0}\cdot(\mathbf{1}_{\R^n}+ \sum_{i=1}^n 2^i \cdot \mathbf{1}_{X_i=0}).
\]
Clearly, the sequence 
$(f_n: \R^n \rightarrow \R)_{n>0}$
belongs to the class $\mathbf{VP}_\R^\R$, and 
$\Par(f_n)$ consists of singletons each containing a point in $\{0,1\}^n$, as well as the set $\R^n \setminus \{0,1\}^n$.
It follows that for \emph{any}  sequence of subsets $(S_n \subset \{0,1\}^n\subset \R^n)_{n > 0}$, the corresponding sequence of characteristic functions 
$(\mathbf{1}_{S_n})_{n > 0}$ belongs to $\mathbf{VP}_\R^\R$. Thus, membership in $\mathbf{VP}_\R^\R$
is \emph{not} a good measure of tractability of Boolean functions. 
\end{remark}

\subsection{Constructible analogs of $\mathbf{VNP}_\kk$}
 In this section we define the analogs of the Valiant complexity classes $\mathbf{VNP}_\R$ and
 $\mathbf{VNP}_\C$. Before doing so we first need to introduce the Grothendieck rings,
 $K_0(\sa_\R)$ and $K_0(\Var_\C)$, of semi-algebraic and constructible sets, and additive (as well as
 multiplicative) invariants semi-algebraic and constructible sets.
 
\subsection{Grothendieck rings and additive invariants}
\begin{definition}[Grothendieck rings]
\label{def:Grothendieck-ring}
The Grothendieck ring $K_0(\sa_\R)$ of semi-algebraic sets is defined as follows. The
underlying additive group is generated by the classes $[X]$, where $X$ is a semi-algebraic set
with the following relations:
\begin{enumerate}
\item $[X] = [Y]$ if $X$ and $Y$ are isomorphic as varieties;
\item $[X] = [X\setminus Y] + [Y]$ for each closed subset $Y$ of $X$ (in the Euclidean topology).
\end{enumerate}The multiplication operation is defined by 
\[
[X]\cdot[Y] = [X \times Y].
\]

Similarly, the Grothendieck ring $K_0(\Var_\C)$ of complex algebraic varieties is defined as follows. The
underlying additive group is generated by the classes $[X]$, where $X$ is a complex algebraic variety, 
with the following relations:
\begin{enumerate}
\item $[X] = [Y]$ if $X$ and $Y$ are isomorphic as varieties;
\item $[X] = [X\setminus Y] + [Y]$ for each closed subset $Y$ of $X$ (in the Zariski topology).
\end{enumerate}
The multiplication operation is defined by 
\[
[X]\cdot[Y] = [X \times Y].
\]
\end{definition}

 We also need the notion of an additive invariant of classes of semi-algebraic or
 constructible sets.
 
 \begin{definition}[Additive invariants of classes of semi-algebraic or
 constructible sets]
 \label{def:additive-invariant}
 Let $\kk=\R,\C$ and $\kA$ a $\kk$-algebra. Let $K_0 = K_0(\sa_\R)$ if $\kk=\R$ and
 $K_0 = K_0(\Var_\C)$ if $\kk = \C$. We call a ring homomorphism $\inv:K_0 \rightarrow \kA$
 to be \emph{additive invariant} of the class of semi-algebraic or constructible sets.
 \end{definition}
 
 Given an additive invariant, there is a well defined notion of integrating a constructible function
 with respect to the invariant. 
 
 More precisely:
 \begin{definition}[Integration with respect to an additive invariant]
\label{def:integral-EP}
Let $\kk$ be either the field $\R$, or the field $\C$, and $K_0 = K_0(\sa_\R)$ in case $\kk =\R$, and
$K_0= K_0(\Var_\C)$ in case $\kk = \C$.   Let $\inv: K_0\rightarrow \kA$  be an
additive invariant, and 
let $f: \kk^n \rightarrow \kA$ be a $\kA$-valued constructible function 
defined by 
\[
f = \sum_{i=1}^{N} a_i \mathbf{1}_{X_i},
\]
where the $X_i$'s are semi-algebraic sets in case $\kk=\R$ and constructible sets if $\kk = \C$, and
each $a_i \in \kA$.
We define the \emph{integral of $f$ 
with respect to the additive invariant $\inv$ } (following \cite{Viro-euler,Schapira89, Schapira91}) to be 
\[
\int_{\kk^n} f \dd\inv :=  \sum_{i=1}^{N} a_i \inv(X_i).
\]
\end{definition}

\begin{remark}
The fact that the definition of $\int_{\kk^n} f \dd\inv$ is independent of the particular representation of the
constructible function $f$ (which is far from being unique) is a classical fact \cite{Viro-euler,Schapira89,Schapira91}. 
The  integral defined above satisfies all the usual properties (of say the Lebesgue integral) such as
additivity, Fubini-type theorem etc. \cite{Viro-euler, Schapira89, Schapira91}, and in particular can be used to define ``push-forwards'' 
of (constructible) functions via fiber-wise integration.
\end{remark}

We are now in a position to define the geometric analogs of Valiant's classes $\mathbf{VNP}_\R$ and
$\mathbf{VNP}_\C$.

 \subsubsection{The classes  $\mathbf{VNP}_\kk^\inv$}
 Let $\kk$ be the field $\R$ (respectively, $\C$),
 $K_0 = K_0(\sa_\R)$ (respectively, $K_0=K_0(\Var_\C)$), 
 $\kA$ a polynomially bounded graded $\kk$-algebra, 
 and $\inv:K_0\rightarrow \kA$ be an additive invariant.

\begin{definition}[The class  $\mathbf{VNP}_\kk^\inv$]
\label{def:VNP}
Let $m(n),\bar{m}(n),\bar{m}_1(n)$ be non-negative polynomials with
integer coefficients.
We say that a sequence of constructible functions 
$(f_n:\kk^{m(n)} \rightarrow \kA_{\leq \bar{m}(n)})_{n > 0}$ is in the class $\mathbf{VNP}_\kk^\inv$
if there exists a sequence of constructible functions
$(g_n:\kk^n \rightarrow \kA_{\leq \bar{m}_1(n)})_{n > 0}$ belonging to the class $\mathbf{VP}_\kk^\kA$,
and a non-negative polynomial $m_1(n)$ such that
for each $n > 0$, $\Par(\bar{f}_n) \prec \Par(f_n)$,
where $\bar{f}_n:\kk^{m(n)} \rightarrow \kA_{\leq \bar{m}(n)}$ is the $\kA$-valued constructible function defined by
\begin{equation}
\bar{f}_n(\x) =  \int_{\kk^{m_1(n)}} g_{m(n) + m_1(n)}(\cdot,\x) \dd\inv.
\end{equation}
\end{definition}

 Here are two key examples.
 
 \subsubsection{The (generalized) Euler-Poincar\'e characteristic of semi-algebraic sets}
 \label{subsubsec:generalizedP}
\begin{definition}[Generalized Euler-Poincar\'e characteristic]
\label{def:gen_EP}
The generalized Euler-Poincar\'e characteristic, $\chi(S)$,  of a semi-algebraic set $S \subset \R^k$ 
is uniquely defined by the following properties \cite[Chapter 4]{Dries}:
\begin{enumerate}
\item
$\chi$ is invariant under semi-algebraic homeomorphisms.
\item
\[ \chi(\{\pt\}) = \chi([0,1]) = 1.\]
\item
$\chi$ is multiplicative, i.e., $\chi(A \times B) = \chi(A)\cdot\chi(B)$ for any pair of semi-algebraic sets $A,B$.
\item
\label{item:additive}
$\chi$ is additive, i.e., $\chi(A \cup B) = \chi(A) + \chi(B) - \chi(A\cap B)$ for any pair of semi-algebraic subsets $A,B \subset \R^n$.
\end{enumerate}
\end{definition}

\begin{remark}
\label{rem:alternating-sum}
Note that the generalized Euler-Poincar\'e characteristic is a homeomorphism
(but \emph{not} a homotopy) invariant. 
For a locally  closed semi-algebraic set $X$, 
\[
\chi(X) = \sum_{i \geq 0} (-1)^i \dim_\Q \HH_c^i(X,\Q),
\]
where $\HH_c^i(X,\Q)$ is the the $i$-th co-homology group of $X$ with compact support
(see Definition \ref{def:cohomology-compact-support}).
Thus, the definition agrees with the usual 
Euler-Poincar\'e characteristic as an alternating sum of the Betti numbers 
for locally closed semi-algebraic sets.
\end{remark}

A few illustrative examples are given below.

\begin{notation}
\label{not:ball-and-sphere}
We denote by $\Ball^n(0,r)$ the open ball in $\R^n$ of radius $r$ centered at the
origin. We will denote by $\Ball^n$ the  open unit ball $\Ball^n(0,1)$.
Similarly, we denote by $\Sphere^{n-1}(0,r)$ the sphere in $\R^{n}$ 
of radius $r$ centered at the origin, and 
by $\Sphere^{n-1}$ the unit sphere $\Sphere^{n-1}(0,1)$.
\end{notation}

\begin{example}
For every $n \geq 0$,
\begin{enumerate}
\item
 \[
\chi(\overline{\Ball^n}) = \chi([0,1]^n) = \chi([0,1])^n = 1.
\]
\item
\[
\chi(\Ball_n) = \chi((0,1)^n) = (\chi(0,1))^n = (\chi([0,1]) - \chi({0}) - \chi({1}))^n = (-1)^n.
\]
\item
\[
\chi(\Sphere^{n-1}) = \chi(\overline{\Ball^n}) - \chi(\Ball^n) = 1 - (-1)^n.
\]
\end{enumerate}
\end{example}

It is obvious from its definition that
\begin{proposition}
The generalized Euler-Poincar\'e characteristic is an additive invariant of the class
of semi-algebraic sets.
\end{proposition}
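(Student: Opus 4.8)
The statement to prove is: \emph{The generalized Euler-Poincar\'e characteristic is an additive invariant of the class of semi-algebraic sets.} By Definition~\ref{def:additive-invariant}, this amounts to exhibiting a ring homomorphism $\chi \colon K_0(\sa_\R) \to \Z$ induced by the generalized Euler-Poincar\'e characteristic, where $\Z$ is viewed as the (trivially graded, polynomially bounded) value algebra $\kA$.

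\begin{proof}
We must verify that $\chi$ descends to a well-defined ring homomorphism from the Grothendieck ring $K_0(\sa_\R)$ to $\Z$. Recall from Definition~\ref{def:Grothendieck-ring} that $K_0(\sa_\R)$ is the free abelian group on symbols $[X]$, one for each semi-algebraic set $X$, modulo the relations $[X]=[Y]$ when $X$ and $Y$ are semi-algebraically isomorphic, and $[X]=[X\setminus Y]+[Y]$ for each closed (in the Euclidean topology) subset $Y\subset X$; multiplication is $[X]\cdot[Y]=[X\times Y]$. By the universal property of this construction, it suffices to check that the assignment $X\mapsto\chi(X)$ respects all these relations.

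First, $\chi$ is invariant under semi-algebraic homeomorphisms by property~(1) of Definition~\ref{def:gen_EP}; in particular semi-algebraically isomorphic sets have equal generalized Euler-Poincar\'e characteristic, so the first family of relations is respected. Second, for a closed semi-algebraic subset $Y\subset X$, writing $X=(X\setminus Y)\sqcup Y$ as a disjoint union and applying the additivity property~(\ref{item:additive}) of Definition~\ref{def:gen_EP} with $A=X\setminus Y$, $B=Y$ (so that $A\cap B=\emptyset$ and $\chi(\emptyset)=0$), we get $\chi(X)=\chi(X\setminus Y)+\chi(Y)$, which is exactly the scissor relation defining $K_0(\sa_\R)$. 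Hence $\chi$ extends uniquely to a group homomorphism $\chi\colon K_0(\sa_\R)\to\Z$. Finally, the multiplicativity property~(3) of Definition~\ref{def:gen_EP} gives $\chi(X\times Y)=\chi(X)\cdot\chi(Y)$, so this homomorphism is compatible with products, and since $\chi(\{\pt\})=1$ by property~(2) it sends the multiplicative identity $[\{\pt\}]$ of $K_0(\sa_\R)$ to $1\in\Z$. Therefore $\chi$ is a ring homomorphism $K_0(\sa_\R)\to\Z$, and $\Z$ being a trivially graded polynomially bounded $\R$-algebra (indeed a subring of $\R$), this is precisely an additive invariant in the sense of Definition~\ref{def:additive-invariant}.
\end{proof}

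The only point requiring a word of care is the well-definedness: one must know that the defining properties of $\chi$ in Definition~\ref{def:gen_EP} are mutually consistent and actually characterize an existing function on semi-algebraic sets. This is precisely the content of \cite[Chapter 4]{Dries}, which we invoke, so no real obstacle remains; the proof is a direct matching of the axioms of $\chi$ against the presentation of $K_0(\sa_\R)$. The analogous statement over $\C$, with $\chi$ replaced by the topological Euler characteristic with compact support and $K_0(\sa_\R)$ by $K_0(\Var_\C)$, would be proved in exactly the same manner, using the known fact that the compactly supported Euler characteristic is a motivic measure.
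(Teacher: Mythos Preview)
Your argument is correct in substance and is exactly the verification the paper has in mind: the paper itself offers no proof, prefacing the proposition with ``It is obvious from its definition that\ldots''. You have simply unpacked why the four axioms in Definition~\ref{def:gen_EP} match the relations presenting $K_0(\sa_\R)$, which is the right thing to do.

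One small correction: $\Z$ is not an $\R$-algebra (it carries no $\R$-module structure), so your closing sentence is not quite right. The paper's convention (see the remark following the definition of the value algebra) is that the relevant target algebra is $\kA=\R$ itself with the trivial grading; the invariant should be read as the ring homomorphism $\chi\colon K_0(\sa_\R)\to\R$ whose image happens to lie in $\Z\subset\R$. With that adjustment your proof is complete. (You might also note explicitly that $\chi(\emptyset)=0$, which you use; it follows, for instance, from multiplicativity together with $\chi((0,1))=-1$.)
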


It was mentioned in the introduction that one difficulty in defining a push-forward of functions in the
B-S-S model had to do with the impossibility of computing exactly integrals with respect to usual
measures on $\R^n$ (such as the Lebesgue measure), since such integrals could be transcendental
numbers or might not converge. 
In contrast, we have the following effective upper bound on the complexity of 
computing integrals with respect to the generalized Euler-Poincar\'e characteristic.

\begin{theorem}
\label{thm:algorithm-for-integration-EP}
There exists an algorithm that takes as input a formula $F$ describing 
a constructible function $f:\R^n\rightarrow \R$, and computes
\[
\int_{\R^n} f \dd\chi.
\] 
The complexity of the algorithm measured as the number of arithmetic
operations over $\R$ as well as comparisons 
is bounded singly exponentially in $n$ and the size of the formula $F$. 
\end{theorem}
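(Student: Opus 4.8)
The plan is to reduce the computation of $\int_{\R^n} f \,\dd\chi$ to computing the generalized Euler--Poincar\'e characteristics of the realizations of all realizable sign conditions of the finite family of polynomials occurring in the formula $F$, and then to invoke the known singly exponential algorithms for that latter task. First I would parse $F$. By the inductive definition of formulas and of their size (Definitions \ref{def:size-of-polynomial} and \ref{def:size-of-formula}), $F$ involves a finite family $\mathcal{P} = \{P_1,\ldots,P_s\} \subset \R[X_1,\ldots,X_n]$, one polynomial per atom $\mathbf{1}_{P_i = 0},\mathbf{1}_{P_i > 0},\mathbf{1}_{P_i < 0}$, with $s \leq \size(F)$, and each $P_i$ given by a straight-line program of length $\leq \size(F)$ and of degree $\leq \size(F)$. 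Expanding each $P_i$ into its dense monomial representation costs time polynomial in $\size(F)$ and in $\binom{n+\size(F)}{n}$, i.e. singly exponential in $n$ and polynomial in $\size(F)$.

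The key structural observation is that on the realization $\RR(\sigma,\R^n)$ of a sign condition $\sigma$ on $\mathcal{P}$ (Notation \ref{not:realization}) every atom of $F$ is constant, hence $f$ itself is constant on $\RR(\sigma,\R^n)$, with value $c_\sigma \in \R$ obtained by a straightforward structural recursion on $F$: replace each atom by its truth value under $\sigma$, evaluate $F_1 + F_2$, $F_1 \cdot F_2$, $c \cdot F_1$ and an outer composition $h(\cdot)$ with $h \in \R[T]$ accordingly; computing $c_\sigma$ from $F$ and $\sigma$ costs $O(\size(F))$ arithmetic operations. Consequently $f = \sum_\sigma c_\sigma \mathbf{1}_{\RR(\sigma,\R^n)}$, the sum running over the sign conditions of $\mathcal{P}$, and since $\RR(\sigma,\R^n) = \emptyset$ (so $\chi = 0$) for the non-realizable $\sigma$, Definition \ref{def:integral-EP} gives
\[
\int_{\R^n} f \,\dd\chi \;=\; \sum_{\sigma \text{ realizable}} c_\sigma\, \chi\bigl(\RR(\sigma,\R^n)\bigr).
\]
The number of realizable sign conditions of $\mathcal{P}$ on $\R^n$ is $(s\cdot\size(F))^{O(n)}$, singly exponential in $n$, and they can all be enumerated within that bound.

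It remains to compute the integer $\chi(\RR(\sigma,\R^n))$ for every realizable $\sigma$. Because $\chi$ is additive and is a semi-algebraic homeomorphism invariant, one first passes from the unbounded ambient space to a ball: for $r$ larger than the modulus of all real roots of the relevant polynomials, local conic structure at infinity gives a semi-algebraic homeomorphism $\RR(\sigma,\R^n)\setminus \overline{\Ball^n(0,r)} \;\cong\; \RR\bigl(\sigma,\Sphere^{n-1}(0,r)\bigr)\times(r,\infty)$, whence by additivity $\chi(\RR(\sigma,\R^n)) = \chi\bigl(\RR(\sigma,\overline{\Ball^n(0,r)})\bigr) - \chi\bigl(\RR(\sigma,\Sphere^{n-1}(0,r))\bigr)$ (Notation \ref{not:ball-and-sphere}); an infinitesimal perturbation of the inequalities together with inclusion--exclusion then expresses the generalized $\chi$ of each such locally closed, bounded realization as a $\Z$-linear combination of $\chi$ of \emph{closed and bounded} semi-algebraic sets, for which the generalized Euler--Poincar\'e characteristic coincides with the alternating sum of the Betti numbers (Remark \ref{rem:alternating-sum}). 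All the resulting closed bounded sets are cut out by $O(s)$ polynomials of degree $O(\size(F))$ in at most $n+1$ variables, so one may invoke the existing singly exponential algorithms computing Euler--Poincar\'e characteristics (equivalently, enough Betti numbers, or directly the sign-condition Euler characteristics) of such sets \cite{BPRbook2}, running in time $(s\cdot\size(F))^{O(n)}$. Combining all pieces --- parsing and expanding $F$, enumerating realizable sign conditions, computing each $c_\sigma$ and each $\chi(\RR(\sigma,\R^n))$, and forming the weighted sum --- yields the claimed singly exponential bound in $n$ and $\size(F)$.

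The main obstacle is precisely this last step: turning the naive, doubly exponential bookkeeping of ``generalized Euler characteristic of every sign condition on $\R^n$'' into a genuinely singly exponential computation, which needs (i) the reduction from unbounded to bounded realizations via conic structure at infinity, (ii) the passage from locally closed to closed-and-bounded sets by perturbation and additivity so that the \emph{generalized} $\chi$ can be read off topologically, and (iii) invoking (or, for a self-contained treatment, reproving) the singly exponential algorithms for Euler--Poincar\'e characteristics of closed bounded semi-algebraic sets. Everything else --- the constancy of $f$ on each $\RR(\sigma,\R^n)$, the recursion computing the $c_\sigma$, and the final summation --- is elementary.
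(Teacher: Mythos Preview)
Your proposal is correct and follows essentially the same approach as the paper: decompose $f$ over the sign conditions of the polynomials appearing in $F$, compute the constant value $c_\sigma$ on each realization by structural recursion, and then sum $c_\sigma\,\chi(\RR(\sigma,\R^n))$ over the realizable sign conditions. The only difference is in the last step: the paper directly invokes the algorithm of \cite{BPR-euler-poincare} (also \cite[Algorithm~13.5]{BPRbook2}), which computes the full list $\bigl(\chi(\RR(\sigma,\R^n))\bigr)_{\sigma\text{ realizable}}$ in one shot with complexity $(sd)^{O(n)}$, whereas you unpack this into a reduction (conic structure at infinity, then perturbation and inclusion--exclusion to pass from locally closed bounded to closed bounded sets) before citing a singly exponential algorithm for the closed bounded case. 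Your route is valid but more laborious; the cited algorithm already handles arbitrary sign conditions on $\R^n$ directly, so the intermediate reductions are unnecessary.
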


\begin{proof}
It is easy to verify using
induction on the size of the formula $F$, that 
there exists  a family of polynomials 
$\mathcal{P}_F \subset \R[X_1,\ldots,X_n]$, 
such that $\card(\mathcal{P}_F)$, as well as the degrees of the polynomials in
$\mathcal{P}_F$, are bounded by $\size(F)$. Moreover
$f$ can be expressed as a linear combination of the characteristic functions of the realizations
of the various sign conditions on $\mathcal{P}_F$. More precisely, there is an expression
\[
f = \sum_{\sigma \in \{0,1,-1\}^{\mathcal{P}_F}} a_\sigma \mathbf{1}_{\RR(\sigma,\R^n)},
\]
where the $a_\sigma \in \R$. Moreover, the set of $a_\sigma$'s can be computed from $F$ with complexity
singly exponential in $\size(F)$.
From Definition \ref{def:integral-EP} it follows that
\[
\int_{\R^n}  f \dd\chi = \sum_{\sigma \in \{0,1,-1\}^{\mathcal{P}_F}} a_\sigma \chi(\RR(\sigma,\R^n)).
\]
It follows from the main result in \cite{BPR-euler-poincare} (see also \cite[Algorithm 13.5]{BPRbook2}) 
that the list 
\[
\Big(\chi(\RR(\sigma,\R^n))\Big)_{\sigma \in \{0,1,-1\}^{\mathcal{P}_F}, \RR(\sigma,\R^n) \neq\emptyset}
\] 
can be computed with complexity  
\[
\Big(\card(\mathcal{P}_F) (\max_{P \in \mathcal{P}_F} \deg(P))\Big)^{O(n)}.
\] 
Since, $\card(\mathcal{P}_F), \max_{P \in \mathcal{P}_F} \deg(P) \leq \size(F)$,  the result follows.
\end{proof}

\subsubsection{Uniform bounds on the generalized Euler-Poincar\'e characteristic of semi-algebraic sets}
\label{subsubsec:bounding-generalized-chi}

The following proposition giving a uniform bound on the generalized Euler-Poincar\'e characteristic of semi-algebraic sets 
will be useful later.

We first need a notation. 
\begin{notation}
\label{not:P-formula}
Let $\mathcal{P} \subset \R[X_1,\ldots,X_n]$ be a finite set of polynomials. We say that a semi-algebraic subset $S \subset \R^n$ is a \emph{$\mathcal{P}$-semi-algebraic set},  if $S$ is defined by a quantifier-free first order formula $\Phi$ with atoms $P \{=,<,>\} 0, P \in \mathcal{P}$. We call $\Phi$ a \emph{$\mathcal{P}$-formula}.
We say that $S$ is a 
\emph{$\mathcal{P}$-closed semi-algebraic
set},  if $S$ is defined by a quantifier-free first order formula $\Psi$ with no negations and 
atoms $P \{\leq,\geq\} 0, P \in \mathcal{P}$. We call $\Psi$ a \emph{$\mathcal{P}$-closed formula}.

Similarly, if $\mathcal{P} \subset  \C[X_1,\ldots,X_n]$ is a finite set of polynomials, we say that a constructible subset $S \subset \C^n$ is a \emph{$\mathcal{P}$-constructible set},  if $S$ is defined by a quantifier-free first order formula $\Phi$ with atoms $P \{=,\neq\} 0, P \in \mathcal{P}$. 
\end{notation}

\begin{proposition}
\label{prop:uniform-bound-real}
Let $\mathcal{P} \subset \R[X_1,\ldots,X_n]$ be a finite set of polynomials, with $\card(\mathcal{P}) = s$, and $d = \max(\max_{P \in \mathcal{P}} \deg(P),2)$. 
Let $\sigma \in \{0,1,-1\}^{\mathcal{P}}$. Then, 
\[
|\chi(\RR(\sigma,\R^n)| \leq \sum_{i=0}^{n-1} \sum_{j=0}^{i+1}  \binom{s+1}{j}d(2d -1)^{n-1}  = (O(sd))^n.
\]
More generally, let $S$ be any $\mathcal{P}$-semi-algebraic subset of $\R^n$. Then,
\[
|\chi(S)| =  (O(s d))^{2n}.
\]
\end{proposition}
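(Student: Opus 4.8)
The plan is to prove the bound on $|\chi(\RR(\sigma,\R^n))|$ first, by a Mayer–Vietoris–style decomposition of $\R^n$ according to the polynomials in $\mathcal{P}$, and then derive the bound for an arbitrary $\mathcal{P}$-semi-algebraic set $S$ by inclusion–exclusion over a bounded number of realizations. For the first part I would introduce the $s$ hypersurfaces $\ZZ(P_i,\R^n)$ together with a large sphere, and use the classical fact (from the cylindrical algebraic decomposition / Oleinik–Petrovsky–Thom–Milnor circle of ideas) that the total number, summed over all sign conditions $\tau$, of connected components of all the $\RR(\tau,\R^n)$, and more generally the sum of all their Betti numbers, is bounded by $s^n (O(d))^n$. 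Since $\chi$ is additive and a homeomorphism invariant, and since each $\RR(\sigma,\R^n)$ is locally closed, I can write $\chi(\RR(\sigma,\R^n)) = \sum_{i\ge 0}(-1)^i \dim_\Q \HH^i_c(\RR(\sigma,\R^n),\Q)$ (Remark~\ref{rem:alternating-sum}); hence $|\chi(\RR(\sigma,\R^n))| \le \sum_i \dim_\Q \HH^i_c(\RR(\sigma,\R^n),\Q)$, and the latter sum is controlled by the quantitative bounds on the sum of the Betti numbers of all realizations of sign conditions of a family of $s$ polynomials of degree $\le d$ in $\R^n$. The explicit expression $\sum_{i=0}^{n-1}\sum_{j=0}^{i+1}\binom{s+1}{j} d(2d-1)^{n-1}$ is exactly the bound that appears in \cite{BPRbook2} for this quantity (an extra polynomial is added to the family to make sign conditions closed/bounded, which is why one sees $s+1$ rather than $s$), so I would cite that result and check that it dominates $|\chi|$ term by term, concluding $(O(sd))^n$.

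For the second assertion, I would use that a $\mathcal{P}$-semi-algebraic set $S$ is a finite disjoint union of realizations of sign conditions: $S = \bigsqcup_{\sigma \in \Sigma} \RR(\sigma,\R^n)$ for some subset $\Sigma \subseteq \{0,1,-1\}^{\mathcal{P}}$. By additivity of $\chi$, $\chi(S) = \sum_{\sigma \in \Sigma}\chi(\RR(\sigma,\R^n))$, so $|\chi(S)| \le |\Sigma| \cdot \max_\sigma |\chi(\RR(\sigma,\R^n))|$. The number of realizable sign conditions $|\Sigma|$ is itself bounded by $(O(sd))^n$ (again from \cite{BPRbook2}; e.g.\ it follows from the same Betti-number bound since each realizable sign condition contributes at least $1$ to $\HH^0_c$ of something, or more directly from the classical $\binom{s}{\le n} O(d)^n$-type bound on the number of sign conditions). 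Multiplying the two $(O(sd))^n$ factors gives $|\chi(S)| = (O(sd))^{2n}$, which is the claimed bound.

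The one subtlety I would want to be careful about — and the step I expect to be the main obstacle — is the passage from sign conditions to an arbitrary $\mathcal{P}$-semi-algebraic set in a way that keeps the bound at $(O(sd))^{2n}$ rather than something larger. Realizations $\RR(\sigma,\R^n)$ need not be closed, so one cannot naively apply additivity to an arbitrary (non-closed) decomposition; the clean route is to recall that the realizations of \emph{all} sign conditions of $\mathcal{P}$ on $\R^n$ form a finite partition of $\R^n$ into locally closed sets, and that any $\mathcal{P}$-semi-algebraic set is a union of blocks of this partition, at which point additivity of $\chi$ (property~\eqref{item:additive} of Definition~\ref{def:gen_EP}, extended to finite disjoint unions by induction) applies directly. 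A second point to verify is simply that both the count of realizable sign conditions and the per-cell bound on $|\chi|$ are simultaneously of the form $(O(sd))^n$ with the \emph{same} implied constant-free asymptotics, so that their product is honestly $(O(sd))^{2n}$; this is immediate from the references but worth stating. No deep new idea is needed beyond assembling these quantitative results with the additivity and local-closedness properties of $\chi$.
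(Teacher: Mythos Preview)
Your argument for the second assertion (an arbitrary $\mathcal{P}$-semi-algebraic set $S$) is exactly the paper's: write $S$ as a disjoint union of realizations of sign conditions, use additivity of $\chi$, and multiply the per-cell bound by the $(O(sd))^n$ bound on the number of realizable sign conditions (the paper cites \cite[Theorem~7.30]{BPRbook2} for the latter). That part is fine.

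The gap is in the first part. You correctly reduce to bounding $b_c(\RR(\sigma,\R^n)) = \sum_i \dim_\Q \HH^i_c(\RR(\sigma,\R^n),\Q)$, but then you assert that ``the latter sum is controlled by the quantitative bounds on the sum of the Betti numbers of all realizations of sign conditions'' and that the explicit expression is already in \cite{BPRbook2}. It is not: the bounds in \cite{BPRbook2} (and the Gabrielov--Vorobjov reduction you have in mind) control \emph{ordinary} Betti numbers $b(\RR(\sigma,\R^n))$, typically by replacing the set with a homotopy-equivalent compact one. For a locally closed, non-compact set, $b$ and $b_c$ differ, and $\chi$ is only a homeomorphism invariant, not a homotopy invariant; the paper explicitly flags this point in the Question following the proposition. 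So you cannot simply cite the bound you need.

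What the paper actually does is supply this missing step. It first uses Hardt triviality to replace $\RR(\sigma,\R^n)$ by its intersection with a large open ball $\Ball_n(0,R)$, and then sets up an explicit pair $Z_{\sigma,R}\subset X_{\sigma,R}$ of \emph{compact} algebraic sets (the equational part of $\sigma$ intersected with the closed ball, and its intersection with the union of the remaining hypersurfaces and the boundary sphere) so that $\RR(\sigma,\R^n)\cap\Ball_n(0,R)$ sits inside $U_{\sigma,R}=X_{\sigma,R}\setminus Z_{\sigma,R}$. Two applications of the long exact sequence for compactly supported cohomology (Theorem~\ref{thm:inequality-compact-support}) give
\[
b_c(\RR(\sigma,\R^n)\cap\Ball_n(0,R)) \;\le\; b_c(U_{\sigma,R}) \;\le\; b_c(X_{\sigma,R}) + b_c(Z_{\sigma,R}) \;=\; b(X_{\sigma,R}) + b(Z_{\sigma,R}),
\]
the last equality because $X_{\sigma,R}$ and $Z_{\sigma,R}$ are compact. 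Only now do the Oleinik--Petrovsky--Thom--Milnor bounds and the Mayer--Vietoris inequalities from \cite{BPRbook2} apply, yielding the stated $\sum_{i=0}^{n-1}\sum_{j=0}^{i+1}\binom{s+1}{j}d(2d-1)^{n-1}$. Your ``large sphere'' remark gestures at the right compactification, but the substance --- passing from $b_c$ of a locally closed set to $b$ of auxiliary compact sets via the long exact sequence --- is precisely the work you skipped.
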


\begin{remark}
A  result  similar to Proposition \ref{prop:uniform-bound-real} in the case of locally closed semi-algebraic sets can also be deduced from \cite[Theorem 1.10]{Pardo96}. However, the implied constants are slightly better in Proposition \ref{prop:uniform-bound-real}, and it applies to all semi-algebraic sets, not just to locally closed ones.
\end{remark}

In the proof of Proposition \ref{prop:uniform-bound-real} we will need the following notation and
result.
 
\begin{notation}
\label{not:betti}
If $X$ is a locally closed semi-algebraic set then we denote
\begin{eqnarray*}
b^i(X) &=& \dim_\Q \HH^i(X,\Q), \\
b^i_c(X) &=& \dim_\Q \HH^i_c(X,\Q), 
\end{eqnarray*}
where $\HH^i(X,\Q)$ (respectively, $\HH^i_c(X,\Q)$) is the $i$-th cohomology group (respectively,
the $i$-th cohomology group with compact support) of $X$ with coefficients in $\Q$
(see Definition \ref{def:cohomology-compact-support}).

We denote 
\begin{eqnarray*} 
b(X)    &=& \sum_i b^i(X), \\
b_c(X)    &=& \sum_i b^i_c(X).
\end{eqnarray*}
\end{notation}

One important property of cohomology groups with compact supports is the following.

\begin{theorem}
\label{thm:inequality-compact-support}
\cite[III.7, page 185]{Iversen}
Let $X$ be a locally closed semi-algebraic set and $Z\subset X$ a closed subset and let $U = X\setminus Z$.
Then, there exists a long exact sequence 
\[
\cdots\rightarrow \HH^{p-1}_c(Z,\Q) \rightarrow \HH_c^p(U,\Q) \rightarrow \HH_c^p(X,\Q) \rightarrow \HH_c^p(Z,\Q) \rightarrow \cdots
\]
In particular,
\begin{equation}
\label{eqn:inequality-compact-support}
b_c(U) \leq b_c(X) + b_c(Z).
\end{equation}
\end{theorem}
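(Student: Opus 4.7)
The plan is to derive the long exact sequence as the standard cohomology-with-compact-supports sequence associated to the decomposition $X = Z \sqcup U$, and then read off the inequality from the exactness. First I would set up the short exact sequence of sheaves on $X$
\[
0 \longrightarrow j_!\, \Q_U \longrightarrow \Q_X \longrightarrow i_*\, \Q_Z \longrightarrow 0,
\]
where $i : Z \hookrightarrow X$ denotes the closed inclusion and $j : U \hookrightarrow X$ the open inclusion. Exactness of this sequence is a stalk-wise check: at a point of $Z$ the first sheaf has vanishing stalk, and at a point of $U$ the last one does.

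Next I would apply the right-derived functor $R\Gamma_c(X, -)$ and read off the associated long exact sequence in cohomology. The two identifications I need to use are
\[
\HH_c^p(X,\, j_!\,\Q_U) \;\cong\; \HH_c^p(U,\Q), \qquad \HH_c^p(X,\, i_*\,\Q_Z) \;\cong\; \HH_c^p(Z,\Q);
\]
the first one follows because $j_!$ is exact and the compactly supported sections of $j_! \mathcal{F}$ on $X$ coincide with those of $\mathcal{F}$ on $U$ (the extension-by-zero is what makes compact support of the extension equivalent to compact support on $U$), while the second one follows because $i$ is a closed inclusion, so $i_*$ is exact and proper, hence preserves compactly supported cohomology. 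Substituting these into the long exact sequence coming from $R\Gamma_c$ yields precisely the stated six-term sequence
\[
\cdots \to \HH_c^{p-1}(Z,\Q) \to \HH_c^p(U,\Q) \to \HH_c^p(X,\Q) \to \HH_c^p(Z,\Q) \to \cdots.
\]

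For the inequality \eqref{eqn:inequality-compact-support}, I would use the standard rank inequality for an exact sequence $A \to B \to C$ of finite-dimensional $\Q$-vector spaces: $\dim_\Q B \leq \dim_\Q A + \dim_\Q C$. Applying this to the segment $\HH_c^{p-1}(Z,\Q) \to \HH_c^p(U,\Q) \to \HH_c^p(X,\Q)$ gives $b^p_c(U) \leq b^{p-1}_c(Z) + b^p_c(X)$, and summing over $p$ produces $b_c(U) \leq b_c(Z) + b_c(X)$ as desired.

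The main obstacle is to make sure the sheaf-theoretic compact-support formalism is applicable in the semi-algebraic setting, where $X, Z, U$ need not be locally compact Hausdorff in the classical topological sense (they are only semi-algebraically so). This is handled by working in the semi-algebraic site (as in Delfs--Knebusch and the references cited in the paper) where $j_!$ and $Ri_*$ behave as in the classical case; once this foundational setup is in place, the argument above goes through verbatim. Alternatively, one can pass to the one-point semi-algebraic compactification and reduce to ordinary sheaf cohomology, which is the viewpoint taken in the remainder of the paper.
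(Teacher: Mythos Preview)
Your approach is correct and is exactly the standard derivation (and essentially what Iversen does in the cited reference): the short exact sequence $0 \to j_!\Q_U \to \Q_X \to i_*\Q_Z \to 0$, followed by $R\Gamma_c$, gives the long exact sequence, and the rank inequality for a three-term exact segment gives \eqref{eqn:inequality-compact-support}. The paper itself does not supply a proof but simply cites Iversen, so there is nothing further to compare.

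One minor remark: your ``main obstacle'' is not actually an obstacle here. A locally closed semi-algebraic subset $X \subset \R^N$ is locally closed in a locally compact Hausdorff space, hence is itself locally compact Hausdorff in the Euclidean topology. So the classical sheaf-theoretic machinery (in particular the identifications $\HH_c^p(X, j_!\Q_U) \cong \HH_c^p(U,\Q)$ and $\HH_c^p(X, i_*\Q_Z) \cong \HH_c^p(Z,\Q)$) applies without any need to pass to the semi-algebraic site or to one-point compactifications.
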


We are now ready to prove Proposition \ref{prop:uniform-bound-real}.
\begin{proof}[Proof of Proposition \ref{prop:uniform-bound-real}]
Let $R>0$ be a real number which will be chosen sufficiently large later.

It follows from Hardt's
triviality theorem (see \cite[Theorem 9.3.2.]{BCR}) that for all large enough $R>0$, $\RR(\sigma,\R^n)$ is semi-algebraically
homeomorphic to $\RR(\sigma,\R^n) \cap \Ball_n(0,R)$. 

Hence, in particular since $\chi(\cdot)$ is a homeomorphism invariant,
\begin{equation}
\label{eqn:inside-big-ball}
\chi(\RR(\sigma,\R^n)) =\chi(\RR(\sigma,\R^n) \cap \Ball_n(0,R)).
\end{equation}

Let 
$U_{\sigma,R}$ be the semi-algebraic subset of $\R^n$ defined by
\[
\biggl(\bigwedge_{P \in \mathcal{P}, \sigma(P) = 0} (P=0)\biggr) \wedge \biggl(\bigwedge_{P \in \mathcal{P}, \sigma(P) \neq 0} (P\neq 0) \biggr)\wedge  \bigl( |\X^2| < R^2 \bigr), 
\]
$Z_{\sigma,R}$ be the semi-algebraic subset of $\R^n$ defined by
\[
\biggl(\bigwedge_{P \in \mathcal{P}, \sigma(P) = 0} (P=0)\biggr)  \wedge \biggl(\bigvee_{P \in \mathcal{P}, \sigma(P) \neq 0} (P= 0) \vee \bigl(|\X|^2=R^2\bigr)\biggr),
\]
and let $X_{\sigma,R}$ be the semi-algebraic subset of $\R^n$ defined by 
 \[
 \biggl(\bigwedge_{P \in \mathcal{P}, \sigma(P) = 0} (P=0)\biggr) \wedge \bigl(|\X^2|\leq R^2\bigr).
 \]
 Note that $X_{\sigma,R}$ is compact, $Z_{\sigma,R} \subset X_{\sigma,R}$ is a closed subset, and $U_{\sigma,R} = X_{\sigma,R} \setminus Z_{\sigma,R}$.
  
 It follows from an application of standard bounds on the Betti numbers of real algebraic varieties \cite{OP,T,Milnor2} that
 \begin{equation}
 \label{eqn:X}
 b(X_{\sigma,R})  \leq  d(2d -1)^{n-1}.
 \end{equation}
 
Using Mayer-Vietoris inequalities \cite[Proposition 7.33]{BPRbook2} and the bound on the Betti numbers of  real varieties used above we obtain
 
 \begin{equation}
 \label{eqn:Z}
 b(Z_{\sigma,R})  \leq  \sum_{i=0}^{n-1} \sum_{j=1}^{i+1}  \binom{s+1}{j}d(2d -1)^{n-1}.
 \end{equation}
 
 It follows that for all $R$ large enough and  positive that,
\begin{eqnarray}
\label{eqn:uniform-bound}
\nonumber
|\chi(\RR(\sigma,\R^n))| &=& |\chi(\RR(\sigma,\R^n) \cap \Ball_n(0,R))|  \mbox{ (using \ref{eqn:inside-big-ball})} \\
\nonumber
&=& |\sum_{i \geq 0} (-1)^i b^i_c(\RR(\sigma,\R^n) \cap \Ball_n(0,R))|  \mbox{ (using Remark \ref{rem:alternating-sum}) }\\ 
\nonumber
& \leq & b_c(\RR(\sigma,\R^n) \cap \Ball_n(0,R))  \\
\nonumber
&\leq &  b_c(U_{\sigma,R})  \mbox{ (using inequality \ref{eqn:inequality-compact-support} in Theorem \ref{thm:inequality-compact-support})} \\
\nonumber
&\leq & b_c(Z_{\sigma,R}) + b_c(X_{\sigma,R})  \mbox{ (using inequality \ref{eqn:inequality-compact-support} in Theorem \ref{thm:inequality-compact-support})} \\
\nonumber
&\leq & b(Z_{\sigma,R}) + b(X_{\sigma,R})  \mbox{ (since $Z_{\sigma,R}$ and $X_{\sigma,R}$ are both compact)} \\
\nonumber
&\leq& \sum_{i=0}^{n-1} \sum_{j=1}^{i+1}  \binom{s+1}{j}d(2d -1)^{n-1} + d(2d -1)^{n-1} \mbox{ (using \ref{eqn:Z} and \ref{eqn:X})}\\
\nonumber
&\leq& \sum_{i=0}^{n-1} \sum_{j=0}^{i+1}  \binom{s+1}{j}d(2d -1)^{n-1} \\
&= & (O(sd))^n. 
\end{eqnarray}

Recall that every $\mathcal{P}$-semi-algebraic set is a disjoint union of sets of the form
$\RR(\sigma,\R^n), \sigma \in \{0,1,-1\}^{\mathcal{P}}$.  Also, the cardinality of the set
$\{\sigma \in \{0,1,-1\}^{\mathcal{P}} \;\mid\; \RR(\sigma,\R^n) \neq \emptyset\}$ is bounded by 
$O(sd)^n$ (see for example \cite[Theorem 7.30]{BPRbook2}).
It now follows from Property (\ref{item:additive}) in Definition \ref{def:gen_EP} 
(additive property)  of the generalized Euler-Poincar\'e characteristic 
and \eqref{eqn:uniform-bound} above, that
for any $\mathcal{P}$-semi-algebraic set $S$, 
\[
|\chi(S)| =  (O(sd))^{2 n}.
\]
\end{proof}

\begin{question}
Is it true that (using the notation as in Proposition \ref{prop:uniform-bound-real}) that for any $\mathcal{P}$-semi-algebraic set $S \subset \R^n$,
$\chi(S) = (O(sd))^n$ (instead of $(O(sd))^{2n}$) ?
Notice that this is true when $S$ is $\mathcal{P}$-closed,  and this follows from  known bounds on the Betti
numbers of such sets \cite[Theorem 7.38]{BPRbook2}). 
For the problem of bounding the ordinary Betti numbers of an arbitrary $\mathcal{P}$-semi-algebraic set, one first reduces to the compact
case using a result of Gabrielov and Vorobjov  
\cite{GV07}, by replacing the given set by a compact one homotopy equivalent to it (see \cite{GV07}).
However, since the generalized Euler-Poincar\'e characteristic is \emph{not}  a homotopy invariant but only a homeomorphism invariant, 
this technique is not directly applicable in this situation.
\end{question}

\begin{remark}
Also, note that for the purposes of the current paper it is only the existence of a uniform bound in Proposition \ref{prop:uniform-bound-real}
(i.e., a bound depending just on the parameters
$s$ and $d$ and not on the coefficients of the polynomials in $\mathcal{P}$) which is important. However, since in our opinion proving a 
tight singly exponential bound on the generalized Euler-Poincar\'e characteristic is an interesting problem on its own,  and to our knowledge
does not appear in the literature, we decided to include this result.
In fact, as we will see later,  in the complex case we will need a
corresponding uniform bound on the virtual Betti numbers of constructible sets
(see Definition  \ref{def:virtual-Betti-numbers}).  The bound that we will use in the complex case
is not effective and thus substantially weaker (see Proposition \ref{prop:uniform-bound} and Question \ref{question:effective-bound}).
\end{remark}

A  first example of a sequence in $\mathbf{VNP}_\R^\inv$ is as follows.

\begin{example}
For each $n,d > 0$ let $V_{n,d} \cong \R^{\binom{n+d}{d}}$ denote the
vector-space of polynomials
in $\R[X_1,\ldots,X_n]$ of degree at most $d$, and let 
$\chi_{n,d}: V_{n,d} \rightarrow \R$ be the constructible function defined by 

\begin{equation}
\label{eqn:def:EP}
\chi_{n,d}(P) = \chi(\ZZ(P,\R^n)).
\end{equation}

It is an easy exercise to show that for each fixed $d > 0$, the
sequence of functions $(\chi_{n,d}: V_{n,d} \rightarrow \R)_{n > 0}$ 
is in $\mathbf{VNP}_\R^\chi$. Indeed, letting 
$S_{n,d} \subset \R^n \times V_{n,d} $ the incidence variety defined by 
\[
S_{n,d} = \{(\x, P) \; \mid\; P(\x) = 0\},
\]
it is easy to check that the sequence $(\mathbf{1}_{S_{n,d}})_{n>0} \in \mathbf{VP}_\R^\R$
(notice that $\dim(V_{n,d})  = \binom{n+d}{d}$ is bounded by a polynomial in $n$ of degree $d$).

Then, 
\[
\chi_{n,d}(P) = \int_{\R^n} \mathbf{1}_{S_{n,d}}(\cdot, P) \dd\chi,
\]
which implies that the  sequence $(\chi_{n,d}: V_{n,d} \rightarrow \R)_{n > 0} \in
\mathbf{VNP}_\R^\chi$ using
Definition \ref{def:VNP}.
\end{example}

\subsubsection{The virtual Poincar\'e polynomial of constructible sets}
\label{subsubsec:virtualP}
We now consider the complex case. In this case the generalized Euler-Poincar\'e characteristic is still an additive invariant
(considering a constructible subset of $\C^n$ as a semi-algebraic set cf. Remark \ref{rem:constructible-is-sa}), 
but a more sophisticated algebro-geometric
invariant, originating in the theory of mixed Hodge structures on complex varieties due to Deligne \cite{Del71,Del74} 
is available. This is the virtual Poincar\'e polynomial whose existence and main properties are stated in the theorem below.
 
\begin{theorem}
\label{thm:properties-of-vP}
There exists an additive invariant 
\[
\vP: K_0(\Var_\C) \rightarrow \Z[T],
\]
such that for any smooth, projective variety $X$, 
\[
\vP([X]) = P_X
\]
where $P_X(T) = \sum_{i\geq 0} b^i(X) T^i$ is the usual Poincar\'e polynomial
of $X$.
\end{theorem}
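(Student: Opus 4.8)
The plan is to construct the virtual Poincaré polynomial $\vP$ as the unique additive invariant that extends the assignment $X \mapsto P_X$ on smooth projective varieties, and to do so by invoking the theory of mixed Hodge structures on the cohomology with compact support. First I would recall that for any complex algebraic variety $X$ (possibly singular, non-projective, or non-reduced), Deligne's theory \cite{Del71,Del74} endows each cohomology group with compact support $H^k_c(X,\Q)$ with a functorial mixed Hodge structure, hence a weight filtration $W_\bullet$. One then \emph{defines} the $E$-polynomial (or rather the weight polynomial specialization of it) by setting
\[
\vP([X]) = \sum_{k \geq 0} \sum_{m \geq 0} (-1)^k \dim_\Q \gr^W_m H^k_c(X,\Q)\, T^{m/2},
\]
where one uses that for the weight polynomial only even weights $m$ contribute with the convention that forces integrality; more precisely one sets $E(X;u,v) = \sum (-1)^k \dim \gr^W_m H^k_c \, h^{p,q}\cdots$ and specializes $u = v = \sqrt T$, obtaining a polynomial in $\Z[T]$. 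The key input is that mixed Hodge structures on cohomology with compact support are additive for closed-open decompositions: if $Y \subset X$ is closed with open complement $U$, the long exact sequence of the pair in compactly supported cohomology is an exact sequence of mixed Hodge structures, so passing to the alternating sum of dimensions of graded pieces of the weight filtration kills the connecting maps and yields $\vP([X]) = \vP([Y]) + \vP([U])$. This is exactly relation (2) in Definition \ref{def:Grothendieck-ring}. Multiplicativity under products, $\vP([X \times Y]) = \vP([X])\cdot\vP([Y])$, follows from the Künneth formula for compactly supported cohomology, which is again compatible with mixed Hodge structures. Invariance under isomorphism is automatic from functoriality. Hence $\vP$ descends to a ring homomorphism $K_0(\Var_\C) \to \Z[T]$, i.e.\ an additive invariant in the sense of Definition \ref{def:additive-invariant}.

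Next I would verify the normalization on smooth projective varieties. If $X$ is smooth and projective, then $H^k_c(X,\Q) = H^k(X,\Q)$ is pure of weight $k$ (classical Hodge theory), so $\gr^W_m H^k_c(X,\Q)$ vanishes unless $m = k$, and the double sum collapses to $\sum_k \dim_\Q H^k(X,\Q)\, T^{k/\,?}$. With the weight-polynomial normalization (weight $2j$ contributing $T^j$, and odd cohomology of smooth projective varieties being of odd weight), one must be slightly careful: the honest statement is that the weight polynomial of a smooth projective $X$ is $\sum_j \dim H^{2j}(X) T^j - \sum_j \dim H^{2j+1}(X)T^{?}$, which does \emph{not} literally equal $P_X(T) = \sum_i b^i(X)T^i$ unless one uses the convention that assigns $T^i$ to $H^i$ regardless of parity. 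The cleanest route is to use the $E$-polynomial $E(X;u,v) = \sum_{p,q}(-1)^{p+q} h^{p,q}(X) u^p v^q$ and observe that for smooth projective $X$ one has $h^{p,q}(X) = \dim H^{p,q}(X)$ with all contributions concentrated so that specializing appropriately recovers the Poincaré polynomial; alternatively, and most simply, one takes the definition of $\vP$ to be the specialization designed precisely so that $\vP([X]) = P_X$ on the smooth projective case, and checks additivity survives this choice. The existence of such a well-defined specialization landing in $\Z[T]$ is the content one must extract from the literature \cite{Del71,Del74} (see also standard references on motivic measures).

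The main obstacle is the bookkeeping of weights and the parity convention: ensuring that the additive invariant one writes down really lands in $\Z[T]$ (and not merely $\Z[T^{1/2}]$ or a Laurent polynomial ring) and really restricts to the \emph{ordinary} Poincaré polynomial $\sum_i b^i(X)T^i$ on smooth projective varieties, rather than a weight-shifted variant. The resolution is that for a smooth projective variety all cohomology in degree $i$ is pure of weight $i$, so there is no collision between the weight grading and the cohomological grading; thus the weight polynomial and the Poincaré polynomial carry the same information and can be matched by a single uniform substitution $T \mapsto T$ (weight $w$ in degree $i$ contributing $(-1)^i T^{w/2}$, but with $w$ determined by $i$ on the smooth projective locus so that the sign and exponent are consistent). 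Once this is pinned down on generators (which, by resolution of singularities and the fact that $K_0(\Var_\C)$ is generated by classes of smooth projective varieties, suffices for uniqueness), additivity forces the extension to all of $K_0(\Var_\C)$ to be unique, completing the proof. I would remark that uniqueness also follows abstractly: any two additive invariants agreeing on smooth projective varieties agree on $K_0(\Var_\C)$ because the latter is generated by such classes (Bittner's presentation), so the theorem's assertion of existence is the only substantive point.
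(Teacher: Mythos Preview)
The paper's own proof is a one-line citation to \cite[Corollary~2.1.8]{peters2010motivic}, so your attempt to actually sketch the mixed-Hodge-theoretic construction is in spirit the right thing to do and is indeed the standard argument underlying that citation. However, your proposal has a genuine gap: you never succeed in writing down a formula that simultaneously (i) is additive on closed--open decompositions, (ii) lands in $\Z[T]$, and (iii) restricts to the ordinary Poincar\'e polynomial on smooth projective varieties. You yourself recognize this, and the paragraph beginning ``The main obstacle is the bookkeeping of weights'' is essentially an admission that the definition you wrote down does not work; the appeals to $T^{m/2}$, to specializing at $u=v=\sqrt{T}$, and to ``a single uniform substitution $T\mapsto T$'' do not resolve the issue.

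The fix is a sign, not a square root. Define
\[
\vP([X])(T) \;=\; \sum_{j\ge 0}\Bigl(\sum_{i\ge 0} (-1)^{\,i+j}\,\dim_\Q \gr^W_j H^i_c(X,\Q)\Bigr)\,T^{\,j}.
\]
For each fixed weight $j$, strictness of the weight filtration on the long exact sequence of compactly supported cohomology makes $\sum_i (-1)^i \dim \gr^W_j H^i_c(\cdot)$ additive for a closed--open pair; multiplying by the constant $(-1)^j T^j$ and summing over $j$ preserves additivity. Multiplicativity follows from K\"unneth with compact supports, compatible with mixed Hodge structures. Finally, if $X$ is smooth projective then $H^i_c(X,\Q)=H^i(X,\Q)$ is pure of weight $i$, so the only nonzero contributions have $i=j$, the sign $(-1)^{i+j}=1$, and the formula collapses to $\sum_i b^i(X)\,T^i = P_X(T)$. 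This is the definition you were circling around; the missing ingredient was the extra factor $(-1)^j$ (equivalently writing $(-1)^{i+j}$ instead of $(-1)^i$), which kills the sign on the smooth projective locus without introducing half-integer exponents. Your remark about uniqueness via Bittner's presentation is correct and worth keeping.
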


\begin{proof}
See for example \cite[Corollary 2.1.8]{peters2010motivic}.
\end{proof}

\begin{definition}
\label{def:virtual-Betti-numbers}
For a constructible subset $X \subset \C^n$, the coefficients of $\vP([X])$ are referred to as the 
\emph{virtual Betti numbers of $X$}.
\end{definition}

Any constructible subset $X \subset \C^n \subset \PP^n_\C$, can be written as a finite, disjoint
union of quasi-projective varieties $X_i, i\in I$, each $X_i = V_i\setminus W_i$  where $V_i,W_i$ are
sub-varieties of of $\PP^n_\C$, and thus represents a well-defined class,
\[
[X] := \sum_{i \in I} ([V_i] - [W_i])
\]
in $K_0(\Var_\C)$.
Moreover, as a consequence of the weak factorization theorem \cite{AKMW} 
there exists for each 
$i$, a smooth projective variety $\tilde{V}_i$ and a simple, normal, crossing divisor $\tilde{W}_i$ such
that $[V_i] -[W_i] = [\tilde{V}_i] - [\tilde{W}_i]$ in  $K_0(\Var_\C)$ (see also \cite{Bittner}). In particular,
\[
\vP([V_i] - [W_i]) = \vP([\tilde{V_i}]) - \vP([\tilde{W_i}]) = P_{V_i}(T) - \vP([\tilde{W_i}]).
\]
Since $\dim(\tilde{W}_i)  < \dim(\tilde{V_i})$, we have that 
the leading coefficient of $\vP([V_i] - [W_i])$ is $>0$. This also shows that
\begin{proposition}
\label{prop:non-zero}
If $X \subset \C^n$ is a non-empty constructible set, then
$\vP([X]) \neq 0$. 
\end{proposition}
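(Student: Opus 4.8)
The plan is to isolate the top-degree coefficient of $\vP([X])$ from the highest-dimensional strata of a stratification of $X$ and to check that this coefficient is strictly positive. The one nontrivial input is the computation recorded in the paragraph immediately preceding the statement: for an irreducible quasi-projective piece $V\setminus W$, with $V$ irreducible projective and $W\subsetneq V$ a proper closed subvariety, the weak factorization theorem gives $\vP([V\setminus W]) = \vP([\tilde V]) - \vP([\tilde W]) = P_{\tilde V}(T) - \vP([\tilde W])$, where $\tilde V$ is smooth projective of dimension $\dim V$ and $\tilde W$ is a normal crossing divisor of dimension $<\dim V$; since $\deg P_{\tilde V} = 2\dim V$ with leading coefficient $b^{2\dim V}(\tilde V)>0$ (the number of connected components of $\tilde V$, nonzero because $\tilde V\neq\emptyset$) while $\deg\vP([\tilde W])\le 2\dim\tilde W<2\dim V$, the polynomial $\vP([V\setminus W])$ has degree exactly $2\dim V$ and strictly positive leading coefficient.

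First I would decompose: write $X$ as a finite disjoint union $X=\bigsqcup_{i\in I}X_i$ of nonempty irreducible locally closed subsets of $\PP^n_\C$. This is the standard decomposition of a constructible set --- present $X$ as a finite union of locally closed sets, decompose each into irreducible components, and disjointify by inclusion-exclusion. For each $i$ put $V_i=\overline{X_i}$ (Zariski closure in $\PP^n_\C$), which is irreducible projective, and $W_i=V_i\setminus X_i$, which is closed and proper (as $X_i$ is locally closed and nonempty); thus $d_i:=\dim X_i=\dim V_i>\dim W_i$. By the scissor relations in $K_0(\Var_\C)$ we have $[X]=\sum_{i\in I}[X_i]$, and since $\vP$ is a ring homomorphism, $\vP([X])=\sum_{i\in I}\vP([X_i])$.

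Next I would apply the computation above to each piece $X_i=V_i\setminus W_i$: each $\vP([X_i])$ is a polynomial of degree exactly $2d_i$ with strictly positive leading coefficient. Let $d=\dim X=\max_{i\in I}d_i$. In the sum $\vP([X])=\sum_i\vP([X_i])$, the terms coming from pieces with $d_i<d$ have degree $<2d$, while each term coming from a piece with $d_i=d$ has degree $2d$ and positive leading coefficient; hence the coefficient of $T^{2d}$ in $\vP([X])$ equals $\sum_{i:\,d_i=d}b^{2d}(\tilde V_i)>0$. In particular $\vP([X])\neq 0$. As a byproduct one also obtains $\deg\vP([X])=2\dim X$, with leading coefficient counting the maximal-dimensional components of $X$.

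I do not expect a genuine obstacle here: once the weak-factorization identity for a single irreducible piece is in hand --- and it is exactly what is stated just above Proposition \ref{prop:non-zero} --- the rest is degree bookkeeping. The only points deserving a sentence of justification are the existence of the decomposition into irreducible locally closed pieces, the degree bound $\deg\vP([Y])\le 2\dim Y$ for an arbitrary constructible $Y$ (an easy induction on dimension using the same decomposition together with the smooth projective case, needed to be sure that low-dimensional pieces cannot contribute in degree $2d$), and the triviality that a finite sum of positive real numbers is positive. Consequently I expect the final write-up to be quite short.
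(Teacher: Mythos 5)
Your argument is essentially the same as the paper's. The key input --- that for a quasi-projective piece $V_i\setminus W_i$, weak factorization gives $\vP([V_i]-[W_i])$ of degree exactly $2\dim V_i$ with strictly positive leading coefficient --- is established in the paragraph directly preceding the proposition, and the paper then simply asserts that this ``also shows'' the proposition; you supply precisely the omitted degree bookkeeping (decompose $X$ into finitely many irreducible locally closed pieces, sum $\vP$ over the pieces, and isolate the coefficient of $T^{2\dim X}$, which is a sum of positive integers coming only from the top-dimensional pieces).
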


\begin{remark}
Note that Proposition \ref{prop:non-zero} does not hold if we replace $\vP$ by $\chi$. For example,
\begin{eqnarray*}
\chi(\C \setminus \{0\}) &=& \chi(\PP^1_\C - \{\infty,0\}) \\
& =& \chi(\Sphere^2) - 2 \chi(\{\mathrm{\pt}\}) \\
&=& 2-2\cdot 1 \\
&=& 0.
\end{eqnarray*}
On the other hand using the properties of $\vP$ stated in Theorem  \ref{thm:properties-of-vP} we have that
\begin{eqnarray*}
\vP(\C\setminus \{0\}) &=& \vP(\PP^1_\C \setminus \{\infty,0\}) \\
&=& 1 + T^2 -2 \\
&=& -1 + T^2.
\end{eqnarray*}
\end{remark}

We will also need the following proposition.
\begin{proposition}
\label{prop:uniform-bound}
There exists a function $M(s,d,n)$ such that if
$V,W \subset \PP^n_\C$ are projective varieties defined by a family of $s$ homogeneous polynomials of degree at most $d$, then the absolute values of the coefficients of $\vP([V \setminus W])$ are bounded by $M(s,d,n)$.
\end{proposition}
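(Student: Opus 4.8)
The plan is to establish the existence of the bound $M(s,d,n)$ by a purely existential (compactness/model-theoretic or Noetherianity) argument rather than by an effective computation, since the weak factorization theorem—which underlies the construction of $\vP$—is not effective in the degrees of the inputs. First I would fix $s,d,n$ and consider the parameter space of all pairs $(V,W)$ as above: the homogeneous polynomials of degree $\leq d$ in $n+1$ variables defining $V$ (a tuple of $s$ such polynomials) and likewise for $W$ range over a finite-dimensional affine space $\Lambda_{s,d,n}$ over $\C$. The class $[V\setminus W]\in K_0(\Var_\C)$, and hence the polynomial $\vP([V\setminus W])\in\Z[T]$, is a function of the point of $\Lambda_{s,d,n}$ corresponding to the chosen coefficients. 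The key claim is that this function takes only finitely many values as the coefficients vary; granting this, $M(s,d,n)$ can be taken to be the maximum absolute value of any coefficient occurring among this finite list, and we are done.

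To prove the finiteness claim, I would argue that $\Lambda_{s,d,n}$ admits a finite partition into (constructible, or even locally closed) strata over each of which the isomorphism type—indeed the class in $K_0(\Var_\C)$—of $V\setminus W$ is constant. This is exactly the kind of statement furnished by generic flatness / stratification theorems for families of varieties (Hardt-type triviality in the algebraic category, or a Noetherian induction on the constructible topology of $\Lambda_{s,d,n}$): there is a stratification such that the total family restricted to each stratum is a locally trivial fibration, so the fibers $V\setminus W$ over a fixed stratum are all isomorphic as varieties and therefore all have the same class in $K_0(\Var_\C)$. Since $\vP$ factors through $K_0(\Var_\C)$ by Theorem \ref{thm:properties-of-vP}, the virtual Poincaré polynomial is constant on each stratum; a finite stratification then yields only finitely many polynomials $\vP([V\setminus W])$, which is what we need. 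Alternatively, one can phrase this via the Lefschetz principle and compactness in the theory of algebraically closed fields, replacing "stratification" by "definability of the map $(\text{coefficients})\mapsto\vP([V\setminus W])$ into a discrete set."

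The main obstacle, in my view, is making precise that the assignment $(\text{coefficients})\mapsto [V\setminus W]\in K_0(\Var_\C)$ is genuinely constructible—i.e. that it is constant on the pieces of a finite partition of $\Lambda_{s,d,n}$—because $K_0(\Var_\C)$ is not itself a variety and one has to be careful about what "constructible, $K_0$-valued function" means. One clean way around this is to invoke that $\vP$, being an additive and multiplicative invariant, can be computed on any constructible set $X$ from the weight filtration on its compactly-supported cohomology $\HH^*_c(X,\Q)$, and then to use that for a stratified family the compactly-supported Betti numbers together with their weight gradings are constant on strata (by base change and the proper/smooth descent underlying mixed Hodge theory). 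This reduces the problem to the well-known fact that Betti numbers of the fibers of an algebraic family take finitely many values, strengthened by the (also classical) fact that the mixed Hodge numbers do as well. Once this is in hand the proposition follows immediately; the effective version of $M(s,d,n)$ is explicitly disclaimed in the remark preceding the statement and in Question \ref{question:effective-bound}, so no quantitative estimate is required here.
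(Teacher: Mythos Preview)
Your proposal is essentially correct via the second route you describe (constructibility of the mixed Hodge numbers of $\HH^*_c$ in families), but it differs genuinely from the paper's argument. The paper does not argue by compactness or stratification of the parameter space at all: it simply invokes the existence of \emph{effective} resolution of singularities (citing \cite{BGMW2012}) to replace $V$ and $V\cap W$ by smooth projective varieties of bounded complexity in terms of $s,d,n$, and then applies the standard degree bounds on ordinary Betti numbers of smooth varieties. So the paper's $M(s,d,n)$ is in principle explicit (if enormous), whereas yours is purely existential. Your approach is conceptually cleaner and avoids any appeal to effective resolution; the paper's approach buys a concrete, if impractical, bound and is closer in spirit to the surrounding quantitative results.

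One caution about your first suggested route: the claim that the parameter space $\Lambda_{s,d,n}$ admits a stratification over which the fibers $V\setminus W$ are \emph{isomorphic as varieties} (Zariski-locally trivial) is false in general---think of a family of elliptic curves with varying $j$-invariant---so the class in $K_0(\Var_\C)$ need not be constant on strata. You correctly flag this as the main obstacle and your fallback via the weight filtration on compactly supported cohomology is the right repair: the graded pieces $\dim \mathrm{Gr}^W_j \HH^i_c(X_t)$ are indeed constructible functions on the base (via simultaneous resolution over a generic open and Noetherian induction, or via mixed Hodge modules), and $\vP$ is determined by these. With that in place your finiteness argument goes through.
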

\begin{proof}
The existence of the uniform bound $M(s,d,n)$ follows from the complexity analysis of effective desingularization algorithms (see for example \cite{BGMW2012})  and bounds on the ordinary Betti numbers of smooth varieties in terms of the degrees of the defining polynomials \cite{BPRbook2}.
\end{proof}

\begin{question}
\label{question:effective-bound}
Is it possible to prove  a better (say even singly exponential) bound on the virtual Betti numbers (i.e. the coefficients of $\vP([S])$) of a $\mathcal{P}$-constructible
subset $S \subset \C^n$ in terms of the degrees and the number of polynomials in $\mathcal{P}$ ? 
\end{question}

\begin{remark}
Notice that unlike the usual Betti numbers of complex varieties, we do not have good singly exponential
bounds on the virtual Betti numbers. 
\end{remark}

An interesting example of a sequence of $\C \hookrightarrow\C[T]$-valued constructible functions which belong to
the class $\mathbf{VNP}_\C^{\vP}$ is the following.  

\begin{definition}[Ranks of tensors]
\label{def:ranksoftensors}
For $p,n \geq 0$, and $\tb \in \C^n \underbrace{\otimes \cdots \otimes}_{p} \C^n = (\C^n)^{\otimes p}$, we define $\rank_{n,p}(\tb)$ as the least number $r$, such that $\tb$ can be expressed as a sum
of $r$ decomposable tensors. A tensor $\tb \in (\C^n)^{\otimes p}$ is decomposable if there exists
$\vb_1,\ldots,\vb_p \in \C^n$ such that
\[
\tb = \vb_1 \otimes \cdots \otimes \vb_p.
\]  
\end{definition}

\begin{theorem}
\label{thm:rankoftensor}
For any fixed $p \geq 0$, the sequence of constructible functions
$(\rank_{n,p}: (\C^n)^{\otimes p}\rightarrow \C\hookrightarrow\C[T])_{n > 0}$ belongs to the class 
$\mathbf{VNP}_\C^\vP$.
\end{theorem}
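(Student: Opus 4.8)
The plan is to exhibit $\rank_{n,p}$ as an Euler integral, with respect to the virtual Poincar\'e polynomial $\vP$, of a sequence of $\C[T]$-valued constructible functions that visibly lies in $\mathbf{VP}_\C^{\C[T]}$, and then invoke Definition \ref{def:VNP}. Two facts make this work. First, for fixed $p$ the maximal rank of a tensor in $(\C^n)^{\otimes p}$ is at most $R(n) := n^{p-1}$ (slice along one mode and induct on $p$), so the ``universal decomposition space'' has only polynomially many coordinates. Second, and crucially, Proposition \ref{prop:non-zero}: the virtual Poincar\'e polynomial of a \emph{non-empty} constructible set is non-zero. It is exactly this non-vanishing that will let us read off $\rank_{n,p}(\tb)$ as the \emph{order} (smallest exponent occurring with non-zero coefficient) of a suitable $\C[T]$-valued integral; the corresponding statement with $\chi$ in place of $\vP$ would fail, since $\chi$ can vanish on non-empty sets (cf.\ $\chi(\C\setminus\{0\}) = 0$).

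Concretely, for $0 \leq r \leq R(n)$ let
$W_{n,r} \subset (\C^n)^{\otimes p} \times \prod_{k=1}^{R(n)}(\C^n)^p$
be the Zariski-closed subset of all $\bigl(\tb,(\vb_j^{(k)})_{1\leq j\leq p,\,1\leq k\leq R(n)}\bigr)$ with
$\tb = \sum_{k=1}^{r}\vb_1^{(k)}\otimes\cdots\otimes\vb_p^{(k)}$
and $\vb_j^{(k)} = 0$ for all $k>r$ and all $j$. Each $W_{n,r}$ is cut out by $n^p$ coordinate-wise decomposition equations of degree $p$ (each computable by a straight-line program of length $O(R(n)p)$) together with $O(R(n)pn)$ linear coordinate equations, so $\size(\mathbf{1}_{W_{n,r}})$ is polynomially bounded in $n$ by Definition \ref{def:size-of-formula}. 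Setting
\[
g_n \;=\; \sum_{r=0}^{R(n)} T^{r}\cdot \mathbf{1}_{W_{n,r}},
\]
and using that scalar multiples by elements of $\C[T]$ cost nothing and sums incur no overhead in the size measure, we get a sequence $(g_n)_{n>0}$ in $\mathbf{VP}_\C^{\C[T]}$, valued in $\C[T]_{\leq R(n)}$, with $m_1(n) = R(n)\cdot p\cdot n$ integration variables.

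It remains to integrate out the decomposition variables. By linearity of the integral (Definition \ref{def:integral-EP}),
$\bar f_n(\tb) = \int g_n(\cdot,\tb)\,\dd\vP = \sum_{r=0}^{R(n)} T^{r}\,\vP\bigl([W_{n,r,\tb}]\bigr)$,
where $W_{n,r,\tb}$ is the fiber over $\tb$, i.e.\ the variety of writings of $\tb$ as a sum of $\leq r$ decomposable tensors. This fiber is non-empty precisely when $\rank_{n,p}(\tb)\leq r$ (pad a rank-$r_0$ decomposition with zeros; conversely any such point exhibits a length-$\leq r$ decomposition), and $\vP([\emptyset]) = 0$ while $\vP$ of a non-empty fiber is non-zero by Proposition \ref{prop:non-zero}. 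Hence the order of the polynomial $\bar f_n(\tb)$ equals $\rank_{n,p}(\tb)$; in particular $\bar f_n(\tb) = \bar f_n(\tb')$ forces $\rank_{n,p}(\tb) = \rank_{n,p}(\tb')$, so $\Par(\bar f_n)\prec\Par(\rank_{n,p})$ by Remark \ref{rem:finite-values}, and $(\rank_{n,p})_{n>0}\in\mathbf{VNP}_\C^{\vP}$. The only genuinely non-formal ingredient, and the one worth emphasizing, is the appeal to Proposition \ref{prop:non-zero}: the $\vP$-integral simultaneously detects, for every $r\leq R(n)$, whether $\tb$ has a length-$r$ decomposition, and packages these answers into the coefficients of a single polynomial whose order is the rank --- an operation with no exact analogue for ordinary measures, and one that would be destroyed by using the possibly-vanishing invariant $\chi$ instead of $\vP$. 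Everything else is the routine verification that the incidence varieties $W_{n,r}$, stratified by the number $r$ of non-zero summands and $T^r$-weighted, assemble into a polynomial-size formula.
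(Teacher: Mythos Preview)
Your overall strategy is sound and close to the paper's, but there is a genuine gap in the key step. You claim that the order of $\bar f_n(\tb)=\sum_{r}T^{r}\,\vP([W_{n,r,\tb}])$ equals $\rank_{n,p}(\tb)$. For this you need the \emph{constant term} of $\vP([W_{n,r_0,\tb}])$ to be nonzero, where $r_0=\rank_{n,p}(\tb)$; Proposition~\ref{prop:non-zero} only guarantees that $\vP$ is nonzero \emph{as a polynomial} (indeed, the paper shows the leading coefficient is positive, not the constant term). This fails already in the simplest nontrivial case: for $p=2$, $n=2$, $\tb=I_2$, one has $W_{2,2,I}=\{(U,V)\in(\C^{2\times 2})^2: UV^{T}=I\}\cong GL_2(\C)$, and $\vP(GL_2(\C))=T^{8}-T^{6}-T^{4}+T^{2}$ has constant term $0$. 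Hence $\bar f_2(I)=T^{10}-T^{8}-T^{6}+T^{4}$ has order $4$, not $2$. Since different $T^{r}\vP([W_{n,r,\tb}])$ can overlap in degree, your argument does not establish $\Par(\bar f_n)\prec\Par(\rank_{n,p})$.

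There is an easy repair that preserves your idea: replace the weights $T^{r}$ by $T^{rD_n}$ with $D_n>2\dim\bigl(\prod_{k}(\C^n)^p\bigr)=2R(n)pn$, so that $\deg\vP([W_{n,r,\tb}])<D_n$ and the contributions for distinct $r$ occupy disjoint degree windows in $\C[T]$. Then $\bar f_n(\tb)$ determines each $\vP([W_{n,r,\tb}])$ separately, hence determines $r_0$ as the first nonempty window; and $g_n$ still lands in $\C[T]_{\le R(n)D_n}$, which is polynomially bounded. The paper takes a different but equivalent route: it uses \emph{scalar} weights $M_n^{2i}\in\C$ with $M_n$ the uniform bound of Proposition~\ref{prop:uniform-bound} on all coefficients of the fiberwise $\vP$, and then invokes Lemma~\ref{lem:injective} in a fixed $T^j$-coefficient where $\vP$ of the fibers is known to differ. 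Your $T$-graded packaging is arguably cleaner once the spacing is fixed, while the paper's approach is coefficientwise and avoids introducing any extra grading.
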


The proof of Theorem \ref{thm:rankoftensor} will depend on the following simple lemma.

\begin{lemma}
\label{lem:injective}
Let $M \geq 2$ be an integer. Then, for any $n >0$, the map $\phi_n: (-M, M)^n \rightarrow \Z$ defined
by $(a_0,\ldots,a_{n-1}) \mapsto \sum_{i=0}^{n-1} a_i M^{2i}$ is injective.
\end{lemma}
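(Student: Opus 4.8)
The plan is to reduce the injectivity of $\phi_n$ to a statement about the uniqueness of a base-$M^2$ representation of integers. First I would observe that the issue is entirely arithmetic: if $\phi_n(a_0,\ldots,a_{n-1}) = \phi_n(b_0,\ldots,b_{n-1})$, then setting $c_i = a_i - b_i$ we obtain $\sum_{i=0}^{n-1} c_i M^{2i} = 0$ with each $c_i \in (-2M, 2M)$, and we must show all $c_i$ vanish. So the real content is: a vanishing $\Z$-linear combination of the powers $1, M^2, M^4, \ldots, M^{2(n-1)}$ with coefficients bounded strictly in absolute value by $2M$ must be trivial.

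For that I would argue by looking at the lowest-index nonzero coefficient. Suppose for contradiction that not all $c_i$ are zero, and let $k$ be the smallest index with $c_k \neq 0$. Then $M^{2k}$ divides every term $c_i M^{2i}$ for $i \geq k$, so dividing through we get $c_k + M^2\big(\sum_{i > k} c_i M^{2(i-k-1)}\big) \cdot M^{0}$ — more cleanly, $c_k \equiv 0 \pmod{M^2}$ after factoring $M^{2k}$ out of the whole sum and reducing modulo $M^2$. But $|c_k| < 2M \leq M^2$ since $M \geq 2$, and $c_k \neq 0$, which is a contradiction. Hence all $c_i = 0$, so $a_i = b_i$ for all $i$, proving injectivity. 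Alternatively, the same conclusion follows directly from the standard uniqueness of base-$b$ expansions (with $b = M^2$), using that $2 \cdot (2M-1) < M^2$ guarantees no carrying ambiguity; I would phrase whichever is shorter.

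I do not expect any genuine obstacle here — the only thing to be careful about is the bound on the coefficients $c_i$: differences of elements of $(-M,M)$ lie in the open interval $(-2M, 2M)$, and one needs $2M \leq M^2$, i.e. $M \geq 2$, which is exactly the hypothesis. This is why the lemma requires $M \geq 2$ rather than merely $M \geq 1$. The proof is a three-line modular arithmetic argument once this setup is in place, so I would keep it short.
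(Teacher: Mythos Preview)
Your proof is correct. The paper's proof is also elementary but proceeds slightly differently: instead of looking at the \emph{smallest} index with a nonzero difference and reducing modulo $M^2$, it takes the \emph{largest} such index $p$, assumes without loss of generality $a_p > b_p$, and bounds $\phi_n(\mathbf{a}) - \phi_n(\mathbf{b})$ from below by $M^{2p} - 2(M-1)\sum_{i=0}^{p-1} M^{2i}$, then sums the geometric series to show this is strictly positive. Your modular-arithmetic argument is arguably cleaner (no geometric-series computation needed), while the paper's version makes the quantitative gap explicit; both use the hypothesis $M \geq 2$ in the same essential way, namely to ensure the coefficient bound $2M \leq M^2$.
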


\begin{proof}
Let $\mathbf{ a}= (a_0,\ldots,a_{n-1}),\mathbf{b} = (b_0,\ldots,b_{n-1}) \in (-M,M)^n$, $\mathbf{a} \neq \mathbf{b}$. Let 
$p$ be the largest index such that $a_i \neq b_i$. Without loss of generality assume that $a_p > b_p$. Then,
\begin{eqnarray*}
\phi_n(\mathbf{a}) -\phi_n(\mathbf{b}) &=& (a_p - b_p)M^{2p} + \sum_{i=0}^{p-1} (a_i - b_i)M^{2i} \\
                            &\geq& M^{2p} - 2(M-1) \sum_{i=0}^{p-1}  M^{2i} \\
                            &=&  M^{2p} - \frac{2(M^{2p} -1)}{M+1} \\
                            &=& \frac{M^{2p}(M -1) +2}{M+1} \\
                            &>& 0.
\end{eqnarray*}                            
\end{proof}

\begin{proof}[Proof of Theorem \ref{thm:rankoftensor}]
Let $V_n$ denote the vector space $\C^n$.
First note that for $\tb \in V_n^{\otimes p}$, $0\leq \rank_{n,p}(\tb) \leq n^{p-1}$.
For each $i, 0 \leq i \leq n^{p-1}$, let 
\[
S_{n,i} \subset V_n^{\otimes p} \times V_n^{\oplus i p}
\]
be the constructible set defined by 
\[
S_{n,i} = \{ (\tb, \vb^{(1,1)},\ldots,\vb^{(i,p)} \;\mid\; \tb = \sum_{j=1}^{i} \vb^{(j,1)}\otimes\cdots\otimes \vb^{(j,p)} \}.
\]
Let
\[
S_n = \bigcup_{i=0}^{n^{p-1}} X_{n,i} \subset V_n^{\otimes p} \times \bigoplus_{i=0}^{n^{p-1}}V_n^{ \oplus i p}
\] 
where the constructible subsets $X_{n,i}$ are defined as follows.

Let 
\[
\Pi_{n,i}: V_n^{\otimes p} \times \bigoplus_{i=0}^{n^{p-1}}V_n^{ \oplus i p} \longrightarrow
V_n^{\otimes p} \times V_n^{ \oplus i p}
\]
be the obvious projections,
and let
\[
X_{n,i} = \Pi_{n,i}^{-1}(S_{n,i}).
\]
Now let 
\[
\pi_n: V_n^{\otimes p} \times \bigoplus_{i=0}^{n^{p-1}}V_n^{ \oplus i p} \rightarrow V_n^{\otimes p}
\] 
be the projection to the first factor.

Note that for each $\tb \in V_n^{\otimes p}$, $\pi_n^{-1}(\tb) \cap S_n$ is a constructible subset of $V_n^{\otimes p} \times \bigoplus_{i=0}^{n^{p-1}}V_n^{ \oplus i p}$,
and can be defined by a formula involving $n^p(n^{p-1}+1)$ polynomial equations, each of degree at most $p$, in 
$\dim(\bigoplus_{i=0}^{n^{p-1}}V_n^{ \oplus i p}) = np \binom{n^p+1}{2}$ variables.
Let 
\[
g_n:V_n^{\otimes p} \times \bigoplus_{i=0}^{n^{p-1}}V_n^{ \oplus i p} \rightarrow \C
\] 
be the constructible function defined by
\[
g_n = \sum_{i=0}^{n^{p-1}} M_n^{2i} \cdot\mathbf{1}_{X_{n,i}},
\]
where 
\begin{equation}
\label{eqn:M_n}
M_n = M(n^p(n^{p-1}+1), p,np\binom{n^{p-1}+1}{2})
\end{equation} 
(cf. Proposition \ref{prop:uniform-bound}).

Clearly, the sequence of functions 
\[
g_n: V_n^{\otimes p} \times \bigoplus_{i=0}^{n^{p-1}}V_n^{\oplus i p}  \rightarrow \C\hookrightarrow \C[T]
\] 
belong to $\mathbf{VP}_\C^\kA$ with $\kA = \C[T]$.

We now prove that the constructible function defined by 
\[
\bar{f_n}(\tb) = \int_{\bigoplus_{i=0}^{n^{p-1}}V_n^{\oplus i p}} g_n(\tb,\cdot) \dd\vP
\]
has the property that 
\[\Par(\bar{f_n}) \prec \Par(\rank_{n,p}).
\]

Let $\tb,\tb' \in V_n^{\otimes p}$ and suppose that
$\rank_{n,p}(\tb) = r < r' = \rank_{n,p}(\tb')$. Then, 
\begin{eqnarray*}
\pi_n^{-1}(\tb) \cap X_{n,i} &=& \emptyset, \mbox{ for $i < r$}, \\
\pi_n^{-1}(\tb') \cap X_{n,i} &=& \emptyset, \mbox{ for $i < r'$}, \\
\pi_n^{-1}(\tb)\cap X_{n,i} &\neq& \emptyset, \mbox{ for $i \geq  r$}, \\
\pi_n^{-1}(\tb') \cap X_{n,i} &\neq& \emptyset, \mbox{ for $i \geq r'$}.\\
 \end{eqnarray*}
 In particular, since $r< r'$ we have that,
 \begin{eqnarray}
 \label{eqn:nonzero}
 \pi_n^{-1}(\tb) \cap X_{n,i} &\neq&\emptyset, \mbox{ for $i = r$}, \\
 \label{eqn:zero}
 \pi_n^{-1}(\tb') \cap X_{n,i} &= & \emptyset, \mbox{ for $i = r$}. 
 \end{eqnarray}
 
 Since, $\pi_n^{-1}(\tb) \cap X_{n,i} \neq \emptyset$ by (\ref{eqn:nonzero}), it follows from Proposition \ref{prop:non-zero} that there exists
 $j\geq 0$ such that $\vP(\pi_n^{-1}(\tb) \cap X_{n,r})_j \neq 0$ (where for any polynomial 
 $\mathbf{P} \in \Z[T]$ we denote by $\mathbf{P}_j$ the coefficient of $T^j$ in $\mathbf{P}$). Using
 \eqref{eqn:zero} we know that $\vP(\pi_n^{-1}(\tb') \cap X_{n,r})_j = 0$. 
Moreover,  for  every $j \geq 0$,  
$\vP(\pi_n^{-1}(\tb) \cap X_{n,i})_j,\vP(\pi_n^{-1}(\tb') \cap X_{n,i})_j$ are both bounded in absolute value by $M_n$ (cf. \eqref{eqn:M_n} and Proposition \ref{prop:uniform-bound}).

Applying Lemma \ref{lem:injective} to the two sequences,
\[\left(\vP(\pi_n^{-1}(\tb) \cap X_{n,0})_j,\ldots,
\vP(\pi_n^{-1}(\tb) \cap X_{n,n^{p-1}})_j\right)\]
and 
\[
\left(\vP(\pi_n^{-1}(\tb') \cap X_{n,0})_j,\ldots,
\vP(\pi_n^{-1}(\tb') \cap X_{n,n^{p-1}})_j\right),
\]
along with the prescribed value of $M_n$ \eqref{eqn:M_n},
and using the definition of the function $\bar{f}_n$,
we deduce that,
$\bar{f}_n(\tb)_j \neq \bar{f}_n(\tb')_j$. This implies that
that $\Par(\bar{f}_n) \prec \Par(\rank_{n,p})$.

It follows that $(\rank_{n,p}: (\C^n)^{\otimes p}\rightarrow \C)_{n > 0}$ belongs to the class 
$\mathbf{VNP}_\C^\vP$.
\end{proof}
  
\subsection{Reduction and complete problems}
We adapt a standard notion of reduction (see for example \cite[Chapter 2]{Burgisser-book2}). 

Let $\kk$ be either the field  $\R$ or $\C$, and $\kA$ a polynomially bounded graded $\kk$-algebra.

\begin{definition}[Polynomial reduction]
\label{def:reduction}
For two sequences of constructible functions $(f_n: \kk^{m_1(n)} \rightarrow \kA)_{n >0},
(g_n: \kk^{m_2(n)} \rightarrow \kA)_{n >0}$, we will say that $(f_n)_{n >0} \leq_p (g_n)_{n > 0}$,
if there exists a polynomially bounded function $t: \mathbb{N} \rightarrow \mathbb{N}$, and
for each  $n>0$, a map $Z: [1,m_2(t(n))] \rightarrow \{X_1,\ldots,X_{m_1(n)}\} \cup \kk$,
such that 
the sequence of functions $(\bar{f}_n:\kk^{m_1(n)} \rightarrow \kA)_{n >0}$ defined by 
\[
\bar{f}_n(X_1,\ldots,X_{m_1(n)}) = g_{t(n)}(Z(1),\ldots,Z(m_2(t(n)))),
\] 
has the property that $\Par(\bar{f}_n) \prec \Par(f_n)$ for all $n>0$.
\end{definition}

The following two properties of the relation $\leq_p$ are obvious, and we omit their proofs.
\begin{lemma}[Composition]
\label{lem:substitution}
Let $m_1(n),m_2(n),m_3(n)$ be non-negative polynomials, 
$(g_n: \kk^{m_1(n)} \rightarrow \kA)_{n >0}$
a sequence of constructible functions, and 
\[
\left(H_n= (H_{n,1},\ldots,H_{n,m_2(n)}) \in \kk[X_1,\ldots,X_{m_1(n)}]^{m_2(n)}\right)_{n >0}
\]
a sequence of tuples of polynomials such that $\deg(H_{n,i}), \size(H_{n,i}) \leq m_3(n), 1\leq i \leq m_2(n)$, for $n>0$.
For each $n>0$, let $f_n = g_n\circ H_n$. Then,
$(f_n)_{n >0} \leq_p (g_n)_{n > 0}$. 
\end{lemma}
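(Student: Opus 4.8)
The plan is to realize $f_n$ as an \emph{exact} polynomial substitution into $g_n$, so that the partition-refinement requirement in Definition \ref{def:reduction} collapses to an identity $\bar f_n = f_n$ and there is nothing left to prove about partitions. The whole content is then just checking that this substitution is of polynomially bounded complexity.

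Concretely, I would take $t(n) = n$ (manifestly polynomially bounded), and for each $n>0$ let the substitution map $Z = Z_n$ send the $i$-th argument slot of $g_{t(n)} = g_n$ to the polynomial $H_{n,i}$, for $1 \leq i \leq m_2(n)$. Here one uses that $Z$ may take its values among polynomials whose straight-line complexity is polynomially bounded in $n$; this is exactly the purpose of the hypotheses $\deg(H_{n,i}), \size(H_{n,i}) \leq m_3(n)$, with $m_3$ a fixed polynomial. With this choice,
\[
\bar f_n(X_1,\ldots,X_{m_1(n)}) = g_{t(n)}\bigl(Z(1),\ldots,Z(m_2(t(n)))\bigr) = g_n\bigl(H_{n,1},\ldots,H_{n,m_2(n)}\bigr) = (g_n\circ H_n)(X_1,\ldots,X_{m_1(n)}) = f_n(X_1,\ldots,X_{m_1(n)}),
\]
so $\bar f_n = f_n$ as $\kA$-valued constructible functions. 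In particular $\Par(\bar f_n) = \Par(f_n)$, hence $\Par(\bar f_n) \prec \Par(f_n)$ (trivially, for instance by the criterion of Remark \ref{rem:finite-values}), which is precisely the statement $(f_n)_{n>0} \leq_p (g_n)_{n>0}$.

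There is essentially no obstacle here, which is consistent with the lemma being flagged as obvious. The one point requiring any care is the bookkeeping that the substituted polynomials $H_{n,i}$ be of polynomially bounded size (and degree), so that the construction qualifies as a \emph{polynomial} reduction; this is handed to us directly by the bound $m_3(n)$. No padding of the sequence $(g_n)_{n>0}$ is needed, and the auxiliary parameter $t$ plays no real role in this proof: its presence in Definition \ref{def:reduction} is what makes $\leq_p$ transitive (the companion statement), not what is needed for the composition lemma, where $t(n)=n$ does the job.
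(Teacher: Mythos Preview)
Your argument has a gap at exactly the point you flag as ``the one point requiring any care.'' In Definition~\ref{def:reduction} the map
\[
Z:[1,m_2(t(n))]\longrightarrow \{X_1,\ldots,X_{m_1(n)}\}\cup\kk
\]
is required to take values only among the \emph{variables} $X_1,\ldots,X_{m_1(n)}$ and the \emph{constants} in $\kk$ --- not among arbitrary polynomials of polynomially bounded size. Your sentence ``Here one uses that $Z$ may take its values among polynomials whose straight-line complexity is polynomially bounded in $n$'' is precisely what the definition, as written, does \emph{not} allow. With the literal definition and $t(n)=n$, you cannot substitute $H_{n,i}$ into the $i$-th slot of $g_n$ unless each $H_{n,i}$ happens to be a variable or a scalar; so $\bar f_n = g_n\circ H_n$ is not available to you.

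The paper itself omits the proof of this lemma, so there is nothing to compare against. But note that with the codomain of $Z$ read literally, the lemma as stated does not follow by the mechanism you describe (consider e.g.\ $g_n=\mathbf{1}_{X_1=0}$ for all $n$ and $H_{n,1}=X_1^2-1$: no variable-or-constant substitution into any $g_{t(n)}$ yields a $\bar f_n$ with $\Par(\bar f_n)\prec\Par(f_n)$). What is evidently intended --- and what is actually used later, e.g.\ in the proof of Theorem~\ref{thm:VNPR-complete} where the entries of the matrices $Q_i(\cdot,\x)$ are genuine degree-$2$ polynomials in $\x$ --- is a notion of reduction that permits substituting polynomials of polynomially bounded size. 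Under that intended reading your proof is correct and is exactly the ``obvious'' argument the paper has in mind; but you should say explicitly that you are reading Definition~\ref{def:reduction} with the codomain of $Z$ enlarged to polynomials of polynomially bounded size (or affine forms, at minimum), rather than asserting that the stated definition already allows this.
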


\begin{proof}
Omitted.
\end{proof}

\begin{lemma}[Transitivity]
\label{lem:transitivity}
Let $(f_n)_{n >0}, (g_n)_{n > 0}, (h_n)_{n >0}$ be sequences of constructible functions such that
$(f_n)_{n >0} \leq _p (g_n)_{n >0}$, and $(g_n)_{n > 0} \leq_p (h_n)_{n >0}$. Then,
$(f_n)_{n>0} \leq_p (h_n)_{n>0}$.
\end{lemma}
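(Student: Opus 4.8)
The plan is to unwind the definitions of $\leq_p$ twice and compose the reductions. Suppose $(f_n)_{n>0} \leq_p (g_n)_{n>0}$ via a polynomially bounded function $t_1:\mathbb{N}\to\mathbb{N}$ and, for each $n>0$, a substitution map $Z_n:[1,m_2(t_1(n))]\to\{X_1,\ldots,X_{m_1(n)}\}\cup\kk$; and suppose $(g_n)_{n>0}\leq_p (h_n)_{n>0}$ via a polynomially bounded $t_2:\mathbb{N}\to\mathbb{N}$ and maps $W_n:[1,m_3(t_2(n))]\to\{X_1,\ldots,X_{m_2(n)}\}\cup\kk$. The natural candidate for the composite reduction is the function $t := t_2\circ t_1$ (still polynomially bounded, being a composition of polynomially bounded functions), together with a substitution map obtained by first applying $W_{t_1(n)}$ to land in the variables $X_1,\ldots,X_{m_2(t_1(n))}$ and then substituting via $Z_n$.

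First I would set up notation carefully: for each $n>0$, by hypothesis $\bar{f}_n := g_{t_1(n)}(Z_n(1),\ldots,Z_n(m_2(t_1(n))))$ satisfies $\Par(\bar{f}_n)\prec\Par(f_n)$, and $\bar{g}_n := h_{t_2(n)}(W_n(1),\ldots,W_n(m_3(t_2(n))))$ satisfies $\Par(\bar{g}_n)\prec\Par(g_n)$. Next I would define, for each $n>0$, the composite substitution $Z'_n : [1,m_3(t_2(t_1(n)))] \to \{X_1,\ldots,X_{m_1(n)}\}\cup\kk$ by $Z'_n(j) = Z_n(W_{t_1(n)}(j))$ when $W_{t_1(n)}(j)$ is a variable $X_\ell$ (with $\ell \le m_2(t_1(n))$), interpreting $Z_n(X_\ell)$ as $Z_n(\ell)$, and $Z'_n(j) = W_{t_1(n)}(j)$ when the latter is already a scalar in $\kk$. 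Then I would set $\bar{h}_n := h_{t_2(t_1(n))}(Z'_n(1),\ldots,Z'_n(m_3(t_2(t_1(n)))))$, which equals $\bar{g}_{t_1(n)}$ with the variables $X_1,\ldots,X_{m_2(t_1(n))}$ replaced according to $Z_n$.

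The key step is verifying the partition refinement $\Par(\bar{h}_n)\prec\Par(f_n)$, for which I would chase through two applications of Remark \ref{rem:finite-values}: $\Par(\bar{g}_{t_1(n)})\prec\Par(g_{t_1(n)})$ means there is a function $h':\kA\to\kA$ with $g_{t_1(n)} = h'\circ\bar{g}_{t_1(n)}$, and substituting the tuple $(Z_n(1),\ldots,Z_n(m_2(t_1(n))))$ into both sides shows $\bar{f}_n = g_{t_1(n)}(Z_n(\cdot)) = h'\circ\bar{h}_n$ (since $\bar{h}_n$ is exactly $\bar{g}_{t_1(n)}$ after the same substitution). Hence $\Par(\bar{h}_n)\prec\Par(\bar{f}_n)$, and combining with $\Par(\bar{f}_n)\prec\Par(f_n)$ and the transitivity of the relation $\prec$ on partitions (which is immediate from Notation \ref{not:partition-function}) yields $\Par(\bar{h}_n)\prec\Par(f_n)$, as required. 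Finally I would note that $t=t_2\circ t_1$ is polynomially bounded, completing the verification that $(f_n)_{n>0}\leq_p(h_n)_{n>0}$.

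I expect the only genuine subtlety — and hence the main obstacle to state cleanly rather than to prove — is the bookkeeping of the composite substitution map $Z'_n$: one must check that plugging a variable $X_\ell$ that is itself being assigned a value (another variable or a scalar) composes correctly, i.e. that substitution of tuples into formulas for constructible functions is associative in the expected way. This is entirely routine, which is why, in keeping with the preceding two lemmas, the proof can reasonably be omitted; but a careful write-up would isolate this as the one point to check.
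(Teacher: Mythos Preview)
Your proof is correct and is precisely the routine unwinding-and-composing argument one would expect; the paper simply omits the proof as obvious, so your write-up is consistent with (and fills in) the paper's approach.
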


\begin{proof}
Omitted.
\end{proof}
\begin{definition}[Completeness]
\label{def:complete}
We say that a sequence of constructible functions $(g_n)_{n>0}$ is 
\emph{$\mathbf{VP}_\kk^\kA$-complete} if the sequence $(g_n)_{n>0}$ belongs to 
$\mathbf{VP}_\kk^\kA$, and for every sequence $(f_n)_{n > 0}$ in 
$\mathbf{VP}_\kk^\kA$, we have that $(f_n)_{n>0} \leq_p (g_n)_{n>0}$.

Similarly, we say that a sequence of constructible functions $(g_n)_{n>0}$ is 
\emph{$\mathbf{VNP}_\kk^\inv$-complete} if the sequence $(g_n)_{n>0}$ belongs to 
$\mathbf{VNP}_\kk^\inv$, and for every sequence $(f_n)_{n > 0}$ in 
$\mathbf{VNP}_\kk^\inv$, we have that $(f_n)_{n>0} \leq_p (g_n)_{n>0}$.
\end{definition}

\subsubsection{$\mathbf{VP}_\C^\C$-complete problems}
We do not have an example of a natural $\mathbf{VP}_\kk^\kA$-complete problem for $\kk = \R,\C$.
But for $\kk=\kA=\C$ we have the following conditional result.

The following hypothesis on the circuit complexity of the determinant polynomial is not known to be true
but plausible \cite[Section 2.5]{Burgisser-book2}. See \cite[Definition 2.4]{Burgisser-book2} 
for the precise definition of the class $\mathbf{VP}_\C$. 

\begin{hypothesis}
\label{hyp:detinVP}
The sequence of polynomials $(\det_n: \C^{n \times n} \rightarrow \C)_{n>0}$  is $\mathbf{VP}_\C$-complete.
\end{hypothesis}

 In other words, Hypothesis \ref{hyp:detinVP} asserts that 
for any sequence of polynomials $(P_n \in \C[X_1,\ldots,X_n])_{n >0} \in \mathbf{VP}_\C$, we have $(P_n)_{n>0} \leq_p (\det_n)_{n > 0}$
(see \cite[Definition 2.8]{Burgisser-book2} for a precise definition of $\leq_p$).
 
\begin{notation}
\label{not:list-of-det}
Given a set of $n$ matrices $A_1,\ldots,A_n \in \C^{n \times n}$, we denote by $\wdet_n$ the constructible function 
\[
\wdet_n: \C^{n\times n\times n} \rightarrow \C
\] 
defined by
\[
\wdet_n(A_1,\ldots,A_n) = \sum_{i=1}^{n} 2^i\cdot \mathbf{1}_{\det(A_i)=0}.
\]
\end{notation}

\begin{theorem}
Suppose that  \emph{\textbf{Hypothesis \ref{hyp:detinVP}}} holds.
Then, the sequence 
\[
\left( \wdet_n:\C^{n \times n \times n} \rightarrow \C \right)_{n >0}
\] 
is   $\mathbf{VP}_\C^\C$-complete.
\end{theorem}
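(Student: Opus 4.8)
\emph{Proof proposal.} The plan is to verify the two requirements of Definition~\ref{def:complete}: that $(\wdet_n)_{n>0}$ lies in $\mathbf{VP}_\C^\C$, and that every sequence in $\mathbf{VP}_\C^\C$ is $\leq_p$-reducible to it. Membership is immediate: the determinant $\det_n$ admits a division-free straight-line program of length polynomial in $n$ and has degree $n$, so $\size(\det_n)$ is polynomially bounded; reading the entries of $A_i$ as $n^2$ of the $n^3$ coordinates on $\C^{n\times n\times n}$, the atom $\mathbf{1}_{\det(A_i)=0}$ has size at most $\size(\det_n)$ by Definition~\ref{def:size-of-formula}(1). Since scaling by $2^i\in\C$ is free (Definition~\ref{def:size-of-formula}(3), with $h(T)=2^iT$) and forming a sum of $n$ formulas adds their sizes (Definition~\ref{def:size-of-formula}(2)), the formula $\sum_{i=1}^n 2^i\cdot\mathbf{1}_{\det(A_i)=0}$ has size $O(n\cdot\size(\det_n))$, which is polynomially bounded; hence $(\wdet_n)_{n>0}\in\mathbf{VP}_\C^\C$.

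For hardness, fix $(f_n\colon\C^{m_1(n)}\to\C)_{n>0}\in\mathbf{VP}_\C^\C$, defined by formulas $F_n$ of polynomially bounded size. Exactly as in the proof of Theorem~\ref{thm:algorithm-for-integration-EP} (with zero patterns over $\C$ in place of sign conditions over $\R$), one extracts from $F_n$ the finite family $\mathcal{P}_{F_n}=\{P_{n,1},\dots,P_{n,s(n)}\}\subset\C[X_1,\dots,X_{m_1(n)}]$ of polynomials appearing in its atoms, whose cardinality $s(n)$, degrees, and straight-line complexities are all polynomially bounded, such that $f_n$ is a $\C$-linear combination of the characteristic functions $\mathbf{1}_{\RR(\rho,\C^{m_1(n)})}$ of the zero-pattern realizations of $\mathcal{P}_{F_n}$. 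In particular $f_n$ is constant on each such realization, so the partition of $\C^{m_1(n)}$ into these realizations is finer than $\Par(f_n)$, and it suffices to produce, by a substitution into $\wdet$, a constructible function whose induced partition refines this zero-pattern partition.

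To obtain a single matrix size valid for all $s(n)$ polynomials simultaneously, I would introduce fresh variables $U_1,\dots,U_{s(n)}$ and set $P_n\defeq\sum_{i=1}^{s(n)}U_iP_{n,i}$; the bounds on $s(n)$, on the $P_{n,i}$, and on $m_1(n)$ give $(P_n)_{n>0}\in\mathbf{VP}_\C$. By Hypothesis~\ref{hyp:detinVP} there are a polynomial $w(n)$ and an assignment of the entries of a $w(n)\times w(n)$ matrix $M_n$ to elements of $\{X_1,\dots,X_{m_1(n)},U_1,\dots,U_{s(n)}\}\cup\C$ with $\det(M_n)=P_n$. Substituting $U_i\mapsto1$ and $U_j\mapsto0$ for $j\neq i$ turns $M_n$ into a matrix $M_{n,i}$ with entries in $\{X_1,\dots,X_{m_1(n)}\}\cup\C$ and $\det(M_{n,i})=P_{n,i}$. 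Put $N(n)\defeq\max(s(n),w(n))$, enlarge each $M_{n,i}$ to an $N(n)\times N(n)$ matrix by a direct sum with an identity block (which leaves the determinant unchanged), and let $A_i$ be this enlargement for $1\le i\le s(n)$ and the zero matrix for $s(n)<i\le N(n)$. Every entry of every $A_i$ is a variable from $\{X_1,\dots,X_{m_1(n)}\}$ or a constant in $\C$, so this data is precisely a substitution map $Z\colon[1,N(n)^3]\to\{X_1,\dots,X_{m_1(n)}\}\cup\C$ as in Definition~\ref{def:reduction}, with $t(n)\defeq N(n)$ polynomially bounded. The function $\tilde f_n(X)\defeq\wdet_{N(n)}(Z(1),\dots,Z(N(n)^3))$ then equals $\sum_{i=1}^{s(n)}2^i\mathbf{1}_{P_{n,i}=0}$ plus a constant; since $(\epsilon_1,\dots,\epsilon_{s(n)})\mapsto\sum_{i=1}^{s(n)}2^i\epsilon_i$ is injective on $\{0,1\}^{s(n)}$ (binary expansion, cf.\ Lemma~\ref{lem:injective}), the value of $\tilde f_n$ at a point determines its zero pattern on $\mathcal{P}_{F_n}$, so $\Par(\tilde f_n)$ coincides with the zero-pattern partition above and in particular $\Par(\tilde f_n)\prec\Par(f_n)$. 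Taking $\tilde f_n$ as the ``$\bar f_n$'' of Definition~\ref{def:reduction}, this gives $(f_n)_{n>0}\leq_p(\wdet_n)_{n>0}$, which completes the argument.

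I expect the only genuinely delicate point to be this uniformity: Hypothesis~\ref{hyp:detinVP} is a statement about a single polynomial sequence, whereas for fixed $n$ one must express $s(n)$ distinct polynomials as determinants of matrices of one polynomially bounded size — the auxiliary variables $U_i$ together with the combined polynomial $\sum U_iP_{n,i}$ are exactly what lets a single invocation of the hypothesis suffice, and after specializing the $U_i$ (and padding with identity blocks) the matrix entries remain variables or field constants, so one really does obtain a legitimate $\leq_p$-reduction. Everything else (the size bookkeeping, the identity padding, the binary encoding) is routine.
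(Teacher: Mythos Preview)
Your proof is correct and follows the same overall architecture as the paper: extract the polynomials $\mathcal{P}_{F_n}$ appearing in the formula $F_n$, represent each as a determinant via Hypothesis~\ref{hyp:detinVP}, and use the binary weighting in $\wdet$ to recover the full zero pattern, whence the refinement $\Par(\tilde f_n)\prec\Par(f_n)$.

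Two differences are worth noting. First, for membership the paper simply invokes Theorem~\ref{thm:rank} (rank is in $\mathbf{VP}_\C^\C$, and $\det(A_i)=0$ iff $\mathrm{rk}(A_i)<n$), whereas you argue directly from a polynomial-size division-free straight-line program for $\det_n$; both are valid, and yours is arguably more self-contained. Second, the paper asserts without elaboration that a single polynomial matrix-size $m_1(n)$ works for all $P_{n,i}$ simultaneously, applying the hypothesis in one stroke; your device of packaging the $P_{n,i}$ into $\sum_i U_iP_{n,i}$ and then specializing the $U_i$ makes this uniformity explicit with a single clean invocation of Hypothesis~\ref{hyp:detinVP}. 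That is a genuine improvement in rigor, though the underlying mechanism (Valiant's universality of the determinant for polynomials of small straight-line complexity) is what justifies the paper's shortcut as well. The padding choice (you use zero matrices, the paper uses identities) is immaterial since either contributes only an additive constant to $\wdet$.
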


\begin{proof}
We first prove (assuming Hypothesis \ref{hyp:detinVP} holds) that given any sequence of constructible functions $(f_n:\C^{m(n)} \rightarrow \C)_{n>0}$, there exists a polynomial
$m_1(n)$,
such that 
the sequence of functions $(g_n = \wdet_{m_1(n)}:\C^{m_1(n) \times m_1(n) \times m_1(n)} \rightarrow \C)_{n >0}$ 
has the property that $\Par(g_n) \prec \Par(f_n)$. This implies that $(f_n)_{n>0} \leq_p (g_n)_{n > 0}$ by Definition \ref{def:reduction}.

Since the sequence $(f_n)_{n>0}$ belongs to the class $\mathbf{VP}_\C^\C$, for each $n>0$ there exists a formula, $F_n$,
of size bounded polynomially in $n$, defining $f_n$. Thus, there exists a polynomial $m_2(n)$, such that at most $m_2(n)$ polynomials of degree at most $m_2(n)$ appear in $F_n$. Let the set of polynomials appearing in $F_n$ be denoted by $\mathcal{P}_n = (P_{n,i})_{i \in I_n}$,
where $I_n = [1,m_2(n)]$. By Hypothesis \ref{hyp:detinVP}, there exists a non-negative polynomial $m_1(n) \geq m_2(n)$, and for each $i \in I_{n}$, a square matrix, 
$A_{n,i}$  of size $m_1(n)$ whose entries belong to the set $\{X_1,\ldots,X_{m(n)}\} \cup \C$, and
such that $P_{n,i} = \det(A_{n,i})$. For $m_2(n) < i \leq m_1(n)$ let $A_{n,i} = \mathbf{Id}_{m_1(n)}$.
It is now clear from the definition of the matrices $A_{m,i}$ that  
$\wdet_{m_1(n)}(A_n)$ determines $\zero(\mathcal{P}_n(\x))$.
Defining $g_n = \wdet_{m_1(n)}:\C^{m_1(n) \times m_1(n) \times m_1(n) } \rightarrow \C$, it follows that
$f_n \leq_p g(n)$.

The fact that the sequence $(g_n)_{n > 0}$ belongs to the class $\mathbf{VP}_\C^\C$ follows from Theorem \ref{thm:rank}.
\end{proof}

\begin{remark}
By modifying the definition of the class $\mathbf{VP}_\C^\C$ (for example by defining the size of a polynomial to be the infimum of the sizes of all
formulas expressing the polynomial), or by generalizing the notion of reduction slightly by allowing the function $t(n)$ to be 
a quasi-polynomial, one could hope to prove an unconditional but weaker completeness result --  mirroring similar results in the
classical Valiant model (see for example  \cite[page 569, Corollary 21.40]{Burgisser-book1}). 
\end{remark}

The following problems are examples of  $\mathbf{VNP}_\R^\inv$-complete
and $\mathbf{VNP}^\inv_\C$-complete problems (for certain choices of the additive invariant
$\inv$).

\begin{notation}
\label{not:quadraticforms}
We identify quadratic polynomials in $n$ variables with coefficients in a field $\kk$ with the space of quadratic forms in $n+1$ 
variables, which are in turn represented by $(n+1) \times (n+1) $ symmetric matrices with entries in $\kk$. 
More precisely, given a polynomial $Q \in \kk[X_1,\ldots,X_n]$ with $\deg(Q) \leq 2$, we denote by $Q^h \in \kk[X_0,\ldots,X_n]$,
the homogenization of $Q$, which is a quadratic form in $n+1$ variables.  Note that, for $\x \in \kk^n$, 
$Q(\x) = Q^h(1,\x)$. We will denote the symmetric matrix $(n+1)\times (n+1)$ matrix representing $Q^h$ also by the letter $Q$.
\end{notation}

\begin{theorem}
\label{thm:VNPR-complete}
There exists a sequence $(\Omega_n \in \Z_+)_{n>0}$, such that
the sequence of functions $(h_n: \R^{(n+1) \times (n+1) \times n} \rightarrow \R)_{n > 0}$ defined
by 
\[h_n(Q_1,\ldots,Q_n) =  \int_{\R^{n}} g_n(Q_1,\ldots,Q_n,\cdot) \dd\chi,
\]
where
\[
g_n = 
\prod_{i=1}^n  \left(\Omega_n^{2^{3i}}\cdot \mathbf{1}_{(Q_i=0)} +  \Omega_n^{2^{3i+1}}\cdot\mathbf{1}_{(Q_i>0)}+ \Omega_n^{2^{3i+2}} \cdot\mathbf{1}_{(Q_i<0)}\right)
\]
is  $\mathbf{VNP}_\R^\chi$-complete.
\end{theorem}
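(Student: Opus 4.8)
The plan is to prove $\mathbf{VNP}_\R^\chi$-completeness of $(h_n)_{n>0}$ by establishing two things: that the sequence belongs to $\mathbf{VNP}_\R^\chi$, and that every sequence in $\mathbf{VNP}_\R^\chi$ reduces to it. Membership is the easier half: the function $g_n$ is manifestly defined by a polynomial-size formula over $\R$ (it is a product of $n$ factors, each a $\kA$-linear combination of three characteristic functions of sign conditions on the quadratic forms $Q_i$, where the coefficients lie in $\kA = \R$ with trivial grading), so $(g_n)_{n>0} \in \mathbf{VP}_\R^\R$; then by Definition \ref{def:VNP} the Euler-integral $h_n(Q_1,\ldots,Q_n) = \int_{\R^n} g_n(Q_1,\ldots,Q_n,\cdot)\,\dd\chi$ lands in $\mathbf{VNP}_\R^\chi$. (One must also check the target-algebra constraint $\kA_{\leq \bar m(n)}$, which is automatic since $\kA = \R$.)

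For the hardness direction, I would take an arbitrary sequence $(f_n : \R^{m(n)} \to \R)_{n>0} \in \mathbf{VNP}_\R^\chi$ and unwind the definition: there is $(p_n : \R^n \to \R)_{n>0} \in \mathbf{VP}_\R^\R$ and a polynomial $m_1(n)$ so that $\bar f_n(\x) = \int_{\R^{m_1(n)}} p_{m(n)+m_1(n)}(\cdot,\x)\,\dd\chi$ with $\Par(\bar f_n) \prec \Par(f_n)$. Using the $\mathbf{VP}_\R^\R$-formula $F$ for $p$, one expresses $p$ as $\sum_\sigma a_\sigma \mathbf{1}_{\RR(\sigma,\R^n)}$ over sign conditions on a polynomial family $\mathcal P_F$ of polynomially bounded size and degree, exactly as in the proof of Theorem \ref{thm:algorithm-for-integration-EP}. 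The crux is then to encode, via a single integrand of the prescribed shape $\prod_i(\Omega_n^{2^{3i}}\mathbf{1}_{Q_i=0}+\Omega_n^{2^{3i+1}}\mathbf{1}_{Q_i>0}+\Omega_n^{2^{3i+2}}\mathbf{1}_{Q_i<0})$, the full sign-vector of $\mathcal P_F$ on each fiber. The point of the $\Omega_n$-powers in an $M$-ary positional expansion (for $M = \Omega_n$ chosen by an injectivity argument as in Lemma \ref{lem:injective}, applied to the uniform bound from Proposition \ref{prop:uniform-bound-real}) is that after Euler integration over the fiber variables the result, read digit by digit in base $\Omega_n$, recovers $\big(\chi(\RR(\sigma,\R^{m_1}))\big)_\sigma$ for each sign vector $\sigma$, hence recovers $\bar f_n(\x)$, hence refines $\Par(f_n)$. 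I would reduce each polynomial $P \in \mathcal P_F$ to a quadratic form by introducing auxiliary variables (Veronese-type linearization of monomials, trading degree $d$ for $O(d)$ extra variables and one quadratic constraint per new variable), so that the entire formula is rewritten using only quadratic forms $Q_i$ evaluated at a point of $\R^N$ with $N$ polynomial in $n$; this is what makes the target shape of $g_n$ (products over quadratic forms $Q_i = 0, >0, <0$) sufficient. The map $Z$ of Definition \ref{def:reduction} is then the substitution sending the entries of the $Q_i$'s to the appropriate $X_j$'s or constants.

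The main obstacle I anticipate is twofold. First, arranging the base-$\Omega_n$ encoding so that a single product over sign conditions of the $Q_i$ (rather than a sum) actually produces the positional sum $\sum_\sigma a_\sigma \Omega_n^{(\text{code of }\sigma)}$: one exploits that over a fixed point exactly one of $\mathbf{1}_{Q_i=0}, \mathbf{1}_{Q_i>0}, \mathbf{1}_{Q_i<0}$ is $1$, so the product selects one monomial $\Omega_n^{\sum_i 2^{3i+\epsilon_i}}$ whose exponent uniquely encodes the sign vector $(\epsilon_i)_i$ — the exponents $2^{3i}, 2^{3i+1}, 2^{3i+2}$ are spaced so that different sign vectors give different exponents, and summing over the $\chi$-contributions of the fibers then produces distinct base-$\Omega_n$ digits as long as the $\chi$-values stay below $\Omega_n$ in absolute value, which is exactly Proposition \ref{prop:uniform-bound-real} and dictates the choice of $\Omega_n$. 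Second, the degree reduction to quadratics must be done so that it respects Euler integration: the fiber of the quadratic system over a base point must be homeomorphic (or at least have the same $\chi$) to the original fiber, which follows because the auxiliary variables are determined by equations like $Y_{\text{new}} = X_a X_b$ and projection along a graph is a homeomorphism, hence $\chi$-preserving. Everything else — polynomial bounds on sizes, transitivity of $\leq_p$ (Lemma \ref{lem:transitivity}), and $\Par(\bar f_n) \prec \Par(f_n)$ composing with $\Par(g_n) \prec \Par(\bar f_n)$ — is routine bookkeeping.
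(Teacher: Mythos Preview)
Your proposal is correct and follows essentially the same approach as the paper's proof: extract the polynomial family from the $\mathbf{VP}_\R^\R$ formula, reduce to quadratics via auxiliary variables, encode all sign-condition Euler characteristics in a single base-$\Omega_n$ expansion using the product form of $g_n$, and invoke Proposition~\ref{prop:uniform-bound-real} together with Lemma~\ref{lem:injective} to guarantee that the digits separate. The one point to sharpen is that the degree reduction should be phrased via the straight-line program computing each $P \in \mathcal{P}_F$ (whose length is polynomially bounded by the paper's definition of $\size(P)$), introducing one auxiliary variable and one quadratic equation per SLP line, rather than a ``Veronese-type'' monomial linearization, which could a priori require exponentially many new variables.
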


\begin{proof}
Suppose that the $(f_n: \R^{m(n)}\rightarrow \R)_{n > 0}$ belongs to the class $\mathbf{VNP}_\R^\chi$. This implies (see Definition \ref{def:VNP}) that
there exists a a sequence of constructible functions
$(\bar{g}_n:\R^n \rightarrow \R)_{n > 0}$ belonging to the class $\mathbf{VP}_\R^\R$,
and a non-negative polynomial $m_1(n)$ such that
for each $n > 0$, $\Par(\bar{f}_n) \prec \Par(f_n)$, 
where $\bar{f}_n:\R^{m(n)} \rightarrow \R$ is the function defined by
\begin{equation*}
\bar{f}_n(\x) =  \int_{\R^{m_1(n)}} \bar{g}_{m(n) + m_1(n)}(\cdot,\x) \dd\chi.
\end{equation*}
Since the sequence $(\bar{g}_n:\R^n \rightarrow \R)_{n > 0}$ belongs to the class $\mathbf{VP}_\R^\chi$, there exists
for each $n>0$ a formula $G_n$ defining the constructible function $\bar{g}_n$, with $\size(G_n)$ bounded polynomially in $n$.

This implies that
for each $n>0$, there exists a polynomial $m_2(n)$, and a family $\mathcal{P}_n = (P_{n,i} \in \R[\X,\Y])_{i \in I_n}$,
where $\X= (X_1,\ldots,X_{m(n)}), \Y=(Y_1,\ldots,Y_{m_1(n)})$, satisfying:
\begin{enumerate}
\item
$\deg(P_{n,i}),\size(P_{n,i})$ are bounded by $m_2(n)$ for $i \in I_n$,
\item
$\card(I_n)$ is also bounded by $m_2(n)$, and 
\item
the set of polynomials appearing in $G_{m(n)+m_1(n)}$ is contained in $\mathcal{P}_n$.
\end{enumerate}
Now for each $i \in I_n$, there exists a straight line program
of length at most $m_2(n)$ computing $P_{i,n}$. The $j$-th line of such a straight line program is of the form,
\[
 Z_{i,j} = A \star B, \star \in \{+,\cdot\}
 \]
where $Z_{i,j}$ is a new variable, and each of $A$ and $B$ is either equal to an element of $\R$, an element of 
 $\{X_1,\ldots,X_{m(n)}, Y_1,\ldots,Y_{m_1(n)} \}$ or some $Z_{i,j'}$ for $j' < j$.  Denoting $Q_{i,j} = Z_{i,j} - A\star B$,
 we have $Q_{i,j} \in \R[\X,(Z_{i,j})_{i \in I_n, 1\leq j \leq m_2(n)},\Y]$, and  $\deg(Q_{i,j}) \leq 2$.  Now let 
 the total number of variables in the tuples of variables 
 $(Z_{i,j})_{i \in I_n, 1\leq j \leq m_2(n)}$ and $\Y$, be denoted by $m_3(n)$. Also, let
\begin{eqnarray*}
 \mathcal{Q}_{n,i}  &= &\bigcup_{1\leq j\leq m_2(n)} \{Q_{i,j}\}, \cr
 \mathcal{Q}_n  &=& \bigcup_{i \in I_n} \mathcal{Q}_{n,i}. 
\end{eqnarray*}

Let
\[
\mathcal{Q}_n = \left(Q_1,\ldots,Q_{m_3(n)}\right)
\] 
(augmenting the list by constant polynomials if needed).

First notice that for each $\x \in \R^{m(n)}$, and $\sigma \in \{0,1,-1\}^{\mathcal{P}_n(\cdot,\x)}$, there exists
$\sigma' \in \{0,1,-1\}^{\mathcal{Q}_n(\cdot,\x)}$, such that 
$\RR(\sigma,\R^{m_1(n)})$ is semi-algebraically homeomorphic to $\RR(\sigma',\R^{m_3(n)})$, and in particular
$\chi(\RR(\sigma,\R^{m_1(n)})) = \chi(\RR(\sigma',\R^{m_3(n)}))$.

It follows that for all $\x,\x' \in \R^{m(n)}$,
\[
\Big(\chi(\RR(\sigma,\R^{m_3(n)}))\Big)_{\sigma \in \{0,1,-1\}^{\mathcal{Q}_n(\cdot,\x)}} 
=
\Big(\chi(\RR(\sigma,\R^{m_3(n)}))\Big)_{\sigma \in \{0,1,-1\}^{\mathcal{Q}_n(\cdot,\x')}},
\] 
implies that 
\[
\Big(\chi(\RR(\sigma,\R^{m_1(n)}))\Big)_{\sigma \in \{0,1,-1\}^{\mathcal{P}_n(\cdot,\x)}} 
=
\Big(\chi(\RR(\sigma,\R^{m_1(n)}))\Big)_{\sigma \in \{0,1,-1\}^{\mathcal{P}_n(\cdot,\x')}}, 
\] 
which in turn implies that $\bar{f}_n(\x) = \bar{f}_n(\x')$. 

Now let $\Omega_n = (100 m_3(n))^{2m_3(n)}$.
It follows from 
 Proposition \ref{prop:uniform-bound-real} that for any $\x \in \R^{m(n)}$, and  $\sigma \in \{0,1,-1\}^{\mathcal{Q}_n(\cdot,\x)}$,
\begin{equation}
\label{eqn:Omega}
|\chi(\RR(\sigma,\R^{m_3(n)}))|^2 < \Omega_n.
\end{equation}

Define the constructible function $\bar{h}_n:\R^{m(n)} \rightarrow \R$ by
\begin{equation}
\label{eqn:barhn}
\bar{h}_n(\x) = h_{m_3(n)}(Q_1(\cdot,\x),\ldots,Q_{m_3(n)}(\cdot,\x)).
\end{equation}

Note that each $Q_i(\cdot,\x)$ is a polynomial in $m_3(n)$ variables of degree at most $2$, and is identified with a $(m_3(n)+1) \times  (m_3(n)+1)$ matrix (cf. Notation \ref{not:quadraticforms}) 
with entries which are polynomials in $\x$, each of degree at most $2$, and of size
bounded by a polynomial in $n$.

From Lemma \ref{lem:substitution} it follows that 
\begin{equation}
\label{eqn:hn}
(\bar{h}_n)_{n >0} \leq_p (h_n)_{n >0}.
\end{equation}

Using the definition of $h_n$ and \eqref{eqn:barhn}, we have
\begin{equation}
\label{eqn:barhn2}
\bar{h}_n(\x) = \int_{\R^{m_3(n)}} {g}_{m_3(n)}(Q_1(\cdot,\x),\ldots,Q_{m_3(n)}(\cdot,\x)) \dd\chi,
\end{equation}
and the integrand in \eqref{eqn:barhn2} equals  
\[
\prod_{i=1}^{m_3(n)}  \left(\Omega_n^{2^{3i}}\cdot \mathbf{1}_{(Q_i=0)} +  \Omega_n^{2^{3i+1}}\cdot\mathbf{1}_{(Q_i>0)}+ \Omega_n^{2^{3i+2}}\cdot \mathbf{1}_{(Q_i<0)}\right).
\]

It follows from Lemma \ref{lem:injective} and \eqref{eqn:Omega} that
the constructible function $\bar{h}_n$ has the property that  
two points $\x,\x' \in \R^{m(n)}$ belong to the same set of the partition
$\Par(\bar{h}_n)$,
if and only if,
\[
\Big(\chi(\RR(\sigma,\R^{m_3(n)}))\Big)_{\sigma \in \{0,1,-1\}^{\mathcal{Q}_n(\cdot,\x)}} 
=
\Big(\chi(\RR(\sigma,\R^{m_3(n)}))\Big)_{\sigma \in \{0,1,-1\}^{\mathcal{Q}_n(\cdot,\x')}}.
\]
It follows that $\Par(\bar{h}_n) \prec \Par(\bar{f}_n)$. Since $\Par(\bar{f}_n) \prec \Par(f_n)$, it implies that
$\Par(\bar{h}_n) \prec \Par(f_n)$
as well.
This proves that $(f_n)_{n <0} \leq_p (\bar{h}_n)_{n>0}$, 
and it follows from \eqref{eqn:hn}, and Lemma \ref{lem:transitivity} 
that $(f_n)_{n > 0} \leq_p ({h}_n)_{n>0}$.

The fact that the sequence $({h}_n)_{n >0}$ belongs to the
class $\mathbf{VNP}_\R^\chi$ is obvious from definition.
\end{proof}

A similar result holds over $\C$.
 \begin{theorem}
 \label{thm:VNPC-complete}
There exists a sequence $(\Omega_n \in \Z_+)_{n>0}$, such that
the sequence of functions $(h_n: \C^{(n+1) \times (n+1)\times n} \rightarrow \C[T])_{n > 0}$ defined
by

\[h_n(Q_1,\ldots,Q_n) =  \int_{\C^n}  g_n(Q_1,\ldots,Q_n,\x) \dd\vP,
\]
where
\[
g_n = 
\prod_{i=1}^n  \left(\Omega_n^{2^{2i}}\cdot \mathbf{1}_{(Q_i=0)} +  \Omega_n^{2^{2i+1}}\cdot\mathbf{1}_{(Q_i\neq0)}\right),
\]
is  $\mathbf{VNP}_\C^\vP$-complete.
\end{theorem}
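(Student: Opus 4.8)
The plan is to follow the proof of Theorem \ref{thm:VNPR-complete} essentially verbatim, under the dictionary: sign conditions $\sigma\in\{0,1,-1\}^{\mathcal{P}}$ are replaced by zero patterns $\rho\in\{0,1\}^{\mathcal{P}}$, the generalized Euler--Poincar\'e characteristic $\chi$ by the virtual Poincar\'e polynomial $\vP$, the uniform bound of Proposition \ref{prop:uniform-bound-real} by that of Proposition \ref{prop:uniform-bound}, and semi-algebraic homeomorphism of realizations by isomorphism of complex varieties. That $(h_n)_{n>0}\in\mathbf{VNP}_\C^\vP$ is immediate from Definition \ref{def:VNP}: $g_n$ is a product of $n$ formulas, each built from the single quadratic polynomial $Q_i^h(1,\cdot)$ (which has a straight-line program of size $O(n^2)$) and the scalar constants $\Omega_n^{2^{2i+j}}\in\Z$ (which are free), so $(g_n)_{n>0}\in\mathbf{VP}_\C^{\C[T]}$ and $h_n$ is exactly its $\vP$-integral over $\C^n$.

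For completeness, let $(f_n:\C^{m(n)}\rightarrow\C[T])_{n>0}\in\mathbf{VNP}_\C^\vP$; by Definition \ref{def:VNP} there are $(\bar g_n)_{n>0}\in\mathbf{VP}_\C^{\C[T]}$ and a polynomial $m_1(n)$ with $\Par(\bar f_n)\prec\Par(f_n)$, where $\bar f_n(\x)=\int_{\C^{m_1(n)}}\bar g_{m(n)+m_1(n)}(\cdot,\x)\,\dd\vP$. As in the real case, from a polynomial-size formula for $\bar g$ one extracts a family $\mathcal{P}_n$ of polynomials (polynomially many, of polynomial degree and straight-line size), and then, introducing auxiliary variables $Z_{i,j}$ recording the lines of straight-line programs computing the members of $\mathcal{P}_n$, one obtains a family $\mathcal{Q}_n=(Q_1,\ldots,Q_{m_3(n)})$ of $m_3(n)$ polynomials of degree $\leq 2$ in $m_3(n)$ variables; after homogenization (Notation \ref{not:quadraticforms}) each $Q_k(\cdot,\x)$ is the quadratic form of an $(m_3(n)+1)\times(m_3(n)+1)$ symmetric matrix whose entries are polynomials in $\x$ of degree $\leq 2$ and polynomial size. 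Since the graph of a straight-line program is isomorphic as a variety to its domain, for every $\x$ and every zero pattern $\rho$ on $\mathcal{P}_n(\cdot,\x)$ there is a zero pattern $\rho'$ on $\mathcal{Q}_n(\cdot,\x)$ with $\RR(\rho,\C^{m_1(n)})\cong\RR(\rho',\C^{m_3(n)})$ as complex varieties, hence with equal virtual Poincar\'e polynomials; and since $\bar g$ is a $\C[T]$-linear combination, with $\x$-independent coefficients, of characteristic functions of realizations of zero patterns of $\mathcal{P}_n$, the tuple $\big(\vP(\RR(\rho',\C^{m_3(n)}))\big)_{\rho'\in\{0,1\}^{\mathcal{Q}_n(\cdot,\x)}}$ determines $\bar f_n(\x)$ up to $\Par$.

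It remains to fix the sequence $(\Omega_n)$ and to recover that tuple from $h_n$. Passing to projective closures, each $\RR(\rho',\C^{m_3(n)})$ equals, in $K_0(\Var_\C)$, an integer combination of classes of projective varieties cut out in $\PP^{m_3(n)}_\C$ by finitely many homogeneous polynomials, the number of summands and the relevant parameters (number of polynomials, degree, ambient dimension) being bounded in terms of $m_3(n)$ only; Proposition \ref{prop:uniform-bound} then gives a bound $B(m_3(n))$, depending on $m_3(n)$ alone, on the absolute values of all coefficients of $\vP(\RR(\rho',\C^{m_3(n)}))$. Put $\Omega_N:=B(N)+1$, a fixed sequence of positive integers, and set $\bar h_n(\x):=h_{m_3(n)}(Q_1(\cdot,\x),\ldots,Q_{m_3(n)}(\cdot,\x))$, so that $(\bar h_n)_{n>0}\leq_p(h_n)_{n>0}$ by Lemma \ref{lem:substitution}. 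Unwinding the integral,
\[
\bar h_n(\x)=\sum_{\rho'\in\{0,1\}^{\mathcal{Q}_n(\cdot,\x)}}\Omega_{m_3(n)}^{\,e(\rho')}\,\vP\big(\RR(\rho',\C^{m_3(n)})\big),\qquad e(\rho')=\sum_{k=1}^{m_3(n)}2^{2k+\rho'_k},
\]
where the exponents $e(\rho')$ are pairwise distinct (sums of distinct powers of two with disjoint bit-ranges) and divisible by $4$. Reading off each $T^j$-coefficient and applying Lemma \ref{lem:injective} (the gaps between distinct $e(\rho')$ are $\geq 2$, and every coefficient has absolute value $<\Omega_{m_3(n)}$ by the choice of $\Omega$) shows that $\bar h_n(\x)$ determines the tuple $\big(\vP(\RR(\rho',\C^{m_3(n)}))\big)_{\rho'}$, and therefore $\Par(\bar h_n)\prec\Par(\bar f_n)\prec\Par(f_n)$. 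Hence $(f_n)_{n>0}\leq_p(\bar h_n)_{n>0}$ by Definition \ref{def:reduction}, and combining with $(\bar h_n)_{n>0}\leq_p(h_n)_{n>0}$ via Lemma \ref{lem:transitivity} gives $(f_n)_{n>0}\leq_p(h_n)_{n>0}$; together with membership this establishes $\mathbf{VNP}_\C^\vP$-completeness.

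The step that will require the most care is the one genuinely specific to the complex setting: rewriting an affine realization of a zero pattern as an integer combination of classes of projective varieties so that Proposition \ref{prop:uniform-bound} is applicable, and verifying that the resulting bound depends on $m_3(n)$ only (and not on the $\x$-dependent coefficients of the $Q_k$), so that a single sequence $(\Omega_n)$ suffices uniformly for all sequences $(f_n)$ being reduced. The reduction to quadrics via straight-line programs, the isomorphism-of-realizations bookkeeping, and the signed-digit recovery step are routine and identical in spirit to the proof of Theorem \ref{thm:VNPR-complete}.
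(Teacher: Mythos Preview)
Your proposal is correct and follows essentially the same approach the paper intends: the paper's own proof reads in full ``The proof is very similar to the proof of Theorem \ref{thm:VNPR-complete} and is omitted,'' and you have supplied exactly that analogy, with the right dictionary (zero patterns for sign conditions, $\vP$ for $\chi$, isomorphism of constructible sets for semi-algebraic homeomorphism, Proposition \ref{prop:uniform-bound} for Proposition \ref{prop:uniform-bound-real}).

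Two small comments. First, your appeal to Lemma \ref{lem:injective} is not literally to the lemma as stated (whose exponents are $0,2,\ldots,2(n-1)$), but to the obvious generalization where the exponents $e(\rho')$ are arbitrary distinct non-negative integers with pairwise gaps $\geq 2$; this holds by the same divisibility argument (the lowest nonzero coefficient would have to be a nonzero integer divisible by $\Omega^2$ yet of absolute value $<\Omega$). The paper's real-case proof implicitly makes the same leap, so this is not a defect of your write-up, but it is worth saying explicitly. Second, you are right that the one genuinely complex-specific point is turning Proposition \ref{prop:uniform-bound} (stated for $[V\setminus W]$ with $V,W$ projective) into a uniform bound on $|\vP(\RR(\rho',\C^{m_3(n)}))_j|$ depending only on $m_3(n)$; your inclusion--exclusion plus projective-closure reduction is the correct route, and the resulting bound is indeed independent of the coefficients of the $Q_k$'s (hence of $\x$), so a single sequence $(\Omega_N)$ works for every $(f_n)$ being reduced.
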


\begin{proof}
The proof is very similar to the proof of Theorem \ref{thm:VNPR-complete} and is omitted.
\end{proof}

We can now formulate conjectures analogous to that of Valiant's.

For $\kk =\R$ (respectively, $\kk= \C$), and $K_0=K_0(\sa_\R)$ (respectively, $K_0 = K_0(\Var_\C)$),
and for for each additive invariant $\inv: K_0\rightarrow \kA$, where $\kA$ is a polynomially
bounded graded $\kk$-algebra, one can ask
\begin{question}
\label{ques:VP-BSS-inv}
Is $
\mathbf{VP}_\kk^\kA \neq
 \mathbf{VNP}_\kk^\inv 
$ ?
\end{question}

Note that  Question \ref{ques:VP-BSS-inv} is not interesting  if we choose $\inv$ to be the trivial additive invariant (i.e., $\inv = 0$). 
However, we make following  conjecture.
\begin{conjecture}[Analog of Valiant's conjecture]
\label{conj:VP-BSS}
Let $\kk =\R$ (respectively, $\kk= \C$), and $K_0=K_0(\sa_\R)$ (respectively, $K_0 = K_0(\Var_\C)$).
Then, for each \emph{non-trivial} additive invariant $\inv: K_0\rightarrow \kA$, where $\kA$ is a polynomially bounded graded $\kk$-algebra,
\[
\mathbf{VP}_\kk^\kA \neq  \mathbf{VNP}_\kk^\inv.
\]
\end{conjecture}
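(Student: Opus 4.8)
The statement is a \emph{conjecture}, so what follows is a plan of attack, and an honest indication of where such a plan currently breaks down.

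\textbf{Step 1: reduce the separation to a lower bound on a single sequence.} The first move is to use completeness. By Theorem~\ref{thm:VNPR-complete} the sequence $(h_n)$ constructed there is $\mathbf{VNP}_\R^\chi$-complete, and by Theorem~\ref{thm:VNPC-complete} the corresponding sequence is $\mathbf{VNP}_\C^\vP$-complete; so $\mathbf{VP}_\kk^\kA \neq \mathbf{VNP}_\kk^\inv$ would follow at once from $(h_n)_{n>0} \notin \mathbf{VP}_\kk^\kA$. For the full statement — an arbitrary \emph{non-trivial} additive invariant $\inv$ — one first needs complete sequences for $\mathbf{VNP}_\kk^\inv$ in this generality; the construction behind Theorem~\ref{thm:VNPR-complete}, which only uses Lemma~\ref{lem:injective} together with a uniform bound on $|\inv(\cdot)|$ over sign-condition (resp. zero-pattern) realizations, should adapt to any $\inv$ admitting such a bound, so I would carry this out in parallel and then concentrate entirely on proving the appropriate complete sequence is not in $\mathbf{VP}_\kk^\kA$.

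\textbf{Step 2: a partition-size obstruction.} Next I would isolate what it takes to be outside $\mathbf{VP}_\kk^\kA$. If $(f_n\colon \kk^{m(n)}\to\kA)_{n>0}\in\mathbf{VP}_\kk^\kA$ with $f_n$ defined by a formula of size bounded by a polynomial $s(n)$, then (exactly as in the proof of Theorem~\ref{thm:algorithm-for-integration-EP}) there is a family $\mathcal{P}_{F_n}$ with $\card(\mathcal{P}_{F_n})\le s(n)$ and degrees $\le s(n)$ such that $f_n$ is constant on each realization of a sign condition (resp. zero pattern) on $\mathcal{P}_{F_n}$; hence $\card(\Par(f_n))$ is bounded by the number of such realizations, which by the standard estimates (e.g. \cite[Theorem~7.30]{BPRbook2}) is at most $(O(s(n)^2))^{m(n)} = 2^{\mathrm{poly}(n)}$. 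Thus \emph{every} sequence in $\mathbf{VP}_\kk^\kA$ induces a partition with only \emph{singly} exponentially many cells. Consequently it would suffice to exhibit some sequence in $\mathbf{VNP}_\kk^\inv$ — for instance the complete $(h_n)$, or equivalently a suitable push-forward of a $\mathbf{VP}$ sequence — whose induced partition has \emph{doubly} exponentially many cells. (A reduction from classical Valiant seems unpromising here, since $\mathbf{VP}_\R^\R$ already contains the characteristic functions of \emph{arbitrary} subsets of $\{0,1\}^n$, so these classes behave very differently from the Boolean ones in exactly the regime a naive reduction would exploit.)

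\textbf{Step 3: the construction — and the main obstacle.} This is where the real difficulty lies, and where the plan stops being a proof. The natural starting point is the family of parametrized polynomial systems of Grigoriev \cite{Grigoriev2001} (with the doubly exponential count analyzed in \cite{Grigoriev-Vorobjov2000}): zero-dimensional systems of polynomially bounded straight-line complexity in the number of parameters that realize a doubly exponential number of distinct \emph{multiplicity vectors} of their zero sets. Via the incidence-variety trick used in the $\mathbf{VNP}$ examples above, such a system produces a sequence in $\mathbf{VP}$, and pushing it forward to parameter space produces a candidate in $\mathbf{VNP}_\kk^\inv$. The obstruction — already flagged in the introduction — is that multiplicity is not an additive invariant: a generic zero-dimensional fibre is just a finite set of points, and its generalized Euler–Poincar\'e characteristic (a point count, essentially the B\'ezout number) or its virtual Poincar\'e polynomial retains none of the multiplicity data, so the push-forward partition collapses. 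The crux of a proof would therefore be to \emph{re-engineer} the construction so that the combinatorial explosion is recorded by an additive invariant: replace each local cluster of the fibre by a perturbed or thickened local model whose generalized Euler–Poincar\'e characteristic (resp. virtual Poincar\'e polynomial) of an appropriate sign-condition (resp. zero-pattern) cell serves as a base-$\Omega_n$ readout of the multiplicity vector, in the spirit of Lemma~\ref{lem:injective}, all the while keeping the defining data of polynomially bounded size and using Proposition~\ref{prop:uniform-bound-real} (resp. Proposition~\ref{prop:uniform-bound}) to bound the digits. Arranging all three requirements simultaneously — polynomial complexity of the data, genuine doubly exponential separation of the parameter classes, and survival of that separation under an \emph{additive}, hence lossy, invariant — is the essential difficulty, and I do not see how to do it with present techniques; this is precisely why the statement is a conjecture. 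A closely related and equally hard route would be to prove a doubly exponential lower bound on the size of the partitions in Hardt-type triviality theorems, as discussed above.
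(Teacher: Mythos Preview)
The paper does not prove this statement; it is posed as an open conjecture with no accompanying argument, so there is no ``paper's own proof'' to compare against. Your proposal is therefore exactly the right kind of document: a strategy sketch together with a clear identification of the bottleneck.

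Your plan is sound and in fact tracks the paper's own informal discussion in the introduction quite closely. Step~1 (reduce via completeness) is the natural move; you are right that Theorems~\ref{thm:VNPR-complete} and~\ref{thm:VNPC-complete} only cover $\chi$ and $\vP$, and that extending them to an arbitrary non-trivial $\inv$ requires a uniform bound on $\inv$ over sign-condition cells analogous to Proposition~\ref{prop:uniform-bound-real}. Step~2 (the singly-exponential upper bound on $\card(\Par(f_n))$ for $\mathbf{VP}$ sequences) is correct and is a clean extraction of what the formula-size definition actually buys you. Your parenthetical on why a reduction from classical Valiant is unpromising matches the paper's own remark that every Boolean function lies in $\mathbf{VP}_\R^\R$.

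Step~3 is where you and the paper agree the problem is genuinely open. The Grigoriev--Vorobjov doubly-exponential multiplicity construction is precisely the candidate the paper singles out in the introduction, and the obstacle you name --- that multiplicity is not additive, so the push-forward under $\int\cdot\,\dd\inv$ collapses the doubly-exponential separation --- is exactly the obstruction the paper flags. Your proposed workaround (engineer local models whose additive invariant encodes the multiplicity digit via Lemma~\ref{lem:injective}) is a reasonable line of attack, but as you say, nobody currently knows how to do it while keeping the data in $\mathbf{VP}$. The alternative route via a doubly-exponential lower bound for Hardt-type partitions is also mentioned by the paper, with the same caveat that such a bound is considered unlikely. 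In short: your assessment of both the strategy and the obstruction is accurate, and there is nothing further the paper offers toward a proof.
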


\section{Complexity theory of constructible sheaves}
\label{sec:sheaf-formulation}
\subsection{Motivation}
Constructible functions on semi-algebraic sets are intimately related to constructible sheaves. In fact, one way constructible functions on a semi-algebraic set $S$ appear is by taking the Euler-Poincar\'e characteristic of stalks of some constructible sheaf on $S$  (see Proposition \ref{prop:sheaves-to-functions} below). This already hints that the language of sheaves (or more accurately, constructible sheaves) might be useful in formulating certain questions in complexity theory in a more direct and geometric fashion. At the same time such an approach could generalize the existing questions to a more general, geometric setting.

Before delving into sheaf theory (or at least the fragment of it that will be relevant for us) let us consider an example that we formulate more precisely later. Let $\Phi(Y, X)$  be a first order formula in the language of the reals that defines a semi-algebraic subset $S \subset \R^m \times \R^n$. Let $\pi :\R^m \times \R^n \rightarrow \R^n$  be the projection to the second factor. Now consider the two semi- algebraic subsets $T, W \subset \R^n$  defined by the formulas $(\exists Y ) \Phi(Y, X)$  and $ (\forall Y )\Phi(Y, X)$. The basic problem of separating the complexity classes $\mathbf{P}_\R$ from the class $\mathbf{NP}_\R$ (respectively, $\textbf{co-NP}_\R$) is related to proving that while testing membership in S could be easy (in polynomial time), testing membership in $T$ (respectively, $W$) could be hard. The B-S-S polynomial time hierarchy, $\mathbf{PH}_\R$  (whose precise definition appears later in the paper, see Section \ref{sec:sheaf-formulation}) is built up by taking alternating blocks of existential and universal quantifiers, and it is conjectured that each new quantifier alternation allowed strictly increases the corresponding complexity class. The increase in complexity caused by taking image under a projection (of any “easy to compute'' semi-algebraic map) is at the heart of these complexity questions.

While the number of quantifier alternation is a well known measure of the complexity of a logical formula, quantifiers are not completely geometric in the following sense. Using the notation of the previous paragraph, for any fixed set 
$S \subset \R^m \times  \R^n$, the existential and universal quantifiers can only characterize points in $\x \in \R^n$, whose corresponding fibers $S_\x = \pi^{-1}(\x) \cap S$ are either empty or equal to $\R^m$. However, one might want to characterize the set of points $\x \in \R^n$, whose corresponding fiber $S_\x$ has a certain topological property – for example, non-vanishing homology groups of a certain dimension. More generally, it would be useful to include in the study of complexity of projections, the complexity of the coarsest possible partition of $\R^n$ such that the fibers $S_\x$  are locally “topologically” constant over each element of the partition. The word “topological” can be used to denote several notions -- the strictest notion being that of ``semi-algebraic homeomorphism''. In the formulation, that we define in this paper we will use the much weaker notion of “homological equivalence”.
Going back to the notions of $\mathbf{P}_\R$, $\mathbf{NP}_\R$ and $\textbf{co-NP}_\R$, the sheaf-theoretic interpretation is the following.
Suppose that  $S\subset \Sphere^m \times \Sphere^n$ (denoting the unit sphere in $\R^{n+1}$ by $\Sphere^n$) is a compact semi-algebraic set defined by a first order formula, $\Phi(\Y,\X)$,  in the theory of the reals. We can identify a given semi-algebraic set with a very special kind of \emph{constructible sheaf} -- namely, the constant sheaf   $\Q_S$  (see Definitions \ref{def:sheaf} and \ref{def:constant-sheaf}). 
The constructible sheaves $\Q_T$, $\Q_W$ corresponding to the compact sets $T$ and $W$ defined by $(\exists \Y )\Phi(\Y,\X)$ and 
$(\forall \Y )\Phi(\Y,\X)$ respectively, have sheaf-theoretic descriptions, namely:

\begin{eqnarray}
\label{eqn:crux}
\nonumber
\Q_T &\cong& \tau^{\geq 0}\tau^{\leq 0} R\pi_{*}\Q_{J_\pi(S)} ,\\
\Q_W &\cong& \tau^{\geq m}\tau^{\leq m} R\pi_{*} \Q_S[m]
\end{eqnarray}
(see Example \ref{eg:forall} below for a proof of the isomorphism in \eqref{eqn:crux}).

The definitions of the, \emph{fibered join} $J_\pi(S)$, the \emph{truncation} functors $\tau$, and the \emph{derived direct image functor} $R\pi_*$  in the \emph{derived category of constructible sheaves} 
is given later (see Section \ref{sec:main-definitions} below).
But  \eqref{eqn:crux} contains the crux of the idea behind defining sheaf-theoretic complexity classes. We first define a class of “simple” sheaves  (an analog of the set theoretic B-S-S class $\textbf{P}_\R$) which contain in particular the class of constant sheaves $\Q_S$ for all $S$ belonging to the class $\textbf{P}_\R$ (see Definition \ref{def:sheaf-P} below).
We show that this class of constructible sheaves is
stable under certain sheaf-theoretic operations mirroring the stability of the class $\mathbf{P}_\R$ under operations such as union, intersections, products and pull-backs.  We then show how to build (conjecturally) more  complicated sheaves using the direct image and truncation functors.  We build a hierarchy of sequences of sheaves, mirroring that of $\mathbf{PH}_\R$, where the place of a sequence in the hierarchy  depends on the number of direct image functors used in the definition of the sheaves in the sequence (in lieu of the number quantifier alternations in the set-theoretic case).

\subsection{Background on sheaves and sheaf cohomology} 
\label{subsec:background-on-sheaves}
We give here a brief redux of the definitions and main results from sheaf theory that will be necessary for the rest of the paper. We refer the reader to \cite{Godement} for more details concerning sheaves including the basic definitions, and to the books \cite{Iversen,Gelfand-Manin, Borel, KS} for details regarding derived categories and hypercohomology. In particular, the books \cite{KS,Dimca,Schurmann} are good
references for constructible sheaves in the context of the current paper.
 
Let $A$ be a fixed commutative ring. Later on for simplicity we will specialize to the case when $A=\Q$.
\begin{definition} (Pre-sheaf of $A$-modules)
\label{def:presheaf}
A \emph{pre-sheaf} $\mathcal{F}$ of $A$-modules over a topological space $X$ associates to each open subset 
$U\subset X$ an $A$-module $\mathcal{F}(U)$, such that that for all pairs of open subsets 
$U,V$ of $X$, with  $V \subset U$, there exists a \emph{restriction} homomorphism 
$r_{U,V}: \mathcal{F}(U) \rightarrow \mathcal{F}(V)$ satisfying:

\begin{enumerate}
\item
$r_{U,U} = \mathrm{Id}_{\mathcal{F}(U)}$,
\item
for $U,V,W$ open subsets of $X$, with $W\subset V \subset U$,
\[
r_{U,W} = r_{V,W} \circ r_{U,V}.
\]
\end{enumerate}
(For open subsets $U,V \subset X$, $V\subset U$, and $s \in \mathcal{F}(U)$, we will sometimes denote the
element $r_{U,V}(s) \in \mathcal{F}(V)$ simply by $s|_V$.)
\end{definition}

\begin{definition}(Sheaf of $A$-modules)
\label{def:sheaf}
A pre-sheaf $\mathcal{F}$ of $A$-modules on $X$ is said to be a \emph{sheaf} if it satisfies the following two axioms.
For any collection of open subsets $\{U_i\}_{i \in I}$ of $X$  with 
$
U = \bigcup_{i \in I} U_i ;
$
\begin{enumerate}
\item
if $s \in \mathcal{F}(U)$ and $s|_{U_i} = 0$ for all $i \in I$, then $s=0$;
\item
if for all $i\in I$ there exists $s_i \in \mathcal{F}(U_i)$ such that 
\[
s_i|_{U_i \cap U_j} = s_j|_{U_i \cap U_j}
\]
for all $i,j \in I$,
then there exists $s \in \mathcal{F}(U)$ such that $s|_{U_i} = s_i$ for each $i \in I$.
\end{enumerate}
\end{definition}

\begin{definition}[Stalk of a sheaf at a point]
\label{def:stalk}
Let $\mathcal{F}$ be a (pre)-sheaf of $A$-modules on $X$ and $\x\in X$. The \emph{stalk} 
$\mathcal{F}_\x$ of $\mathcal{F}$ at $\x$ is defined as the inductive limit 
\[
\mathcal{F}_\x = \varinjlim_{U \ni \x} \mathcal{F}(U).
\]
For any $U \ni \x$, there exists a canonical homomorphism $\mathcal{F}(U) \rightarrow \mathcal{F}_\x$, and the image of $s\in \mathcal{F}(U)$ under this homomorphism is 
denoted by $s_\x$. For $s_1,s_2 \in \mathcal{F}(U)$, we have $(s_1)_\x = (s_2)_\x$ if and only if there exists an open $V\subset U$ with $\x\in V$, such that $s_1|_V = s_2|_V$.
\end{definition}

\begin{definition}[Support of a section and of a sheaf]
\label{def:support}
Let $\mathcal{F}$ be a (pre)-sheaf of $A$-modules on $X$, and $U$ an open subset of $X$.
For $s \in \mathcal{F}(U)$, the support of $s$ (denoted $\supp(s)$) is the complement of the union of all subsets $V$ of
$U$, $V$ open in $U$, such that $s|_V = 0$. Clearly, $\supp(s)$ is a closed subset of $U$.

The complement of the union of open subsets $V$ of $X$ such that $\mathcal{F}|_V=0$ is called the \emph{support} of the sheaf $\mathcal{F}$,
and is denoted $\supp(\mathcal{F})$. Clearly, $\supp(\mathcal{F})$ is a closed subset of $X$.  
\end{definition}

There is a canonically defined sheaf associated to any pre-sheaf. This is important since certain
operations on a sheaf such as taking co-kernels of a sheaf morphism or the inverse image (see below)  only produces a pre-sheaf which then needs to sheafified if we are to stay within the category of sheaves.

\begin{definition}[Sheaf associated to a pre-sheaf]
\label{def:sheafification}
Let $\mathcal{F}$ be a pre-sheaf of $A$-modules over $X$. Then, the sheaf $\mathcal{F}'$ associated to $\mathcal{F}$ (the \emph{sheafification} of $\mathcal{F}$) is defined by associating to each open subset $U \subset X$, the $A$-module $\mathcal{F}'(U)$ consisting of all maps 
\[
s: U \rightarrow \bigcup_{\x\in U} \mathcal{F}_\x
\] 
satisfying the condition that for 
every $x \in U$, and any open neighborhood $U'$ of $\x$ in $U$, there exists 
$s' \in \mathcal{F}(U')$, such that for all $\x'\in U'$, $s(\x') = s'_{\x'}$. 
\end{definition}

\begin{notation}
\label{not:category}
We will denote by $\Sh(X,A)$ the category whose objects are sheaves of $A$-modules on $X$. 
\end{notation}

\begin{definition}[Global section functors]
We denote by 
by $\Gamma(X,\cdot)$ the functor (\emph{the global sections functor}) from
$\Sh(X,A)$ to the category $A\textbf{-mod}$ of $A$-modules, obtained by setting $\Gamma(X,\mathcal{F}) = \mathcal{F}(X)$.
We denote by $\Gamma_c(X,\cdot)$ the functor (\emph{global sections with compact support functor}) from
$\Sh(X,A)$ to the category $A\textbf{-mod}$ of $A$-modules, obtained by setting 
$\Gamma_c(X,\mathcal{F})$ to be the sub-module of $\Gamma(X,\mathcal{F})$ consisting of sections  $s \in \Gamma(X,\mathcal{F})$,
such that $\supp(s)$ (see Definition \ref{def:support}) is compact. Notice that, since $\supp(s)$ is always closed, 
if $X$ is compact then $\Gamma(X,\cdot) = \Gamma_c(X,\cdot)$.
\end{definition}

\begin{example}
The simplest sheaf of $A$-modules on a topological space $X$ is the constant sheaf (denoted $A_X$) defined 
as the sheaf associated to the presheaf  which associates the module $A$
to every open subset $U \subset X$. Each stalk
$(A_X)_\x$ of $A_X$ is then isomorphic to $A$.
\end{example}

\begin{definition}[Direct and inverse images]
\label{def:inverse-direct-image}
Let $X,Y$ be topological spaces and $f:X\rightarrow Y$ a continuous map. Let $\mathcal{F}$ (respectively, $\mathcal{G}$) be a sheaf of $A$-modules on $X$ (respectively, $Y$). 
\begin{itemize}
\item
The
association to each open set $U \subset Y$, the $A$-module $\mathcal{F}(f^{-1}(U)$ defines a sheaf of $A$-modules on $Y$, and this sheaf is denoted by $f_*\mathcal{F}$ and is called the \emph{direct image} of $\mathcal{F}$ under $f$.
\item
A sub-sheaf of $f_*\mathcal{F}$, namely the \emph{direct image under $f$ with proper support}, denoted $f_{!}\mathcal{F}$, will sometimes also be important for us. It is defined by associating to each open set $U \subset Y$, the sub-module of the $A$-module $\mathcal{F}(f^{-1}(U)$, consisting only of those elements $s \in \mathcal{F}(f^{-1}(U)$, such that $f|_{\supp(s)}: \supp(s) \rightarrow U$ is a proper map (i.e., inverse image of any compact set is compact). It is clear from the definition that $f_{!}\mathcal{F}$ is a sub-sheaf of $f_{*}\mathcal{F}$. 
\item
The association to each open set $U$ of $X$, the $A$-module 
\[
\varinjlim_{V\supset f(U)}\mathcal{G}(V),
\]
 defines a \emph{pre-sheaf}  on $X$. The sheaf associated to this pre-sheaf (Definition
\ref{def:sheafification}) is denoted by $f^{-1}\mathcal{G}$ and is called the \emph{inverse image}
of $\mathcal{G}$ under $f$.
\end{itemize}
\end{definition}

\begin{definition}[Restriction to a locally closed subset]
\label{def:restriction-to-locally-closed}
Suppose that $W$ is a locally closed semi-algebraic subset of $X$, and $\mathcal{F}$ a sheaf of $A$-modules on $X$. Let $j:W \hookrightarrow X$ be the inclusion map. We will denote by 
$\mathcal{F}|_W$ the sheaf $j^{-1}\mathcal{F}$ on $W$. 
\end{definition}

\begin{remark}
\label{rem:restriction-to-locally-closed}
It follows from the functoriality of the inverse
image functor that, using the same notation as above, if $W' \subset W$ is a locally closed semi-algebraic subset of $W$ (and hence $W'$ is also a
locally closed subset of $X$), then
\[
\mathcal{F}|_{W'} \cong \Big(\mathcal{F}|_{W}\Big)|_{W'}.
\]
Also, if $\x \in W$, then 
\begin{equation}
\label{eqn:stalk}
\mathcal{F}_\x \cong (\mathcal{F}|_W)_\x.
\end{equation}
\end{remark}

\begin{definition}[Morphisms of sheaves]
Let $\mathcal{F},\mathcal{G}$ be two sheaves on $X$. Then a morphism $\phi:\mathcal{F} \rightarrow \mathcal{G}$ is given by associating to each open subset $U$ of $X$, an element
$\phi(U) \in \Hom_A(\mathcal{F}(U),\mathcal{G}(U))$ such that for all pairs of open subsets
$U,V$ of $X$ with $V \subset U$ the following diagram commutes.
\[
\xymatrix{
\mathcal{F}(U) \ar[r]^{r_{U,V}} \ar[d]^{\phi(U)}& \mathcal{F}(V)
\ar[d]^{\phi(V)}
\\
\mathcal{G}(U) \ar[r]^{r_{U,V}} & \mathcal{G}(V)
}
\]
If $\phi: \mathcal{F}\rightarrow \mathcal{G}$ is a morphism, then for each $\x \in X$ there is
an induced homomorphism  $\phi_\x: \mathcal{F}_\x \rightarrow \mathcal{G}_\x$. 
\end{definition}

\begin{definition} [Kernel, co-kernel of sheaf morphisms]
If $\phi: \mathcal{F} \rightarrow \mathcal{G}$ is a morphism of sheaves on $X$, then the \emph{kernel} of $\phi$, denoted  $\ker(\phi)$, is the sheaf which associates to each open subset $U$ of $X$, the $A$-module
$\ker(\phi)(U) = \ker(\phi_U:\mathcal{F}(U)\rightarrow \mathcal{G}(U))$. In particular, note that
for each $x\in X$, the stalk $(\ker(\phi))_\x = \ker(\phi_\x: \mathcal{F}_\x \rightarrow \mathcal{G}_\x)$.

The \emph{co-kernel} of $\phi$, denoted $\coker(\phi)$ is the sheaf associated to the pre-sheaf (this pre-sheaf in general is not a sheaf)  which associates to each open $U\subset X$, the $A$-module $\mathcal{G}(U)/\textrm{Im}(\phi(U))$.
\end{definition}

\begin{definition}[Direct sum, tensor product and $\mathpzc{Hom}$ of two sheaves]
Let $\mathcal{F},\mathcal{G} \in \Ob(\Sh(X,A))$. Then, $\mathcal{F}\oplus\mathcal{G}$,(respectively, $\mathcal{F}\otimes\mathcal{G}$) is the sheaf obtained by associating to any open $U\subset X$, the $A$-module $\mathcal{F}(U)\oplus\mathcal{G}(U)$
(respectively, $\mathcal{F}(U)\otimes_A\mathcal{G}(U)$).

The sheaf $\mathpzc{Hom}(\mathcal{F},\mathcal{G})$) is the sheaf obtained by associating to any open $U\subset X$, the $A$-module $\Hom_A(\mathcal{F}|_U,\mathcal{G}|_U)$.
\end{definition}

\begin{warning}
\label{warning:stalk}
Given sheaves $\mathcal{F},\mathcal{G}$ on $X$, and $\x \in X$, it is true that 
$ (\mathcal{F}\oplus\mathcal{G})_\x \cong (\mathcal{F})_\x \oplus (\mathcal{G})_\x$, and
$ (\mathcal{F}\otimes\mathcal{G})_\x \cong (\mathcal{F})_\x \otimes (\mathcal{G})_\x$. But in general
it is not true that
$ (\mathpzc{Hom}(\mathcal{F},\mathcal{G}))_\x \cong \Hom_A(\mathcal{F}_\x ,\mathcal{G}_\x)$
(see \cite[2.2.5, also Ex. II.3]{KS}).
\end{warning}

\begin{definition}[Constant sheaves]
\label{def:constant-sheaf}
Let $M$ be a $A$-module.
The sheaf 
associated to the presheaf
on $X$ that associates to each open subset $U$ of $X$, the $A$-module
$M$ is denoted by $M_X$, and is called the \emph{constant sheaf on $X$ with stalk $M$} . 
\end{definition}

\begin{warning}
\label{warning:abuse}
More generally, 
if $Z$ is a closed subset of $X$, and $i:Z \hookrightarrow X$ the corresponding inclusion map, then we will abuse notation slightly and 
will denote by $M_Z$ the sheaf $i_*(M_Z)$ on $X$. 
\end{warning}

\begin{definition}[Locally constant sheaves]
\label{def:locally-constant}
A sheaf $\mathcal{F}$ on a semi-algebraic set $X$ is called \emph{locally constant} if there exists an open
cover of $X$ by open semi-algebraic sets $(U_i)_{i \in I}$ such that for each $i \in I$,
$\mathcal{F}|_{U_i}$ is a constant sheaf. 
\end{definition}

We will need the following proposition later in the paper.

\begin{proposition}
\label{prop:contractibility-implies-constant}
If $X$ is contractible then any locally constant sheaf on $X$ is isomorphic to a constant sheaf.
\end{proposition}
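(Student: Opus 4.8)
The plan is to reduce the statement to the homotopy invariance of inverse images of locally constant sheaves. The case $X=\emptyset$ is trivial, so fix a point $x_0\in X$ and a contraction $H\colon X\times[0,1]\to X$ with $H(\cdot,0)=\mathrm{id}_X$ and $H(\cdot,1)$ equal to the constant map onto $x_0$; let $i_0,i_1\colon X\hookrightarrow X\times[0,1]$ be the two endpoint inclusions. Given a locally constant sheaf $\mathcal{F}$ on $X$, the inverse image $H^{-1}\mathcal{F}$ is again locally constant on $X\times[0,1]$ (pull back an open cover trivializing $\mathcal{F}$). By the functoriality $(g\circ f)^{-1}=f^{-1}g^{-1}$ of inverse image we get $i_0^{-1}H^{-1}\mathcal{F}\cong(H\circ i_0)^{-1}\mathcal{F}=\mathcal{F}$, whereas $i_1^{-1}H^{-1}\mathcal{F}\cong(H\circ i_1)^{-1}\mathcal{F}$ is the inverse image of $\mathcal{F}$ under the constant map onto $x_0$; factoring that map as $X\to\{x_0\}\hookrightarrow X$ and unwinding Definition \ref{def:inverse-direct-image} identifies $i_1^{-1}H^{-1}\mathcal{F}$ with the constant sheaf $(\mathcal{F}_{x_0})_X$. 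Thus it suffices to prove the key fact: if $\mathcal{G}$ is a locally constant sheaf on $Z\times[0,1]$, then $i_0^{-1}\mathcal{G}\cong i_1^{-1}\mathcal{G}$ on $Z$. Applying it to $Z=X$ and $\mathcal{G}=H^{-1}\mathcal{F}$ yields $\mathcal{F}\cong(\mathcal{F}_{x_0})_X$, a constant sheaf.

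For the key fact I would argue by a compactness (Lebesgue-number) argument along the $[0,1]$ direction. Given $z\in Z$, compactness of $\{z\}\times[0,1]$ together with local constancy of $\mathcal{G}$ produces a connected open neighbourhood $U$ of $z$ and a subdivision $0=t_0<t_1<\cdots<t_m=1$ such that $\mathcal{G}|_{U\times[t_{\ell-1},t_\ell]}$ is constant, with stalk some $A$-module $N_\ell$, for each $\ell$. Restricting each of these constant sheaves to $U\times\{t_{\ell-1}\}$ and to $U\times\{t_\ell\}$ (both copies of $(N_\ell)_U$) gives an isomorphism $\alpha_\ell\colon\mathcal{G}|_{U\times\{t_{\ell-1}\}}\xrightarrow{\ \sim\ }\mathcal{G}|_{U\times\{t_\ell\}}$, and composing the $\alpha_\ell$ gives $\psi_U\colon\mathcal{G}|_{U\times\{0\}}\xrightarrow{\ \sim\ }\mathcal{G}|_{U\times\{1\}}$. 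One then checks that $\psi_U$ is independent of the subdivision (pass to a common refinement; a refinement inside a rectangle on which $\mathcal{G}$ is already constant only inserts identity maps), and — using that $U$ is connected and locally connected, so that an automorphism of a constant sheaf over $U\times[t_{\ell-1},t_\ell]$ is itself constant — that $\psi_U$ does not depend on the chosen trivializations and is compatible with restriction to smaller connected opens. Since connected opens form a basis, the $\psi_U$ agree over each connected component of every overlap and therefore glue to a global isomorphism $i_0^{-1}\mathcal{G}\cong i_1^{-1}\mathcal{G}$. (Equivalently, one shows that the adjunction morphism $p^{-1}p_*\mathcal{G}\to\mathcal{G}$ for the projection $p\colon Z\times[0,1]\to Z$ is an isomorphism and $p_*\mathcal{G}$ is locally constant, whence $i_0^{-1}\mathcal{G}\cong p_*\mathcal{G}\cong i_1^{-1}\mathcal{G}$; this is the same computation. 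The homotopy invariance used here is classical, see also \cite{KS,Dimca}.)

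The main obstacle is precisely this key fact — more exactly, the bookkeeping needed to see that the locally defined ``parallel transport'' isomorphisms $\psi_U$ are canonical enough to glue; everything else is formal manipulation with inverse image functors. A cleaner route that bypasses the gluing is to invoke the equivalence between locally constant sheaves of $A$-modules on $X$ and functors $\Pi_1(X)\to A\textbf{-mod}$ from the fundamental groupoid: a contractible $X$ has $\Pi_1(X)$ equivalent to the trivial groupoid, so every such functor, hence every locally constant sheaf, is constant. In the semi-algebraic setting the technical points are mild: semi-algebraic sets are locally connected (indeed triangulable), so the connectedness arguments above apply, a semi-algebraically contractible $X$ admits a semi-algebraic contraction, and in any event local constancy in the topological sense agrees with local constancy in the sense of Definition \ref{def:locally-constant} for a sheaf on a semi-algebraic set.
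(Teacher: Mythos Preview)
Your argument is correct and is essentially the standard one; the paper itself gives no proof at all, merely citing \cite[Proposition 4.20]{Iversen} and \cite[page 132]{KS}. What you have written is a careful unpacking of exactly the proof found in those references: reduce to homotopy invariance of locally constant sheaves along $[0,1]$, then establish that invariance either by a Lebesgue-number/parallel-transport argument or, more cleanly, by showing the adjunction map $p^{-1}p_*\mathcal{G}\to\mathcal{G}$ is an isomorphism for the projection $p\colon Z\times[0,1]\to Z$. The latter formulation (your parenthetical alternative) is in fact closer to how Iversen presents it and avoids the gluing bookkeeping you flag as the ``main obstacle''. Your remark that local connectedness of $X$ is needed for the parallel-transport version is well taken and correctly handled by the observation that semi-algebraic sets are locally connected.
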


\begin{proof}
See \cite[Proposition 4.20]{Iversen} or \cite[page 132]{KS}.
\end{proof}

\begin{notation}
\label{not:Gamma}
For any locally closed subset $W \subset X$,
we denote by $\Gamma(W,\cdot)$ the functor from $\Sh(X,A)$ to $A\textbf{-mod}$ defined by $\mathcal{F}\mapsto \mathcal{F}|_W(W)$. This agrees with the prior definition of $\Gamma(U,\cdot)$
when $U$ is an open subset of $X$.
\end{notation}

\begin{remark}
\label{rem:abelian}
The category $\mathbf{Sh}(X,A)$ is an \emph{abelian category} \cite[Proposition 2.2.4]{KS}. 
Roughly, this means that every
morphism between $\phi:\mathcal{F}\rightarrow \mathcal{G}$ between sheaves admits a kernel and a co-kernel, and gives rise to the following diagram in which the induced morphism $u$ is an isomorphism (see \cite[Definition 1.2.1]{KS}).
\[
\xymatrix{
0 \ar[r] &\ker(\phi)  \ar[r]& \mathcal{F} \ar[r]^{\phi}\ar@{->>}[d] & \mathcal{G} \ar[r]& \coker(\phi)\ar[r]& 0 \\
&&\coimage(\phi)\ar[r]^u &\image(\phi)\ar@{>->}[u]&&
}
\]

In particular, this means we have the notion of exactness in the category $\Sh(X,A)$. Recall that a sequence of morphisms $\mathcal{F} \xrightarrow{f} \mathcal{G} \xrightarrow{g} \mathcal{H}$ is exact if $\ker(g) = \image(f)$.
\end{remark}

\begin{remark}
\label{rem:exactness}
Since the category $\mathbf{Sh}(X,A)$ is an abelian category, it makes sense to talk about \emph{exactness}
of the functors $f^{-1},f_*,f_{!},\Gamma(X,\cdot),\Gamma_c(X,\cdot)$
defined earlier. It turns out that $f^{-1}$ is an exact functor (i.e., it takes
an exact sequence of sheaves to an exact sequence), while the functors $f_*,f_{!},\Gamma(X,\cdot),\Gamma_c(X,\cdot)$
are only \emph{left exact}
(see \cite[page 15]{Godement} for definition of exactness as well as left exactness of functors).
\end{remark}

\subsubsection{Sheaf cohomology}
The functors $\Gamma(Z,\cdot)$
defined above are left-exact but not exact -- i.e.,  they do not preserve exactness when applied to an exact sequence of morphism on sheaves. Sheaf cohomology is a measure of this deviation from exactness, and is defined by applying one of these  (non-exact) functor to an \emph{injective resolution} (\cite{Godement} of a given sheaf (which always exists since the category $\Sh(X,A)$ has enough \emph{injective objects} \cite[II, 2.6]{KS}), and then taking the homology of the resulting (not necessarily exact) complex. 

We also obtain in this way a functor
$\HH^i(X,\cdot),\HH^i_c(X,\cdot)$ from the category of $\Sh(X,A)$ 
to the categories $A\textbf{-mod}$ as follows.  

\begin{definition}
\label{def:cohomology-compact-support}
$\HH^i(X,\cdot)= R^i\Gamma(X,\cdot)$ (respectively, $\HH^i_c(X,\cdot)$) is the $i$-th \emph{right derived functor} \cite{Godement} of  $\Gamma(X,\cdot)$ (respectively, $\Gamma_c(X,\cdot)$).
\end{definition}

It is a classical fact that \cite{Iversen}
\begin{proposition}
\label{prop:cohomology-compact-support}
Let $X$ be a locally closed semi-algebraic set.
Then, $\HH^i(X,\Q_X)$ (respectively, $\HH^i_c(X,\Q_X)$) is isomorphic to $i$-th singular cohomology group (respectively, 
cohomology group with compact support) with coefficients in $\Q$.
\end{proposition}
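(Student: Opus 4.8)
The statement is classical and I would only sketch the argument, deferring to \cite{Iversen} for the full details; the plan is to reduce it to the standard comparison between the cohomology of a constant sheaf and singular cohomology, once the required point-set hypotheses on $X$ have been checked. First I would record these hypotheses. Being a subset of some $\R^N$, the set $X$ is metrizable and second countable, hence paracompact; being \emph{locally closed}, $X$ is moreover locally compact. Finally, by the semi-algebraic triangulation theorem \cite{BCR}, $X$ is semi-algebraically homeomorphic to a locally closed sub-polyhedron of the realization of a locally finite simplicial complex; in particular $X$ is locally contractible, and therefore homologically locally connected in every degree. These are exactly the conditions under which the two classical comparison isomorphisms hold.

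For the first isomorphism I would introduce the complex of sheaves $\mathcal{S}^\bullet_X$ obtained by sheafifying the presheaves $U\mapsto \mathrm{S}^\bullet(U;\Q)$ of singular $\Q$-valued cochains, together with the natural augmentation $\Q_X\to\mathcal{S}^0_X$. Since $X$ is homologically locally connected, the stalk-wise exactness of $0\to\Q_X\to\mathcal{S}^\bullet_X$ holds, so this is a resolution of $\Q_X$; and since $X$ is paracompact each $\mathcal{S}^q_X$ is a fine, hence soft, sheaf, so the resolution is acyclic both for $\Gamma(X,\cdot)$ and for $\Gamma_c(X,\cdot)$. Consequently $\HH^i(X,\Q_X)\cong \mathrm{H}^i\big(\Gamma(X,\mathcal{S}^\bullet_X)\big)$. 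The natural map from singular cochains $\mathrm{S}^\bullet(X;\Q)$ to the global sections $\Gamma(X,\mathcal{S}^\bullet_X)$ of the sheafification is a quasi-isomorphism on a homologically locally connected paracompact space (this is again where local contractibility is used), and combining the two identifications yields $\HH^i(X,\Q_X)\cong \mathrm{H}^i_{\mathrm{sing}}(X;\Q)$.

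For the second isomorphism I would apply $\Gamma_c(X,\cdot)$ to the same soft resolution, obtaining $\HH^i_c(X,\Q_X)\cong \mathrm{H}^i\big(\Gamma_c(X,\mathcal{S}^\bullet_X)\big)$; then, using that $X$ is locally compact and the description of both sides as colimits over the compact subsets of $X$, one identifies $\mathrm{H}^i\big(\Gamma_c(X,\mathcal{S}^\bullet_X)\big)$ with singular cohomology with compact supports $\mathrm{H}^i_{c,\mathrm{sing}}(X;\Q)$, which is the assertion. All of this is carried out in \cite{Iversen}.

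I do not expect a genuine obstacle: the only specifically semi-algebraic input is the triangulation theorem, which supplies the local contractibility (equivalently, the homological local connectedness) of $X$; everything else is the standard machinery of soft and fine resolutions. If one preferred a more self-contained route, one could avoid the cochain-sheaf resolution entirely and argue by induction on the number of simplices in a semi-algebraic triangulation of $X$, using that both cohomology theories are homotopy invariant, satisfy the Mayer--Vietoris long exact sequence, and agree trivially on the empty set and on a point — the same inductive bookkeeping that underlies the Betti-number estimates used elsewhere in the paper.
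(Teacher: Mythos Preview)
Your sketch is correct and follows the standard route via the soft resolution by sheafified singular cochains, which is exactly the argument in \cite{Iversen} that the paper defers to; in fact the paper gives no proof of its own here, simply introducing the proposition with ``It is a classical fact that \cite{Iversen}'' and stating the result without further argument.
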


We record here also the following simple facts.
\begin{eqnarray}
\label{eqn:cohomology-of-sphere}
\nonumber
\HH^i(\Sphere^n, \Q) &\cong & \Q \mbox{ if $i=n,0$},\\
& \cong& 0 \mbox{ else.}
\end{eqnarray} 

Moreover, suppose that $S \subset \Sphere^n, S\neq \Sphere^n$ is a closed subset. Then, it follows
using an argument (that we omit) using the long exact sequence in
Theorem \ref{thm:inequality-compact-support}, and the Alexander duality theorem that
\begin{eqnarray}
\label{eqn:cohomology-of-sphere2}
\HH^n(S, \Q) &\cong& 0. 
\end{eqnarray}

Now suppose that $f : X \rightarrow Y$ is a continuous semi-algebraic map. Then, $f_*,f_{!}$  are functors from 
$\Sh(X,A)$  to $\Sh(Y,A)$ which are left exact, and carry injectives to injectives.  Thus one obtains for any 
$\mathcal{F} \in \Ob(\Sh(X,A))$ and $i \geq 0$, 
the \emph{higher direct image sheaves} 
$R^if_{*}(\mathcal{F}), R^if_{!}(\mathcal{F}) \in \Ob(\Sh(Y, A))$.

Similarly, for a sheaf $\mathcal{G} \in \Ob(\Sh(X,A))$, $\cdot \otimes_{A_X} \mathcal{G}$ is a right exact functor,
we have a (left) derived functor $\cdot \stackrel{L}{\otimes} \mathcal{G}$.
In case $A$ is a field (and hence all $A$-modules are free and so in particular flat), 
the functor $\otimes$ is exact and extends directly to the derived category (similar to $f^{-1}$).
Since, the functor  $\mathpzc{Hom}(\mathcal{G},\cdot)$ is left exact we have a right derived functor $\RHom(\mathcal{G},\cdot)$ in the derived category.

The following example is instructive.

\begin{example}
\label{eg:forall}
Let $S$ be a 
closed
semi-algebraic subset of $\Sphere^m \times \Sphere^n$ defined by a first order formula 
$\Phi(Y,X)$, where $\Y = (Y_0,\ldots,Y_m)$ and $\X = (X_0,\ldots, X_n)$. Let $T$ be the semi-algebraic subset of $\Sphere^n$ defined by the formula  $(\forall \Y \in \Sphere^m)\Phi(\Y, \X)$. Let $\pi : \Sphere^m \times \Sphere^n \rightarrow \Sphere^n$ denote the projection to the second factor, and for each $\x \in  \Sphere^n$, let $S_\x = \pi^{-1}(\x) \cap S$. 

We claim that 
\[
\Q_T \cong R^m \pi_* \Q_S.
\]

First notice that
$\x \in T$  if and only if $S_\x = \Sphere^n$,  if and only if  $\HH^m(S_\x,\Q) \cong \Q$
(the last equivalence by virtue of  \eqref{eqn:cohomology-of-sphere} and 
\eqref{eqn:cohomology-of-sphere2}).
In particular, this implies by Theorem \ref{thm:proper-base-change} that the stalks
of the sheaf $R^m\pi_* \Q_S$ vanish outside $T$, and
\begin{equation}
\label{eqn:proper}
\Q_T \cong R^m\pi_* \Q_{\pi^{-1}(T)}.
\end{equation}

Next consider the the following Cartesian square (recall Definition \ref{def:cartesian})
\[
\xymatrix{
{\pi^{-1}(T)} \ar[r]^{j} \ar[d]^{\pi}& S \ar[d]^{\pi}
\\
T \ar[r]^{i} & \pi(S),\\
}
\]
where $i:T \hookrightarrow \pi(S)$, $j: \pi^{-1}(T) \hookrightarrow S$ denote the inclusion maps.

By Theorem \ref{thm:proper-base-change}, we have that
\begin{eqnarray*}
i^{-1} R^m \pi_*\Q_S &\cong&  R^m\pi_*j^{-1}\Q_S  \mbox{ (Theorem \ref{thm:proper-base-change})} \\
&\cong& R^m \pi_*\Q_{\pi^{-1}(T)} \mbox{ (since $\pi^{-1}(T)$ is compact and $j$ is the inclusion map)} \\
&\cong & \Q_T \mbox{ (using \eqref{eqn:proper})}.
\end{eqnarray*}
Finally, since the stalks of the sheaf $i^{-1} R^m \pi_*\Q_S$ are zero outside $T$,
and letting $i':T \rightarrow \Sphere^n$ denote the inclusion map,
we have that $i'_* i^{-1} R^m \pi_*\Q_S \cong R^m \pi_*\Q_S$.
Hence,
$\Q_T \cong R^m \pi_* \Q_S$ (recall that we are denoting $i'_*\Q_T$ also by 
$\Q_T$ cf. Warning \ref{warning:abuse}).
Notice that the universal quantifier has been effectively 
replaced by the higher direct image functor $R^m\pi_{*}$.
\end{example}

It turns out that for our purposes it is better to enlarge the category $\Sh(X,A)$ by letting
for each open $U\subset X$, $\mathcal{F}(U)$ to be not just an $A$-module, but a  \emph{complex}
of $A$-modules.

Recall that:
\begin{definition}
\label{def:complex}
A complex $C^\bullet$ of $A$-modules is a sequence of homomorphisms 
$(C^i \xrightarrow{\phi^i} C^{i+1})_{i \in \Z}$ such that $\phi^{i+1}\circ\phi^i = 0$ for each $i$. 
Given a complex $C^\bullet$, the $i$-th cohomology module $\HH^i(\C^\bullet)$ is defined as the quotient $\ker(\phi^i)/\image(\phi^{i-1})$. 
\end{definition}

\begin{definition} (Truncation and shift operators). 
\label{def:truncation-et-shift}
Given a complex
\[
C^\bullet = \left(\cdots \xrightarrow{\delta^{i-1}} C^i \xrightarrow{\delta^i} C^{i+1}\xrightarrow{\delta{i+1}} \cdots\right)
\]
of $A$-modules, we define for any $n \in \Z$ the truncated complex $\tau^{\leq n}(C^\bullet)$  to
be the complex
\[
\tau^{\leq n}(C^\bullet) = \left( \cdots \xrightarrow{\delta^{n-3}} C^{n-2} \xrightarrow{\delta^{n-2}} C^{n-1}\xrightarrow{\delta^{n-1}} \ker(\delta^n) \rightarrow 0 \rightarrow 0\cdots \right).
\]

Similarly, we define the truncated complex $\tau^{\geq n}(C^\bullet)$  to be the complex
\[
\tau^{\geq n}(C^\bullet) = 
\left( \cdots 0 \rightarrow 0 \rightarrow \coker(\delta^n) \xrightarrow{\delta^n} C^{n+1} \xrightarrow{\delta^{n+1}} C^{n+2} \xrightarrow{\delta^{n+2}}  \cdots \right).
\]
 
We define the shifted complex
\[
C[n]^\bullet = \left(\cdots \xrightarrow{\delta[n]^{i-1}} C[n]^i \xrightarrow{\delta[n]^i} C[n]^{i+1}\xrightarrow{\delta[n]^{i+1}} \cdots\right),
\]
by setting $C[n]^i = C^{i+n}$ and $\delta[n]^i = \delta^{i+n}$.
\end{definition}

\begin{definition}(Tensor product and $\Hom$ of complexes).
Let $C^\bullet, D^\bullet$ be complexes of $A$-modules. Then, the tensor product
$C^\bullet \otimes D^\bullet$ is the complex defined by 
\[
(C^\bullet \otimes D^\bullet)^n = \bigoplus_{i+j=n} C^i \otimes D^j,
\]
with differentials defined in the obvious way.

The dual complex $\Hom^\bullet(C^\bullet,A)$ (denoting by $A$ the complex isomorphic to $A$ and concentrated at degree $0$ )
is defined by 
\[
\Hom^n(C^\bullet,A) = \Hom_A(C^{-n},A),
\]
with the obvious differentials.

More generally, $\Hom^\bullet(C^\bullet\otimes D^\bullet)$ is the complex defined by 
\[\Hom^n(C^\bullet,D^\bullet) = \prod_{j-i = n} \Hom(C^i,D^j).\]
Note that, using the isomorphism $\Hom(M,N) \cong \Hom(M,A) \otimes N$ for any two $A$-modules $M$ and $N$, we have the isomorphism,
\begin{equation*}
\Hom^n(C^\bullet,D^\bullet) = \prod_{j-i = n} \Hom(C^i,A) \otimes D^j,
\end{equation*}
and hence for bounded complexes we have the isomorphism,
\begin{equation}
\label{eqn:dual_hom}
\Hom^\bullet(C^\bullet,D^\bullet) \cong \Hom(C^\bullet,A)\otimes D^\bullet.
\end{equation}
\end{definition}

\begin{definition}
The category $\Kom(A\boldmod)$ is defined as the category whose objects are complexes of 
$A$-module and whose morphisms are morphisms of complexes. We say that a morphism
$f^\bullet: C^\bullet \rightarrow D^\bullet$ (i.e., $ f^\bullet \in \Hom_{\Kom(A\boldmod)}(C^\bullet,D^\bullet)$) is a \emph{quasi-isomorphism} if the induced homomorphism,
$f^i: \HH^i(C^\bullet) \rightarrow \HH^i(D^\bullet)$ is an isomorphism for all $i \in \Z$.
\end{definition}

 \begin{definition}
 Two morphisms of complexes $f^\bullet,g^\bullet: C^\bullet \rightarrow D^\bullet$ are
 said to be \emph{homotopically equivalent} ($f^\bullet \sim g^\bullet$)  if there exists a collection of homomorphisms 
 $h^i: C^i \rightarrow D^{i-1}, i\in \Z$, such that 
 \[
 f^i - g^i = h^{i+1} \circ \delta_{C^\bullet}^i + \delta_{D^\bullet}^{i-1}\circ h^i.
 \]
 The \emph{homotopy category of complexes}  $\K(A\boldmod)$ is the category whose objects are
 the same as the objects of $\Kom(A\boldmod)$, but whose morphisms are defined  by 
 \[
 \Hom_{\K(A\boldmod)}(C^\bullet,D^\bullet) = \Hom_{\Kom(A\boldmod)}(C^\bullet,D^\bullet)/\sim.
 \]
 \end{definition}
 
 Finally (we are being slightly imprecise here in the interest of space and readability; the reader should refer to \cite[Chapter III]{Gelfand-Manin} for a more precise definition):
 \begin{definition}
 The \emph{derived category} of $A$-modules, $\mathbf{D}(A\boldmod)$, is the category whose objects 
 are the same as those of $\K(A\boldmod)$, but whose set of morphisms 
 $\Hom_{\mathbf{D}(A\boldmod})(C^\bullet,D^\bullet)$ are equivalence classes of diagrams of the form
 $$
 \xymatrix{
 &  E^\bullet \ar[ld]^{qis} \ar[rd] & \\
 C^\bullet &&  D^\bullet
 }
 $$
 where the left arrow is an quasi-isomorphism.
 The derived category  $\mathbf{D}(A\boldmod)$ is no longer an abelian category, but is an example
 of what is called a \emph{triangulated category} \cite[Chapter IV]{Gelfand-Manin}.
 Finally, restricting to complexes with bounded cohomology (i.e.,  complexes $C^\bullet$ such that
 $\HH^i(\C^\bullet) = 0$ for $|i| \gg 0$)  we obtain the corresponding categories,
 $\Kom^b(A\boldmod), \K^b(A\boldmod), \mathbf{D}^b(A\boldmod)$.
 \end{definition}
 
 \begin{proposition}
 \label{prop:basic-isomorphism-in-derived}
 If $F^\bullet \in \mathbf{D}^b(A\boldmod)$, then we have the isomorphism (in the derived category) 
 \[
 F^\bullet \cong \bigoplus_{m \in \Z} \HH^m(F^\bullet)[-m] .
 \]
 (In other words, $F^\bullet$ is quasi-isomorphic to the direct sum of the shifted complexes 
 $
 \HH^m(F^\bullet)[-m],
 $ 
 where 
 $\HH^m(F^\bullet)[-m]$ is the complex concentrated at degree $m$, with all differentials equal to $0$, and whose degree $m$ part is isomorphic to $\HH^m(F^\bullet)$ (cf. Definition \ref{def:truncation-et-shift}).)
 \end{proposition}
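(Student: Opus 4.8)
The plan is to use crucially that the coefficient ring has been specialized to the field $A=\Q$, so that every $A$-module is a vector space and every short exact sequence of $A$-modules splits; this is precisely the feature that makes the statement true (it fails already over $\Z$).

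First I would, for each $i\in\Z$, set $Z^i=\ker(\delta^i)\subseteq F^i$ and $B^i=\image(\delta^{i-1})\subseteq Z^i$, and choose splittings of the inclusions $B^i\hookrightarrow Z^i\hookrightarrow F^i$. This produces a decomposition $F^i = B^i\oplus\mathcal{H}^i\oplus L^i$ in which $\mathcal{H}^i$ is a complement of $B^i$ in $Z^i$ (so the projection $Z^i\twoheadrightarrow\mathcal{H}^i$ identifies $\mathcal{H}^i$ with $\HH^i(F^\bullet)$) and $L^i$ is a complement of $Z^i$ in $F^i$. By construction $\delta^i$ annihilates $Z^i=B^i\oplus\mathcal{H}^i$, while the induced map $F^i/Z^i\xrightarrow{\sim}\image(\delta^i)=B^{i+1}$ shows that $\delta^i$ restricts to an isomorphism $L^i\xrightarrow{\sim}B^{i+1}$.

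Next I would observe that these identifications yield an honest direct-sum decomposition \emph{of complexes}, $F^\bullet=\mathcal{H}^\bullet\oplus K^\bullet$: here $\mathcal{H}^\bullet=(\mathcal{H}^i)_i$ carries the zero differential — so it is literally $\bigoplus_{m\in\Z}\HH^m(F^\bullet)[-m]$ — and $K^\bullet=(B^i\oplus L^i)_i$ carries the differential that is zero on $B^i$ and the isomorphism $L^i\xrightarrow{\sim}B^{i+1}$ on $L^i$. Since the complex $F^\bullet$ is bounded, only finitely many summands are nonzero, so there is nothing to check about convergence. A one-line computation gives $\ker(\delta^i|_{K^\bullet})=Z^i\cap(B^i\oplus L^i)=B^i=\image(\delta^{i-1}|_{K^\bullet})$, so $K^\bullet$ is acyclic.

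Finally, since an acyclic bounded complex is isomorphic to the zero object of $\mathbf{D}^b(A\boldmod)$, the projection $F^\bullet\to\mathcal{H}^\bullet$ (a quasi-isomorphism, since its kernel $K^\bullet$ is acyclic) gives the desired isomorphism $F^\bullet\cong\mathcal{H}^\bullet=\bigoplus_{m\in\Z}\HH^m(F^\bullet)[-m]$ in the derived category. The only point that needs care — and where a careless argument could fail — is checking that $\delta$ genuinely respects the splitting $F^i=B^i\oplus\mathcal{H}^i\oplus L^i$ degree by degree (it does, because $\delta^i$ kills $Z^i$ and lands inside $B^{i+1}$), so that $F^\bullet=\mathcal{H}^\bullet\oplus K^\bullet$ already holds at the level of $\Kom(A\boldmod)$ rather than only up to quasi-isomorphism; the resulting isomorphism in $\mathbf{D}^b(A\boldmod)$ is of course not canonical, but canonicity is not needed here.
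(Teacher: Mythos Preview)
Your argument is correct and in fact supplies more detail than the paper, whose proof consists solely of a reference to \cite[Ex.~1.18]{KS}. The splitting argument you give is the standard one and works perfectly over a field, which is all that is needed here since the paper immediately afterwards fixes $A=\Q$.

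One small correction to your parenthetical remark: the \emph{statement} does not fail over $\Z$. A ring $A$ is hereditary (global dimension $\leq 1$) if and only if every object of $\mathbf{D}^b(A\boldmod)$ is isomorphic to the direct sum of its shifted cohomologies, and $\Z$ is hereditary. What fails over $\Z$ is your \emph{proof}, since it relies on splitting the inclusions $B^i\hookrightarrow Z^i$, which need not split when the cohomology has torsion. Over a hereditary ring one instead argues via the truncation triangles $\tau^{\leq m-1}F^\bullet\to\tau^{\leq m}F^\bullet\to \HH^m(F^\bullet)[-m]\to$, showing they split because $\mathrm{Ext}^2$ vanishes; your direct splitting argument is the simpler route available once $A$ is a field. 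The statement genuinely fails only once $A$ has global dimension $\geq 2$.
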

 
 \begin{proof}
 See \cite[Ex. 1.18]{KS}.
 \end{proof}
 
\subsubsection{The categories $\Kom^b(X,A)$, $\K^b(X,A)$ and $\mathbf{D}^b(X,A)$}
Passing to sheaves over a semi-algebraic set $X$, the definitions of the last section gives 
successively the category
$\Kom^b(X,A)$ of sheaves of complexes of $A$-modules on $X$ whose stalks have bounded cohomology and which contains the category of $\Sh(X,A)$ as a sub-category of sheaves of complexes concentrated in degree $0$. Furthermore,  considering complexes only up to homotopy gives rise to the homotopy category $\K^b(X,A)$ of sheaves of complexes  on $X$ whose stalks have bounded cohomology, and
finally by a localization process, needed to formally invert arrows which are homotopy equivalences, we arrive at the derived category $\mathbf{D}^b(X,A)$ of sheaves on $X$.  This passage from
$\Sh(X,A)$ to $\mathbf{D}^b(X,A)$ while fairly standard, takes up a couple of chapters in textbooks
and we refer the reader to \cite{KS} for the details. While the definition of the derived category
might seem too cumbersome and unnecessary at first glance, once its existence is
taken for granted, it provides a very useful and concise geometric language to express relationships 
especially pertaining to cohomology of sheaves which is very useful in many applications.    

\begin{notation}
\label{not:abuse1}
Considering a sheaf $\mathcal{F} \in \Ob(\Sh(X,A))$, as a complex of sheaves concentrated at degree $0$,
we will continue to denote by $\mathcal{F}$ the corresponding object in the categories $\Kom^b(X,A)$, $\K^b(X,A)$ and $\mathbf{D}^b(X,A)$.
\end{notation}

\subsubsection{Extension of operations on sheaves to the derived category}
The sheaf operations of taking direct sums, tensor products, $Hom$, and direct and inverse
images under maps $f:X\rightarrow Y$ extend to the category $\mathbf{D}^b(X,A)$ in the same way that 
these operations extend from the category of $A$-modules to the category of complexes of 
$A$-modules. 

Since the functor
$\Gamma(Z,\cdot)$
is
left exact (see Remark \ref{rem:exactness}) 
 and take injective objects to injective objects, it  induces corresponding derived functors 
$R\Gamma(Z,\cdot)$
which take objects in $\mathbf{D}^b(X,A)$ to objects in 
$\mathbf{D}^b(A)$. 
Similarly, for any map $f:X\rightarrow Y$,
we have the derived functors $Rf_{!},Rf_*: \mathbf{D}^b(X,A) \rightarrow \mathbf{D}^b(Y,A)$. The functor
$f^{-1}$ being exact extends directly to a functor
$f^{-1}: \mathbf{D}^b(Y,A) \rightarrow \mathbf{D}^b(X,A)$ in the derived category.

\begin{definition}
\label{def:hypercohomology}
The images of the higher derived functors, $R^i\Gamma$  of the global section functor $\Gamma: \Sh(X,A) \rightarrow A\boldmod$, will be denoted by $\HH^i(X,\mathcal{F})$ and we call the $A$-modules
$\HH^i(X,\mathcal{F})$ the \emph{$i$-th cohomology module} of $\mathcal{F}$. 

Similarly, for $\mathcal{F}^\bullet \in \Ob(\mathbf{D}^b(X,A))$,
we call images of the higher derived functors, $R^i\Gamma$  of the global section functor $\Gamma: \mathbf{D}^b(X,A) \rightarrow \mathbf{D}^b(A\boldmod)$, will be denoted by $\hyperH^i(X,\mathcal{F}^\bullet)$,
and we call the $A$-modules
$\hyperH^i(X,\mathcal{F}^\bullet)$ the \emph{$i$-th hyper-cohomology module} of $\mathcal{F}^\bullet$. 
\end{definition}

 Given $\mathcal{F}^\bullet \in \Ob(\mathbf{D}^b(X,A))$, and $\x\in X$, the stalk $\mathcal{F}^\bullet_\x$ is represented
 by a complex of $A$-modules (i.e.,  an object of the category $\mathbf{D}^b(A)$). Thus, the cohomology 
 groups $\HH^i(\mathcal{F}^\bullet_\x)$ are $A$-modules and vanish for $|i|\gg 0$.
 Furthermore, recalling the definitions of the truncation and shift operations 
 on complexes of $A$-modules 
 (Definition \ref{def:truncation-et-shift}), these operations
 extend naturally to  $\mathcal{F}^\bullet \in \Ob(\mathbf{D}^b(X,A))$,
 and we obtain for each $n \in \Z$, 
$\tau^{\leq n}\mathcal{F}^\bullet, \tau^{\geq n}\mathcal{F}^\bullet,
\mathcal{F}^\bullet[n]$ respectively.
 It is an immediate consequence of the definitions that:

 \begin{proposition}
 Suppose $\mathcal{F}^\bullet \in \Ob(\mathbf{D}(X,A))$. Then, for $n \in \Z$ and $\x \in X$, we have:
 
\begin{eqnarray*}
 \HH^i((\tau^{\leq n}\mathcal{F}^\bullet)_\x) &\cong&  \HH^i(\mathcal{F}^\bullet_\x) \mbox{ for } i\leq n, \\
 \HH^i((\tau^{\leq n}\mathcal{F}^\bullet)_\x) &=&  0  \mbox{ for } i >  n, \\
\HH^i((\tau^{\geq n}\mathcal{F}^\bullet)_\x) &\cong&  \HH^i(\mathcal{F}^\bullet_\x) \mbox{ for } i\geq n, \\
\HH^i((\tau^{\geq n}\mathcal{F}^\bullet)_\x) &=&  0 \mbox{ for } i < n, \\
\HH^i((\mathcal{F}^\bullet[n])_\x) &\cong&  \HH^{i+n}(\mathcal{F}^\bullet_\x) \mbox{ for all } i\in \Z.
 \end{eqnarray*}
 \end{proposition}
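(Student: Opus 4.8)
The plan is to reduce everything to the corresponding elementary statement for complexes of $A$-modules, using the exactness of the stalk functor. Recall that for a fixed $\x \in X$ the functor $\mathcal{F}\mapsto\mathcal{F}_\x$ on $\Sh(X,A)$ is the filtered colimit $\varinjlim_{U \ni \x}\mathcal{F}(U)$ (Definition \ref{def:stalk}), and since filtered colimits of $A$-modules are exact, this functor is exact. In particular it commutes with the formation of kernels and cokernels of sheaf morphisms: for any morphism $\phi:\mathcal{F}\to\mathcal{G}$ of sheaves one has canonical isomorphisms $(\ker\phi)_\x \cong \ker(\phi_\x)$ and $(\coker\phi)_\x \cong \coker(\phi_\x)$ of $A$-modules.

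First I would make precise the (degreewise) extension of the truncation and shift operators of Definition \ref{def:truncation-et-shift} to a complex of sheaves $\mathcal{F}^\bullet$: the complexes $\tau^{\leq n}\mathcal{F}^\bullet$, $\tau^{\geq n}\mathcal{F}^\bullet$, $\mathcal{F}^\bullet[n]$ are built exactly as in that definition, replacing the modules $\ker(\delta^n)$ and $\coker(\delta^n)$ by the corresponding kernel and cokernel sheaves. Applying the stalk functor degreewise and using the two isomorphisms above then yields canonical isomorphisms of complexes of $A$-modules
\[
(\tau^{\leq n}\mathcal{F}^\bullet)_\x \cong \tau^{\leq n}(\mathcal{F}^\bullet_\x),\qquad
(\tau^{\geq n}\mathcal{F}^\bullet)_\x \cong \tau^{\geq n}(\mathcal{F}^\bullet_\x),\qquad
(\mathcal{F}^\bullet[n])_\x \cong (\mathcal{F}^\bullet_\x)[n].
\]
Since $(\cdot)_\x$ is exact it preserves quasi-isomorphisms, hence descends to a functor $\mathbf{D}^b(X,A)\to\mathbf{D}^b(A\boldmod)$, so these isomorphisms are valid already in the derived category and are independent of the chosen complex representative.

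Second I would record the computation for a single complex $C^\bullet$ of $A$-modules, which is immediate from Definition \ref{def:truncation-et-shift}: $\tau^{\leq n}C^\bullet$ agrees with $C^\bullet$ in degrees $< n$, has $\ker(\delta^n)$ in degree $n$, and is $0$ in degrees $>n$, so $\HH^i(\tau^{\leq n}C^\bullet)\cong\HH^i(C^\bullet)$ for $i<n$, $\HH^n(\tau^{\leq n}C^\bullet)= \ker(\delta^n)/\image(\delta^{n-1})=\HH^n(C^\bullet)$, and $\HH^i(\tau^{\leq n}C^\bullet)=0$ for $i>n$; the case of $\tau^{\geq n}$ is symmetric, and $\HH^i(C^\bullet[n])\cong\HH^{i+n}(C^\bullet)$ holds by definition of the shift. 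Combining this with the stalk-commutation isomorphisms of the previous paragraph, applied to $C^\bullet = \mathcal{F}^\bullet_\x$, gives the five displayed isomorphisms. There is no serious obstacle here; the only point needing a little care is the first step — checking that the stalk functor genuinely commutes with the degreewise truncation construction — and this rests entirely on its exactness, after which the statement is a formal consequence of the elementary module computation.
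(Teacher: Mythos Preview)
Your proof is correct and takes essentially the same approach as the paper, which simply writes ``Follows immediately from Definition~\ref{def:truncation-et-shift}.'' You have spelled out the details the paper omits---in particular the exactness of the stalk functor and the consequent passage to the derived category---but the underlying idea is the same.
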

 
 \begin{proof}
 Follows immediately from Definition \ref{def:truncation-et-shift}.
 \end{proof}
 
 There are two important isomorphisms in the derived category that will play a role later in the paper. We record them here for convenience.
 
 Let $f: X \rightarrow Y$ be a continuous semi-algebraic map, and suppose that $X,Y$ are compact. In particular, this means that $f$ is proper. Suppose also that,
$\mathcal{F}^\bullet \in \Ob(\mathbf{D}^b(X,A)), \mathcal{G}^\bullet \in \Ob(\mathbf{D}^b(Y,A))$, $\x\in X, \y = f(\x)$.
 
 \begin{proposition}
 \label{prop:adjunction}
 For every $i \in \Z$,
 \[ \HH^i(f^{-1}(\mathcal{G}^\bullet)_\x) 
\cong 
\HH^i(\mathcal{G}^\bullet_\y).
\]
\end{proposition}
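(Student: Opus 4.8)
The plan is to reduce the whole statement to the single sheaf-level identity $(f^{-1}\mathcal{G})_\x \cong \mathcal{G}_{f(\x)}$ for an ordinary sheaf $\mathcal{G}$ on $Y$, and then transport it through complexes using exactness of the functors involved. Note that neither the compactness of $X$ and $Y$ nor the properness of $f$ is used here, nor does the sheaf $\mathcal{F}^\bullet$ enter; these hypotheses are present only for the companion statements that follow.

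First I would establish the key lemma: for every $\mathcal{G} \in \Ob(\Sh(Y,A))$ and every $\x\in X$ with $\y = f(\x)$, there is a natural isomorphism $(f^{-1}\mathcal{G})_\x \cong \mathcal{G}_\y$. By Definition \ref{def:inverse-direct-image}, $f^{-1}\mathcal{G}$ is the sheafification of the pre-sheaf $U \mapsto \varinjlim_{V\supset f(U)}\mathcal{G}(V)$, and since sheafification does not change stalks (Definition \ref{def:sheafification}), one has
\[
(f^{-1}\mathcal{G})_\x \;\cong\; \varinjlim_{U \ni \x}\ \varinjlim_{V\supset f(U)} \mathcal{G}(V).
\]
A cofinality argument then identifies this iterated colimit with $\varinjlim_{V \ni \y}\mathcal{G}(V) = \mathcal{G}_\y$: on the one hand, every term $\mathcal{G}(V)$ occurring on the left has $V \supset f(U) \ni \y$, so $V$ is an open neighbourhood of $\y$; on the other hand, for any open $V \ni \y$ the set $f^{-1}(V)$ is an open neighbourhood of $\x$ by continuity of $f$, and $V \supset f(f^{-1}(V))$, so $\mathcal{G}(V)$ already appears in the iterated system. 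One checks that the resulting map is functorial in $\mathcal{G}$.

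Next I would globalise over complexes. Since $f^{-1}$ is exact (Remark \ref{rem:exactness}), it extends termwise to $\Kom^b(Y,A)$ and commutes with the formation of kernels, images and hence of cohomology sheaves, so $\HH^i(f^{-1}\mathcal{G}^\bullet) \cong f^{-1}\HH^i(\mathcal{G}^\bullet)$ for all $i$. Likewise the stalk functor $(\cdot)_\x$ is exact, being a filtered colimit (Definition \ref{def:stalk}), hence commutes with the cohomology of any complex $\mathcal{H}^\bullet$ of $A$-modules, i.e. $\HH^i(\mathcal{H}^\bullet_\x) \cong (\HH^i(\mathcal{H}^\bullet))_\x$. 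Combining these with the key lemma yields the chain of natural isomorphisms
\[
\HH^i\big(f^{-1}(\mathcal{G}^\bullet)_\x\big) \;\cong\; \big(\HH^i(f^{-1}\mathcal{G}^\bullet)\big)_\x \;\cong\; \big(f^{-1}\HH^i(\mathcal{G}^\bullet)\big)_\x \;\cong\; \big(\HH^i(\mathcal{G}^\bullet)\big)_\y \;\cong\; \HH^i(\mathcal{G}^\bullet_\y),
\]
which is the assertion.

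The only point that is not purely formal is the cofinality computation in the key lemma, and even that is routine; everything else is bookkeeping with exact functors, so I do not anticipate any real obstacle.
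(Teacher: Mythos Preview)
Your argument is correct and is the standard direct proof of this fact. The paper itself does not give an argument at all: its entire proof is the citation ``See \cite[II.4]{Iversen}'', so you have simply written out what the reference would supply. Your observation that neither compactness nor properness is needed here is also right; those hypotheses are only relevant for the base-change statement that follows.
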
  
\begin{proof}
See \cite[II.4]{Iversen}.
\end{proof}

In order to state the next isomorphism we first recall the notion of a Cartesian square.
\begin{definition}
\label{def:cartesian}
For semi-algebraic sets $X, Y,X',Y'$, we say that the following diagram of semi-algebraic maps 
\[
\xymatrix{
X' \ar[r]^{g'} \ar[d]^{f'} & X \ar[d]^{f} 
\\
Y'\ar[r]^{g} & Y
}
\]
form a \emph{Cartesian square},  if it is isomorphic to the pull-back diagram of 
\[
\xymatrix{
& X \ar[d]^{f} 
\\
Y'\ar[r]^{g} & Y
}.
\]
In this case, $X'$ 
is semi-algebraically homeomorphic to 
the semi-algebraic set $\{(\x,\y') \in X \times Y' \;\mid\;  f(\x) = g(\y') \}$.
\end{definition}

\begin{theorem} [Proper base change theorem]
\label{thm:proper-base-change}
Given a Cartesian square as above, with all sets compact, we have for every $q$, and 
$\mathcal{F}^\bullet \in  \Ob(\mathbf{D}^b(X,A))$,
\[
g^{-1}R^q f_*\mathcal{F}^\bullet \cong R^q f'_* g'^{-1}\mathcal{F}^\bullet.
\]

In particular, if $Y' = \{\y\} \subset Y$ and $g$ is the inclusion map, then 
this implies that 
\[
(R^q f_*(\mathcal{F}^\bullet))_\y \cong \HH^q((Rf_*(\mathcal{F}^\bullet))_\y) \cong  \hyperH^q(f^{-1}(\y), \mathcal{F}^\bullet).
\]
\end{theorem}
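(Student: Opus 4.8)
The plan is to deduce the statement about the higher direct image sheaves $R^qf_*$ from a single isomorphism in the derived category $\mathbf{D}^b(Y',A)$, to reduce that isomorphism to a stalkwise verification via a canonical ``base change morphism'', and finally to prove the local fact — which is exactly the ``in particular'' clause — that for a proper map the stalk of $Rf_*\mathcal{F}^\bullet$ at a point is the hypercohomology of the corresponding fibre. For the first reduction, note that $g^{-1}$ is exact (Remark \ref{rem:exactness}), hence commutes with the formation of cohomology sheaves, so that $g^{-1}R^qf_*\mathcal{F}^\bullet\cong\HH^q(g^{-1}Rf_*\mathcal{F}^\bullet)$ and $R^qf'_*g'^{-1}\mathcal{F}^\bullet\cong\HH^q(Rf'_*g'^{-1}\mathcal{F}^\bullet)$; thus it suffices to construct a natural isomorphism $g^{-1}Rf_*\mathcal{F}^\bullet\xrightarrow{\sim}Rf'_*g'^{-1}\mathcal{F}^\bullet$ in $\mathbf{D}^b(Y',A)$, which on $\HH^q$ then gives the asserted isomorphism for every $q$.

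To build the morphism, I would use the adjunction $f'^{-1}\dashv Rf'_*$ from Section \ref{subsec:adjoint}: a morphism $g^{-1}Rf_*\mathcal{F}^\bullet\to Rf'_*g'^{-1}\mathcal{F}^\bullet$ is the same datum as a morphism $f'^{-1}g^{-1}Rf_*\mathcal{F}^\bullet\to g'^{-1}\mathcal{F}^\bullet$, and since the square commutes we have $f'^{-1}g^{-1}\cong g'^{-1}f^{-1}$, so one takes $g'^{-1}$ applied to the counit $f^{-1}Rf_*\mathcal{F}^\bullet\to\mathcal{F}^\bullet$; call the resulting natural map $\theta$. It then remains to see that $\theta$ is an isomorphism, which may be checked on stalks. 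Fix $y'\in Y'$, set $y=g(y')$ and $F=f^{-1}(y)$. By Proposition \ref{prop:adjunction} applied to $g$, $(g^{-1}Rf_*\mathcal{F}^\bullet)_{y'}\cong(Rf_*\mathcal{F}^\bullet)_y$; and the Cartesian condition (Definition \ref{def:cartesian}) identifies $f'^{-1}(y')$ semi-algebraically with $F$ in such a way that $g'$ restricts to the inclusion $F\hookrightarrow X$, whence by Remark \ref{rem:restriction-to-locally-closed} the restriction of $g'^{-1}\mathcal{F}^\bullet$ to $f'^{-1}(y')$ is $\mathcal{F}^\bullet|_F$. Granting the key fact below (applied to both $f$ and $f'$), both stalks get identified with $R\Gamma(F,\mathcal{F}^\bullet|_F)$, and a routine diagram chase with the units and counits shows $\theta$ realises exactly this identification.

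The remaining ingredient — which is also precisely the ``in particular'' statement — is: for $f:X\to Y$ a continuous semi-algebraic map with $X$ compact (hence $f$ proper), every $y\in Y$ and every $\mathcal{F}^\bullet\in\Ob(\mathbf{D}^b(X,A))$, one has $\HH^q((Rf_*\mathcal{F}^\bullet)_y)\cong\hyperH^q(f^{-1}(y),\mathcal{F}^\bullet)$. By definition $(Rf_*\mathcal{F}^\bullet)_y=\varinjlim_{U\ni y}\hyperH^\bullet(f^{-1}(U),\mathcal{F}^\bullet)$, the colimit over open semi-algebraic neighbourhoods $U$ of $y$. Using properness, $f^{-1}(y)=\bigcap_U f^{-1}(\overline{U})$ is the intersection of a decreasing, cofinal family of compact semi-algebraic sets, and by the continuity of sheaf hypercohomology on a locally compact space this intersection computes the colimit; interleaving $f^{-1}(\overline{U})\supset f^{-1}(U)\supset f^{-1}(\overline{U'})$ for $U'$ with $\overline{U'}\subset U$ then yields the claim. (Equivalently, one may simply invoke the topological proper base change theorem for $Rf_!$ on locally compact Hausdorff spaces — see \cite[Ch.\ II]{Iversen} and \cite{KS} — together with the fact that $Rf_!=Rf_*$ for a proper $f$, which applies since a semi-algebraic map of compact semi-algebraic sets is in particular such a map; this then gives the full Cartesian-square statement at once.)

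I expect this last step to be the only real obstacle: the interchange of the filtered colimit over neighbourhoods of $y$ with hypercohomology is exactly where compactness of $X$ (equivalently, properness of $f$) enters, and it is here that one must either use the semi-algebraic local structure — for instance Hardt triviality, cf.\ \cite[Theorem 9.3.2]{BCR}, to arrange that the sets $f^{-1}(\overline{U})$ form a cofinal family retracting onto $F$ — or defer to the topological proper base change theorem. Everything else (Steps 1 and 2, and the stalk comparison in Step 3) is formal manipulation of adjoint functors and exact functors in the derived categories $\mathbf{D}^b(X,A)$, $\mathbf{D}^b(Y,A)$, $\mathbf{D}^b(Y',A)$.
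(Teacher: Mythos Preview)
Your sketch is essentially the standard proof of proper base change---construct the base-change morphism $\theta$ via the adjunction $f'^{-1}\dashv Rf'_*$, reduce to checking it is an isomorphism on stalks, and then identify the stalk $(Rf_*\mathcal{F}^\bullet)_y$ with $R\Gamma(f^{-1}(y),\mathcal{F}^\bullet|_{f^{-1}(y)})$ using that $f$ is proper. The paper, however, does not give its own proof of this theorem at all: it simply cites \cite[Theorem 2.3.26]{Dimca-sheaves}. So there is nothing substantive to compare; you have in fact written out (correctly, at the level of a sketch) what the cited reference does, whereas the paper treats the result as a black box from the literature.

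One minor remark on your write-up: the identification $(Rf_*\mathcal{F}^\bullet)_y=\varinjlim_{U\ni y}\hyperH^\bullet(f^{-1}(U),\mathcal{F}^\bullet)$ is not quite the definition of the stalk of a complex in the derived category (one should take the stalk of an injective resolution, or equivalently $\varinjlim R\Gamma(f^{-1}(U),\mathcal{F}^\bullet)$ in $\mathbf{D}^b(A\textbf{-mod})$), but since filtered colimits are exact this makes no difference for the cohomology computation you are after. Your identification of the crux---that properness is exactly what makes the colimit over neighbourhoods compute the fibre hypercohomology, via either a continuity argument for compact sets or the general topological proper base change for $Rf_!$---is correct.
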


\begin{proof}
See \cite[Theorem 2.3.26]{Dimca}.
\end{proof}

\begin{theorem}\cite[I.7, Theorem 7.15]{Iversen}
Let $F_1: \mathbf{A} \rightarrow \mathbf{B}$, $F_2: \mathbf{B} \rightarrow \mathbf{C}$ be additive functors
between abelian categories with enough injectives. If $F_1$ transforms injectives into $F_2$-acyclics and $F_2$
is left exact  then 
\[
R(U \circ T) \cong RU \circ RT.
\]
\end{theorem}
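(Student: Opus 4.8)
The plan is to run the classical argument for composition of right derived functors, identifying $T=F_1$ and $U=F_2$, so that the asserted isomorphism reads $R(F_2\circ F_1)\cong RF_2\circ RF_1$ as functors on the derived category $\mathbf{D}^{+}$ of bounded-below complexes (the bounded-below hypothesis, implicit in this category, will be essential for a convergence argument below). Since $\mathbf{A}$ and $\mathbf{B}$ have enough injectives, all three right derived functors $RF_1\colon\mathbf{D}^{+}(\mathbf{A})\to\mathbf{D}^{+}(\mathbf{B})$, $RF_2\colon\mathbf{D}^{+}(\mathbf{B})\to\mathbf{D}^{+}(\mathbf{C})$ and $R(F_2\circ F_1)\colon\mathbf{D}^{+}(\mathbf{A})\to\mathbf{D}^{+}(\mathbf{C})$ exist and are computed by applying the respective functor to an injective resolution. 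The only substantive input beyond the definitions is the hypothesis that $F_1$ sends injectives of $\mathbf{A}$ to $F_2$-acyclic objects of $\mathbf{B}$.

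First I would fix $X^\bullet\in\mathbf{D}^{+}(\mathbf{A})$ and choose a quasi-isomorphism $X^\bullet\to I^\bullet$ with $I^\bullet$ a bounded-below complex of injectives. By the definition of right derived functors, $RF_1(X^\bullet)\cong F_1(I^\bullet)$ and $R(F_2\circ F_1)(X^\bullet)\cong (F_2\circ F_1)(I^\bullet)$. The point is that $F_1(I^\bullet)$ is a bounded-below complex of \emph{$F_2$-acyclic} objects of $\mathbf{B}$, since each term $I^k$ is injective and hence $F_1(I^k)$ is $F_2$-acyclic by hypothesis. Thus the whole statement reduces to the assertion that $RF_2$ applied to a bounded-below complex of $F_2$-acyclics recovers the naive application of $F_2$.

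The key step — and the technical heart of the proof — is precisely this lemma: if $J^\bullet$ is a bounded-below complex of $F_2$-acyclic objects of $\mathbf{B}$, then the canonical morphism $F_2(J^\bullet)\to RF_2(J^\bullet)$ is an isomorphism in $\mathbf{D}^{+}(\mathbf{C})$. I would prove it by choosing a Cartan--Eilenberg resolution $J^\bullet\to\mathbf{I}^{\bullet,\bullet}$, a bounded-below double complex of injectives of $\mathbf{B}$ that resolves $J^\bullet$ column-wise and whose row cocycles, coboundaries and cohomologies are simultaneously resolved; then $RF_2(J^\bullet)$ is represented by the total complex $\mathrm{Tot}\,F_2(\mathbf{I}^{\bullet,\bullet})$. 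Filtering by columns gives a spectral sequence with $E_1^{p,q}=R^qF_2(J^p)$, which vanishes for $q>0$ because $J^p$ is $F_2$-acyclic and equals $F_2(J^p)$ for $q=0$; the bounded-below hypothesis guarantees convergence, so the spectral sequence degenerates and $\mathrm{Tot}\,F_2(\mathbf{I}^{\bullet,\bullet})$ is quasi-isomorphic to $F_2(J^\bullet)$, as desired. Applying this to $J^\bullet=F_1(I^\bullet)$ yields
\[
RF_2\bigl(RF_1(X^\bullet)\bigr)\cong RF_2\bigl(F_1(I^\bullet)\bigr)\cong F_2\bigl(F_1(I^\bullet)\bigr)=(F_2\circ F_1)(I^\bullet)\cong R(F_2\circ F_1)(X^\bullet).
\]

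To conclude I would check that this chain of isomorphisms is natural in $X^\bullet$: the passage to an injective resolution and the choice of a Cartan--Eilenberg resolution are functorial up to chain homotopy, which is exactly what is needed for the constructions to descend to well-defined morphisms in the derived categories, and standard comparison arguments show the resulting isomorphism is independent of the choices. Assembling these, the isomorphisms fit together into a natural isomorphism of functors $R(F_2\circ F_1)\xrightarrow{\ \sim\ } RF_2\circ RF_1$. The place where genuine care is required is the convergence of the column-filtration spectral sequence in the key lemma — this is where boundedness below cannot be dropped — and, relatedly, ensuring the Cartan--Eilenberg resolution is chosen with the boundedness that makes the total complex well-behaved.
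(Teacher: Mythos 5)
The paper does not actually prove this result---it is quoted verbatim (with a harmless notational slip: $F_1,F_2$ in the hypotheses become $T,U$ in the displayed conclusion) from Iversen's book and is used only as a stepping stone to state Theorem~4.22 (the derived-category version) and Corollary~4.23. So there is no ``paper's own proof'' to compare against; I can only evaluate your argument on its merits.

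Your proof is correct and is the standard proof of the Grothendieck composition theorem. You correctly isolate the heart of the matter: reduce to showing that $F_2$ applied termwise to a bounded-below complex of $F_2$-acyclics already computes $RF_2$, prove that via the column filtration of a Cartan--Eilenberg resolution, and then feed $J^\bullet = F_1(I^\bullet)$ into that lemma. A few small points worth flagging. First, the left-exactness of $F_2$ is used exactly where you identify $E_1^{p,0}$ with $F_2(J^p)$; without it one would only get $R^0F_2(J^p)$, which need not agree with $F_2(J^p)$, so it is good that you kept this in sight. Second, a common alternative to the Cartan--Eilenberg argument is an inductive d\'evissage: given a bounded-below complex $J^\bullet$ of $F_2$-acyclics, choose a quasi-isomorphism $J^\bullet\to I^\bullet$ with $I^\bullet$ a bounded-below complex of injectives, truncate, and prove by induction on length (using the long exact sequence and the defining property of $F_2$-acyclics) that $F_2(J^\bullet)\to F_2(I^\bullet)$ is a quasi-isomorphism; this avoids double complexes entirely and is perhaps cleaner for the bounded-below case. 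Either route is fine. Finally, your naturality remark at the end is the right thing to say, though in a full write-up you would want to make explicit that the isomorphism is first constructed as a natural transformation $R(F_2\circ F_1)\Rightarrow RF_2\circ RF_1$ (this morphism exists for arbitrary composable additive functors by the universal property of derived functors) and then shown to be an isomorphism by the acyclicity computation---constructing the two sides separately and then matching them, as your chain of isomorphisms suggests, requires a little more care to see that the result is independent of choices and natural.
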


In fact a similar theorem holds in the derived category. We omit the somewhat technical hypothesis which generalizes
the hypothesis that ``$F_1$ transforms injectives into $F_2$-acyclics and $F_2$
is left exact'' from the following theorem, but this technical hypothesis will always hold
in the derived categories we will encounter in this paper.

\begin{theorem}\cite[III.7]{Gelfand-Manin}
\label{thm:derived-spectral}
Let $\mathbf{A},\mathbf{B},\mathbf{C}$ be abelian categories and
let $F_1: \Kom^b(\mathbf{A}) \rightarrow \Kom^b(\mathbf{B})$, $F_2: \Kom^b(\mathbf{B}) \rightarrow \Kom^b(\mathbf{C})$ be two exact  functors (satisfying certain technical conditions that we omit).
Then, for any complex $A^\bullet \in \D^b(\mathbf{A})$ there exists a spectral sequence with
\[
E_2^{p,q} \cong R^pF_2(R^q F_1(A^\bullet))
\]
which abuts to 
$R(F_2 \circ F_1) (A^\bullet) \cong RF_2 ( RF_1 (A^\bullet))$. 
\end{theorem}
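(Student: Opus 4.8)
The plan is to recognize the claimed statement as the (hyper-)Grothendieck spectral sequence for the composition of two derived functors, which is established in \cite[III.7]{Gelfand-Manin}; here I indicate the shape of the argument. Since $\mathbf{A}$ has enough injectives, one first replaces the bounded complex $A^\bullet$ by a Cartan--Eilenberg resolution: a double complex $I^{\bullet,\bullet}$ of injective objects of $\mathbf{A}$, bounded in both directions (boundedness in the ``complex'' direction comes from $A^\bullet \in \D^b(\mathbf{A})$, and in the ``resolution'' direction from $A^\bullet$ having bounded cohomology), equipped with an augmentation $A^\bullet \to I^{\bullet,0}$ whose totalization is a quasi-isomorphism, and with the additional property that in each row the objects, the cocycles, and the coboundaries all form injective resolutions. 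By definition $RF_1(A^\bullet)$ is then represented by $\mathrm{Tot}\bigl(F_1(I^{\bullet,\bullet})\bigr)$.

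Next one applies $F_1$ and takes a Cartan--Eilenberg resolution of the double complex $F_1(I^{\bullet,\bullet})$ in $\mathbf{B}$, obtaining a triple complex of injectives of $\mathbf{B}$; applying $F_2$ and forming the total complex $L^\bullet$ in $\mathbf{C}$ produces a complex that computes $R(F_2\circ F_1)(A^\bullet)$. The object $L^\bullet$ carries two filtrations coming from its two outer gradings, hence two spectral sequences converging to $\HH^\bullet(L^\bullet)$ (convergence is automatic since everything in sight is bounded, so the filtrations are finite). One of these, by the (omitted) hypothesis that $F_1$ carries the relevant injectives to $F_2$-acyclic objects, degenerates and identifies $\HH^{p+q}(L^\bullet)$ with $R^{p+q}(F_2\circ F_1)(A^\bullet)$, while incidentally yielding the isomorphism $R(F_2\circ F_1)(A^\bullet) \cong RF_2(RF_1(A^\bullet))$ which is the conclusion of the preceding theorem. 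The other filtration has $E_2$-page $E_2^{p,q} \cong R^p F_2\bigl(R^q F_1(A^\bullet)\bigr)$, because taking cohomology first in the inner direction computes $R^qF_1(A^\bullet)$ (again using that the Cartan--Eilenberg rows are injective resolutions and that $F_1$ of an injective is $F_2$-acyclic) and then $R^pF_2$ is applied; this is exactly the asserted spectral sequence.

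The main obstacle is purely bookkeeping: arranging the nested Cartan--Eilenberg resolutions so that all rows, cocycle subobjects and coboundary subobjects are genuinely injective (so that the inner cohomology really does compute the derived functors), and checking that boundedness of $A^\bullet$ together with boundedness of its cohomology propagates through the totalizations to make both filtrations regular. None of this is specific to the categories of constructible sheaves, and it is carried out in full in \cite[III.7]{Gelfand-Manin}; what remains to be noted -- and what we verify elsewhere in the paper for the concrete functors $Rf_*$, $f^{-1}$, $\stackrel{L}{\otimes}$ and $\RHom$ that we actually use -- is that the ``technical conditions that we omit'' hold in each case of interest, which for us follows from the fact that injective sheaves are both $\Gamma$- and $f_*$-acyclic and that the relevant functors restrict well to $\mathbf{D}^b$.
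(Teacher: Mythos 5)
The paper does not prove this statement; it is cited verbatim from Gelfand--Manin III.7 and used as a black box. So there is no ``paper's own proof'' to compare against. Your sketch is the standard Grothendieck spectral sequence argument for a composition of derived functors, built from nested Cartan--Eilenberg resolutions, which is indeed the construction carried out in the cited reference -- so the approach is the same one the author is invoking, and the shape of the argument (two filtrations on the total complex, one degenerating via the acyclicity hypothesis to identify the abutment with $R^*(F_2\circ F_1)$, the other giving the $E_2$-page $R^pF_2(R^qF_1)$) is correct.

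One inaccuracy worth flagging: a Cartan--Eilenberg resolution of a bounded complex $A^\bullet$ is bounded in the complex direction but is, in general, \emph{not} bounded in the resolution direction -- boundedness of $\HH^*(A^\bullet)$ does not bound the injective dimensions involved. What makes the filtrations regular is not two-sided boundedness but the weaker fact that the double (and triple) complexes are bounded in one direction, so that each total degree receives contributions from only finitely many bidegrees. That is what is actually needed, and it holds here; alternatively, in the setting this paper actually uses ($\Q$-sheaves on finite-dimensional semi-algebraic spaces) the injective dimension is finite and the stronger boundedness does hold, but the theorem as stated is more general than that. With that correction your outline matches the reference.
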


The following corollary is then an immediate consequence of Theorem \ref{thm:derived-spectral}.
\begin{corollary}
\cite[Section 2.6, page 111]{KS}
\label{cor:composition}
If $f: X\rightarrow Y$, and $g:Y \rightarrow Z$ are continuous semi-algebraic maps between compact 
semi-algebraic sets, and $\mathcal{F}^\bullet \in \Ob(\mathbf{D}^b(X,A))$, then
\[
R(g \circ f)_* (\mathcal{F}^\bullet) \cong Rg_*(Rf_* (\mathcal{F}^\bullet)),
\]
and there exists a spectral sequence $E_r^{p,q}$, with 
\[
E_2^{p,q} \cong R^pg_*(R^qf_*(\mathcal{F}^\bullet)),
\]
converging to $R^n(g \circ f)_*(\mathcal{F}^\bullet)$. 
\end{corollary}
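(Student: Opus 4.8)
The plan is to read the corollary off from the Grothendieck-type spectral sequence of Theorem \ref{thm:derived-spectral} applied to the composable pair of functors $F_1 = f_*$ and $F_2 = g_*$, with $\mathbf{A} = \Sh(X,A)$, $\mathbf{B} = \Sh(Y,A)$, $\mathbf{C} = \Sh(Z,A)$. These three are abelian categories with enough injectives (Remark \ref{rem:abelian} and the discussion preceding Definition \ref{def:cohomology-compact-support}), and both $f_*$ and $g_*$ are left exact (Remark \ref{rem:exactness}), so they extend degreewise to exact functors on the corresponding categories $\Kom^b(\cdot)$ of bounded complexes, which is the setting of Theorem \ref{thm:derived-spectral}.

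The first step is the underived identity $(g\circ f)_* = g_*\circ f_*$ of functors $\Sh(X,A)\rightarrow \Sh(Z,A)$. For an open $U\subset Z$ both sides assign the $A$-module $\mathcal{F}((g\circ f)^{-1}(U)) = \mathcal{F}(f^{-1}(g^{-1}(U)))$, with the evident restriction maps, so the identity is immediate from Definition \ref{def:inverse-direct-image}; applying the functors degreewise transports it to $\Kom^b$. The second, and only substantive, step is to verify the one technical hypothesis hidden in the phrase ``satisfying certain technical conditions that we omit'' of Theorem \ref{thm:derived-spectral}, namely that $F_1 = f_*$ carries injective objects of $\Sh(X,A)$ to objects of $\Sh(Y,A)$ that are $F_2 = g_*$-acyclic. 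Here I would invoke that $f^{-1}$ is exact (Remark \ref{rem:exactness}) and is left adjoint to $f_*$ (the adjunction recalled in Section \ref{subsec:adjoint}); a right adjoint of an exact functor preserves injectives, so $f_*$ sends injective sheaves to injective sheaves, which are in particular acyclic for $g_*$. (Alternatively one argues through flasque sheaves: injectives are flasque, the direct image of a flasque sheaf is flasque, and flasque sheaves are acyclic for $\Gamma(V,\cdot)$ for every open $V$, hence for $g_*$.) This acyclicity compatibility is the step I expect to require the most care, since it is what legitimizes computing $R(g\circ f)_*$ via an injective resolution on $X$ whose $f_*$-image already resolves $Rg_*$.

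With these two points established, Theorem \ref{thm:derived-spectral} applies verbatim with $A^\bullet = \mathcal{F}^\bullet$: it produces the isomorphism $R(g\circ f)_*(\mathcal{F}^\bullet) \cong Rg_*(Rf_*(\mathcal{F}^\bullet))$ in $\mathbf{D}^b(Z,A)$, together with a spectral sequence $E_r^{p,q}$ having $E_2^{p,q}\cong R^p g_*(R^q f_*(\mathcal{F}^\bullet))$ and converging to $R^{p+q}(g\circ f)_*(\mathcal{F}^\bullet)$, which is exactly the assertion of the corollary. The compactness of $X$, $Y$, $Z$ is used only to remain in the proper setting, where $Rf_*$ and $Rg_*$ preserve boundedness of cohomology so that all objects stay in the bounded derived categories; it plays no role in the spectral sequence argument itself. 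Apart from the acyclicity check, the proof is pure bookkeeping, which is why it is recorded here merely as a corollary.
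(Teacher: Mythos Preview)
Your proposal is correct and takes essentially the same approach as the paper, which simply records that $Rf_*$ and $Rg_*$ satisfy the hypothesis of Theorem~\ref{thm:derived-spectral} and declares the result an immediate corollary. You have supplied the details the paper omits---the underived identity $(g\circ f)_* = g_*\circ f_*$ and the verification that $f_*$ preserves injectives via the exact left adjoint $f^{-1}$---but the route is identical.
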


 \begin{proof}
 The functors $Rf_*$ and $Rg_*$ satisfy the hypothesis of Theorem \ref{thm:derived-spectral}, and thus
 proposition is an immediate corollary.
  \end{proof} 
  
  Another application of Theorem \ref{thm:derived-spectral} is the following.
\begin{proposition} [Leray spectral sequence]
  \label{prop:spectral}
  Let $A$ be a field and let $X$ be a locally closed semi-algebraic set, and $\mathcal{F}^\bullet \in \Ob(\mathbf{D}^b(X,A))$.
  Then, there is a spectral sequence abutting to $\hyperH^*(X,\mathcal{F}^\bullet)$ whose $E_2$-term
  is given by 
  \[
  E_2^{p,q} \cong \HH^p(X,\mathcal{H}^q(\mathcal{F}^\bullet)),
  \]
  where $\mathcal{H}^q(\mathcal{F}^\bullet) \in \Ob(\Sh(X,A))$ is the sheaf defined by:
  $\Gamma(U,\mathcal{H}^q(\mathcal{F}^\bullet)) = \hyperH^q(\mathcal{F}^\bullet(U))$ for every open subset $U$
  of $X$.
  In particular, if 
  $\mathcal{H}^q(\mathcal{F}^\bullet)$ is a constant sheaf for each $q$, then this spectral sequence degenerates at its $E_2$-term, and we have
  \[
  \hyperH^m(X,\mathcal{F}) \cong \bigoplus_{p+q=m} \HH^p(X,A_X)\otimes\HH^q(\mathcal{F}^\bullet_\x),
  \]
  for all $\x\in X$.
  \end{proposition}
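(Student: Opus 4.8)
The plan is to produce the claimed spectral sequence as the ``second'' hypercohomology (local-to-global) spectral sequence of the left exact functor $\Gamma(X,\cdot)$, and then to deal with the degeneration statement separately. First, since $\mathcal{F}^\bullet$ has bounded cohomology I would replace it by a quasi-isomorphic bounded complex and filter it by its canonical truncations, so that for each $q$ there is a distinguished triangle $\tau^{\leq q-1}\mathcal{F}^\bullet \rightarrow \tau^{\leq q}\mathcal{F}^\bullet \rightarrow \mathcal{H}^q(\mathcal{F}^\bullet)[-q] \xrightarrow{+1}$ whose third term is the cohomology sheaf placed in degree $q$. Applying the derived global sections functor $R\Gamma(X,\cdot)\colon \mathbf{D}^b(X,A)\to \mathbf{D}^b(A\boldmod)$ (which exists and is exact as a functor of triangulated categories, $\Gamma(X,\cdot)$ being left exact with enough injectives in $\Sh(X,A)$) to this finite tower, and forming the associated exact couple in hypercohomology $\hyperH^*(X,\cdot)$, produces a bounded, hence convergent, spectral sequence abutting to $\hyperH^{p+q}(X,\mathcal{F}^\bullet)$ with $E_2^{p,q}\cong \HH^p\big(X,\mathcal{H}^q(\mathcal{F}^\bullet)\big)$. (Equivalently one takes an injective Cartan--Eilenberg resolution of $\mathcal{F}^\bullet$ and filters the resulting double complex by columns; this construction is standard and in the same spirit as Theorem \ref{thm:derived-spectral}, so I would simply cite \cite[III.7]{Gelfand-Manin} or \cite{KS}.) Along the way I would record that $\mathcal{H}^q(\mathcal{F}^\bullet)$ is the sheafification of $U\mapsto \hyperH^q(\mathcal{F}^\bullet(U))$, with stalk at $\x$ equal to $\HH^q(\mathcal{F}^\bullet_\x)$, since the stalk functor is exact and commutes with cohomology.

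For the ``in particular'' clause, suppose each $\mathcal{H}^q(\mathcal{F}^\bullet)$ is a constant sheaf, with stalk $M_q\cong \HH^q(\mathcal{F}^\bullet_\x)$ (independent of $\x$). Since $A$ is a field, $M_q$ is free and the constant sheaf on $X$ with stalk $M_q$ is a finite direct sum of copies of $A_X$; hence additivity of sheaf cohomology gives $E_2^{p,q}\cong \HH^p(X,A_X)\otimes_A M_q\cong \HH^p(X,A_X)\otimes_A \HH^q(\mathcal{F}^\bullet_\x)$. It then remains to see that the spectral sequence degenerates at $E_2$, which I would deduce from the stronger fact that in the situations where the proposition is applied --- typically $\mathcal{F}^\bullet = \pi^{-1}C^\bullet$ for $\pi\colon X\to\{\pt\}$ and a bounded complex $C^\bullet$ of $A$-modules --- the complex $\mathcal{F}^\bullet$ is already quasi-isomorphic to $\bigoplus_q \mathcal{H}^q(\mathcal{F}^\bullet)[-q]$: for such a pullback this follows at once from Proposition \ref{prop:basic-isomorphism-in-derived} together with the exactness of $\pi^{-1}$. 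Given such a splitting, $\hyperH^m(X,\mathcal{F}^\bullet)\cong\bigoplus_q \HH^{m-q}\big(X,\mathcal{H}^q(\mathcal{F}^\bullet)\big)\cong\bigoplus_{p+q=m}\HH^p(X,A_X)\otimes_A\HH^q(\mathcal{F}^\bullet_\x)$, and comparison with the $E_2$-page forces all differentials $d_r$, $r\geq 2$, to vanish.

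The genuine obstacle is this degeneration. Constancy of the cohomology \emph{sheaves} alone does not kill the higher differentials: $d_r$ is, up to sign, cup product with a $k$-invariant class of $\mathcal{F}^\bullet$ living in $\Hom_{\mathbf{D}^b(X,A)}\big(\mathcal{H}^q(\mathcal{F}^\bullet),\mathcal{H}^{q-r+1}(\mathcal{F}^\bullet)[r]\big)\cong \HH^r(X,A_X)\otimes_A\Hom_A(M_q,M_{q-r+1})$, and these groups are generally nonzero once $\HH^{\geq 2}(X,A_X)\neq 0$ --- one can exhibit explicit non-split $\mathcal{F}^\bullet$ with constant cohomology sheaves on, say, a torus, for which the displayed formula fails. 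So the real content of the second assertion is the vanishing of these $k$-invariants, equivalently the splitting $\mathcal{F}^\bullet\cong\bigoplus_q\mathcal{H}^q(\mathcal{F}^\bullet)[-q]$, which holds in all the applications in the paper; the remaining bookkeeping (convergence, re-indexing, and the field hypothesis eliminating $\mathrm{Tor}$-terms) is routine.
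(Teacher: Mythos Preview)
Your construction of the spectral sequence is exactly the paper's: the paper simply invokes Theorem~\ref{thm:derived-spectral} with $F_1$ the identity functor and $F_2=\Gamma(X,\cdot)$, which is the hypercohomology/Cartan--Eilenberg spectral sequence you describe. So on the first assertion you match the paper.

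On the ``in particular'' clause you are being more careful than the paper. The paper's entire argument for degeneration is the sentence ``the second part is a consequence of the universal coefficients theorem for homology.'' That sentence justifies the identification $E_2^{p,q}\cong \HH^p(X,A_X)\otimes_A \HH^q(\mathcal{F}^\bullet_\x)$ (since $A$ is a field and the stalk is free), but it does \emph{not} kill the higher differentials, and your observation that constancy of the $\mathcal{H}^q(\mathcal{F}^\bullet)$ alone cannot force $d_r=0$ is correct: a nonzero class in $\Hom_{\mathbf{D}^b(X,A)}(\Q_X,\Q_X[2])\cong \HH^2(X,A)$ on, say, $X=\Sphere^2$ already gives a two-term complex with constant cohomology sheaves and a nonzero $d_2$. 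So the degeneration statement, as written, is too strong, and your diagnosis of the obstacle is on point.

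Two remarks on your proposed workaround. First, the splitting via Proposition~\ref{prop:basic-isomorphism-in-derived} that you invoke applies to complexes of $A$-modules (or to pullbacks $\pi^{-1}C^\bullet$ from a point), not to the general constructible $\mathcal{F}^\bullet$ that actually appear in the paper's applications (e.g.\ in Proposition~\ref{prop:perturbation}), so this does not quite cover what is needed. Second --- and this is the practical resolution --- if you trace how Proposition~\ref{prop:spectral} is used downstream (in the proof of Theorem~\ref{thm:topological-complexity-sheaves}), only the \emph{inequality} $\dim_A\hyperH^m(X,\mathcal{F}^\bullet)\leq\sum_{p+q=m}\dim_A E_2^{p,q}$ is ever required, and that holds with no degeneration hypothesis. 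So the cleanest fix is to weaken the ``in particular'' to an inequality; with that change, both your proof and the paper's one-line appeal to universal coefficients are complete.
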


  \begin{proof}
  In Theorem \ref{thm:derived-spectral} take $F_2$  to be the global section functor,
   $\Gamma:\Kom^b(X,A)\rightarrow \Kom^b(A\boldmod)$, and $F_1$ the identity functor.  
  The second part is a consequence of the universal coefficients theorem for homology.
  \end{proof}

   \begin{proposition} (Mayer-Vietoris spectral sequence)
  \label{prop:cech}
  Let $\K$ be a 
  compact
  semi-algebraic set and 
  $\mathcal{C} = \{C_i\}_{i \in I}$ a finite covering of
  $\K$ by semi-algebraic closed subsets. Let $\mathcal{F}^\bullet \in \Ob(\mathbf{D}^b(\K,A))$. Then, there
  is a spectral sequence abutting to $\hyperH^*(X,\mathcal{F}^\bullet)$ whose $E_2$-term is given by
  \[
  E_2^{p,q} \cong \bigoplus_{(i_0,\ldots,i_p)} \hyperH^q(C_{i_0,\ldots,i_p}, \mathcal{F}^\bullet|_{C_{i_0,\ldots,i_p}}),
  \]
  where $C_{i_0,\ldots,i_p} = C_{i_0} \cap \cdots \cap C_{i_p}$.
  \end{proposition}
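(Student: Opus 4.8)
The plan is to build the \v{C}ech (Mayer--Vietoris) complex of sheaves attached to the closed cover $\mathcal{C}$, verify that it is a resolution of $\mathcal{F}^\bullet$, and then read off the spectral sequence from the bicomplex obtained by applying $\Gamma(\K,\cdot)$ to it. For a tuple $i_0<\cdots<i_p$ let $j_{i_0\cdots i_p}\colon C_{i_0\cdots i_p}\hookrightarrow\K$ denote the (closed) inclusion, and set
\[
\mathcal{C}^p(\mathcal{G})\;=\;\bigoplus_{i_0<\cdots<i_p}\,(j_{i_0\cdots i_p})_*\big(\mathcal{G}|_{C_{i_0\cdots i_p}}\big),
\]
equipped with the usual alternating-sum \v{C}ech differential $\mathcal{C}^p(\mathcal{G})\to\mathcal{C}^{p+1}(\mathcal{G})$ and the augmentation $\mathcal{G}\to\mathcal{C}^0(\mathcal{G})$. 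First I would check that for a single sheaf $\mathcal{G}$ the complex $0\to\mathcal{G}\to\mathcal{C}^0(\mathcal{G})\to\mathcal{C}^1(\mathcal{G})\to\cdots$ is exact. Since a closed inclusion $j$ satisfies $(j_*\mathcal{H})_\x\cong\mathcal{H}_\x$ when $\x$ lies in the closed set and $(j_*\mathcal{H})_\x=0$ otherwise (immediate from Definition \ref{def:stalk} and Definition \ref{def:inverse-direct-image}), the stalk at $\x\in\K$ of this complex is $\mathcal{G}_\x$ tensored with the augmented simplicial cochain complex of the full simplex on the vertex set $J_\x=\{\,i\in I:\x\in C_i\,\}$; this set is nonempty because $\mathcal{C}$ covers $\K$, and a simplex is contractible, so the augmented complex is acyclic. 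Hence $\mathcal{C}^\bullet(\mathcal{G})$ is a resolution of $\mathcal{G}$.

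Next I would replace $\mathcal{F}^\bullet$ by a bounded-below complex of injective sheaves $\mathcal{I}^\bullet$ with $\mathcal{I}^\bullet\cong\mathcal{F}^\bullet$ in $\mathbf{D}^b(\K,A)$, apply the construction above degreewise, and take global sections, obtaining the bicomplex
\[
K^{p,q}\;=\;\Gamma\big(\K,\mathcal{C}^p(\mathcal{I}^q)\big)\;=\;\bigoplus_{i_0<\cdots<i_p}\Gamma\big(C_{i_0\cdots i_p},\ \mathcal{I}^q|_{C_{i_0\cdots i_p}}\big),
\]
the second equality being the definition of the direct image (Definition \ref{def:inverse-direct-image}) together with Notation \ref{not:Gamma}. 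The total complex $\mathrm{Tot}(K^{\bullet,\bullet})$ carries the two standard spectral sequences of a bicomplex, both convergent since $\mathcal{C}$ is finite, $\mathcal{F}^\bullet$ is bounded, and $\K$ is compact (hence of finite cohomological dimension), so the bicomplex is supported in a bounded region. Taking cohomology first along $p$: for each fixed $q$ the complex $\mathcal{C}^\bullet(\mathcal{I}^q)$ is an exact resolution of $\mathcal{I}^q$ by sheaves of the form $(j_J)_*\big(\mathcal{I}^q|_{C_J}\big)$, which are $\Gamma(\K,\cdot)$-acyclic (the restriction of an injective, hence flasque, sheaf to the closed set $C_J$ remains acyclic for $\Gamma(C_J,\cdot)$, and $\Gamma(\K,(j_J)_*\,\cdot)=\Gamma(C_J,\cdot)$), so applying the left-exact functor $\Gamma(\K,\cdot)$ keeps the sequence exact; this spectral sequence therefore degenerates and identifies $\HH^\bullet(\mathrm{Tot})$ with $\HH^\bullet\big(\Gamma(\K,\mathcal{I}^\bullet)\big)=\hyperH^\bullet(\K,\mathcal{F}^\bullet)$ (Definition \ref{def:hypercohomology}). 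This is the claimed abutment.

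Taking cohomology first along $q$ produces the page displayed in the statement: the $q$-th cohomology of the column $\big(\Gamma(C_{i_0\cdots i_p},\mathcal{I}^\bullet|_{C_{i_0\cdots i_p}})\big)$ is $\hyperH^q\big(C_{i_0\cdots i_p},\mathcal{F}^\bullet|_{C_{i_0\cdots i_p}}\big)$, because $\mathcal{I}^\bullet|_{C_{i_0\cdots i_p}}=j_{i_0\cdots i_p}^{-1}\mathcal{I}^\bullet$ (Definition \ref{def:restriction-to-locally-closed}) is a complex of $\Gamma(C_{i_0\cdots i_p},\cdot)$-acyclic sheaves computing $R\Gamma(C_{i_0\cdots i_p},\mathcal{F}^\bullet|_{C_{i_0\cdots i_p}})$ --- equivalently, $R\Gamma(\K,\cdot)\circ R(j_{i_0\cdots i_p})_*\cong R\Gamma(C_{i_0\cdots i_p},\cdot)$ by the composition law for derived direct images (Corollary \ref{cor:composition}, applied to $C_{i_0\cdots i_p}\hookrightarrow\K\to\mathrm{pt}$, noting $R(j_{i_0\cdots i_p})_*=(j_{i_0\cdots i_p})_*$ for a closed inclusion). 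The surviving differential on this page is the \v{C}ech differential, and, with the indexing used in the statement, this is the term $E_2^{p,q}\cong\bigoplus_{(i_0,\ldots,i_p)}\hyperH^q(C_{i_0,\ldots,i_p},\mathcal{F}^\bullet|_{C_{i_0,\ldots,i_p}})$.

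I expect the main nuisance to be the bookkeeping needed to upgrade the single-sheaf argument to a genuine complex $\mathcal{F}^\bullet$: one must choose the resolution $\mathcal{I}^\bullet$, check that the degreewise \v{C}ech construction really assembles into a bicomplex (rather than merely a complex of complexes requiring a Cartan--Eilenberg resolution), and track the acyclicity of injectives and of their restrictions to the closed sets $C_J$ through both filtrations; everything else is the formal machinery of the spectral sequence of a bicomplex. An essentially equivalent alternative would be to cast the construction directly in the form of Theorem \ref{thm:derived-spectral}, with one functor sending $\mathcal{F}^\bullet$ to the cosimplicial system $\big(\Gamma(C_J,\mathcal{F}^\bullet|_{C_J})\big)_J$ over the nerve of $\mathcal{C}$ and the other being \v{C}ech totalization; but the explicit bicomplex above seems the most transparent route.
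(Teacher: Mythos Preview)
Your proposal is correct and is precisely the ``standard argument using spectral sequences arising from double complexes'' that the paper invokes: the paper's proof consists only of a citation to \cite[Lemma 2.8.2]{KS} (which is the exactness of the \v{C}ech resolution for a closed cover, i.e.\ your stalkwise computation) together with the phrase ``standard arguments,'' and you have unpacked those arguments in full. The one point worth flagging is that the acyclicity of $\mathcal{I}^q|_{C_J}$ for $\Gamma(C_J,\cdot)$ is most cleanly justified by softness (injective $\Rightarrow$ flasque $\Rightarrow$ soft on the paracompact space $\K$, and softness is inherited by restriction to closed subsets) rather than by flasqueness directly, but this is exactly the kind of bookkeeping you already anticipated.
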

  
   \begin{proof}
   Follows from \cite[Lemma 2.8.2]{KS} and standard arguments using spectral sequences
   arising from double complexes.
    \end{proof}

 \subsubsection{Constructible Sheaves}
 
\begin{definition}[Constructible Sheaves]
\label{def:constructible-sheaf}
Let $X$ be a locally closed semi-algebraic set. Following \cite{KS}, 
an object $\mathcal{F}^\bullet \in \Ob(\mathbf{D}^b(X,A))$ is
said to be \emph{constructible} if it satisfies the following two
conditions:
\begin{enumerate}
\item
There exists a finite partition $X = \cup_{i \in I} C_i$ of $X$ by
locally closed semi-algebraic subsets such that for $j \in \mathbb{Z}$ and $i \in I$,
the $\HH^j(\mathcal{F}^\bullet)|_{C_i}$ are locally constant. 
This means that for each $i \in I$, and $j \in \Z$, the sheaf on $C_i$ associated to the pre-sheaf defined by 
$U \mapsto  \HH^j(\mathcal{F}^\bullet|_U)$, is a locally constant sheaf (cf. Definition \ref{def:locally-constant}). 
This is equivalent to saying that for each $i \in I$, and
$\x \in C_i$, there exists an open neighborhood $U$ of $\x$ in $C$, such that for every $\x' \in U$, the 
restriction map $r$ induces isomorphisms 
$r_*: \HH^*(\mathcal{F}^\bullet(U)) \rightarrow \HH^*((\mathcal{F}^\bullet)_{\x'})$. We call such a partition to be \emph{subordinate} to $\mathcal{F}^\bullet$. 

For each $\x \in X$ we denote by $P_{\mathcal{F}^\bullet_\x} \in \Z[T]$ the
Poincar\'e polynomial of the bounded complex $\HH^*(\mathcal{F}^\bullet_\x)$  (which is in fact a Laurent polynomial in this case) defined by
\[
P_{\mathcal{F}^\bullet_\x}(T)  =  \sum_{i \in \Z} (\dim_A \HH^i (\mathcal{F}^\bullet_\x)) T^i.
\]
\item
For each $\x \in X$, the stalk $\mathcal{F}^\bullet_\x$ has the following properties:
\begin{enumerate}
\item
for each $j \in \mathbb{Z}$, 
the cohomology groups $\HH^j(\mathcal{F}^\bullet_\x)$ are finitely 
generated, and  
\item
there exists $N$ such that $\HH^j(\mathcal{F}^\bullet_\x) = 0$ for all $\x\in X$ and $|j| >N$.
\end{enumerate}

\end{enumerate}
We will denote the category of constructible sheaves on $X$ by $\constrD^b(X,A)$.
\end{definition}

From now on we will fix $A=\Q$ for convenience. Then all $A$-modules are projective, and 
in fact vector spaces over $\Q$. We will henceforth drop in all the notation the reference to the 
ring $A$, taking $A=\Q$.

\begin{example}
Let $X \subset \mathbb{R}^n$ be a  closed semi-algebraic subset. 
Let $i: X \hookrightarrow \R^n$ be the inclusion map.
Then the sheaf $Ri_{*}(\Q_X)$ is a constructible sheaf on $\R^n$. The stalks of $Ri_{*}(\Q_X)$ are given by:
\begin{eqnarray*}
(Ri_*(\Q_X))_\x &\cong& \Q, \mbox{ for } \x \in X, \\
&\cong& 0, \mbox { otherwise},
\end{eqnarray*}
where we denote by  $\Q$, the complex $C^\bullet$ defined by
$C^0 = \Q,$ and $C^j = \mathbf{0}$ for $j \neq 0$.
\end{example}

\subsubsection{Closure of the category of constructible sheaves under certain operations}
\begin{theorem}
\label{thm:closure1}
Let $X,Y$ be semi-algebraic sets, 
$f:X \rightarrow Y$ a semi-algebraic map, and
$\mathcal{F}^\bullet,  \in \Ob(\constrD^b(X)), \mathcal{G}^\bullet \in \Ob(\constrD^b(Y))$.
Then the following holds.
\begin{enumerate}
\item
$f^{-1}\mathcal{G}^\bullet \in \constrD^b(X)$.
\item 
Suppose that $f$ is proper restricted to $\supp(\mathcal{F}^\bullet)$. Then, 
$Rf_*(\mathcal{F}^\bullet), Rf_{!}(\mathcal{F}^\bullet) \in \Ob(\constrD^b(Y))$.
\end{enumerate}
\end{theorem}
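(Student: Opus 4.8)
The plan is to reduce the statement to the corresponding local-triviality statements about semi-algebraic maps, using the existence of a stratification compatible with both $f$ and the constructibility data of $\mathcal{F}^\bullet$ and $\mathcal{G}^\bullet$. First I would invoke the semi-algebraic Hardt triviality theorem (see \cite[Theorem 9.3.2]{BCR}) to obtain a finite semi-algebraic partition $Y = \bigsqcup_{k} D_k$ such that $f$ is semi-algebraically trivial over each $D_k$; refining if necessary (again using Hardt's theorem applied to the partition subordinate to $\mathcal{F}^\bullet$ on $X$ and the partition subordinate to $\mathcal{G}^\bullet$ on $Y$), I may assume that the partition of $X$ obtained by pulling back the $D_k$ and intersecting with a partition subordinate to $\mathcal{F}^\bullet$ is itself subordinate to $\mathcal{F}^\bullet$, and that $\bigsqcup_k D_k$ refines a partition subordinate to $\mathcal{G}^\bullet$.

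For part (1), I would show $f^{-1}\mathcal{G}^\bullet \in \constrD^b(X)$ as follows. The finiteness and boundedness conditions (2)(a),(b) in Definition~\ref{def:constructible-sheaf} are immediate from Proposition~\ref{prop:adjunction}, which identifies $\HH^i((f^{-1}\mathcal{G}^\bullet)_\x)$ with $\HH^i(\mathcal{G}^\bullet_{f(\x)})$; so these are finitely generated and vanish uniformly in $|i|$. For condition (1), take the partition of $X$ whose pieces are $C_i \cap f^{-1}(D_k)$, where $\{C_i\}$ is subordinate to $\mathcal{G}^\bullet$ after pulling back — more precisely, take the common refinement of $f^{-1}\{D_k\}$ with any locally closed semi-algebraic partition of $X$ refining it into pieces on which $f$ restricts to a nice map. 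On each such piece $P$, every point $\x \in P$ has $f(\x)$ lying in a single $D_k$ over which $\mathcal{H}^q(\mathcal{G}^\bullet)$ is locally constant; pulling back along the continuous map $f|_P$ a locally constant sheaf yields a locally constant sheaf, and since $f^{-1}$ is exact it commutes with taking $\mathcal{H}^q$, so $\mathcal{H}^q(f^{-1}\mathcal{G}^\bullet)|_P$ is locally constant. This handles (1).

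For part (2), with $f$ proper on $\supp(\mathcal{F}^\bullet)$, the key tool is the proper base change theorem (Theorem~\ref{thm:proper-base-change}), which for $\y \in Y$ gives $\HH^q((Rf_*\mathcal{F}^\bullet)_\y) \cong \hyperH^q(f^{-1}(\y), \mathcal{F}^\bullet|_{f^{-1}(\y)})$. Finiteness and uniform boundedness of these hyper-cohomology groups follow from the Leray spectral sequence (Proposition~\ref{prop:spectral}) on the fiber, combined with the fact that the fibers are semi-algebraic sets of bounded ``complexity'' and the cohomology of the constructible sheaf $\mathcal{F}^\bullet$ is finite-dimensional with uniformly bounded amplitude, plus finiteness of semi-algebraic cohomology. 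This gives condition (2). For condition (1), I would use Hardt triviality over $Y$: refine $Y = \bigsqcup_k D_k$ so that $f$ restricted to $\supp(\mathcal{F}^\bullet)$ is semi-algebraically trivial over each $D_k$ \emph{and} so that this trivialization is compatible with the partition of $\supp(\mathcal{F}^\bullet)$ subordinate to $\mathcal{F}^\bullet$. Over a connected $D_k$ the pair (fiber, restriction of the subordinate partition) is then semi-algebraically homeomorphic to a product, hence $\mathcal{F}^\bullet$ restricted to $f^{-1}(D_k)$ is, up to the trivialization, pulled back from the fiber; applying proper base change again shows $\mathcal{H}^q(Rf_*\mathcal{F}^\bullet)|_{D_k}$ is locally constant. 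The same argument with $f_!$ in place of $f_*$ works verbatim since $Rf_! \cong Rf_*$ on $\supp(\mathcal{F}^\bullet)$ when $f$ is proper there.

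The main obstacle I anticipate is the compatibility bookkeeping in the last step: ensuring that one can \emph{simultaneously} stratify $Y$ so that $f$ is trivial over each stratum and the trivialization respects a partition subordinate to $\mathcal{F}^\bullet$ on $X$ — this requires applying Hardt's theorem to the map $f|_{\supp(\mathcal{F}^\bullet)}$ together with the subordinate semi-algebraic partition, and then checking that proper base change combined with the product structure really does force local constancy of $\mathcal{H}^q(Rf_*\mathcal{F}^\bullet)$ along each stratum of $Y$. The subtlety is that proper base change gives an isomorphism of stalks fiber-by-fiber, but local constancy over a stratum requires an isomorphism with a \emph{neighborhood} section; this is supplied precisely by the semi-algebraic local triviality of $f$ over the stratum, which makes $Rf_*\mathcal{F}^\bullet$ restricted to the stratum isomorphic to a constant complex up to the trivializing homeomorphism. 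Everything else (the exactness of $f^{-1}$, finiteness of fibers' cohomology, boundedness) is routine given the cited results.
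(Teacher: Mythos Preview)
The paper does not supply its own proof; it simply refers to \cite[Propositions~8.4.8, 8.4.10]{KS}. Your outline is therefore not reproducing the paper's argument but sketching a direct semi-algebraic proof, with Hardt's triviality theorem standing in for the $\mu$-stratification machinery that Kashiwara--Schapira use.

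Part~(1) is fine as written. For part~(2), you have correctly isolated the only nontrivial point and are honest about it: Hardt triviality plus proper base change show that the stalks $(R^qf_*\mathcal{F}^\bullet)_\y$ are abstractly isomorphic across a stratum $D_k$, but local constancy requires these isomorphisms to be the ones induced by the restriction maps of the sheaf. Your remedy --- that the trivialization makes $\mathcal{F}^\bullet|_{f^{-1}(D_k)}$ ``pulled back from the fiber'' --- is morally correct but, as stated, assumes slightly more than the trivialization delivers: a complex whose cohomology sheaves are locally constant on product strata $D_k\times S_i$ is not automatically quasi-isomorphic to a pullback along the projection to the fiber. One way to close this is to refine the $D_k$ further into \emph{contractible} pieces (available semi-algebraically; cf.\ the covering-by-contractibles construction in Proposition~\ref{prop:covering} used later in the paper) and then combine Proposition~\ref{prop:contractibility-implies-constant} with the Leray spectral sequence of Proposition~\ref{prop:spectral}; alternatively one can run a d\'evissage reducing $\mathcal{F}^\bullet$ to sheaves of the form $j_!\Q_Z$ with $Z$ locally closed, for which the claim is a direct consequence of Hardt. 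Either route completes your sketch, and this extra step is exactly what the cited Kashiwara--Schapira propositions package via their non-characteristic deformation lemma.
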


\begin{proof}
See \cite[Propositions 8.4.8, 8.4.10]{KS}.
\end{proof}

\begin{theorem}
\label{thm:closure2}
Let $X$ be a semi-algebraic set, 
$\mathcal{F}^\bullet, \mathcal{G}^\bullet \in \Ob(\constrD^b(X))$.
Then,
\[\mathcal{F}^\bullet \stackrel{L}{\otimes} \mathcal{G}^\bullet, \RHom(\mathcal{F}^\bullet, \mathcal{G}^\bullet) 
\in \Ob(\constrD^b(X)).
\]
\end{theorem}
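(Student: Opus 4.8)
The plan is to verify the two defining conditions of Definition \ref{def:constructible-sheaf} directly, treating $\stackrel{L}{\otimes}$ first and then reducing the case of $\RHom$ to it via \eqref{eqn:dual_hom}. Throughout I would use that $A=\Q$ is a field: every $\Q$-module is free, so $\stackrel{L}{\otimes}$ agrees with $\otimes$ on complexes, $\mathrm{Ext}^{>0}_\Q(-,\Q)=0$, and the stalk of a tensor product of sheaves is the tensor product of the stalks (Warning \ref{warning:stalk}).

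\emph{The derived tensor product.} First I would choose finite semi-algebraic partitions $\mathcal{A},\mathcal{B}$ of $X$ into locally closed sets subordinate to $\mathcal{F}^\bullet$ and $\mathcal{G}^\bullet$ respectively, and pass to a common refinement $\mathcal{C}$ into finitely many locally closed semi-algebraic pieces, which exists by a standard semi-algebraic decomposition of the finite family of sets occurring in $\mathcal{A}$ and $\mathcal{B}$. On each $C\in\mathcal{C}$ the cohomology sheaves $\mathcal{H}^p(\mathcal{F}^\bullet)|_C$ and $\mathcal{H}^q(\mathcal{G}^\bullet)|_C$ are locally constant, and the K\"unneth formula for complexes of $\Q$-vector spaces, applied stalkwise and hence to the associated cohomology sheaves, gives
\[
\mathcal{H}^j(\mathcal{F}^\bullet\stackrel{L}{\otimes}\mathcal{G}^\bullet)|_C \;\cong\; \bigoplus_{p+q=j}\mathcal{H}^p(\mathcal{F}^\bullet)|_C\otimes\mathcal{H}^q(\mathcal{G}^\bullet)|_C ,
\]
which is locally constant, since a tensor product of locally constant sheaves is locally constant; thus $\mathcal{C}$ is subordinate to $\mathcal{F}^\bullet\stackrel{L}{\otimes}\mathcal{G}^\bullet$. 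The same isomorphism at a point $\x$ gives
\[
\dim_\Q\HH^j\left((\mathcal{F}^\bullet\stackrel{L}{\otimes}\mathcal{G}^\bullet)_\x\right) \;=\; \sum_{p+q=j}\dim_\Q\HH^p(\mathcal{F}^\bullet_\x)\cdot\dim_\Q\HH^q(\mathcal{G}^\bullet_\x),
\]
which is finite for every $j$ and vanishes once $|j|$ exceeds the sum of the two bounds supplied by condition (2) for $\mathcal{F}^\bullet$ and $\mathcal{G}^\bullet$. This settles the tensor product.

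\emph{The derived $\RHom$.} Here I would invoke \eqref{eqn:dual_hom}, which in the derived category for bounded complexes reads $\RHom(\mathcal{F}^\bullet,\mathcal{G}^\bullet)\cong\mathcal{F}^{\bullet\vee}\stackrel{L}{\otimes}\mathcal{G}^\bullet$ with $\mathcal{F}^{\bullet\vee}:=\RHom(\mathcal{F}^\bullet,\Q_X)$; by the previous paragraph it then suffices to show that the dual $\mathcal{F}^{\bullet\vee}$ of a constructible complex is constructible. As the conditions of Definition \ref{def:constructible-sheaf} are local, I would work near a point $\x$ of a stratum $C$ of a partition subordinate to $\mathcal{F}^\bullet$, using a fundamental system of small semi-algebraic neighborhoods $U$ of $\x$ adapted to that partition (conic ones, hence contractible): by Proposition \ref{prop:contractibility-implies-constant} the sheaves $\mathcal{H}^q(\mathcal{F}^\bullet)|_U$ are then constant of finite rank, and since a contractible $U$ has no higher cohomology the complex $\mathcal{F}^\bullet|_U$ splits in the derived category into a finite direct sum of shifted constant finite-rank sheaves $\Q_U^{k_q}[-q]$. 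Dualizing each summand, $\mathpzc{Hom}(\Q_U^{k_q},\Q_U)\cong\Q_U^{k_q}$ and, since $\mathrm{Ext}^{>0}_\Q(-,\Q)=0$, $\RHom(\Q_U^{k_q}[-q],\Q_U)\cong\Q_U^{k_q}[q]$; hence on each $C$ the cohomology sheaves of $\mathcal{F}^{\bullet\vee}$ are locally constant of the same ranks (degrees negated) and the stalk cohomology of $\mathcal{F}^{\bullet\vee}$ is finite and uniformly bounded. Applying the tensor-product case to $\mathcal{F}^{\bullet\vee}$ and $\mathcal{G}^\bullet$ then completes the proof.

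\emph{Main obstacle.} I expect the $\RHom$ part to be the genuinely delicate one. The tensor product is essentially formal once K\"unneth is available, but for $\RHom$ one cannot reason stalkwise, since the stalk of a $\mathpzc{Hom}$-sheaf is in general not the $\Hom$ of the stalks (Warning \ref{warning:stalk}); computing $\mathcal{F}^{\bullet\vee}$ near a point therefore requires, beyond the existence of a subordinate partition, the finer local structure of a constructible complex — namely the triviality of $\mathcal{F}^\bullet$ over sufficiently small semi-algebraic (conic) neighborhoods — and this is exactly where the semi-algebraic hypothesis is essential. Carried out in full this is the content of \cite[Propositions 8.4.8 and 8.4.10]{KS}, whose argument the above plan follows.
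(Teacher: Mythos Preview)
The paper's proof is simply the reference ``See \cite[Proposition 8.4.10]{KS}'', and your proposal explicitly follows (and then cites) that same argument from \cite{KS}, so the two coincide. Your sketch is essentially correct; the one point needing care is the local splitting $\mathcal{F}^\bullet|_U\cong\bigoplus_q\Q_U^{k_q}[-q]$, which requires a formality statement beyond Proposition~\ref{prop:contractibility-implies-constant} alone---but this is exactly the ``finer local structure'' you already flag as the main obstacle and defer to \cite{KS}.
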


\begin{proof}
See \cite[Proposition 8.4.10]{KS}.
\end{proof}

The following is a key example.

\begin{example}
Let $S \subset \R^m \times \R^n$ be a compact semi-algebraic set and 
$\pi : \R^m \times \R^n \rightarrow \R^n$ the projection to the second factor. Clearly the
map $\pi$ is proper restricted to $\supp(\Q_S) = S$. Then, for each $\x \in \R^n$, we have, using 
Theorem \ref{thm:proper-base-change}  and  Proposition \ref{prop:basic-isomorphism-in-derived},
the following isomorphism (in the derived category):
\[
(R\pi_*\Q_S)_\x \cong  (R\pi_{!}\Q_S)_\x \cong \bigoplus_n\HH^n(S_\x,\Q)[-n], 
\]
where $S_\x = \pi^{-1}(\x) \cap S$. In other words, the stalks of $R\pi_*\Q_S$ are isomorphic to the cohomology groups (with coefficients in $\Q$) of the fiber of $S$ over $x$ under the map $\pi$. It follows from Hardt's  theorem on triviality of semi-algebraic maps (see \cite[Theorem 9.3.2.]{BCR}), that there is a finite semi-algebraic partition of $\R^n$ into connected, locally closed semi-algebraic sets, such that for each set $C$ of the partition, the homeomorphism type of the fibers $S_\x, \x \in C$ and hence the stalks $(R\pi_*\Q_S)_\x$ stays invariant. This is the sheaf-theoretic analog of \emph{quantifier elimination} in the
first order theory of the reals. Later in the paper we prove an effective version of this result (see Theorem \ref{thm:effective} below).
\end{example}

\subsection{The polynomial hierarchy in the B-S-S model over $\R$}
\label{subsec:recall-PH}
Our ultimate goal is to generalize the set theoretic complexity classes to a more general
sheaf-theoretic context. Before doing so we recall the definitions of the standard B-S-S complexity
classes. In fact, we will need to use the ``compact versions'' of these classes which were introduced
in \cite{BZ09} in order to avoid certain difficulties arising from non-proper projections.

We first recall the definition of the B-S-S polynomial hierarchy over the $\R$.
It  mirrors the discrete case very closely (see \cite{Stockmeyer}).

\begin{definition}[The class $\mathbf{P}_\R$]
Let $k(n)$ be any polynomial in $n$.
A sequence of semi-algebraic sets
$(T_n \subset \R^{k(n)})_{n > 0}$ 
is said to belong to the class $\mathbf{P}_\R$
if there exists a 
machine $M$ over $\R$ 
(see~\cite{BSS89} or~\cite[\S3.2]{BCSS98}),
such that for all $\x \in \R^{k(n)}$, the machine $M$ 
tests membership of $\x$ in $T_n$ 
in time bounded by a polynomial in $n$.
\end{definition}

\begin{definition}\label{df:sigma}
Let $k(n),k_1(n),\ldots,k_p(n)$ be polynomials in $n$.
A sequence of semi-algebraic sets
$(S_n \subset \R^{k(n)})_{n > 0}$ is said to be in 
the complexity class ${\bf \Sigma}_{\R,p}$, 
if for each $n > 0$ the semi-algebraic set
$S_n$ is described by a 
first order formula
\begin{equation}\label{eq:alternation}
(Q_1 \Y^{1} )  \cdots (Q_p\Y^{p} )
\phi_n(X_1,\ldots,X_{k(n)},\Y^1,\ldots,\Y^p),
\end{equation}
with $\phi_n$ a quantifier free formula in the first order theory of the reals,
and for each $i, 1 \leq i \leq p$,
$\Y^i = (Y^i_1,\ldots,Y^i_{k_i(n)})$ is a block of $k_i(n)$ variables,
$Q_i \in \{\exists,\forall\}$, with $Q_j \neq Q_{j+1}, 1 \leq j < p$,
$Q_1 = \exists$,
and 
the sequence of semi-algebraic sets  $(T_n \subset \R^{k(n)+ k_1(n) + \cdots + k_p(n)})_{n >0}$ 
defined by the quantifier-free formulas $(\phi_n)_{n>0}$ 
belongs to the class $\mathbf{P}_\R$.
\end{definition}

Similarly, the complexity class ${\bf \Pi}_{\R,p}$
is defined as in Definition~\ref{df:sigma}, with the exception 
that the  alternating quantifiers in~\eqref{eq:alternation} start with $Q_1=\forall$.
Since, adding an additional block of quantifiers on the outside
(with new variables) does not change the set defined by a quantified formula
we have the following inclusions:
$$ {\bf \Sigma}_{\R,p}\subset {\bf \Pi}_{\R,p+1}, 
\text{\ and\ }
{\bf \Pi}_{\R,p}\subset{\bf \Sigma}_{\R,p+1}.
$$

Note that by the above definition the class 
${\bf \Sigma}_{\R,0} = {\bf \Pi}_{\R,0}$ 
is the 
familiar class ${\bf P}_{\R}$, 
the class ${\bf \Sigma}_{\R,1} = {\bf NP}_{\R}$ and the
class ${\bf \Pi}_{\R,1} = \textbf{co-NP}_{\R}$.

\begin{definition}[Real polynomial hierarchy] 
The real polynomial time hierarchy is defined to be the union
\[
{\bf PH}_{\R} \defeq \bigcup_{p\geq 0} 
({\bf \Sigma}_{\R,p} \cup {\bf \Pi}_{\R,p}) = 
\bigcup_{p \geq 0} {\bf \Sigma}_{\R,p}  = 
\bigcup_{p \geq 0} {\bf \Pi}_{\R,p}.
\]
\end{definition}

As mentioned before, in order to get around certain difficulties caused by
non-locally-closed sets and  non-proper maps, a restricted polynomial hierarchy 
was defined in \cite{BZ09}. We now recall the definition 
of this compact analog, ${\bf PH}_{\R}^c$,  of
${\bf PH}_{\R}$. 
Unlike in the non-compact case, we will assume all variables
vary over certain compact semi-algebraic sets 
(namely spheres of varying dimensions).

\begin{definition}[Compact real polynomial hierarchy \cite{BZ09}]
\label{def:compactpolynomialhierarchy}
Let \[
k(n),k_1(n),\ldots,k_p(n)\]
be polynomials in $n$.
A sequence of semi-algebraic sets
$(S_n \subset \Sphere^{k(n)})_{n > 0}$ 
is in the complexity class ${\bf \Sigma}_{\R,p}^c$, 
if for each $n > 0$ the semi-algebraic set
$S_n$ is described by a first order formula
\[
 (Q_1 \Y^{1} \in \Sphere^{k_1(n)})  \cdots (Q_p \Y^{p} \in 
\Sphere^{k_p(n)} )
\phi_n(X_0,\ldots,X_{k(n)},\Y^1,\ldots,\Y^p),
\]
with $\phi_n$ a quantifier-free first order formula defining a 
{\em closed} semi-algebraic subset of 
$\Sphere^{k_1(n)}\times\cdots\times
\Sphere^{k_p(n)}\times \Sphere^{k(n)}$
and for each $i, 1 \leq i \leq p$,
$\Y^i = (Y^i_0,\ldots,Y^i_{k_i})$ is a block of $k_i(n)+1$ variables,
$Q_i \in \{\exists,\forall\}$, with $Q_j \neq Q_{j+1}, 1 \leq j < p$,
$Q_1 = \exists$, and
the sequence of semi-algebraic sets
$(T_n \subset \Sphere^{k_1(n)}\times\cdots\times 
\Sphere^{k_p(n)}\times\Sphere^{k(n)})_{n > 0}$
defined by the formulas $(\phi_n)_{n >0}$ belongs to the class
$\mathbf{P}_\R$.

\begin{example}
\label{eg:compact}
The following example that appears in \cite{BZ09} is an example of a language in $\mathbf{\Sigma}_{\R,1}^c$ (i.e., 
of the compact version of $\mathbf{NP}_\R$).

Let $k(n) = \binom{n+4}{4}-1$ and identify $\R^{k(n)+1}$ with the
space of \emph{homogeneous} polynomials in $\R[X_0,\ldots,X_n]$ of degree
$4$. Let $S_n \subset \Sphere^{k(n)} \subset \R^{k(n)+1} $ be defined by 
\[
S_n = \{P \in  \Sphere^{k(n)} \;\mid\; \exists \x 
=(x_0:\cdots:x_n) \in \mathbb{P}_\R^n \mbox{ with }
P(\x) = 0 \};
\]
in other words  $S_n$ is the set of (normalized) real forms of degree $4$ 
which have a zero in the real projective space $\mathbb{P}^n_\R$.
Then 
\[
(S_n \subset \Sphere^{k(n)})_{n > 0} \in \mathbf{\Sigma}_{\R,1}^c,
\]
since it is easy to see that $S_n$ also admits the description:
\[
S_n = \{P \in  \Sphere^{k(n)} \;\mid\; \exists \x \in \Sphere^n \mbox{ with }
P(\x) = 0 \}.
\]

Note that it is \emph{not known} if $(S_n \subset \Sphere^{k(n)})_{n > 0}$ 
is $\mathbf{NP}_\R$-complete 
(see Remark \ref{rem:compact}),
while the non-compact version of this language, 
i.e., the language consisting of (possibly non-homogeneous) 
polynomials of degree at most four having a 
zero in $\mathbb{A}_\R^n$ (instead of $\mathbb{P}^n_\R$),  
has been shown to be $\mathbf{NP}_\R$-complete \cite{BCSS98}.
\end{example}

We define analogously the class ${\bf \Pi}_{\R,p}^c$, and finally 
define the \emph{compact real polynomial time hierarchy} 
to be the union
\[
{\bf PH}_{\R}^c \defeq 
\bigcup_{p \geq 0} ({\bf \Sigma}_{\R,p}^c \cup 
{\bf \Pi}_{\R,p}^c) = \bigcup_{p \geq 0} {\bf \Sigma}_{\R,p}^c =
\bigcup_{p \geq 0} {\bf \Pi}_{\R,p}^c.
\]
\end{definition}

Notice that the semi-algebraic sets belonging to any language in 
${\bf PH}_{\R}^c$ are all semi-algebraic compact 
(in fact closed semi-algebraic  subsets of
spheres). Also, note the inclusion
\[
{\bf PH}_{\R}^c \subset {\bf PH}_{\R}.
\]

\begin{remark}
\label{rem:compact}
The restriction to compact sets in \cite{BZ09} was necessitated by the fact that certain
topological results used in \cite{BZ09} required certain maps to be proper, and assuming compactness
was an easy way to ensure properness of these maps. Similarly, in this paper it will be convenient
to assume that certain maps are proper restricted to supports of some given sheaves on
a semi-algebraic set $X$. Since the support of a sheaf is always closed, the properness
is ensured if we assume $X$ is compact. In the absence of the compactness assumption, one
would have to use the derived functors $Rf_{!}$ 
instead of $Rf_{*}$, 
and always consider cohomology groups with compact supports. While this might indeed be worthwhile to do in the future to have the fullest generality, we avoid complications in this paper by
making the compactness assumption.
 
However, note that even though the restriction to compact semi-algebraic sets might appear
to be only a technicality at first glance, 
this is actually an important restriction.
For instance, it is a long-standing  open question in real complexity theory 
whether there exists an  ${\bf NP}_{\R}$-complete 
problem which belongs to the class ${\bf \Sigma}_{\R,1}^c$ (the compact version
of the class ${\bf NP}_{\R}$, 
see Example \ref{eg:compact}).  See also \cite{BZ09} for natural examples of sequences
in the class ${\bf \Sigma}_{\R,1}^c$.
\end{remark}

\begin{remark}
The topological methods used in this paper only require the sets to be compact. Using spheres to achieve this compact situation is 
a natural choice in the context of real algebraic geometry, 
since the inclusion of the space $\R^n$ 
into its one-point compactification $\Sphere^n$ is a 
continuous semi-algebraic map that sends semi-algebraic subsets of $\R^n$ 
to their own one-point compactification (see \cite[Definition~2.5.11]{BCR}).
\end{remark}
  
\subsubsection{Stability of the classes $\mathbf{P}_\R$ and $\mathbf{P}_\R^c$ under certain operations}
It is important to note that the
the B-S-S complexity class $\mathbf{P}_\R$ (as well as $\mathbf{P}^c_\R$) is stable under certain  operations.
In fact, many results (such as the analog of Toda's theorem in the B-S-S model proved in \cite{BZ09} as an
illustrative example) depend only on these stability properties of the class $\mathbf{P}_\R$ and not
on its actual definition involving B-S-S machines. We will formulate similar stability properties for
the sheaf-theoretic generalization of the class $\mathbf{P}^c_\R$.

\begin{remark}
\label{rem:warning-about-spheres}
We note here that we will sometimes identify a compact subset of $S\subset \R^{n+1}$, with the corresponding
subset of the one-point compactification of  $\R^{n+1}$ which is homeomorphic to $\Sphere ^n$ and write
$S \subset \Sphere^n$. For example, we write $\Sphere^{m} \times \Sphere^{n} \subset \Sphere^{m+n+1}$,
the implied embedding is obtained by taking the product of the standard embeddings 
$\Sphere^m \hookrightarrow \R^{m+1}$,  $\Sphere^n \hookrightarrow \R^{n+1}$, and then taking the 
one-point compactification of $\R^{m+n+2}$. 
\end{remark} 

We omit the proofs of the following two propositions which follow immediately from the definition of the
classes $\mathbf{P}_\R$ and $\mathbf{P}_\R^c$.
 \begin{proposition}
 \label{prop:set-theoretic-stability}
 Let $m(n) \in \Z[n]$ be a fixed non-negative polynomial.
  Let $\left(X_n \subset \R^{m(n)}\right)_{n >0}$ and $\left(Y_n \subset \R^{m(n)}\right)_{n >0}$ both belong to $\mathbf{P}_\R$.
   Then, 
 $\left(X_n \cup Y_n\right)_{ n > 0}$, $\left(X_n \cap Y_n\right)_{n > 0}$,  
 $\left(X_n \times Y_n\right)_{n > 0}$, $\left(\R^{m(n)} \setminus X_n\right)_{n >0}$ all belong to the class $\mathbf{P}_\R$. Moreover, 
 if $\left(X_n \subset \Sphere^{m(n)} \right)_{n >0}$ and $\left(Y_n \subset \Sphere^{m(n)}\right)_{n >0}$ both belong to $\mathbf{P}^c_\R$, then
 \[
 \left(X_n \cup Y_n\right)_{n > 0},  \left(X_n \cap Y_n\right)_{n > 0},  \left(X_n \times Y_n\right)_{n > 0}
 \] 
 all belong to the class $\mathbf{P}^c_\R$ as well.
  \end{proposition}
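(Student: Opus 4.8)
The plan is to verify each closure property directly from the definitions, by exhibiting, for each new sequence, a B-S-S machine over $\R$ (and, in the compact case, the additional observation that closedness inside spheres is preserved) whose running time is polynomially bounded in $n$. No ingenuity is required beyond routine simulation.

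For the class $\mathbf{P}_\R$, let $M_X$ and $M_Y$ be machines over $\R$ testing membership in $(X_n)_{n>0}$ and $(Y_n)_{n>0}$ in time bounded by a polynomial $p(n)$. First, for the union $(X_n \cup Y_n)_{n>0}$ I would build a machine that on input $\x \in \R^{m(n)}$ simulates $M_X(\x)$ and then $M_Y(\x)$, accepting if and only if at least one of the two simulations accepts; its running time is at most $2p(n) + O(1)$, which is still polynomial in $n$. The intersection $(X_n \cap Y_n)_{n>0}$ is handled the same way, accepting iff both simulations accept. For the complement $(\R^{m(n)}\setminus X_n)_{n>0}$ I would simply run $M_X(\x)$ and negate its output, which is legitimate because these are decision machines that halt within the stated time bound. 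For the product $(X_n \times Y_n)_{n>0} \subset \R^{2m(n)}$, on input $(\x,\y)\in\R^{m(n)}\times\R^{m(n)}$ I would run $M_X(\x)$ and $M_Y(\y)$ and accept iff both accept; since $2m(n)$ is still polynomial in $n$, the running time remains polynomial in $n$.

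For the compact class $\mathbf{P}^c_\R$ the identical machine constructions apply, and the only extra point to check is that the resulting sequences consist of closed subsets of spheres. If $X_n, Y_n$ are closed in $\Sphere^{m(n)}$, then $X_n \cup Y_n$ and $X_n \cap Y_n$ are closed in $\Sphere^{m(n)}$, so the union and intersection machines produce admissible sequences. For the product, $X_n \times Y_n$ is closed in $\Sphere^{m(n)}\times\Sphere^{m(n)}$, which I would identify with a closed subset of $\Sphere^{2m(n)+1}$ via the convention in Remark \ref{rem:warning-about-spheres}; the membership-testing machine first checks in polynomial time whether its input lies in the semi-algebraically embedded copy of $\Sphere^{m(n)}\times\Sphere^{m(n)}$, and if so recovers the two coordinates and runs $M_X$ and $M_Y$ as above. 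Note that we do not, and cannot, assert closure of $\mathbf{P}^c_\R$ under complementation, since the complement of a closed subset of a sphere is generally not closed.

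Since every step is a routine simulation argument together with the observation that sums and fixed multiples of polynomials are polynomials, there is no genuine obstacle; the only mild subtlety is the bookkeeping in the compact case needed to keep products inside a sphere of polynomially bounded dimension, which is exactly what Remark \ref{rem:warning-about-spheres} supplies.
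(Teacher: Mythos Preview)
Your proposal is correct and is precisely the routine simulation argument the paper has in mind; indeed the paper explicitly omits the proof, noting that both this proposition and the next ``follow immediately from the definition of the classes $\mathbf{P}_\R$ and $\mathbf{P}_\R^c$.'' Your additional care in the compact case (verifying closedness and invoking Remark~\ref{rem:warning-about-spheres} for the product embedding) is exactly right and goes slightly beyond what the paper spells out.
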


  Even though the B-S-S complexity class $\mathbf{P}_\R$ is by definition a sequence of 
  semi-algebraic sets, sometimes it is also convenient to have a notion of
  a polynomial time computable maps. 
  We will use the following slight abuse of notation.
 
  \begin{notation}
  \label{not:abuse2}
   Let $m_1(n),m_2(n) \in \Z[n]$ two fixed non-negative polynomials and
  let $\left(f_n:\R^{m_1(n)} \rightarrow \R^{m_2(n)}\right)_{n>0}$ be a sequence of maps. We say that the sequence  $\left(f_n:\R^{m_1(n)} \rightarrow \R^{m_2(n)}\right)_{n>0} \in \mathbf{P}_\R$
  if   
  the maps $\left(f_n:\R^{m_1(n)} \rightarrow \R^{m_2(n)}\right)_{n>0}$ are computable by a B-S-S
  machine in polynomial time.
  \end{notation}
 
  \begin{proposition}
  \label{prop:set-theoretic-pull-back}
  Let $m_1,m_2 \in \Z[n]$ two fixed non-negative polynomials and
  let $\left(f_n:\R^{m_1(n)} \rightarrow \R^{m_2(n)}\right)_{n>0} \in \mathbf{P}_\R$. 
  \begin{enumerate}
  \item
 For any sequence $\left(X_n \subset \R^{m_2(n)}\right)_{n >0}$ belonging to $\mathbf{P}_\R$,
 the sequence 
 \[
 \left(f_n^{-1}(X_n)\subset \R^{m_1(n)}\right)_{n > 0}
 \]
  also belongs to the class $\mathbf{P}_\R$.
   \item
   For any two sequences  $\left(X_n \subset \R^{m_1(n)}\right)_{n >0}$ and $\left(Y_n \subset \R^{m_1(n)}\right)_{n >0}$ belonging to $\mathbf{P}_\R$, the sequence
   as well as $\left(X_{n}\times_{f_n} Y_{n} \subset \R^{m_1(n)} \times \R^{m_1(n)}\right)_{n > 0}$
   also belongs to the class $\mathbf{P}_\R$.
  \end{enumerate}
  Similar statements hold for $\mathbf{P}_\R^c$ as well.
 \end{proposition}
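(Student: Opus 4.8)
The plan is to derive everything from two elementary facts: polynomial-time B-S-S computation is closed under composition, and all ambient dimensions occurring here (namely $m_1(n)$ and $m_2(n)$) are polynomially bounded by hypothesis. For part~(1), fix a B-S-S machine $M_f$ computing $(f_n)_{n>0}$ in time polynomial in $n$ and a B-S-S machine $M_X$ testing membership in $(X_n)_{n>0}$ in time polynomial in $n$. On input $\x\in\R^{m_1(n)}$ I would first run $M_f$ to obtain $\y=f_n(\x)\in\R^{m_2(n)}$ in time $p(n)$ for some polynomial $p$; since $m_2\in\Z[n]$, the point $\y$ has polynomially many coordinates, so running $M_X$ on $\y$ halts in time polynomial in $n$ and accepts precisely when $\y\in X_n$, i.e.\ when $\x\in f_n^{-1}(X_n)$. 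Concatenating the two machines gives a polynomial-time membership test for $(f_n^{-1}(X_n))_{n>0}$, establishing that this sequence lies in $\mathbf{P}_\R$.

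For part~(2), interpreting $X_n\times_{f_n}Y_n=\{(\x,\y)\in X_n\times Y_n\mid f_n(\x)=f_n(\y)\}$, on input $(\x,\y)\in\R^{m_1(n)}\times\R^{m_1(n)}$ I would run, one after another, the polynomial-time tests for $\x\in X_n$ and $\y\in Y_n$, compute $f_n(\x)$ and $f_n(\y)$ with $M_f$, and finally branch on the equality $f_n(\x)=f_n(\y)$ (a test on $m_2(n)$ coordinates); the input is accepted iff all three steps succeed. This is again a polynomial-time procedure, so $(X_n\times_{f_n}Y_n)_{n>0}\in\mathbf{P}_\R$. For the compact analogs the \emph{same} machines test membership, so the only extra point is to verify that the resulting sets are closed semi-algebraic subsets of the appropriate spheres: $f_n^{-1}(X_n)$ is the preimage of the closed set $X_n$ under the continuous semi-algebraic map $f_n\colon\Sphere^{m_1(n)}\to\Sphere^{m_2(n)}$ and is therefore closed in $\Sphere^{m_1(n)}$, while $X_n\times_{f_n}Y_n=(X_n\times Y_n)\cap\{(\x,\y)\mid f_n(\x)=f_n(\y)\}$ is an intersection of closed sets (the first closed by Proposition~\ref{prop:set-theoretic-stability}, the second by continuity of $f_n$) inside $\Sphere^{m_1(n)}\times\Sphere^{m_1(n)}\subset\Sphere^{2m_1(n)+1}$ (cf.\ Remark~\ref{rem:warning-about-spheres}). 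Hence both sequences lie in $\mathbf{P}_\R^c$.

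The only place that needs genuine care --- and the reason the assertion is not completely vacuous --- is the bookkeeping on running times: composing $M_f$ with a subsequent machine keeps the total cost polynomial only because the intermediate ambient dimension $m_2(n)$ is itself polynomially bounded, which is exactly the hypothesis $m_2\in\Z[n]$. In the compact setting one additionally uses that the maps under consideration are continuous (as the projection maps $\pi_{2n,n}$ to which this proposition will be applied indeed are), so that preimages of closed sets remain closed and fibered products of closed sets are closed. Beyond these two observations there is no real obstacle, in agreement with the paper's remark that the statement follows immediately from the definitions of $\mathbf{P}_\R$ and $\mathbf{P}_\R^c$.
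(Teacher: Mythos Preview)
Your proposal is correct and aligns with the paper's own treatment: the paper's proof is the single word ``Immediate,'' and your argument simply spells out what that means --- compose the polynomial-time machine for $(f_n)$ with the polynomial-time membership test, and observe that the intermediate dimension $m_2(n)$ is polynomially bounded so the composition stays polynomial-time. Your additional care in the compact case (verifying closedness via continuity of $f_n$) goes slightly beyond what the paper makes explicit, but is appropriate given the definition of $\mathbf{P}_\R^c$.
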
  
\begin{proof}
Immediate.
\end{proof}

\begin{remark}
One important special case of Proposition \ref{prop:set-theoretic-pull-back} is when the semi-algebraic
maps $f_n$ are just projections forgetting the first $n$ coordinates.  
Notice also that Propositions \ref{prop:set-theoretic-stability} and \ref{prop:set-theoretic-pull-back} together
imply that the B-S-S classes $\mathbf{P}_\R$ and $\mathbf{P}_\R^c$ are  closed under usual set theoretic operations, as well
under taking inverse images under and fiber-products over polynomially computable semi-algebraic maps.
\end{remark}

While the classes $\mathbf{P}_\R$ and $\mathbf{P}_\R^c$ are  closed under inverse images under polynomially
computable maps, the question whether the same is true under taking direct images is equivalent to the famous $\mathbf{P}_\R$ vs. $\mathbf{NP}_\R$ (respectively, $\mathbf{P}_\R^c$ vs. $\mathbf{NP}_\R^c$) question, and the prevailing belief in fact is that this is not the case. 

More formally:
\begin{conjecture} \cite{BCSS98}
\[
\mathbf{P}_\R \neq \mathbf{NP}_\R.
\]
\end{conjecture}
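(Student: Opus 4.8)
The plan is to reduce the conjecture to a single super-polynomial lower bound and then to attempt that lower bound through the geometric machinery of this paper; I should say at the outset that I do not expect to be able to carry the argument to completion, this being the central open problem of B-S-S complexity theory. Step one is the reduction. Fix the sequence $(\mathrm{FEAS}_n)_{n>0}$ of (possibly inhomogeneous) polynomials of degree at most $4$ in $n$ variables having a real zero in $\mathbb{A}^n_\R$, which is $\mathbf{NP}_\R$-complete by \cite{BCSS98} and is the affine analog of the sequence $(S_n)_{n>0}$ of Example \ref{eg:compact}. Because this sequence is $\mathbf{NP}_\R$-complete, $\mathbf{P}_\R \neq \mathbf{NP}_\R$ is equivalent to the assertion that no B-S-S machine decides membership in $(\mathrm{FEAS}_n)_{n>0}$ within a number of arithmetic operations and sign tests bounded polynomially in $n$. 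This step is routine.

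Step two is to produce such a lower bound, and here I would work with partitions rather than directly with B-S-S machines. Writing $\mathrm{INC}_n = \{(\x,P)\in\R^n\times V_{n,4} : P(\x)=0\}$ for the incidence variety and $\pi_n$ for the projection forgetting $\x$, one has $\mathrm{FEAS}_n = \pi_n(\mathrm{INC}_n)$, so $\mathbf{1}_{\mathrm{FEAS}_n}$ is the existential push-forward $\pi_n^\exists(\mathbf{1}_{\mathrm{INC}_n})$ in the sense of Section \ref{subsec:adjoint}, with $(\mathbf{1}_{\mathrm{INC}_n})_{n>0}\in\mathbf{VP}_\R^\R$ (the defining formula has polynomial size since $\dim V_{n,4}=\binom{n+4}{4}$ is polynomial in $n$ and $P(\x)$ has bounded degree). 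In the sheaf-theoretic language this push-forward is the truncated direct image $\tau^{\geq 0}\tau^{\leq 0}R\pi_{n,*}\Q_{J_{\pi_n}(\mathrm{INC}_n)}$ of \eqref{eqn:crux}. The separation would follow from showing that the coarsest semi-algebraic partition of the parameter space $V_{n,4}$ over which membership in $\mathrm{FEAS}_n$ is constant --- equivalently, over which the stalk of this truncated direct image sheaf is constant --- cannot be described in polynomial time. To this end I would try to import one of the known doubly-exponential obstructions to simple partitions: the doubly-exponential lower bound on the size of a Hardt-triviality partition \cite{BV06}, or the doubly-exponential number of multiplicity vectors realised by the Grigoriev construction \cite{Grigoriev2001}, and attempt to push either one down through the emptiness-only operation $\pi_n^\exists$.

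The main obstacle --- and the reason a genuine proof is out of reach --- is precisely the mismatch emphasised in the introduction of this paper. The doubly-exponential lower bounds available (Hardt triviality, multiplicity vectors) are attached to invariants that are either semi-algebraic homeomorphism-type data or the non-additive multiplicity vector; neither is an additive invariant, so neither descends through a fibre-wise push-forward, and still less through the drastically lossy operation ``is this fibre empty?'' that defines $\mathbf{NP}_\R$. Conversely, every invariant that \emph{is} additive --- the generalized Euler-Poincar\'e characteristic over $\R$, the virtual Poincar\'e polynomial over $\C$ --- is, by Proposition \ref{prop:uniform-bound-real} and the integration algorithm of Theorem \ref{thm:algorithm-for-integration-EP}, computable in singly exponential time, hence far too coarse to certify a super-polynomial separation and in any case consistent with $\mathbf{P}_\R=\mathbf{NP}_\R$. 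Absent a genuinely new lower-bound method producing a super-polynomial (ideally doubly-exponential) obstruction robust enough to survive a non-additive, topology-destroying push-forward, I cannot finish the argument; accordingly I would settle for attacking the weaker, invariant-sensitive separations of Conjecture \ref{conj:VP-BSS} (and, sheaf-theoretically, Conjecture \ref{conj:main}), where the additive machinery developed above actually has traction.
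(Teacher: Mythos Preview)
The statement you were asked to prove is not a theorem in the paper but an open \emph{conjecture}; the paper records it (with attribution to \cite{BCSS98}) precisely because it is unproved. There is therefore no ``paper's own proof'' to compare your attempt against. You recognized this yourself in your opening sentence, and your proposal is, quite reasonably, not a proof but a discussion of why the available tools fall short: the doubly-exponential obstructions (Hardt triviality, multiplicity vectors) do not survive the non-additive push-forward $\pi^\exists$, while the additive invariants that do push forward are too coarse to separate. That diagnosis is in line with the paper's own remarks in the introduction, and your redirection toward Conjectures~\ref{conj:VP-BSS} and~\ref{conj:main} as more tractable targets is exactly the move the paper makes.
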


We also make the compact version of the above conjecture.

\begin{conjecture}
 \[
 \mathbf{P}_\R^c \neq \mathbf{NP}_\R^c.
 \]
\end{conjecture}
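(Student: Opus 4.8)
The plan is to identify an explicit sequence of semi-algebraic sets lying in $\mathbf{NP}_\R^c$ and to argue that it cannot lie in $\mathbf{P}_\R^c$; since an unconditional such argument would resolve the compact Blum--Shub--Smale separation, what follows is necessarily a program rather than a proof, and the realistic near-term target is a \emph{conditional} statement. The canonical candidate is the sequence $(S_n)_{n>0}$ of Example \ref{eg:compact}, where $S_n \subset \Sphere^{k(n)}$ is the set of normalized real forms of degree $4$ in $n+1$ variables that have a zero in $\mathbb{P}^n_\R$; that example shows $(S_n)_{n>0} \in \mathbf{\Sigma}_{\R,1}^c = \mathbf{NP}_\R^c$.

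First I would translate membership in $\mathbf{P}_\R^c$ into a statement about the geometry of the fibers of the associated incidence variety over the parameter space. Write $\pi$ for the projection onto the parameter sphere and $S_{n,\x} = \pi^{-1}(\x) \cap S_n$. A polynomial-time B-S-S decision procedure for $(S_n)_{n>0}$ would, through the effective real-algebraic methods of \cite{BPRbook2}, force the discrete invariants of these fibers -- number of connected components, Betti numbers, and the generalized Euler-Poincar\'e characteristics of the kind bounded in Proposition \ref{prop:uniform-bound-real} -- to be controlled by a quantity singly exponential in a \emph{polynomially} bounded parameter. The second, and decisive, step would be a matching lower bound: to exhibit degree-$4$ families whose induced partition of the base (the coarsest partition keeping the homological type of $S_{n,\x}$ locally constant) must contain more than $2^{\mathrm{poly}(n)}$ pieces, or on whose fibers an additive invariant such as $\chi$ attains more than that many distinct values. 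This is exactly the lower bound anticipated in the introduction in connection with Hardt's triviality theorem and the Grigoriev--Vorobjov multiplicity bounds, and it is precisely here that the argument stalls: the best available lower bounds on such partitions are only singly exponential (see \cite{BV06}), and a super-polynomial separation of the required kind is widely believed to be out of reach of present techniques.

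The main obstacle is thus the complete absence of any super-polynomial lower bound for B-S-S machines on an explicit decision problem, aggravated in the compact setting by the well-known difficulty (Remark \ref{rem:compact}) that $\mathbf{\Sigma}_{\R,1}^c$ is not even known to contain an $\mathbf{NP}_\R$-complete problem -- so a hypothetical separation of the non-compact hierarchy could not be imported directly. An alternative, equally incomplete, route would pass through the constructible analogs developed in this paper: first establish $\mathbf{VP}_\R^\R \neq \mathbf{VNP}_\R^\chi$ (Conjecture \ref{conj:VP-BSS}), using the completeness results of Theorem \ref{thm:VNPR-complete} to reduce it to a concrete question about families of quadratic forms, and then attempt a Valiant-style transfer in the spirit of \cite{Burgisser-book2} from this non-uniform function-theoretic separation to the uniform set-theoretic one. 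Since both ingredients are themselves open, I expect any genuine resolution to demand new lower-bound ideas, plausibly of an algebro-geometric or representation-theoretic flavor analogous to the geometric complexity theory program attached to the permanent-versus-determinant problem.
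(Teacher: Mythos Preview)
The statement is a \emph{conjecture}, not a theorem; the paper offers no proof whatsoever and simply records it as the compact analogue of the classical $\mathbf{P}_\R \neq \mathbf{NP}_\R$ conjecture of \cite{BCSS98}. You correctly recognized this, explicitly framing your submission as ``a program rather than a proof,'' and your discussion of the obstacles --- the absence of super-polynomial lower bounds for B-S-S machines, and the issue flagged in Remark \ref{rem:compact} that no $\mathbf{NP}_\R$-complete problem is known to lie in $\mathbf{\Sigma}_{\R,1}^c$ --- is accurate and consistent with the paper's own remarks. There is therefore nothing in the paper to compare your proposal against; the conjecture remains open.
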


\subsection{Sheaf-theoretic analog of $\mathbf{P}_\R$} 
\label{subsec:define-P-sheaves}
As noted previously (for reasons of expediency) we are going to restrict to compact complexity
classes from now on. For set theoretic classes this means we only consider sequences of compact
subsets of spheres, and in the sheaf-theoretic  generalizations we will only consider sequences
of sheaves supported on spheres. Note that in this case the supports of such sheaves are 
necessarily compact.

We now define the sheaf-theoretic analog of the complexity class $\mathbf{P}_\R^c$.

\begin{definition}(The  class $\bs{\mathcal{P}}_\R$)
\label{def:sheaf-P}
The class $\bs{\mathcal{P}}_\R$ of constructible sheaves consists of sequences 
$\left(\mathcal{F}^\bullet_n \in \Ob(\constrD^b(\Sphere^{m(n)})\right)_{n>0}$, where
$m(n) \in \Z[n]$ is a non-negative polynomial
satisfying the following conditions. 
There exists a non-negative polynomial $m_1(n) \in \Z[n]$ such that:
\begin{enumerate}
\item
For each $n>0$, there is an index set $I_n$ of cardinality $2^{m_1(n)}$, and a semi-algebraic partition,
$(S_{n,i})_{i \in I_n}$,  of $\Sphere^{m(n)}$ into locally closed semi-algebraic sets $S_{n,i}$ indexed by
$I_n$, which is subordinate to $\mathcal{F}_n$.

\item
For each $n>0$ and each $\x \in \Sphere^{m(n)}$,
\begin{enumerate}
\item The dimensions $\dim_\Q \HH^j((\mathcal{F}^\bullet_{n})_\x)$ are bounded by $2^{m_1(n)}$;
\item $\HH^j((\mathcal{F}^\bullet_{n})_\x)=0$ for all $j$ with $|j| > m_1(n)$.
\end{enumerate}
The two sequences of functions $(i_n: \Sphere^{(m(n)} \rightarrow I_n)_{n>0}$, and $(p_n: \Sphere^{m(n)} \rightarrow \Z[T,T^{-1}])$ defined by 
\begin{eqnarray*}
i_n(\x) & = & i \in I_n, \mbox{ such that, } \x \in S_{n,i} \\
p_n(\x) &= &  P_{(\mathcal{F}^\bullet_n)_\x}
\end{eqnarray*} 
are computable by B-S-S machines in time polynomial in $n$ (recall from 
Definition \ref{def:constructible-sheaf} that 
$P_{\mathcal{F}^\bullet_\x}$ denotes the Poincar\'e polynomial of the stalk of $\mathcal{F}^\bullet$ at $\x$).
(Notice that the number of bits needed to represent elements of $I_n$, and the coefficients of $P_{(\mathcal{F}^\bullet_n)_\x}$ are bounded polynomially in $n$.)
\end{enumerate}
\end{definition}

One immediate property of the class $\bs{\mathcal{P}}_\R$ is the following.
In order to state it we need a new notation.

\begin{notation}
\label{not:partition-set}
For any finite family $\mathcal{P} \subset \R[X_1,\ldots,X_n]$ and a semi-algebraic set $S \subset \R^n$, we will denote by $\Pi(\mathcal{P},X)$ the partition of $X$ into the connected components of $\RR(\sigma, S)$ for each realizable sign condition $\sigma \in \{0,1,-1\}^{\mathcal{P}}$ on $S$ (cf. Notation \ref{not:realization}).
\end{notation}

\begin{proposition}
\label{prop:singly-exponential}
Let $\left(\mathcal{F}^\bullet_n \in \Ob(\constrD^b(\Sphere^{m(n)}))\right)_{n>0}$ belong to the class
$\bs{\mathcal{P}}_\R$.
Then, there exists for each $n>0$, a family of polynomials $\mathcal{P}_n \subset \R[X_0,\ldots,X_{m(n)}]$, such that the semi-algebraic partition $\Pi(\mathcal{P}_n,\Sphere^{m(n)})$ is subordinate to
$\mathcal{F}^\bullet_n$, and
moreover 
$\card(\mathcal{P}_n)$, as well as the degrees of the polynomials in $\mathcal{P}_n$, are
bounded singly exponentially as a function of $n$.
\end{proposition}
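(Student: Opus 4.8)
The plan is to extract the required family of polynomials from the B-S-S machine which, by Definition \ref{def:sheaf-P}, computes the index function $i_n$ in polynomial time, and then to verify that the associated sign-condition partition refines the subordinate partition $(S_{n,i})_{i\in I_n}$ furnished by that definition. (Note that Definition \ref{def:sheaf-P} gives no a priori bound on the complexity of the individual cells $S_{n,i}$, so the machine really has to be used.)

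First I would fix $n$ and let $M$ be a B-S-S machine computing $i_n\colon \Sphere^{m(n)}\to I_n$ in time bounded by $T(n)$, where $T\in\Z[n]$ is a non-negative polynomial. I would then invoke the standard structural description of such a machine's computation (as in \cite{BCSS98}): restricted to inputs lying in $\Sphere^{m(n)}$, the computation unfolds into a tree of depth at most $T(n)$ whose branch nodes test the sign of a rational function accumulated so far. Along any single path there are at most $T(n)$ such tests, and since each arithmetic step at most doubles the quantity $\deg(\mathrm{numerator})+\deg(\mathrm{denominator})$, every tested rational function has numerator and denominator of degree at most $2^{T(n)}$. Collecting into a family $\mathcal{P}_n\subset\R[X_0,\dots,X_{m(n)}]$ all these numerators and denominators over all branch nodes of the tree, one obtains $\card(\mathcal{P}_n)=2^{O(T(n))}$ (at most $2T(n)$ polynomials per path and $2^{O(T(n))}$ paths) and $\deg(P)\le 2^{T(n)}$ for every $P\in\mathcal{P}_n$; since $T$ is a polynomial, both bounds are singly exponential in $n$, as required.

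Next I would show that $\Pi(\mathcal{P}_n,\Sphere^{m(n)})\prec (S_{n,i})_{i\in I_n}$. The key point is that the sign at an input point of any rational function tested by $M$ is determined by the sign condition on $\mathcal{P}_n$ at that point (the sign of a quotient being the product of the signs of its numerator and denominator, both members of $\mathcal{P}_n$). Hence, for any realizable $\sigma\in\{0,1,-1\}^{\mathcal{P}_n}$, all points of $\RR(\sigma,\Sphere^{m(n)})$ traverse the same path in $M$, so $i_n$ is constant on $\RR(\sigma,\Sphere^{m(n)})$ and therefore on each of its connected components; thus every cell of $\Pi(\mathcal{P}_n,\Sphere^{m(n)})$ is contained in a single $S_{n,i}$. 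I would also record the routine fact that each such cell is a locally closed semi-algebraic subset of $\Sphere^{m(n)}$: the set $\RR(\sigma,\Sphere^{m(n)})$ is the intersection of the closed set $\Sphere^{m(n)}\cap\bigcap_{\sigma(P)=0}\{P=0\}$ with the open set $\bigcap_{\sigma(P)\neq 0}\{\sign(P)=\sigma(P)\}$, hence locally closed, and a connected component of a locally closed semi-algebraic set is again locally closed and semi-algebraic.

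Finally I would conclude as follows. By Definition \ref{def:sheaf-P}(1) the partition $(S_{n,i})_{i\in I_n}$ is subordinate to $\mathcal{F}^\bullet_n$, so each cohomology sheaf $\mathcal{H}^j(\mathcal{F}^\bullet_n)$ is locally constant on each $S_{n,i}$ (Definition \ref{def:constructible-sheaf}). For a cell $C\subset S_{n,i}$ of $\Pi(\mathcal{P}_n,\Sphere^{m(n)})$, functoriality of the restriction functor (Remark \ref{rem:restriction-to-locally-closed}) gives $\mathcal{H}^j(\mathcal{F}^\bullet_n)|_C\cong\bigl(\mathcal{H}^j(\mathcal{F}^\bullet_n)|_{S_{n,i}}\bigr)|_C$, and the restriction of a locally constant sheaf to a locally closed subset is again locally constant; hence $\Pi(\mathcal{P}_n,\Sphere^{m(n)})$ is subordinate to $\mathcal{F}^\bullet_n$, which together with the size and degree bounds above proves the proposition. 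I expect the only slightly delicate ingredient to be the degree-and-count bookkeeping for the B-S-S computation tree — in particular handling divisions by passing to numerators and denominators so that degrees grow at most singly exponentially — but this is precisely the bookkeeping underlying the classical singly exponential effective quantifier-elimination bounds, so it poses no real difficulty.
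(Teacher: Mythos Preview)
Your proposal is correct and follows exactly the approach implicit in the paper's one-line proof, which simply states that the claim ``is an immediate consequence of the fact that the sequence of functions $i_n$ is computable in polynomial time.'' You have spelled out precisely the standard argument behind that sentence: unfolding the B-S-S computation tree, collecting the branching polynomials (via numerator/denominator pairs), bounding their number and degrees by $2^{O(T(n))}$, and observing that the resulting sign-condition partition refines $(S_{n,i})_{i\in I_n}$ and is therefore subordinate to $\mathcal{F}^\bullet_n$.
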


\begin{proof}
This is an immediate consequence of the fact that the sequence of functions $i_n$ is computable in
polynomial time.
\end{proof}

One connection between $\bs{\mathcal{P}}_\R$ and the standard B-S-S complexity class $\mathbf{P}^c_\R$ is 
as follows.

\begin{proposition}
\label{prop:sheaf-P-vs.-set-P}
Let $\left(X_n \subset \Sphere^{m(n)}\right)_{n > 0}$ be a sequence of compact semi-algebraic sets.
Then,  $\left(X_n \in \Sphere^{m(n)}\right)_{n > 0} \in \mathbf{P}^c_\R$ if and only if the sequence of constructible sheaves 
$\Big(\Q_{X_n} \in \Ob(\constrD^b(\Sphere^{m(n)}))\Big)_{n > 0} \in \bs{\mathcal{P}}_\R$.
\end{proposition}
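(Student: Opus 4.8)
The plan is to prove the two implications separately, using the characterization of $\mathbf{P}_\R^c$ and the definition of $\bs{\mathcal{P}}_\R$ (Definition \ref{def:sheaf-P}). Throughout, recall that $\Q_{X_n}$ denotes the constant sheaf on $X_n$ pushed forward to $\Sphere^{m(n)}$ (Warning \ref{warning:abuse}), so its stalk at $\x$ is $\Q$ concentrated in degree $0$ if $\x \in X_n$, and $0$ otherwise. Thus $P_{(\Q_{X_n})_\x}(T) = \mathbf{1}_{X_n}(\x)$, a polynomial taking only the values $0$ and $1$.

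For the forward direction, suppose $(X_n \subset \Sphere^{m(n)})_{n>0} \in \mathbf{P}_\R^c$. First I would produce the required semi-algebraic partition: take $(S_{n,i})_{i \in I_n}$ with $I_n = \{0,1\}$ and $S_{n,0} = \Sphere^{m(n)} \setminus X_n$, $S_{n,1} = X_n$; both are semi-algebraic (the complement of a set in $\mathbf{P}_\R^c$ is semi-algebraic), and $\Q_{X_n}$ is constant on each, so the partition is subordinate to $\Q_{X_n}$. One subtlety: Definition \ref{def:sheaf-P} requires the index set to have cardinality exactly $2^{m_1(n)}$ and the pieces to be locally closed; this is handled by padding $I_n$ with empty pieces and by refining $X_n$ and its complement into locally closed pieces — e.g. using a quantifier-free description of $X_n$ and intersecting with the sign conditions of the polynomials involved, which costs at most a polynomial blow-up in the description size since membership in $X_n$ is testable in polynomial time by a B-S-S machine (and such a machine yields, by standard arguments, a quantifier-free formula of polynomial-in-$n$ complexity — I would cite the relevant machinery, essentially that $\mathbf{P}_\R^c$ languages have polynomially-sized quantifier-free descriptions). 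The function $p_n(\x) = \mathbf{1}_{X_n}(\x)$ is computable in polynomial time since $X_n \in \mathbf{P}_\R^c$, and $i_n(\x)$ is computable by the same membership test. The stalk dimension bounds ($\dim_\Q \HH^0 \le 1$, vanishing for $j \ne 0$) are trivial. Hence $(\Q_{X_n})_{n>0} \in \bs{\mathcal{P}}_\R$.

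For the reverse direction, suppose $(\Q_{X_n})_{n>0} \in \bs{\mathcal{P}}_\R$. Then by Definition \ref{def:sheaf-P} the function $p_n \colon \Sphere^{m(n)} \to \Z[T,T^{-1}]$, $\x \mapsto P_{(\Q_{X_n})_\x}$, is computable by a B-S-S machine in time polynomial in $n$. But $P_{(\Q_{X_n})_\x} = 1$ precisely when $\x \in X_n$ and $= 0$ otherwise. Therefore testing membership of $\x$ in $X_n$ reduces to computing $p_n(\x)$ and checking whether it equals the constant polynomial $1$, which is a polynomial-time computation; hence $(X_n \subset \Sphere^{m(n)})_{n>0} \in \mathbf{P}_\R^c$.

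The main obstacle I anticipate is the bookkeeping in the forward direction: reconciling the rigid combinatorial shape of the partition demanded by Definition \ref{def:sheaf-P} (index set of size exactly $2^{m_1(n)}$, pieces locally closed and subordinate) with the essentially trivial two-piece partition that $\Q_{X_n}$ naturally wants, while keeping the functions $i_n, p_n$ polynomial-time computable. This requires invoking the standard fact that a language in $\mathbf{P}_\R^c$ admits a polynomially-bounded quantifier-free defining formula and then taking the sign-condition refinement of the associated polynomial family; none of this is deep, but it is the only place where something beyond unwinding definitions is needed. The reverse direction is essentially immediate once one observes that the Poincar\'e polynomial of the stalk of a constant sheaf is exactly the characteristic function.
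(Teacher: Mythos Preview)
Your argument is correct and follows exactly the same route as the paper's proof: the two-piece partition $\{X_n,\ \Sphere^{m(n)}\setminus X_n\}$ for the forward direction, and the observation $p_n(\x)=\mathbf{1}_{X_n}(\x)$ for the reverse. The only difference is that the ``subtlety'' you flag is not actually present: since $X_n$ is compact it is closed, so both $X_n$ and its complement are already locally closed, and $\lvert I_n\rvert = 2 = 2^{1}$ so the constant polynomial $m_1(n)=1$ suffices---no padding, no sign-condition refinement, and no appeal to quantifier-free descriptions of $\mathbf{P}_\R^c$ languages is needed.
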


\begin{proof}
For any compact semi-algebraic set $X \subset \Sphere^{m(n)}$,  the stalks $(\Q_X)_\x = 0$
for $\x \not\in X$. 
For $\x\in X$, 
\begin{eqnarray*}
\HH^j((\Q_X)_\x) &=& 0  \mbox{ for } j \neq 0, \\
				   &=& \Q \mbox{ otherwise.}
\end{eqnarray*}
Now suppose that $\left(X_n \in \Sphere^{m(n)}\right)_{n > 0} \in \mathbf{P}^c_\R$.  Then, letting for each
$n>0$, $I_n = \{0,1\}$, and 
\begin{eqnarray*}
S_{n,0} &=& \Sphere^{m(n)} \setminus X_n,\\
S_{n,1} &=&  X_n,
\end{eqnarray*}
it is easy to verify that 
$\Big(\Q_{X_n} \in \Ob(\constrD^b(\Sphere^{m(n)}))\Big)_{n > 0} \in \bs{\mathcal{P}}_\R$.

Conversely, if $\Big(\Q_{X_n} \in \Ob(\constrD^b(\Sphere^{m(n)}))\Big)_{n > 0} \in \bs{\mathcal{P}}_\R$, then
for each $n>0$, there is an index set $I_n$ of cardinality $2^{m_1(n)}$, and a semi-algebraic partition,
$(S_{n,i})_{i \in I_n}$,  of $\Sphere^{m(n)}$ into locally closed semi-algebraic sets $S_{n,i}$ indexed by
$I_n$, 
which is subordinate to 
$\Q_{X_n}$
satisfying the properties listed in Definition \ref{def:sheaf-P}.
Now, from the fact that 
the sequence of functions $\Big(p_n: \Sphere^{m(n)} \rightarrow \Z[T,T^{-1}]\Big)_{n>0}$ defined by 
\begin{eqnarray*}
p_n(\x) &= &  P_{(\Q_{X_n})_\x} \\
 &=& 1 \mbox{ if $\x \in X_n$} \\
 &=&  0 \mbox{ else},
\end{eqnarray*} 
is computable by a B-S-S machine in time polynomial in $n$,
it follows immediately that 
$\left(X_n \in \Sphere^{m(n)}\right)_{n > 0} \in \mathbf{P}^c_\R$.
\end{proof}

The class $\bs{\mathcal{P}}_\R$ is stable under standard sheaf-theoretic operations. These closure
properties are reminiscent of the stability properties of the classes $\mathbf{P}_\R$ and  $\mathbf{P}^c_\R$ 
(cf. Propositions \ref{prop:set-theoretic-stability} and \ref{prop:set-theoretic-pull-back}).

\begin{proposition}[Stability properties of the class $\bs{\mathcal{P}}_\R$]
\label{prop:stability-sheaf-P}
Let $m(n) \in \Z[n]$ be a fixed non-negative polynomial.
\begin{enumerate}
\item (Closure under direct sums, tensor products.)
If 
\[
\Big(\mathcal{F}^\bullet_n \in \Ob(\constrD^b(\Sphere^{m(n)}))\Big)_{n>0}, 
\Big(\mathcal{G}^\bullet_n \in \Ob(\constrD^b(\Sphere^{m(n)}))\Big)_{n>0} \in  \bs{\mathcal{P}}_\R,
\] 
then $(\mathcal{F}^\bullet_n\oplus \mathcal{G}^\bullet_n)_{n>0},  (\mathcal{F}^\bullet_n\otimes \mathcal{G}^\bullet_n)_{n>0}
\in  \bs{\mathcal{P}}_\R$.
\item (Closure under pull-backs.) 
For any fixed non-negative polynomial $m_1(n) \in \Z[n]$, let
$\pi_n:\Sphere^{m_1(n)} \times \Sphere^{m(n)} \rightarrow \Sphere^{m(n)}$ be the projection to the second factor,
and 
\[
\Big(\mathcal{F}^\bullet_n \in \Ob(\constrD^b(\Sphere^{m(n)}))\Big)_{n>0} \in  \bs{\mathcal{P}}_\R.
\] 
Then, $\left(\pi_{n}^{-1}(\mathcal{F}^\bullet_n) \right)_{n>0} \in \bs{\mathcal{P}}_\R$.
\item (Closure under truncations.)
For any non-negative polynomial $m_1(n) \in \Z[n]$, and a sequence
\[
\Big(\mathcal{F}^\bullet_n \in \Ob(\constrD^b(\Sphere^{m(n)}))\Big)_{n>0} 
\]
belonging to the class  $\bs{\mathcal{P}}_\R$, we have 
 \[
 \left(\tau^{\leq m_1(n)} \mathcal{F}^\bullet_n\right)_{n>0} \in  \bs{\mathcal{P}}_\R,
 \]
 \[
 \left(\tau^{\geq m_1(n)} \mathcal{F}^\bullet_n\right)_{n>0} \in  \bs{\mathcal{P}}_\R.
 \]
 \item (Closure under shifts.)
For any non-negative polynomial $m_1(n) \in \Z[n]$, and a sequence
\[
\Big(\mathcal{F}^\bullet_n \in \Ob(\constrD^b(\Sphere^{m(n)}))\Big)_{n>0} 
\]
belonging to the class  $\bs{\mathcal{P}}_\R$, we have
\[
  \left(\mathcal{F}^\bullet_n[m_1(n)]\right)_{n>0} \in  \bs{\mathcal{P}}_\R,
\]
\[
 \left(\mathcal{F}^\bullet_n[-m_1(n)]\right)_{n>0} \in  \bs{\mathcal{P}}_\R.
 \]
 \end{enumerate}
\end{proposition}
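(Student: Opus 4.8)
The plan is to verify each of the four closure properties by exhibiting, for each resulting sequence of sheaves, an index set of singly-exponential size, a compatible semi-algebraic partition subordinate to the sheaf, and B-S-S machines computing the stratum-index function and the stalk-Poincaré-polynomial function in polynomial time. In every case these data will be built from the corresponding data for the input sequences (which we are given lie in $\bs{\mathcal{P}}_\R$), together with the stalk-level formulas for the relevant sheaf operation recorded in the earlier part of the excerpt.

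For part (1), closure under $\oplus$ and $\otimes$: suppose $(\mathcal{F}^\bullet_n)$ comes with partitions $(S_{n,i})_{i\in I_n}$, $\card(I_n)=2^{m_1(n)}$, and $(\mathcal{G}^\bullet_n)$ with $(S'_{n,j})_{j\in I'_n}$, $\card(I'_n)=2^{m'_1(n)}$. I take the common refinement: its strata are the nonempty intersections $S_{n,i}\cap S'_{n,j}$, so the new index set sits inside $I_n\times I'_n$ and has cardinality at most $2^{m_1(n)+m'_1(n)}$, still singly exponential. On each such stratum both $\HH^k(\mathcal{F}^\bullet_n)$ and $\HH^k(\mathcal{G}^\bullet_n)$ are locally constant, hence so are the cohomology sheaves of $\mathcal{F}^\bullet_n\oplus\mathcal{G}^\bullet_n$ and $\mathcal{F}^\bullet_n\otimes\mathcal{G}^\bullet_n$ (the latter because $A=\Q$ is a field, so there are no higher $\mathrm{Tor}$ terms and the tensor product of complexes computes the termwise tensor product of cohomology; this is Warning \ref{warning:stalk} applied stalkwise). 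The new index function is the pair $(i_n,i'_n)$, computable in polynomial time. For the Poincaré polynomials, $P_{(\mathcal{F}^\bullet_n\oplus\mathcal{G}^\bullet_n)_\x}=P_{(\mathcal{F}^\bullet_n)_\x}+P_{(\mathcal{G}^\bullet_n)_\x}$ and $P_{(\mathcal{F}^\bullet_n\otimes\mathcal{G}^\bullet_n)_\x}=P_{(\mathcal{F}^\bullet_n)_\x}\cdot P_{(\mathcal{G}^\bullet_n)_\x}$ (since over a field $\dim_\Q\HH^k(C^\bullet\otimes D^\bullet)=\sum_{a+b=k}\dim_\Q\HH^a(C^\bullet)\dim_\Q\HH^b(D^\bullet)$), both of which are computed from $p_n$ and $p'_n$ by polynomially many ring operations on polynomials of polynomially-bounded degree and coefficient-size; the dimension and vanishing bounds in Definition \ref{def:sheaf-P}(2) follow (using $m_1(n)+m'_1(n)$, respectively $m_1(n)+m'_1(n)+\log(m_1(n))$-type adjustments, still polynomial).

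For part (2), closure under pull-back along $\pi_n\colon\Sphere^{m_1(n)}\times\Sphere^{m(n)}\to\Sphere^{m(n)}$: the key fact is $\HH^k((\pi_n^{-1}\mathcal{F}^\bullet_n)_{(\y,\x)})\cong\HH^k((\mathcal{F}^\bullet_n)_\x)$ (Proposition \ref{prop:adjunction}, or directly from exactness of $f^{-1}$ and \eqref{eqn:stalk}). So I take as new strata the cylinders $\Sphere^{m_1(n)}\times S_{n,i}$, indexed by the same $I_n$; these are locally closed semi-algebraic, form a partition of $\Sphere^{m_1(n)+m(n)+1}$, and the pulled-back sheaf is locally constant on each because it is a pullback of something locally constant on $S_{n,i}$. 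The new index function is $(\y,\x)\mapsto i_n(\x)$ and the new Poincaré-polynomial function is $(\y,\x)\mapsto p_n(\x)$; both are polynomial-time since composition with the (polynomial-time, in fact linear) projection preserves polynomial time, and the dimension/vanishing bounds are inherited verbatim. For parts (3) and (4), closure under $\tau^{\leq m_1(n)}$, $\tau^{\geq m_1(n)}$ and shifts $[\pm m_1(n)]$: here the underlying space and hence the partition $(S_{n,i})_{i\in I_n}$ and the index function $i_n$ are unchanged, so I only need to track what these operations do to stalk cohomology. The Proposition just before \ref{prop:stability-sheaf-P} gives exactly this: $\HH^k((\tau^{\leq r}\mathcal{F}^\bullet)_\x)=\HH^k((\mathcal{F}^\bullet)_\x)$ for $k\leq r$ and $0$ otherwise; likewise for $\tau^{\geq r}$; and $\HH^k((\mathcal{F}^\bullet[r])_\x)=\HH^{k+r}((\mathcal{F}^\bullet)_\x)$. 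Hence the new Poincaré polynomial is obtained from $p_n(\x)$ by truncating the Laurent polynomial to degrees $\leq r$ (resp. $\geq r$), resp. multiplying by $T^{-r}$ — all computable in polynomial time from $p_n$ — and local constancy on each $S_{n,i}$ is preserved since truncation and shift commute with restriction and with passing to cohomology sheaves. The dimension bounds are unchanged; the vanishing range changes from $|j|>m_1(n)$ to $|j|>m_1(n)+r$ (or is tightened further by truncation), still polynomial.

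I expect no single step to be a genuine obstacle — the proposition is essentially a bookkeeping exercise — but the one point requiring a little care is the tensor-product case of part (1): one must use that $A=\Q$ is a field to guarantee that $\stackrel{L}{\otimes}$ agrees with $\otimes$ and that the Künneth formula holds with no correction terms, so that both the local-constancy of $\mathcal{H}^k(\mathcal{F}^\bullet_n\otimes\mathcal{G}^\bullet_n)$ on the refined strata and the multiplicativity $p_n^{\otimes}=p_n\cdot p'_n$ are valid; this is exactly the hypothesis built into the excerpt (``From now on we will fix $A=\Q$'') and into Theorem \ref{thm:closure2}, so it is available. A secondary small point is to check that the refined partition in (1) still has singly-exponential size and that the coefficient-sizes of the product Poincaré polynomials remain polynomially bounded in bit-length — both follow from the product of two singly-exponential quantities being singly exponential and the product of two polynomially-bounded integers having polynomially-bounded bit-length.
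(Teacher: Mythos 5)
Your proposal is correct and follows essentially the same route as the paper's proof: a common refinement of the two subordinate partitions for part (1), cylinders over the given partition with $i_n$ and $p_n$ composed with projection for part (2), and the observation that truncation and shift leave the partition unchanged and act on $p_n$ by truncating or shifting the Laurent polynomial for parts (3) and (4), which the paper dismisses as immediate. Your explicit note that $A=\Q$ is what makes the Künneth formula correction-free and hence $p_n^\otimes = p_n\cdot p'_n$ is exactly the hypothesis the paper relies on implicitly.
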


\begin{proof}
\begin{enumerate}
\item
Let for each $n >0$, $(S'_{n,i})_{i' \in I'_n}$ and $(S''_{n,i})_{i'' \in I''_n}$ be semi-algebraic
partitions of $\Sphere^{m(n)}$ subordinate to the $\mathcal{F}^\bullet_n$ and $\mathcal{G}^\bullet_n$,
and $i_n',p_n',i_n'',p_n''$ the corresponding functions (cf. Definition \ref{def:sheaf-P}).
Also, let $m_1'(n)$  (respectively, $m_1''(n)$)  be the polynomial appearing in the definition of the
class $\bs{\mathcal{P}}_\R$ for the sequence $(\mathcal{F}^\bullet_n)_{n>0}$ (respectively,
$(\mathcal{G}^\bullet_n)_{n>0}$).
Let $I_n = I'_n \times I''_n$, and for each $i = (i',i'') \in I_n$, let $S_n = S'_{n,i'} \cap S''_{n,i''}$.
Then the following hold.
\begin{enumerate}
\item
For each $i=(i',i'')\in I_n$, $j\in\Z$, and $\x \in S_{n,i}$
\[
\HH^j((\mathcal{F}^\bullet_n\oplus\mathcal{G}^\bullet_n)_\x)  \cong \HH^j((\mathcal{F}^\bullet_n)_\x) \oplus 
\HH^j((\mathcal{G}^\bullet_n)_\x)
\] 
and is clearly locally constant for $\x\in S_{n,i}$,
since $\HH^*((\mathcal{F}^\bullet_n)_\x)$ is locally constant for $\x \in S'_{n,i'}$ and 
$\HH^*((\mathcal{G}^\bullet_n)_\x)$ is locally constant for $\x \in S''_{n,i''}$.
 
 \item 
\begin{eqnarray*}
 \dim_\Q \HH^j((\mathcal{F}^\bullet_{n}\oplus\mathcal{G}^\bullet_n)_\x)
 &=& 
 \dim_\Q \HH^j((\mathcal{F}^\bullet_{n})_\x)+\dim_\Q \HH^j((\mathcal{G}^\bullet_{n})_\x) \\
 &\leq & 
 2^{m_1'(n)} + 2^{m_1''(n)};
\end{eqnarray*}
Also using the Künneth formula  and the definition of tensor products of constructible
 sheaves,  
 we have
 \begin{eqnarray*}
\dim_\Q \HH^j((\mathcal{F}^\bullet_{n}\otimes\mathcal{G}^\bullet_n)_\x) 
 &=& 
 \dim_\Q \left(\bigoplus_{p+q=j} \HH^p((\mathcal{F}^\bullet_n)_\x) \otimes  \HH^q((\mathcal{G}^\bullet_n)_\x)\right)
 \\
 &=&
 \sum_{p+q=j} \dim_\Q \HH^p((\mathcal{F}^\bullet_{n})_\x)\cdot\dim_\Q \HH^q((\mathcal{G}^\bullet_{n})_\x) \\
 &\leq& \sum_{p+q=j} 2^{m_1'(n)+m_1''(n)}\\
 &\leq&  2(m_1(n) + m_2(n)+1) 2^{m_1'(n)+m_1''(n)},
 \end{eqnarray*}
noting that 
 \[
 \HH^j((\mathcal{F}^\bullet_{n})_\x), \HH^j((\mathcal{G}^\bullet_{n})_\x)=0\]
  for all $j$ with $|j| > m_1'(n)+m_1''(n)$.
  \item 
From the fact that the functions $i'_n: \Sphere^{m(n)} \rightarrow I_n'$ and
$i_n'': \Sphere^{m(n)} \rightarrow I_n''$ are computable in polynomial time it follows that
the function 
$i_n: \Sphere^{m(n)} \rightarrow I_n$, defined by $i_n(\x) = (i_n'(\x),i_n''(\x))$ is also
computable in polynomial time. 
\item 
From the fact that the functions $p'_n: \Sphere^{m(n)} \rightarrow \Z[T,T^{-1}]$ and
$p_n'': \Sphere^{m(n)} \rightarrow \Z[T,T^{-1}]$ are computable in polynomial time it follows that
the functions 
$p_n^\oplus: \Sphere^{m(n)} \rightarrow \Z[T,T^{-1}]$,  $p_n^\otimes: \Sphere^{m(n)} \rightarrow \Z[T,T^{-1}]$, defined by 
\begin{eqnarray*}
p_n^\oplus(\x)  &=& p_n'(\x) + p_n''(\x),\\
p_n^\otimes(\x) &=& p_n'(\x)\cdot p_n''(\x),
\end{eqnarray*}
are also computable in polynomial time. 
\end{enumerate}
It follows from the above that both sequences 
$(\mathcal{F}^\bullet_n\oplus \mathcal{G}^\bullet_n)_{n>0},  (\mathcal{F}^\bullet_n\otimes \mathcal{G}^\bullet_n)_{n>0}$
belong to the class $\bs{\mathcal{P}}_\R$.

\item
Let for each $n >0$, $(S'_{n,i})_{i\in I'_n}$  be the semi-algebraic
partition of $\Sphere^{m(n)}$ subordinate to $\mathcal{F}^\bullet_n$,
and $i_n',p_n'$ the corresponding functions (cf. Definition \ref{def:sheaf-P}).
Also, let $m_1'(n)$   be the polynomial appearing in the definition of the
class $\bs{\mathcal{P}}_\R$ for the sequence $(\mathcal{F}^\bullet_n)_{n>0}$.

Let $I_n = I_n'$.
Defining $i_n: \Sphere^{m_1(n)} \times \Sphere^{m(n)} \rightarrow I_n$ and 
$p_n: \Sphere^{m_1(n)} \times \Sphere^{m(n)} \rightarrow \Z[T,T^{-1}]$ by
\begin{eqnarray*}
i_n(\y,\x) &=& i_n'(\x), \\
p_n(\y,\x) &=& p_n'(\x),
\end{eqnarray*}
it follows from the fact that the sequences $(i_n')_{n>0}, (p_n')_{n>0}$ are computable in polynomial
time, that so are the sequences $(i_n)_{n>0}$ and $(p_n)_{n >0}$.

Moreover,  for each $(\y,\x) \in \Sphere^{m_1(n)} \times \Sphere^{m(n)}$ we have for each $j\in\Z$, using
Proposition \ref{prop:adjunction} an isomorphism 
\[
\HH^j(\pi_n^{-1}(\mathcal{F}^\bullet_n)_{(\y,\x)} \cong \HH^j((\mathcal{F}^\bullet_n)_\x.
\]
This shows that $\left(\pi_{n}^{-1}(\mathcal{F}^\bullet_n) \right)_{n>0} \in \bs{\mathcal{P}}_\R$.
\end{enumerate}
The remaining parts of the proposition are immediate.
\end{proof}

\begin{question}
\label{question:closure-under-hom}
By Theorem \ref{thm:closure2} we know that the category of constructible sheaves is closed under $\RHom$.
Is it true that
if 
\[\left(\mathcal{F}^\bullet_n \in \Ob(\constrD^b(\Sphere^{m(n)})\right)_{n>0}, 
\left(\mathcal{G}^\bullet_n \in \Ob(\constrD^b(\Sphere^{m(n)})\right)_{n>0} \in  \bs{\mathcal{P}}_\R,
\] 
then 
$(\RHom^\bullet(\mathcal{F}^\bullet_n, \mathcal{G}^\bullet_n))_{n >0}
\in  \bs{\mathcal{P}}_\R$ ?
One difficulty in proving this is related to the fact that the Poincar\'e polynomials of the
stalks $\RHom^\bullet(\mathcal{F}^\bullet_n,\mathcal{G}^\bullet_n)_\x$ cannot be
computed from the Poincar\'e polynomials of the stalks 
$(\mathcal{F}^\bullet_n)_\x$ and $(\mathcal{G}^\bullet_n)_\x$ as in the other cases
(see Warning \ref{warning:stalk}).
Note that sheaves of the form $\RHom(\mathcal{F},\mathcal{G})$ are quite
important. For example, they appear
as \emph{solution sheaves} in algebraic theory of partial differential equations \cite{KS}. 
It is also important in defining the Verdier duality that plays the role of Alexander-Poincar\'e duality in
sheaf theory \cite[VI]{Iversen}.
\end{question}

We now give several illustrative examples of sequences of constructible sheaves in the class
$\bs{\mathcal{P}}_\R$ other than those coming directly from a B-S-S complexity class
$\mathbf{P}^c_\R$.

\begin{example}(Rank stratification sheaf)
\label{eg:det}
For each $n >0$, let $V_n \subset   \Sphere^{n-1}\times \Sphere^{n^2-1}$ be the semi-algebraic 
set defined by
\[
V_n = \{(\x,A)  \mid \x \in \R^n, A \in \R^{n\times n}, A\cdot\x = 0,||A||^2=1,||\x||^2 =1 \}.
\]
Let $\pi_n:\Sphere^{n-1} \times \Sphere^{n^2-1} \rightarrow \Sphere^{n^2-1} $ denote the projection  
to the second factor.

\begin{proposition}
The sequence of constructible sheaves 
\[
\left( R\pi_{n,*}\Q_{V_n} \in \Ob(\constrD^b(\Sphere^{n^2-1}))\right)_{n >0}
\] 
belongs to the class $\bs{\mathcal{P}}_\R$.
 \end{proposition}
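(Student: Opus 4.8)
The plan is to verify directly the three requirements of Definition \ref{def:sheaf-P} for $\mathcal{F}^\bullet_n \defeq R\pi_{n,*}\Q_{V_n}$, taking the \emph{rank stratification} of $\Sphere^{n^2-1}$ as the subordinate partition. First I would record the stalks. Since $V_n$ is a closed, hence compact, subset of $\Sphere^{n-1}\times\Sphere^{n^2-1}$, the restriction of $\pi_n$ to $\supp(\Q_{V_n}) = V_n$ is proper, so by Theorem \ref{thm:closure1} each $\mathcal{F}^\bullet_n$ is a constructible sheaf on $\Sphere^{n^2-1}$. For $A\in\Sphere^{n^2-1}$ the fiber $\pi_n^{-1}(A)\cap V_n$ is the unit sphere of $\ker A\subset\R^n$; setting $k = n-\rank(A)$ this fiber is $\Sphere^{k-1}$ (empty when $k=0$). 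Hence by Theorem \ref{thm:proper-base-change} and Proposition \ref{prop:basic-isomorphism-in-derived},
\[
(\mathcal{F}^\bullet_n)_A \;\cong\; \bigoplus_{q} \HH^q(\Sphere^{k-1},\Q)[-q],
\]
so $P_{(\mathcal{F}^\bullet_n)_A} = 0$ if $\rank(A)=n$, equals $2$ if $\rank(A)=n-1$, and equals $1+T^{\,k-1}$ if $k=n-\rank(A)\ge 2$. In particular $\dim_\Q\HH^j((\mathcal{F}^\bullet_n)_A)\le 2$ for all $j$, and these groups vanish for $j<0$ and for $j>n-1$.

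Next I would fix the partition. Put $m_1(n)=n$, let $S_{n,r} = \{A\in\Sphere^{n^2-1}\;\mid\;\rank(A)=r\}$ for $0\le r\le n$ (a locally closed semi-algebraic set, being the difference of the zero sets of the $(r+1)\times(r+1)$ minors and of the $r\times r$ minors), and pad this family with empty sets so that it is indexed by a set $I_n$ of cardinality $2^{m_1(n)}=2^n\ge n+1$. The crucial claim is that this partition is subordinate to $\mathcal{F}^\bullet_n$, i.e.\ that each sheaf $\mathcal{H}^j(\mathcal{F}^\bullet_n)|_{S_{n,r}}$ is locally constant. To prove it I would observe that on $S_{n,r}$ the assignment $A\mapsto\ker A$ is a semi-algebraic map to the Grassmannian $\mathrm{Gr}(n-r,n)$ (its Plücker coordinates are, up to a common scalar, maximal minors of a matrix spanning $\ker A$, expressible through $r\times r$ minors of $A$), and that $V_n|_{S_{n,r}}$ is exactly the pullback along this map of the unit-sphere bundle of the tautological subbundle over $\mathrm{Gr}(n-r,n)$. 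The latter is a semi-algebraically locally trivial fibration with fiber $\Sphere^{n-r-1}$, hence so is $\pi_n\colon V_n|_{S_{n,r}}\to S_{n,r}$; applying Theorem \ref{thm:proper-base-change} over a trivializing chart $U\times\Sphere^{n-r-1}\to U$ gives $\mathcal{H}^j(\mathcal{F}^\bullet_n)|_U\cong\HH^j(\Sphere^{n-r-1},\Q)_U$, a constant sheaf. This is precisely the subordination condition of Definition \ref{def:constructible-sheaf}.

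Finally I would check the effectivity conditions of Definition \ref{def:sheaf-P}. The function $i_n$ sends $A$ to the index of the stratum containing it, which is a function of $\rank(A)$; the rank of a real matrix is computable in polynomial time (cf.\ Theorem \ref{thm:rank}), so $i_n$ is. By the stalk computation above, $p_n(A)$ is a trivially evaluated function of $\rank(A)$ — namely $0$, $2$, or $1+T^{\,n-\rank(A)-1}$ — hence also polynomial-time computable, and both the elements of $I_n$ and the coefficients of $p_n$ need only polynomially many bits. Together with $\dim_\Q\HH^j((\mathcal{F}^\bullet_n)_A)\le 2\le 2^{m_1(n)}$ and the vanishing for $|j|>m_1(n)=n$ from the first step, all clauses of Definition \ref{def:sheaf-P} are satisfied, so $(\mathcal{F}^\bullet_n)_{n>0}\in\bs{\mathcal{P}}_\R$.

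The main obstacle is the subordination claim: one must avoid invoking Hardt's triviality theorem (whose partition could be doubly exponential in size, as discussed in the introduction) and instead work with the explicit, polynomially many rank strata, proving that the higher direct image sheaf is locally constant on each. Identifying $V_n|_{S_{n,r}}$ with the pullback of the tautological sphere bundle over $\mathrm{Gr}(n-r,n)$ is what makes the local triviality — and hence the whole argument — go through; the remaining verifications are routine.
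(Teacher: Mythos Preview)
Your proof is correct and follows essentially the same approach as the paper: use the rank stratification of $\Sphere^{n^2-1}$, identify the stalks via proper base change as $\HH^*(\Sphere^{n-1-\rank(A)},\Q)$, and invoke the polynomial-time computability of rank for the functions $i_n$ and $p_n$. The paper's proof simply asserts that the rank partition is subordinate to $R\pi_{n,*}\Q_{V_n}$ as ``clear,'' whereas you supply the Grassmannian sphere-bundle argument for local constancy; this is a welcome elaboration rather than a different route.
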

 \begin{proof}
 It is clear that the semi-algebraic partition by rank of the matrices $A \in \Sphere^{n^2-1}$ is subordinate
 to the constructible sheaf  $R\pi_{n,*}\Q_{V_n}$. Moreover, for each $\A \in \Sphere^{n^2-1}$, 
 we have that 
 \[
 \HH^*((R\pi_{n,*}\Q_{V_n})_A) \cong \HH^*(\Sphere^{n-1-\mathrm{rk}_{n,n} (A)},\Q).
 \]
 The claim is now clear from the fact that the rank of a matrix is computable in polynomial time by
 a B-S-S machine and Theorem \ref{thm:proper-base-change}.
 \end{proof}
 This example should be compared with Theorem \ref{thm:rank} in the previous Section \ref{sec:constructible-functions}.
\end{example}

The next example might look a little artificial at first glance but shows how index sets  with exponential cardinality, as well as Poincar\'e polynomials with coefficients which are exponentially large in $n$ could
arise for a sequence in $\bs{\mathcal{P}}_\R$.

\begin{example}
\label{eg:example0}
For $n>0$ let $P_n \in \R[X_0,X_1,\ldots,X_n,Y_0,\Y_1,\ldots,\Y_{n}]$,
where each $\Y_i = (Y_{i,1},\ldots,Y_{i,i}), 1\leq i\leq n$ is a block of $i$ variables, be defined by
\[
P_n = \sum_{i=1}^{n} \Bigg((n X_i^2-1)^2 + \Big(\sum_{j=1}^i 2n Y_{i,j}^2 +(1- \sqrt{n}X_i)\Big)^2\Bigg).
\]
Let
\[
N = \sum_{i=1}^n i = \binom{n+1}{2},
\]
and let 
$V_n = \ZZ(P_n,\Sphere^n \times \Sphere^{N}_{Y_0 \geq 0})$ 
(recall that $\Sphere^n$ and $\Sphere^{N}$ are the \emph{unit} spheres centered at the origin
in $\R^{n+1}$ and $\R^{N+1}$, respectively).
Let $\pi_n: \Sphere^n \times \Sphere^{N} \rightarrow \Sphere^n$ be the projection to the first
factor.
Observe that 
\[
\pi_n(V_n) =  \{(0,x_1,\ldots,x_n) \;\mid\; x_i \in \{-1,+1\}, 1\leq i \leq n\},
\] 
and for
$\x= (0,x_1,\ldots,x_n) \in \pi_n(V_n)$, $(V_{n})_\x = \pi_n^{-1}(\x)\cap V_n$ is described by
\[
(V_n)_\x = \Sphere^{N}_{Y_0 \geq 0} \cap (T_1 \times \cdots \times T_n),
\]
where for each $i$, $1 \leq i \leq n$, 
\begin{eqnarray*}
T_i & \cong& \Sphere^{i-1} \mbox{ if }  x_i = -1, \\
     &\cong&  \{\pt\} \mbox{ if }   x_i = 1.
 \end{eqnarray*}
 
\begin{proposition}
 The sequence $\left(R\pi_{n,*}\Q_{V_n} \in \Ob(\constrD^b(\Sphere^n)) \right)_{n >0}$ belongs to the class
 $\bs{\mathcal{P}}_\R$.
\end{proposition}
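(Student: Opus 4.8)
The plan is to verify the conditions of Definition \ref{def:sheaf-P} one at a time, the key input being an explicit description of the stalks of $R\pi_{n,*}\Q_{V_n}$. First I would note that $V_n$ is a closed, hence compact, semi-algebraic subset of $\Sphere^n \times \Sphere^{N}_{Y_0\geq 0}$, so $\pi_n$ is proper restricted to $\supp(\Q_{V_n}) = V_n$; Theorem \ref{thm:closure1} then guarantees that $R\pi_{n,*}\Q_{V_n}$ is a constructible sheaf on $\Sphere^n$. By the proper base change theorem (Theorem \ref{thm:proper-base-change}) together with Proposition \ref{prop:basic-isomorphism-in-derived}, for every $\x\in\Sphere^n$ we have
\[
(R\pi_{n,*}\Q_{V_n})_\x \;\cong\; \bigoplus_k \HH^k((V_n)_\x,\Q)[-k],
\]
so everything reduces to understanding the fibers $(V_n)_\x = \pi_n^{-1}(\x) \cap V_n$.

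For the partition I would take the $2^n$ singletons $\{\x\}$ with $\x\in\pi_n(V_n)$, together with the open set $\Sphere^n\setminus\pi_n(V_n)$. This is subordinate to $R\pi_{n,*}\Q_{V_n}$: the sheaf vanishes identically off the finite set $\pi_n(V_n)$ (there the fibers are empty), and on a singleton every sheaf is constant. For $\x = (0,x_1,\dots,x_n)\in\pi_n(V_n)$ set $A(\x) = \{\,i \mid x_i = -1\,\}$; using the fiber computation already carried out in the statement of the example, $(V_n)_\x$ is semi-algebraically homeomorphic to $\prod_{i\in A(\x)}\Sphere^{i-1}$ (the point-factors $T_i$ and the constraint $Y_0\geq 0$ drop out). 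The Künneth formula then gives $\HH^*((V_n)_\x,\Q) \cong \bigotimes_{i\in A(\x)}\HH^*(\Sphere^{i-1},\Q)$, so the Poincar\'e polynomial of the stalk is $\prod_{i\in A(\x)} q_i(T)$ with $q_1(T)=2$ and $q_i(T)=1+T^{i-1}$ for $i\geq 2$, and it is $0$ for $\x\notin\pi_n(V_n)$.

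From this the numerical bounds in part (2) of Definition \ref{def:sheaf-P} are immediate: the total dimension $\sum_k\dim_\Q\HH^k((V_n)_\x,\Q) = 2^{\card(A(\x))} \leq 2^n$, so each $\dim_\Q\HH^j$ is at most $2^n$, and the cohomology is concentrated in degrees $0\leq j\leq \binom{n}{2}$. I would therefore take $m_1(n) = (n+1)^2$ (any polynomial dominating $n$, $\binom{n}{2}$, and ensuring $2^{m_1(n)}\geq 2^n+1$ will do), and for part (1) pad the index set $I_n$ with dummy indices attached to empty pieces so that $\card(I_n) = 2^{m_1(n)}$. It then remains to check that $i_n$ and $p_n$ are computable in polynomial time: given $\x\in\Sphere^n$, a B-S-S machine first tests in polynomial time whether $\x\in\pi_n(V_n)$ (i.e.\ whether $X_0=0$ and each $X_i$ equals one of its two admissible values); if so it reads off $(x_1,\dots,x_n)$, which names the piece $\{\x\}$ and, via the product $\prod_i q_i(T)$, yields $p_n(\x)$; otherwise it returns the index of $\Sphere^n\setminus\pi_n(V_n)$ and $p_n(\x)=0$.

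The one point that requires care — and, as the preamble to the example indicates, the whole reason the example is included — is that $\card(I_n)$ is exponential in $n$ and the coefficients of $p_n(\x)$ can be as large as $2^n$, with degree up to $\binom{n}{2}$; but both are encoded by polynomially many bits, so all the computations above genuinely run in time polynomial in $n$, and no hypothesis of Definition \ref{def:sheaf-P} is violated. I do not expect a genuine obstacle: the argument is simply an assembly of the elementary fiber computation, proper base change, Künneth, and the bookkeeping of Definition \ref{def:sheaf-P}, and the only delicate step is precisely this bit-size verification.
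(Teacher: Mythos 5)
Your proof is correct and follows essentially the same route as the paper: the same partition of $\Sphere^n$ into the $2^n$ singletons of $\pi_n(V_n)$ together with its complement, the same fiber computation via proper base change and K\"unneth, and the same verification of the dimension/degree bounds and polynomial-time computability of $i_n$ and $p_n$. (As an incidental remark, your Poincar\'e polynomial $\prod_{i\in A(\x)} q_i(T)$ is the correct one; the paper's displayed formula \eqref{eqn:example} has a slip, since as written the factor vanishes when $\alpha_i = 1$, whereas the point $T_i$ should contribute a factor $1$.)
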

 
 \begin{proof}
 Let for $ I_n = \{+1,-1\}^n \cup \{0\}$, and for $\alpha=(\alpha_1,\ldots,\alpha_n)  \in \{-1,+1\}^n$
 \[
 S_{n,\alpha} = \{\x = (x_0,x_1,\ldots,x_n) \in \Sphere^n \;\mid\; x_i = \alpha_i, 1\leq i \leq  n\},
 \]
 and let 
 \[
 S_0 = \Sphere^n \setminus \bigcup_{\alpha_ \in \{-1,+1\}^n} S_{n,\alpha}.
 \]
 
 Then the family $\left(S_{n,\alpha}\right)_{\alpha\in I_n}$ is a semi-algebraic partition of $\Sphere^n$
 into $\card(I_n) = 2^n+1$ locally closed semi-algebraic sets. It is easy to check that 
$\HH^*((R\pi_{n,*}\Q_{V_n})_\x)$ is locally constant over each element of the partition, and 

\begin{eqnarray*}
 \HH^*((R\pi_{n,*}\Q_{V_n})_\x) &=& 0 \mbox{ if } \x\in S_{n,0}, \\
                                                &\cong& \HH^*(T_1\times\cdots\times T_n,\Q) \mbox{ for } \x \in S_{n,\alpha} \mbox{ with } \alpha \in \{-1,+1\}^n,
\end{eqnarray*}
where for each $i$, $1 \leq i \leq n$, 
\begin{eqnarray*}
T_i & \cong& \Sphere^{i-1} \mbox{ if }  \alpha_i = -1, \\
     &\cong&  \{\pt\} \mbox{ if }   \alpha_i = 1.
 \end{eqnarray*}
 
Using the Künneth formula we have that 
\begin{eqnarray}
\label{eqn:example}
P_{T_1 \times \cdots \times T_n}(T) &=&  \prod_{i=1}^n P_{T_i} \nonumber\\
&=& \prod_{i=1}^n  \left(\frac{(1 - \alpha_i)}{2} (1 + T^{i-1})\right)
\end{eqnarray}
(recall from Notation \ref{not:betti} that for any locally closed semi-algebraic set $X$ we denote by
$P_X(T)$ the Poincar\'e polynomial of $X$).

It follows from \eqref{eqn:example} that the dimensions 
$\dim_\Q \HH^*((R\pi_{n,*}\Q_{V_n})_\x)$ are bounded singly exponentially in $n$. 

It is also clear that the sequence of maps
$(i_n:\Sphere^n \rightarrow I_n)_{n>0}$ and $(p_n:\Sphere^n \rightarrow \Z[T,T^{-1}])$ are computable
in polynomial time. 
Together with the above this shows that $\Big(R\pi_{n,*}\Q_{V_n} \in \Ob(\constrD^b(\Sphere^n)) \Big)_{n >0}$ belongs to the class $\bs{\mathcal{P}}_\R$.
\end{proof}

\begin{remark}
Notice that in the above example, 
it follows from the unique factorization property of $\Z[T]$, that 
$\HH^*((R\pi_{n,*}\Q_{V_n})_\x)$ is distinct over each element, $S_{n,\alpha}$, of the partition
$(S_{n,\alpha})_{\alpha \in I_n}$, and this is the \emph{coarsest possible}  partition 
which is subordinate to $R\pi_{n,*}\Q_{V_n}$,
and this
partition has size $2^n+1$ which is singly exponential in $n$.  Moreover,
$\displaystyle{\sum_{i} \dim_\Q \HH^i((R\pi_{n,*}\Q_{V_n})_\x)}$ can be as large as $2^n$, and
in fact $\displaystyle{\sum_{i} \dim_\Q \HH^i((R\pi_{n,*}\Q_{V_n})_\x) = 2^n}$, for $\x = (0,-1,-1,\ldots,-1)$
(seen by substituting $T=1$ in the corresponding Poincar\'e polynomial). 
\end{remark}
\end{example}

\begin{example}
\label{eg:example1}
Let $d,t > 0$ be fixed integers. Let for each $n > 0$, $P_n \in \R[X_1,\ldots,X_n]$ be a polynomial with $\deg(P_n) \leq d$, and such that the sequence of functions 
\[
\left(P_n:\R^n \rightarrow \R\right)_{n>0} \in \mathbf{P}_\R
\] 
(see Notation \ref{not:abuse2}). For example, we could take for $P_n$ the elementary symmetric polynomial $e_{d,n}$ of degree $d$ in $n$ variables. Let 
$V_n = \ZZ(P_n,\Sphere^{n-1})\subset \R^n$, and it is immediate that the sequence 
$\left(V_n \subset \Sphere^{n-1}\right)_{n>0} \in \mathbf{P}^c_\R$. Now for each $n>t$, $\pi_{n,t}$ denote the projection  $
\R^{n+t}$ to $\Sphere^n$ which forgets that last $t$ variables,
and where we have replaced $\R^n$ by its one-point compactification (refer to Remark \ref{rem:warning-about-spheres}).

\begin{proposition}
\label{prop:example1}
The sequence $ \left(R\pi_{n,t,*} \Q_{V_{n+t}}\right)_{n > 0}$ belongs to the class
$\bs{\mathcal{P}}_\R$. 
\end{proposition}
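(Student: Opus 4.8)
The plan is to verify directly that the sequence $\left(R\pi_{n,t,*}\Q_{V_{n+t}}\right)_{n>0}$ satisfies all the conditions in Definition \ref{def:sheaf-P}, using the proper base change theorem (Theorem \ref{thm:proper-base-change}) to describe the stalks, and then invoking the effective bounds available for polynomial systems of bounded degree. First I would note that since $t$ is a \emph{fixed} constant, the map $\pi_{n,t}$ forgets only $t$ variables, so each fiber $\pi_{n,t}^{-1}(\x)$ is (a compactification of) $\R^t$, and $V_{n+t}$ is compact so $\pi_{n,t}$ is proper restricted to $\supp(\Q_{V_{n+t}}) = V_{n+t}$; hence by Theorem \ref{thm:closure1} the sequence does consist of constructible sheaves on $\Sphere^n$. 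By Theorem \ref{thm:proper-base-change} and Proposition \ref{prop:basic-isomorphism-in-derived}, for each $\x\in\Sphere^n$ we have
\[
(R\pi_{n,t,*}\Q_{V_{n+t}})_\x \cong \bigoplus_{j} \HH^j\bigl((V_{n+t})_\x,\Q\bigr)[-j],
\]
where $(V_{n+t})_\x = \pi_{n,t}^{-1}(\x)\cap V_{n+t}$ is the zero set in the compactified fiber of the single polynomial $P_{n+t}$ restricted to the $t$-dimensional slice over $\x$.

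Next I would produce the required semi-algebraic partition of $\Sphere^n$. Since $\left(P_n:\R^n\to\R\right)_{n>0}\in\mathbf{P}_\R$, for each $n$ the polynomial $P_{n+t}$ is computable in polynomial time, and in particular $\size(P_{n+t})$ (and hence $\deg(P_{n+t})\le d$, the number of monomials, etc.) is polynomially bounded; the locus where the homeomorphism (indeed homology) type of the fiber $(V_{n+t})_\x$ changes is cut out by a family $\mathcal{P}_n$ of polynomials whose cardinality and degrees are bounded singly exponentially in $n$ — this follows from applying the (singly exponential, since $t$ is fixed) complexity bounds for computing a cylindrical-type or roadmap-type description of the map $\pi_{n,t}$ restricted to $V_{n+t}$, e.g.\ the results behind Theorem \ref{thm:effective} or classical effective Hardt triviality in a bounded number of fiber variables (the fiber dimension $t$ being constant is what keeps this singly exponential). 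Taking $(S_{n,i})_{i\in I_n}$ to be the connected components of the realizations of sign conditions on $\mathcal{P}_n$ gives a partition subordinate to $R\pi_{n,t,*}\Q_{V_{n+t}}$ with $\card(I_n)$ bounded singly exponentially, so condition (1) of Definition \ref{def:sheaf-P} holds with an appropriate polynomial $m_1(n)$.

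For condition (2), the dimensions $\dim_\Q\HH^j((V_{n+t})_\x,\Q)$ are the Betti numbers of the zero set of one polynomial of degree $\le d$ in a $t$-dimensional (compact) ambient space; by the standard bounds on Betti numbers of real algebraic sets (\cite{OP,T,Milnor2}, or \cite[Theorem 7.38]{BPRbook2}) these are bounded by $d(2d-1)^{t-1}$ up to a constant depending on $t$, which is an absolute constant — in particular bounded by $2^{m_1(n)}$ — and they vanish for $|j| > t$, so the cohomological-degree bound $|j|\le m_1(n)$ holds trivially. Finally, the maps $i_n:\Sphere^n\to I_n$ and $p_n:\Sphere^n\to\Z[T,T^{-1}]$ are computable in polynomial time: given $\x$, one evaluates the polynomially-many polynomials of $\mathcal{P}_n$ at $\x$ to locate the cell (this is point location against a singly-exponential-size arrangement described by polynomially-sized data, so polynomial time), and the Poincaré polynomial $p_n(\x)$ of the fiber is determined by that cell, so it is read off a polynomial-time-computable lookup — more carefully, one computes the constant Betti-number data on the representative cell using the fixed-ambient-dimension algorithm in polynomial time. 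The main obstacle I anticipate is precisely this last point: making sure that the partition $\mathcal{P}_n$ can be \emph{described} by polynomially-sized data (even though it has exponentially many cells) and that point location plus fiberwise Betti number computation is genuinely polynomial time in the B-S-S model — this is where the hypothesis that $t$ is a fixed constant (so the fibers have bounded dimension) and that $(P_n)_{n>0}\in\mathbf{P}_\R$ are both essential, and it is the step that requires the most care in invoking the algorithmic results behind Theorem \ref{thm:effective} rather than merely their existence statements.
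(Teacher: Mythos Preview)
Your overall structure is correct and matches the paper's: use proper base change to identify the stalks with the cohomology of the fibers, find a semi-algebraic partition of the base over which the fiber topology is constant, bound the Betti numbers of the fibers, and check polynomial-time computability of $i_n$ and $p_n$. The gap is precisely the one you flag at the end, and it is a genuine one: with only a singly exponential bound on $\card(\mathcal{P}_n)$, you cannot evaluate all of $\mathcal{P}_n$ at $\x$ in polynomial time, so the point-location function $i_n$ is not obviously in $\mathbf{P}_\R$. Invoking ``cylindrical-type or roadmap-type'' bounds or the machinery behind Theorem \ref{thm:effective} in all $n+t$ variables gives you complexity singly exponential in $n$, which does not close this gap.

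The paper's fix is a sharper observation that you are not using. Since $t$ and $d$ are fixed, the fiber $(V_{n+t})_\x$ is the zero set, inside a $t$-dimensional sphere, of the polynomial $P_{n+t}(\x,Y_1,\dots,Y_t)$, which has degree $\leq d$ in the $Y$-variables and hence is determined by its $\binom{t+d}{d} = O(1)$ coefficients. These coefficients are themselves polynomials of degree $\leq d$ in $\x$, computable in polynomial time from the (polynomially many, polynomially computable) coefficients of $P_{n+t}$. The topological type of the fiber therefore factors through a map to a \emph{constant}-dimensional coefficient space, and the stratification of that space by topological type is cut out by a family of polynomials whose cardinality and degrees are bounded by $d^{2^{O(t)}} = O(1)$. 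Pulling back along the coefficient map yields a family $\mathcal{Q}_n \subset \R[X_1,\dots,X_n]$ with $\card(\mathcal{Q}_n) = O(1)$ and degrees $O(1)$. Now point location is just evaluating $O(1)$ polynomials of bounded degree at $\x$, and the Poincar\'e polynomial of the fiber is read off from one of only $3^{O(1)}$ possible sign patterns --- both clearly polynomial time. This ``factor through a constant-dimensional moduli space'' step is the missing idea that turns your singly-exponential family into a constant-size one and resolves the obstacle you identified.
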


\begin{proof}
First  observe that the number of coefficients of the polynomial $P_{n+t}$ is bounded polynomially
in $n$ (for constant $t$ and $d$), and moreover from the fact that the sequence of functions 
$\left(P_n:\R^n \rightarrow \R\right)_{n>0} \in \mathbf{P}_\R$ it follows that these coefficients can be 
computed in polynomially many steps by a B-S-S machine.
Notice that for each $x \in \R^n$, the fiber $(V_{n+t})_\x = V_{n+t} \cap \pi_{n,t}^{-1}(x)$ can be identified with the set of zeros of the polynomials $P_{n+t}(\x,Y_1,\ldots,Y_t)$ intersected with the sphere having the equation $Y_1^2+\cdots+Y_t^2 + |\x|^2 -1$. Moreover the degree of the polynomial $P_{n+t}(\x,Y_1,\ldots,Y_t)$ in
$Y_1,\ldots,Y_t$ is at most $d$. Now, it is a standard fact (see for example \cite{BPRbook2}) that there exists
a family of polynomials $\mathcal{Q}_n\subset \R[X_1,\ldots,X_n]$, such that 
$\card(\mathcal{Q}_n)$, as well as the degrees of the polynomials in $\mathcal{Q}_n$, are bounded by
$d^{2^{O(t)}} = O(1)$, and such that for each $\x \in \R^n$ the signs of the polynomials in $\mathcal{Q}_n$ at
$x$ determines the topological type of the fiber $(V_{n+t})_\x$. The number of  of realizable sign conditions of this family $\mathcal{Q}_n$ is bounded singly exponentially in $n$. Moreover,
for each $i \in \Z$ and $\x\in \R^n$,
\[
\HH^i((R\pi_{n,t,*} \Q_{V_{n+t}})_\x) \cong \HH^i((V_{n+t})_\x,\Q).
\]

From dimension considerations it follows that $\HH^i((V_{n+t})_\x,\Q) = 0$ for $i\not\in [0,t]$. Moreover, it
follows from standard bounds on the Betti numbers of real varieties \cite{OP,T,Milnor2} that, 
$\sum_i \dim_\Q (\HH^i((V_{n+t})_\x,\Q)) =  O(d)^t$. 
It is also clear that given $\x\in\R^n$, the signs of the polynomials in $\mathcal{Q}_n$ at $\x$
as well as the dimensions, $\dim_\Q \HH^i((V_{n+t})_\x,\Q)$,  
can be computed in polynomial time. This completes the proof that 
\[ 
\left(R\pi_{n,t,*} \Q_{V_{n+t}}\right)_{n > 0} \in \bs{\mathcal{P}}_\R.
\] 
\end{proof}
\end{example}

In order to describe the next example we need a new notation.
\begin{notation}
We denote by $\Sym_{n,d}(\R)$ the $\R$-vector space  of forms over $\R$ of degree $d$ in $n+1$ variables. We
will denote by $N_{n,d} = \binom{n+d}{d} = \dim_\R \Sym_{n,d}$. Note that each form $P \in \Sym_{n,d}$ can
be identified uniquely with a point on the sphere $a_P \in \Sphere^{N_{n,d}-1}$ obtained by intersecting the sphere  $\Sphere^{N_{n,d}-1}$ with the half-ray consisting of the positive multiples of $P$. We will
identify $P$ with the point $a_P$ in what follows.
\end{notation}

\begin{proposition}
\label{prop:example2}
Let $s >0$ be fixed, and consider for each $n> 0$, the compact real algebraic set 
$V_n \subset (\Sphere^{N_{n,2}-1})^s \times \Sphere^n$ defined by 
\[
V_n = \{ (P_1,\ldots,P_s,\x)\; \mid \;  \x \in \Sphere^n, P_i \in \Sym_{n,2}, P_i(\x) = 0, 1\leq i \leq s\},
\]

Let $\pi_{n}: (\Sphere^{N_{n,2}-1})^s \times \Sphere^n \rightarrow (\Sphere^{N_{n,2}-1})^s\hookrightarrow 
\Sphere^{s(N_{n,2}-1) + s}$ be the projection to the first factor.
Then,
\begin{enumerate}
\item
The sequence $(V_n)_{n>0} \in \bs{\mathbf{P}}_\R$.
\item
The sequence $\left(R\pi_{n,*}(\Q_{V_n}) \in \Ob(\constrD^b(\Sphere^{s(N_{n,2}-1) + s})\right)_{n > 0} \in \bs{\mathcal{P}}_\R$.
\end{enumerate}
\end{proposition}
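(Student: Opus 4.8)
The plan is to verify both parts directly against Definition~\ref{def:sheaf-P}, reusing the machinery already assembled in Examples~\ref{eg:example1} and~\ref{eg:det}. For part~(1), the point is that $V_n$ is cut out inside $(\Sphere^{N_{n,2}-1})^s \times \Sphere^n$ by the $s$ bihomogeneous equations $P_i(\x)=0$ together with the sphere equations, all of bounded degree (degree $2$ in $\x$, degree $1$ in the coefficients of each $P_i$), and the ambient dimension $s(N_{n,2}-1)+s+ n+1$ is polynomial in $n$ since $N_{n,2}=\binom{n+2}{2}$ is quadratic in $n$ and $s$ is fixed. Hence membership in $V_n$ can be tested by a B-S-S machine in polynomial time, so $(V_n)_{n>0}\in\mathbf{P}^c_\R$; applying Proposition~\ref{prop:sheaf-P-vs.-set-P} gives $(\Q_{V_n})_{n>0}\in\bs{\mathcal{P}}_\R$, which is the content of~(1) once we interpret $\bs{\mathbf{P}}_\R$ as $\mathbf{P}^c_\R$ for the underlying semi-algebraic sets. (If instead $\bs{\mathbf{P}}_\R$ denotes the sheaf class, part~(1) is literally a restatement of $(\Q_{V_n})_{n>0}\in\bs{\mathcal{P}}_\R$.)

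For part~(2), the projection $\pi_n$ is proper (all spaces compact), so by Theorem~\ref{thm:closure1} the sheaf $R\pi_{n,*}(\Q_{V_n})$ is constructible, and by Theorem~\ref{thm:proper-base-change} together with Proposition~\ref{prop:basic-isomorphism-in-derived} its stalk at $P=(P_1,\dots,P_s)$ satisfies
\[
\HH^i\big((R\pi_{n,*}\Q_{V_n})_P\big) \cong \HH^i\big(\ZZ(\{P_1,\dots,P_s\},\Sphere^n),\Q\big).
\]
So I need: (a) a semi-algebraic partition of $(\Sphere^{N_{n,2}-1})^s$, subordinate to the sheaf, of singly-exponential size with polynomial-time point location; (b) singly-exponential bounds on the dimensions $\dim_\Q \HH^i$ of the stalks, vanishing outside a bounded range of $i$; and (c) polynomial-time computability of the index function $i_n$ and the Poincar\'e-polynomial function $p_n$. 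For (b): the fiber is a real projective variety defined by $s$ quadratic forms in $n+1$ variables (equivalently, $s$ quadrics in $\Sphere^n$), so the range of nonzero cohomology is $[0,n]$ and, crucially, standard bounds on the Betti numbers of varieties defined by a \emph{bounded} number of \emph{bounded-degree} equations — here the relevant parameter is that $s$ and the degree $2$ are fixed, while only the number of variables grows — give $\sum_i \dim_\Q \HH^i((V_n)_P,\Q) = n^{O(s)}$, which is polynomial (hence a fortiori singly exponential) in $n$; I will cite \cite{OP,T,Milnor2} and the Mayer--Vietoris bounds from \cite[Proposition~7.33]{BPRbook2} exactly as in the proof of Proposition~\ref{prop:uniform-bound-real}.

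The substantive step, and the one I expect to be the main obstacle, is (a) together with (c): producing a family of polynomials $\mathcal{Q}_n$ in the coefficient variables (of cardinality and degree bounded singly exponentially in $n$) such that the sign vector of $\mathcal{Q}_n$ at $P$ determines the topological — indeed homological — type of the fiber $\ZZ(\{P_1,\dots,P_s\},\Sphere^n)$, \emph{and} such that these signs, and the resulting Betti numbers, are computable in time polynomial in $n$. This is precisely the kind of effective stratification statement that underlies Example~\ref{eg:example1}, but there the number of ``fiber variables'' $t$ was constant whereas here the fiber lives in $\Sphere^n$ with $n$ growing; what saves us is that the defining data is $s$ quadrics (fixed $s$, fixed degree $2$), so one can invoke the algorithms for computing Betti numbers of varieties defined by a constant number of quadrics, which run in time polynomial in the number of variables (see \cite{BPRbook2} and the references therein on quadratic systems) — this yields both the polynomial-time computation of $p_n$ and, by tracking the sign conditions used by such an algorithm, the desired family $\mathcal{Q}_n$ and the index function $i_n$. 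Assembling these three ingredients and checking them against the clauses of Definition~\ref{def:sheaf-P} completes the proof that $\left(R\pi_{n,*}(\Q_{V_n})\right)_{n>0}\in\bs{\mathcal{P}}_\R$.
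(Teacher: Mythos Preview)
Your outline is essentially the same as the paper's: the paper's proof is a brief sketch that invokes two specific results for the quadratic case --- \cite{BK08} for a polynomial-time computable family $\mathcal{Q}_n$ (of polynomially bounded size and degree) over whose realizable sign conditions the \emph{stable homotopy type} of the fiber is invariant, and \cite{Bas05-top} (see also \cite{BP'R07joa}) for the polynomial-time computation of the Betti numbers of an intersection of a fixed number of quadrics in $\Sphere^n$. You correctly identify that the crux is the availability of polynomial-time algorithms peculiar to the ``fixed number of quadrics'' regime, and your steps (a)--(c) match the paper's organization.

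One small correction: the bound $\sum_i b^i((V_n)_P)=n^{O(s)}$ does \emph{not} follow from the classical Ole\u{\i}nik--Petrovski\u{\i}--Thom--Milnor estimates you cite; those give $O(d)^n$-type bounds, exponential in $n$ even for $d=2$. The polynomial bound in $n$ for a fixed number $s$ of quadrics is a special result (Barvinok, Basu) and is what \cite{Bas05-top} provides. This does not affect correctness --- singly exponential is all Definition~\ref{def:sheaf-P} requires --- but your citation for that step should point to the quadratic-specific literature, not to \cite{OP,T,Milnor2}. Likewise, your plan to extract $\mathcal{Q}_n$ by ``tracking the sign conditions used'' in a Betti-number algorithm is heuristically right, but the paper instead appeals directly to the stratification result in \cite{BK08}, which is cleaner.
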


\begin{remark}
\label{rem:example2}
Notice that unlike in Example \ref{eg:example1} above, the dimensions of the fibers of the map
$\pi_n$ grows with $n$. However, the degrees of the polynomials used in the definition of the 
sets $V_n$ is restricted to $2$. 
\end{remark}

\begin{proof}
The proof of the proposition is somewhat similar to that of Proposition \ref{prop:example1} and we outline
it here, omitting details. It follows from the proof of 
the main result in \cite{BK08} that for each $n>0$ there exists a family of polynomials $\mathcal{Q}_n$,
computable in polynomial time, such that the stable homotopy types of the fibers 
$(V_n)_\x = V_n \cap \pi_n^{-1}(\x)$ stay invariant over each connected component of the realizations
of each realizable sign conditions on $\mathcal{Q}_n$. The degrees and the number of polynomials in $\mathcal{Q}_n$ are bounded polynomially in $n$ (for fixed $s$). 
Also, it follows from the main result in \cite{Bas05-top}
(see also \cite{BP'R07joa}), that the Betti numbers of the fibers $(V_n)_\x$ are computable in polynomial time (for fixed $s$).
Together they imply the proposition.
\end{proof}

\subsection{Sheaf-theoretic analog of $\mathbf{PH}_\R$}
\label{subsec:define-PH-sheaves}
We will now define the sheaf-theoretic version of the polynomial hierarchy. However, before 
doing so we motivate our definition by recalling the definitions of $\mathbf{NP}^c_\R$ 
and $\textbf{co-NP}^c_\R$.

If $m(n)$ is any fixed polynomial, then a sequence 
$\left(X_n \subset \Sphere^{m(n)}\right)_{n > 0}$ is in the class $\mathbf{NP}_\R^c$,
if there exists a polynomial $m_1(n)$ and a 
sequence
$(Y_n \subset \Sphere^{m_1(n)} \times \Sphere^{(m(n)})_{n > 0} \in \mathbf{P}_\R$ such that 
for each $n > 0$ the semi-algebraic set
$X_n$ is described by the formula
$(\exists y \in \Sphere^{m_1(n)}) ((y,x) \in Y_n)$.
  
Similarly,
  $\left(X_n \subset \Sphere^{m(n)}\right)_{n > 0}$ is in the class $\textbf{co-NP}_\R^c$,
if there exists a polynomial $m_1(n)$ and a 
sequence
$(Z_n \subset \Sphere^{m_1(n)} \times \Sphere^{(m(n)})_{n > 0} \in \mathbf{P}_\R$ such that 
for each $n > 0$ the semi-algebraic set
$X_n$ is described by the formula
$(\forall z \in \Sphere^{m_1(n)}) ((z,x) \in Z_n)$.
  
  Notice that in the second case it immediately follows from the fact that for all $N>0$,
 \begin{eqnarray*}
  \HH^j(\Q_{\Sphere^N}) &=& \Q, \mbox{ for } j=0,N, \\
                                       &=& 0, \mbox{ otherwise}
 \end{eqnarray*}
  that
  \[
  \Q_{X_n} = \tau^{\leq m_1(n)}\tau^{\geq m_1(n)}R\pi_{n,*}(\Q_{Z_n})[m_1(n)],
  \]
where for each $n>0$, $\pi_n: \Sphere^{m_1(n)}\times \Sphere^{m(n)} \rightarrow \Sphere^{m(n)}$ is the projection to the second factor.

In the first case, we use a construction used in \cite{BZ09}. Namely, given a sequence 
$(Y_n \subset \Sphere^{m_1(n)} \times \Sphere^{(m(n)})_{n > 0} \in \mathbf{P}_\R$,
the sequence 
\[
(J_{\pi_n}(Y_n)  \subset [0,1]\times \overline{\Ball^{m_1(n)}} \times \overline{\Ball^{m_1(n)}}\times \Sphere^{(m(n)})_{n > 0} \in \mathbf{P}_\R,
\] 
where $J_{\pi_n}(Y_n) \subset [0,1] \times \overline{\Ball^{m_1(n)}} \times \overline{\Ball^{m_1(n)}} \times \Sphere^{m(n)}$  is defined as the union of the following three sets:
$$\displaylines{
(0,1) \times (Y_n \times_{\pi_n} Y_n),   \cr
\{(1,y_0,y_1,x) \;\mid\; (y_0,x) \in Y_n\}, \cr
\{(0,y_0,y_1,x) \;\mid\; (y_1,x) \in Y_n\}.
}
$$
Denoting by $J(\pi_n): J_{\pi_n}(Y_n) \rightarrow \Sphere^{m(n)}$ the natural projection, it is easy
to verify that
the fibers of the projection $J(\pi_n): J_{\pi_n}(Y_n) \rightarrow \Sphere^{m(n)}$ are homotopy equivalent 
to the topological join with itself of the corresponding fibers of the projection $\pi_n: Y_n \rightarrow \Sphere^{m(n)}$,
and are thus connected if non-empty.
It is clear that the sequence $(J_{\pi_n}(Y_n))_{n >0} \in \mathbf{P}_\R$. Moreover, 
$J(\pi_n)(J_{\pi_n}(Y_n)) = X_n$, and for each $x\in X_n$, $(J(\pi_n))^{-1}(x)$ is connected.

Thus, the constructible sheaf $\Q_{X_n}$ can be expressed as 
\[
\Q_{X_n} = \tau^{\leq 0}\tau^{\geq 0}R(J(\pi_{n}))_* \Q_{J_{\pi_n}(Y_n)}.
\]

Notice that the quantifiers in the usual definition of the classes $\textbf{NP}_\R$ and 
$\textbf{co-NP}_\R$ are replaced by the direct image functor and truncation functors.

This motivates the following definition.
\begin{definition}
For each $p \geq 0$ we define a class of sequences
$\bs{\Lambda}^{(p)}\bs{\mathcal{P}}_\R$ as follows.
Let
\[ 
\bs{\Lambda}^{(0)}\bs{\mathcal{P}}_\R = \bs{\mathcal{P}}_\R.
\]
For $p > 0$, we define inductively
the class $\bs{\Lambda}^{(p)}\bs{\mathcal{P}}_\R$ as the smallest class of sequences of
constructible sheaves satisfying the following conditions.
\begin{enumerate}
\item
The class $\bs{\Lambda}^{(p)}\bs{\mathcal{P}}_\R$  contains
the class of sequences,
$\left(\mathcal{F}^\bullet_n\right)_{n > 0}$ for which there exists non-negative polynomials $m(n),m_0(n) \in \Z[n]$, 
and a  sequence $\left(\mathcal{G}^\bullet_n\right)_{n>0} \in \bs{\Lambda}^{(p-1)}\bs{\mathcal{P}}_\R$ such that
for each $n>0$, 
\begin{enumerate}
\item
$\mathcal{G}^\bullet_n \in \Ob(\constrD^b(\Sphere^{m_0(n)} \times \Sphere^{m(n)}))$;
\item
$\mathcal{F}^\bullet_n = R\pi_{n,*} \mathcal{G}^\bullet_n$, where $\pi_n: \Sphere^{m_0(n)} \times \Sphere^{m(n)} \rightarrow  \Sphere^{m(n)}$ is the projection to the second factor.
\end{enumerate}
\item
The class $\bs{\Lambda}^{(p)}\bs{\mathcal{P}}_\R$  is closed under taking direct sums, tensor products,
truncations and pull-backs.  More precisely, 
\begin{enumerate}
\item 
If $(\mathcal{F}^\bullet_n)_{n>0}, (\mathcal{G}^\bullet_n)_{n>0} \in  \bs{\Lambda}^{(p)}\bs{\mathcal{P}}_\R$, then 
\[
(\mathcal{F}^\bullet_n\oplus \mathcal{G}^\bullet_n)_{n>0},  (\mathcal{F}^\bullet_n\otimes \mathcal{G}^\bullet_n)_{n>0}
\in \bs{\Lambda}^{(p)}\bs{\mathcal{P}}_\R.
\]
\item
For any polynomial $m(n)$, and a class  $(\mathcal{F}^\bullet_n)_{n>0} \in  \bs{\Lambda}^{(p)}\bs{\mathcal{P}}_\R$, the sequences
 $(\tau^{\leq m(n)} \mathcal{F}^\bullet_n)_{n>0}, (\tau^{\geq m(n)} \mathcal{F}^\bullet_n)_{n>0}$ also belong to the
 class   $\bs{\Lambda}^{(p)}\bs{\mathcal{P}}_\R$.
\item
 For any fixed non-negative polynomial $m_1(n) \in \Z[n]$, let
$\pi_n:\Sphere^{m_1(n)} \times \Sphere^{m(n)} \rightarrow \Sphere^{m(n)}$ be the projection to the second factor,
and 
\[
\left(\mathcal{F}^\bullet_n \in \Ob(\constrD^b(\Sphere^{m(n)})\right)_{n>0} \in  \bs{\Lambda}^{(p)}\bs{\mathcal{P}}_\R.
\] 
Then, $\left(\pi_{n}^{-1}(\mathcal{F}^\bullet_n) \right)_{n>0} \in \bs{\Lambda}^{(p)}\bs{\mathcal{P}}_\R$.
\end{enumerate}
\end{enumerate}
Finally, 
$\bs{\mathcal{PH}}_\R$
  is defined as the union
$$
\displaylines{
\bs{\mathcal{PH}}_\R
=  \bigcup_{p\geq 0} \bs{\Lambda}^{(p)}\bs{\mathcal{P}}_\R
}
$$
The class 
$\bs{\mathcal{PH}}_\R$
 is the  analog 
in the category of constructible sheaves
of the class
$\mathbf{PH}_\mathbb{R}$.
\end{definition}

The following example is closely related to the language in the B-S-S complexity class $\mathbf{NP}_\R$ of all real polynomials in $n$ variables having degree at most $4$, whose set of real zeros is non-empty. It 
is well known \cite{BCSS98} that this problem is $\mathbf{NP}_\R$-complete.

\begin{example}
\label{eg:degree-four}
For each $n$ let $V_n \subset \Sphere^{n-1} \times \Sphere^{N_{n,4}-1}$ denote the real variety defined by 
\[
V_n = \{ (\x, P) \mid P \in \Sym_{n,4}\cap  \Sphere^{N_{n,4}-1} , \x \in \Sphere^{n-1} | P(\x) = 0\}.
\]
Let as usual $\pi_n: \Sphere^{n-1} \times \Sphere^{N_{n,4}-1} \rightarrow \Sphere^{N_{n,4}-1}$ denote the projection to the second factor.

It is now easy to verify that the sequence $\left(R\pi_{n,*} \Q_{V_n} \in \Ob(\constrD^b(\Sphere^{N_{n,4}-1}))\right)_{n>0}$ belongs to the class $\bs{\Lambda}^{(1)}\bs{\mathcal{P}}_\R$. 
We conjecture that this sequence does not belong to the class $\bs{\mathcal{P}}_\R$. 
\end{example}

\subsection{Some important inclusion relationships}
\label{subsec:inclusions}
In this section we relate the class of sequences of sheaves $\bs{\mathcal{P}}_\R$ with the B-S-S
complexity class $\mathbf{P}_\R^c$, and more generally the class $\bs{\mathcal{PH}}_\R$ with the
class $\mathbf{PH}^c_\R$. This allows us to relate the existing question about separating the class
$\mathbf{P}_\R^c$ from the class $\mathbf{PH}^c_\R$ in B-S-S theory, with the new question of separating the class
$\bs{\mathcal{P}}_\R$ from $\bs{\mathcal{PH}}_\R$.

\begin{proposition}
\label{prop:inclusion}
\begin{enumerate}
\item
A sequence $(S_n \subset \Sphere^{m(n)})_{n > 0}$ belongs to the class
$\mathbf{P}^c_\R$ if and only if $(\Q_{S_n} \in \Ob(\constrD^b(\Sphere^{m(n)}))_{n > 0}$ belongs to the class  $\bs{\mathcal{P}}_\R$.
\item
If a sequence $(S_n \subset \Sphere^{m(n)})_{n > 0}$ belongs to the complexity class $\mathbf{\Sigma}_{\R,p}^c
\cup \mathbf{\Pi}_{\R,p}^c$, then 
the sequence $(\Q_{S_n} \in \Ob(\constrD^b(\Sphere^{m(n)}))_{n > 0}$ belongs to the class  $\bs{\Lambda}^{(p)}\bs{\mathcal{P}}_\R$. 
\end{enumerate}
In particular,
if a sequence $(S_n \subset \Sphere^{m(n)})_{n > 0}$ belongs to the class $\mathbf{PH}^c_\R$ then  $(\Q_{S_n} \in \Ob(\constrD^b(\Sphere^{m(n)})_{n > 0}$ belongs to the class $\bs{\mathcal{PH}}_\R$.
\end{proposition}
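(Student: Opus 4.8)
The plan is to prove the three assertions together, treating (1) and (3) as essentially formal consequences of the work needed for (2). Part (1) is exactly Proposition~\ref{prop:sheaf-P-vs.-set-P} (take $I_n=\{0,1\}$ with $S_{n,0}=\Sphere^{m(n)}\setminus S_n$, $S_{n,1}=S_n$ in one direction, and read membership off the Poincar\'e polynomial $p_n$ in the other), and part (3) follows from part (2) by taking the union over $p$ and unwinding $\mathbf{PH}^c_\R=\bigcup_{p}(\mathbf{\Sigma}_{\R,p}^c\cup\mathbf{\Pi}_{\R,p}^c)$ and $\bs{\mathcal{PH}}_\R=\bigcup_{p}\bs{\Lambda}^{(p)}\bs{\mathcal{P}}_\R$. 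So the heart of the matter is part (2), which I would prove by induction on $p$; the case $p=0$ is precisely part (1) since $\mathbf{\Sigma}_{\R,0}^c=\mathbf{\Pi}_{\R,0}^c=\mathbf{P}^c_\R$ and $\bs{\Lambda}^{(0)}\bs{\mathcal{P}}_\R=\bs{\mathcal{P}}_\R$.

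For the inductive step, take $(S_n\subset\Sphere^{m(n)})_{n>0}\in\mathbf{\Pi}_{\R,p}^c$, $p\geq1$ (the $\mathbf{\Sigma}$ case is handled below with one extra device). Peel off the outermost universal block $\Y^1\in\Sphere^{k_1(n)}$: making $\Y^1$ a free variable in the defining formula of $S_n$ produces a sequence $(S'_n\subset\Sphere^{k_1(n)}\times\Sphere^{m(n)})_{n>0}$ lying in $\mathbf{\Sigma}_{\R,p-1}^c$, so by the induction hypothesis $(\Q_{S'_n})_{n>0}\in\bs{\Lambda}^{(p-1)}\bs{\mathcal{P}}_\R$. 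Since every member of a language in $\mathbf{PH}^c_\R$ is a \emph{closed} semi-algebraic subset of a sphere, each fiber $(S'_n)_\x$ is a closed subset of $\Sphere^{k_1(n)}$, so by~\eqref{eqn:cohomology-of-sphere} and~\eqref{eqn:cohomology-of-sphere2} one has $\HH^{k_1(n)}((S'_n)_\x,\Q)\cong\Q$ exactly when $\x\in S_n$ and $0$ otherwise. Applying the proper base change theorem (Theorem~\ref{thm:proper-base-change}) to the proper projection $\pi_n\colon\Sphere^{k_1(n)}\times\Sphere^{m(n)}\to\Sphere^{m(n)}$ then yields, exactly as in the description of $\textbf{co-NP}^c_\R$ preceding the definition of $\bs{\Lambda}^{(p)}\bs{\mathcal{P}}_\R$, the derived-category isomorphism
\[
\Q_{S_n}\;\cong\;\bigl(\tau^{\geq k_1(n)}\tau^{\leq k_1(n)}R\pi_{n,*}\Q_{S'_n}\bigr)[k_1(n)].
\]
Now $R\pi_{n,*}\Q_{S'_n}$ is a direct image of a sequence in $\bs{\Lambda}^{(p-1)}\bs{\mathcal{P}}_\R$ and hence lies in $\bs{\Lambda}^{(p)}\bs{\mathcal{P}}_\R$ by definition, and this class is closed under truncation and (by the same bookkeeping as Proposition~\ref{prop:stability-sheaf-P}(4), which one should first record for $\bs{\Lambda}^{(p)}\bs{\mathcal{P}}_\R$) under shifts; therefore $(\Q_{S_n})_{n>0}\in\bs{\Lambda}^{(p)}\bs{\mathcal{P}}_\R$.

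For $(S_n)_{n>0}\in\mathbf{\Sigma}_{\R,p}^c$ the outermost quantifier is existential, and the naive formula $\tau^{\geq0}\tau^{\leq0}R\pi_{n,*}\Q_{S'_n}$ fails because $R^0\pi_{n,*}\Q_{S'_n}$ has stalk $\HH^0((S'_n)_\x,\Q)$, whose dimension is the (possibly large) number of connected components of the fiber. The remedy, following~\cite{BZ09} and already recalled in the excerpt, is to replace $S'_n$ by its fibered join $J_{\pi_n}(S'_n)$ over $\pi_n$: its fibers over $\Sphere^{m(n)}$ are the joins of the fibers of $S'_n$ with themselves, hence connected when nonempty and empty precisely over $\Sphere^{m(n)}\setminus S_n$, so that $\Q_{S_n}\cong\tau^{\geq0}\tau^{\leq0}R(J(\pi_n))_*\Q_{J_{\pi_n}(S'_n)}$. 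One then checks that $J_{\pi_n}(S'_n)$, being built from $S'_n$ by finitely many products, fibered products and unions (all of which preserve $\mathbf{\Sigma}_{\R,p-1}^c$) and being closed in the compact ambient set $[0,1]\times\overline{\Ball^{k_1(n)}}\times\overline{\Ball^{k_1(n)}}\times\Sphere^{m(n)}$ identified with a subset of a single sphere via Remark~\ref{rem:warning-about-spheres}, is again a sequence in $\mathbf{\Sigma}_{\R,p-1}^c$; the induction hypothesis gives $\Q_{J_{\pi_n}(S'_n)}\in\bs{\Lambda}^{(p-1)}\bs{\mathcal{P}}_\R$, and applying $R(J(\pi_n))_*$ followed by truncation keeps us in $\bs{\Lambda}^{(p)}\bs{\mathcal{P}}_\R$.

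I expect the main obstacle to be the careful verification in the existential case: checking that the fibered join has connected nonempty fibers and is closed (so that $J(\pi_n)$ is proper and proper base change applies), and --- more fussily --- tracking that $J_{\pi_n}(S'_n)$ genuinely stays in $\mathbf{\Sigma}_{\R,p-1}^c$, which requires recording that the compact B-S-S hierarchy is closed under the relevant set-theoretic operations and that products of spheres and balls can be coherently embedded into spheres. The derived-category manipulations themselves are routine given Theorem~\ref{thm:proper-base-change} and the computation~\eqref{eqn:cohomology-of-sphere}--\eqref{eqn:cohomology-of-sphere2}; the only other point needing a line of justification is closure of $\bs{\Lambda}^{(p)}\bs{\mathcal{P}}_\R$ under shifts, which is immediate from the definitions, since shifting multiplies each stalk Poincar\'e polynomial by a power of $T$ and leaves the subordinate partition and the point-location map unchanged.
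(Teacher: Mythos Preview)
Your proposal follows essentially the same approach as the paper: part (1) is Proposition~\ref{prop:sheaf-P-vs.-set-P}, part (2) is proved by induction on $p$, peeling off the outermost quantifier and using the fibered join $J_{\pi_n}(S'_n)$ in the existential case and the top-cohomology identification (via \eqref{eqn:cohomology-of-sphere}--\eqref{eqn:cohomology-of-sphere2} and proper base change) in the universal case; part (3) follows by taking the union over $p$.

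Two small points. First, in your existential paragraph you write that $J_{\pi_n}(S'_n)$ lies in $\mathbf{\Sigma}_{\R,p-1}^c$; since $S_n\in\mathbf{\Sigma}_{\R,p}^c$ and you have removed the leading $\exists$, the correct class is $\mathbf{\Pi}_{\R,p-1}^c$ (the paper says this too). This is harmless for the induction, which covers both. Second, your justification for closure of $\bs{\Lambda}^{(p)}\bs{\mathcal{P}}_\R$ under shifts (``multiplies the stalk Poincar\'e polynomial by a power of $T$'') is the argument for $\bs{\mathcal{P}}_\R$, not for the abstractly defined class $\bs{\Lambda}^{(p)}\bs{\mathcal{P}}_\R$, whose closure properties are only direct sums, tensor products, truncations and pull-backs. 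The clean fix is to realize the shift as a tensor product: $\mathcal{F}^\bullet_n[k]\cong\mathcal{F}^\bullet_n\otimes\Q_{\Sphere^{m(n)}}[k]$, and $\Q_{\Sphere^{m(n)}}[k]\in\bs{\mathcal{P}}_\R$ by Proposition~\ref{prop:stability-sheaf-P}(4). (The paper's own proof uses the same shift without comment, so you are in good company, but your stated reason is not the right one.)
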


\begin{proof}
The first part has been already proved in  Proposition \ref{prop:sheaf-P-vs.-set-P}.
Now suppose that $(S_n \subset \Sphere^{m(n)})_{n > 0}$ belongs to the complexity class $\mathbf{\Sigma}_{\R,p}^c$. We prove that
$(\Q_{S_n} \in \Ob(\constrD^b(\Sphere^{m(n)}))_{n > 0}$ belongs to the class  $\bs{\Lambda}^{(p)}\bs{\mathcal{P}}_\R$ using induction on $p$. Suppose that the claim holds for all smaller values of $p$. The base case $p=0$ follows from the first
part.

Then, since $(S_n \subset \Sphere^{m(n)})_{n > 0}$ belongs to the complexity class $\mathbf{\Sigma}_{\R,p}^c$, by definition,
there exists
polynomials
\[
m(n),m_1(n),\ldots,m_p(n)\]
and for 
for each $n > 0$ the semi-algebraic set
$S_n$ is described by a first order formula
\[
 (Q_1 \Y^{1} \in \Sphere^{m_1(n)})  \cdots (Q_p \Y^{p} \in 
\Sphere^{m_p(n)} )
\phi_n(X_0,\ldots,X_{m(n)},\Y^1,\ldots,\Y^p),
\]
with $\phi_n$ a quantifier-free first order formula defining a 
{\em closed} semi-algebraic subset of 
$\Sphere^{m_1(n)}\times\cdots\times
\Sphere^{m_p(n)}\times \Sphere^{m(n)}$
and for each $i, 1 \leq i \leq p$
$Q_i \in \{\exists,\forall\}$, with $Q_j \neq Q_{j+1}, 1 \leq j < p$,
$Q_1 = \exists$, and
the sequence of semi-algebraic sets
$(T_n \subset \Sphere^{m_1(n)}\times\cdots\times 
\Sphere^{m_p(n)}\times\Sphere^{m(n)})_{n > 0}$
defined by the formulas $(\phi_n)_{n >0}$ belongs to the class
$\mathbf{P}_\R$.

The sequence $\left(S'_n \subset \Sphere^{m_1(n)}\times \Sphere^{m(n)}\right)_{n >0}$,
and hence the sequence $\left(J_{\pi_n}(S_n') \right)_{n>0}$,
belongs to the class $\mathbf{\Pi}_{\R,p-1}^c$.
where each $S_n'$ is defined by the formula 
\[
 (Q_2 \Y^{2} \in \Sphere^{m_2(n)})  \cdots (Q_p \Y^{p} \in 
\Sphere^{m_p(n)} )
\phi_n(X_0,\ldots,X_{m(n)},\Y^1,\ldots,\Y^p),
\]
and $\pi_n: \Sphere^{m_1(n)} \times \Sphere^{m(n)} \rightarrow \Sphere^{m(n)}$ is the projection
to the second factor.
Using the inductive hypothesis, 
the sequence $\left(\Q_{J_{\pi_n}(S_n')}\right)_{n > 0} \in \bs{\Lambda}^{(p-1)}\bs{\mathcal{P}}_\R$. 
Then, for each $n>0$,
 $\Q_{S_n} = \tau^{\geq 0}\tau^{\leq 0}R\pi_{n,*} \Q_{J_{\pi_n}}(S_n')$, and it follows 
 from the definition of the class $\bs{\Lambda}^{(p-1)}\bs{\mathcal{P}}_\R$, that 
$\left(\Q_{S_n} \right)_{n > 0}$ belongs to the class  $\bs{\Lambda}^{(p)}\bs{\mathcal{P}}_\R$.

Now suppose that $(S_n \subset \Sphere^{m(n)})_{n > 0}$ belongs to the complexity class $\mathbf{\Pi}_{\R,p}^c$. 

Then,  there exists
polynomials
\[
m(n),m_1(n),\ldots,m_p(n)\]
and for 
for each $n > 0$ the semi-algebraic set
$S_n$ is described by a first order formula
\[
 (Q_1 \Y^{1} \in \Sphere^{m_1(n)})  \cdots (Q_p \Y^{p} \in 
\Sphere^{m_p(n)} )
\phi_n(X_0,\ldots,X_{m(n)},\Y^1,\ldots,\Y^p),
\]
with $\phi_n$ a quantifier-free first order formula defining a 
{\em closed} semi-algebraic subset of 
$\Sphere^{m_1(n)}\times\cdots\times
\Sphere^{m_p(n)}\times \Sphere^{m(n)}$
and for each $i, 1 \leq i \leq p$
$Q_i \in \{\exists,\forall\}$, with $Q_j \neq Q_{j+1}, 1 \leq j < p$,
$Q_1 = \forall$, and
the sequence of semi-algebraic sets
$(T_n \subset \Sphere^{m_1(n)}\times\cdots\times 
\Sphere^{m_p(n)}\times\Sphere^{m(n)})_{n > 0}$
defined by the formulas $(\phi_n)_{n >0}$ belongs to the class
$\mathbf{P}_\R$.

The sequence $\left(S'_n \subset \Sphere^{m_1(n)}\times \Sphere^{m(n)}\right)_{n >0}$
belongs to the class $\mathbf{\Sigma}_{\R,p-1}^c$
where each $S_n'$ is defined by the formula 
\[
 (Q_2 \Y^{2} \in \Sphere^{m_2(n)})  \cdots (Q_p \Y^{p} \in 
\Sphere^{m_p(n)} )
\phi_n(X_0,\ldots,X_{m(n)},\Y^1,\ldots,\Y^{p}).
\]

Using the inductive hypothesis, 
the sequence $\left(\Q_{S_n'}\right)_{n > 0} \in \bs{\Lambda}^{(p-1)}\bs{\mathcal{P}}_\R$. 
Then, for each $n>0$,
 $\Q_{S_n} = \tau^{\geq 0}\tau^{\leq 0}R\pi_{n,{*}} \Q_{J_{\pi_n}}(S_n')[m_1(n)]$, and it follows 
 from the definition of the class $\bs{\Lambda}^{(p-1)}\bs{\mathcal{P}}_\R$, that 
$\left(\Q_{S_n} \right)_{n > 0}$ belongs to the class  $\bs{\Lambda}^{(p)}\bs{\mathcal{P}}_\R$.
\end{proof}

It is an immediate consequence of Proposition \ref{prop:inclusion} that:

\begin{theorem}
\label{thm:inclusion2}
For each $p\geq 0$, 
the equality $\bs{\Lambda}^{(p)}\bs{\mathcal{P}}_\R = \bs{\mathcal{P}}_\R$ implies
$\mathbf{\Sigma}_{\R,p}^c \cup \mathbf{\Pi}_{\R,p}^c = \mathbf{P}_\R^c$.
\end{theorem}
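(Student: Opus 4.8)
The plan is to obtain this as a direct corollary of Proposition \ref{prop:inclusion}, so the proof will be very short; essentially all the work has already been done. First I would dispose of the inclusion $\mathbf{P}_\R^c \subseteq \mathbf{\Sigma}_{\R,p}^c \cup \mathbf{\Pi}_{\R,p}^c$, which holds unconditionally: given a sequence $(S_n \subset \Sphere^{m(n)})_{n>0} \in \mathbf{P}_\R^c$, one prepends $p$ alternating blocks of dummy quantifiers ranging over spheres $\Sphere^1$ of fresh variables not appearing in a quantifier-free closed formula defining $S_n$. The quantifier-free matrix is unchanged (it now defines a cylinder over $S_n$, still closed and still with underlying sequence in $\mathbf{P}_\R$), so starting the blocks with $\exists$ exhibits $(S_n)_{n>0} \in \mathbf{\Sigma}_{\R,p}^c$ and starting with $\forall$ exhibits $(S_n)_{n>0} \in \mathbf{\Pi}_{\R,p}^c$. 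This is the compact analog of the inclusions recorded just after Definition \ref{def:compactpolynomialhierarchy}.

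For the reverse inclusion I would invoke the hypothesis. Suppose $\bs{\Lambda}^{(p)}\bs{\mathcal{P}}_\R = \bs{\mathcal{P}}_\R$ and let $(S_n \subset \Sphere^{m(n)})_{n>0}$ lie in $\mathbf{\Sigma}_{\R,p}^c \cup \mathbf{\Pi}_{\R,p}^c$. By part (2) of Proposition \ref{prop:inclusion}, the sequence of constant sheaves $\Big(\Q_{S_n} \in \Ob(\constrD^b(\Sphere^{m(n)}))\Big)_{n>0}$ belongs to $\bs{\Lambda}^{(p)}\bs{\mathcal{P}}_\R$, hence by the assumed equality it belongs to $\bs{\mathcal{P}}_\R$. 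Since each $S_n$ is a compact semi-algebraic subset of a sphere, part (1) of Proposition \ref{prop:inclusion} (equivalently Proposition \ref{prop:sheaf-P-vs.-set-P}) applies in the converse direction and gives $(S_n)_{n>0} \in \mathbf{P}_\R^c$. Combining the two inclusions yields $\mathbf{\Sigma}_{\R,p}^c \cup \mathbf{\Pi}_{\R,p}^c = \mathbf{P}_\R^c$.

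The main point to emphasize is that there is no genuine obstacle in this particular proof: the content has been relocated into Proposition \ref{prop:inclusion}, whose proof in turn rests on re-encoding existential and universal quantifiers by the truncated higher direct image functors (isomorphisms of the type in \eqref{eqn:crux}) together with the fibered-join construction $J_{\pi_n}(\cdot)$ used to connect the fibers in the existential step. The only minor care required when writing out the present argument is to check that the dummy-quantifier padding is compatible with the compactness and closedness conditions built into Definition \ref{def:compactpolynomialhierarchy}, which is automatic once the dummy variables range over spheres.
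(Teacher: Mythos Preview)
Your proof is correct and follows exactly the approach the paper takes: the paper simply states that the theorem is an immediate consequence of Proposition \ref{prop:inclusion}, and you have written out precisely that immediate consequence, including the trivial inclusion via dummy quantifiers and the application of parts (1) and (2) of the proposition.
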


In view of Theorem \ref{thm:inclusion2} we conjecture that:

\begin{conjecture}
\label{conj:main}
For each $p >0$,
\[
\bs{\Lambda}^{(p)}\bs{\mathcal{P}}_\R \neq  \bs{\mathcal{P}}_\R.
\]
\end{conjecture}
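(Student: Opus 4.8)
\medskip\noindent\textbf{Towards a proof.}
The plan is to reduce the conjecture to an explicit topological lower bound, while being candid that an unconditional resolution is beyond current techniques: by Theorem \ref{thm:inclusion2}, establishing $\bs{\Lambda}^{(1)}\bs{\mathcal{P}}_\R \neq \bs{\mathcal{P}}_\R$ already implies $\mathbf{\Sigma}_{\R,1}^c \neq \mathbf{P}_\R^c$, i.e. $\mathbf{P}_\R^c \neq \mathbf{NP}^c_\R$, so the conjecture is at least as hard as the compact B-S-S $\mathbf{P}$ versus $\mathbf{NP}$ problem. What one \emph{can} do is isolate the precise obstruction and propose a candidate hard sequence, and this is what the proposal below does.

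For the candidate I would take the sequence of Example \ref{eg:degree-four}: let $V_n \subset \Sphere^{n-1} \times \Sphere^{N_{n,4}-1}$ be the incidence variety of pairs $(\x,P)$, with $P$ a normalized real quartic form in $n$ variables and $P(\x) = 0$, and set $\mathcal{F}^\bullet_n = R\pi_{n,*}\Q_{V_n}$, so that $(\mathcal{F}^\bullet_n)_P \cong \bigoplus_i \HH^i(\ZZ(P,\Sphere^{n-1}),\Q)[-i]$ records the cohomology of the real projective variety cut out by $P$. This sequence lies in $\bs{\Lambda}^{(1)}\bs{\mathcal{P}}_\R$, and the assertion to be proven is that it does not lie in $\bs{\mathcal{P}}_\R$. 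Unwinding Definition \ref{def:sheaf-P}, membership in $\bs{\mathcal{P}}_\R$ would furnish, for each $n$, a semi-algebraic partition of the space of quartics of singly exponential size, subordinate to $\mathcal{F}^\bullet_n$, together with B-S-S machines that in time polynomial in $n$ (i) locate the cell containing a given form $P$, and (ii) output the Poincar\'e polynomial $\sum_i (\dim_\Q \HH^i(\ZZ(P,\Sphere^{n-1}),\Q))\,T^i$. The goal is to show that no such partition and machines can coexist.

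I would attack this on two fronts. The first, complexity-theoretic front reduces the non-computability of the invariant $p_n$ to the conjectural $\mathbf{PH}_\R$-hardness of computing Betti numbers of real varieties (the statement underlying the real analogue of Toda's theorem discussed in the introduction); this yields a conditional separation, not an unconditional one. The second, geometric front is the one the present framework is designed for: instead of bounding the \emph{time} of point location, bound the \emph{algebraic complexity of any subordinate partition}. Concretely, in the spirit of Grigoriev's construction \cite{Grigoriev2001} of parametrized polynomial systems realizing a doubly exponential number of multiplicity vectors, one would build a sub-family of quartics, parametrized polynomially in $n$, on which the fiberwise cohomology --- equivalently, over $\C$, the virtual Poincar\'e polynomial $\vP$ of Theorem \ref{thm:properties-of-vP} --- assumes a pattern of values not separable by any finite family of polynomials of polynomially bounded degree; combining this with a quantitative cell-decomposition bound (point location in a partition defined by low-degree polynomials runs in polynomial time) would give the separation. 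The essential subtlety is that one must use an \emph{additive} invariant, so the multiplicity-vector construction must be genuinely adapted, since distinct multiplicity vectors can share the same $\chi$ or $\vP$ --- exactly the difficulty already flagged in the introduction.

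The hard part, and the reason the statement is posed as a conjecture, is that the only unconditional lower bounds available on partitions over which the topology of fibers stays locally constant --- for instance the singly exponential bound of \cite{BV06} --- match the singly exponential \emph{upper} bound already built into Definition \ref{def:sheaf-P}, and so say nothing. What is really needed is a super-polynomial lower bound on point-location time, and by Theorem \ref{thm:inclusion2} any such bound separates $\mathbf{P}_\R^c$ from $\mathbf{PH}_\R^c$. Thus a proof would require either a breakthrough in real complexity lower bounds or a genuinely new exploitation of the extra homological data that the push-forward $R\pi_{n,*}$ records beyond mere emptiness of fibers; the present proposal only shows how to localize that difficulty.
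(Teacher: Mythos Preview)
The statement is a \emph{conjecture}; the paper does not prove it, and your proposal correctly does not claim an unconditional proof either. However, your framing of \emph{why} it is hard inverts the logical direction of Theorem \ref{thm:inclusion2}. That theorem says that $\bs{\Lambda}^{(p)}\bs{\mathcal{P}}_\R = \bs{\mathcal{P}}_\R$ implies $\mathbf{\Sigma}_{\R,p}^c \cup \mathbf{\Pi}_{\R,p}^c = \mathbf{P}_\R^c$; its contrapositive is that the classical separation implies the sheaf-theoretic one, not the other way around. The paper makes this explicit in the remark immediately following the conjecture: Conjecture \ref{conj:main} is \emph{a priori weaker} than $\mathbf{\Sigma}_{\R,p}^c \cup \mathbf{\Pi}_{\R,p}^c \neq \mathbf{P}_\R^c$, and the subsequent remark even states that the reverse implication is ``probably not true''. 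So your assertion that ``establishing $\bs{\Lambda}^{(1)}\bs{\mathcal{P}}_\R \neq \bs{\mathcal{P}}_\R$ already implies $\mathbf{\Sigma}_{\R,1}^c \neq \mathbf{P}_\R^c$'' and hence that the conjecture ``is at least as hard as the compact B-S-S $\mathbf{P}$ versus $\mathbf{NP}$ problem'' is not supported by Theorem \ref{thm:inclusion2} and contradicts the paper's own assessment. The same misreading recurs at the end of your proposal.

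Once the direction is corrected, your discussion loses its claimed reduction to a known hard problem, and what remains is a heuristic outline: a plausible candidate sequence (Example \ref{eg:degree-four}, which the paper itself conjectures lies outside $\bs{\mathcal{P}}_\R$), and two speculative avenues --- conditional hardness of Betti-number computation, and a Grigoriev-style lower bound for subordinate partitions --- neither of which you carry out. These are reasonable directions to explore, but they do not constitute progress beyond what the paper already states, and the paper itself offers no proof to compare against.
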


\begin{remark}
Notice that Conjecture \ref{conj:main} is a priori weaker than the more standard conjecture that 
$\mathbf{\Sigma}_{\R,p}^c \cup \mathbf{\Pi}_{\R,p}^c \neq \mathbf{P}_\R^c$.
\end{remark}

\begin{remark}
The reverse implication in Theorem \ref{thm:inclusion2} is probably not true, even though we do not
have a counter-example at this point. In fact, the class of  sequences 
$
\left(S_n \subset \Sphere^{m(n)}\right)_{n>0}
$ 
for which
$\left( \Q_{S_n}\right)_{n>0} $ belongs to  $\bs{\Lambda}^{(1)}\bs{\mathcal{P}}_\R$ ``interpolates''  between the classes
$\textbf{NP}_\R^c$ and $\textbf{co-NP}_\R^c$. 

For example, consider a sequence $\left(S_n \subset \Sphere^{m(n)}\right)_{n>0}$ for which there exists a sequence 
$\left(T_n \subset \Sphere^{m_1(n)}\times \Sphere^{m(n)}\right)_{n>0} \in \textbf{P}_\R^c$, such that
\[
S_n = \{x\in \Sphere^{m(n)} \;\mid\; \HH^{m_2(n)}(T_{n,x},\Q) \cong \Q\},
\] 
where $m_1(n),m_2(n)$ are non-negative
polynomials, and $T_{n,x} = \pi_n^{-1}(x) \cap T_n$ where 
$\pi_n:  \Sphere^{m_1(n)}\times \Sphere^{m(n)} \rightarrow  \Sphere^{m(n)}$ is the projection to the second factor. 
It is not clear that such a sequence will belong to either $\textbf{NP}_\R^c$ and $\textbf{co-NP}_\R^c$, even though clearly
$\left( \Q_{S_n}\right)_{n>0} \in \bs{\Lambda}^{(1)}\bs{\mathcal{P}}_\R$.
\end{remark}

\section{Singly exponential upper bounds}
\label{sec:topological-complexity-sheaves}
In this section we study the complexity of constructible sheaves from an effective point of view.
In Section \ref{subsec:topological-complexity-sheaves} we prove a singly exponential upper bound on the ``complexity'' of constructible sheaves belonging to the class 
$\bs{\mathcal{PH}}_\R$.
We then prove the existence of an algorithm having singly exponential complexity for
``generalized quantifier elimination'' defined below in Section \ref{subsec:generalized-qe}.

\subsection{Bounding the topological complexity of constructible sheaves}
\label{subsec:topological-complexity-sheaves}
A standard way in semi-algebraic geometry to measure the topological complexity of a semi-algebraic
set is by the sum of their Betti numbers. The topological complexity of a set $S$, measured by the sum of its Betti numbers,  has been shown to be related to hardness of testing membership in $S$
\cite{Yao94,Pardo96}. Also, the topological complexity is a rough guide to the best possible complexity one can hope for of algorithms computing topological invariants of a set $S$. For example, if the topological complexity is bounded by a polynomial in the input parameters, then one often gets algorithms also with polynomial complexity (see for example \cite{Bar93,Bas05-top,Bas05-euler,BP'R07joa} 
in the case of semi-algebraic sets defined by few quadratic polynomials for an instance of this
phenomena). 

For a sequence of semi-algebraic sets $(S_n \subset \R^n)$
one can study the dependence of  the Betti numbers of $S_n$ as a function of $n$, and in particular 
estimate the growth of this sequence.
It follows from complexity estimates on effective quantifier elimination  (see for example \cite{BPRbook2}), as well as estimates on the Betti numbers of semi-algebraic sets in terms of the number and degrees of the polynomials occurring in
their definition  \cite{GV07}, that the Betti numbers of the sets $S_n$, for any sequence 
$\left(S_n \subset \R^{m(n)}\right)_{n >0} \in \textbf{PH}_\R$,
are bounded singly exponentially as a function of $n$. This gives an upper bound on the growth of topological complexity of the sequence $\left(S_n \subset \R^{m(n)}\right)_{n >0}$. 
We now state this fact more formally.

\begin{theorem}
\label{thm:topological-complexity-sets}
Let $\left(S_n \subset \R^{m(n)}\right)_{n>0}$ belong to the class $\mathbf{PH}_\R$. Then,  there exists a polynomial $q(n)$ (depending on the
sequence $\left(S_n\right)_{n>0}$),   such that
\[
b(S_n) \leq 2^{q(n)}.
\]
In other words,  $b(S_n)$ is bounded
singly exponentially as a function of $n$. 
A similar statement holds for the compact class $\mathbf{PH}^c_\R$ as well.
\end{theorem}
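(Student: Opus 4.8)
The plan is to reduce the statement to two ingredients that are both well-known in quantitative real algebraic geometry: first, effective quantifier elimination in the first-order theory of the reals with singly exponential complexity bounds; and second, the singly exponential bounds on the Betti numbers of a semi-algebraic set in terms of the number and degrees of the defining polynomials, valid for arbitrary (not necessarily closed) semi-algebraic sets. First I would unwind the definition of $\mathbf{PH}_\R$: a sequence $\left(S_n \subset \R^{m(n)}\right)_{n>0} \in \mathbf{PH}_\R$ means each $S_n$ is defined by a first order formula with a fixed number $p$ of quantifier blocks, each block having polynomially many variables in $n$, and whose quantifier-free part $\phi_n$ defines a set in $\mathbf{P}_\R$. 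The first subtlety is that membership in $\mathbf{P}_\R$ is phrased via a B-S-S machine running in polynomial time, not directly via polynomials; so I would first invoke the standard fact that a B-S-S machine running in time $T$ on inputs of size $k$ computes a set that is defined by a quantifier-free formula involving at most $2^{O(T)}$ polynomials, each of degree at most $2^{O(T)}$, in $k$ variables (one traces through the computation tree, each node branching on the sign of a polynomial whose degree at most doubles per arithmetic step). Taking $T$ polynomial in $n$ gives that the quantifier-free formula $\phi_n$ involves a family $\mathcal{P}_n \subset \R[X_1,\ldots,X_{m(n)+k_1(n)+\cdots+k_p(n)}]$ with $\card(\mathcal{P}_n)$ and $\max_{P}\deg(P)$ both bounded by $2^{q_0(n)}$ for a suitable polynomial $q_0$.

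Next I would apply a singly exponential quantifier elimination algorithm (for instance the one in \cite[Chapter 14]{BPRbook2}): eliminating the $p$ blocks of quantifiers produces a quantifier-free formula $\psi_n$ defining $S_n$, involving a family $\mathcal{Q}_n$ of polynomials in the free variables $X_1,\ldots,X_{m(n)}$ whose cardinality and degrees are bounded by $\bigl(sd\bigr)^{O(k_1(n)\cdots k_p(n))}$ where $s = \card(\mathcal{P}_n)$, $d$ is the degree bound, and the $k_i(n)$ are the block sizes. Since $p$ is a fixed constant for the sequence, and $s,d,k_i(n)$ are all singly exponential (resp. polynomial) in $n$, the quantities $\card(\mathcal{Q}_n)$ and $\max_{Q \in \mathcal{Q}_n}\deg(Q)$ remain bounded by $2^{q_1(n)}$ for some polynomial $q_1$ depending on the sequence. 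Finally, I would apply the bound on the total Betti number $b(S_n)$ of an arbitrary $\mathcal{Q}_n$-semi-algebraic subset of $\R^{m(n)}$ in terms of $\card(\mathcal{Q}_n)$, the degree bound, and the ambient dimension $m(n)$ — this is the singly exponential bound of Gabrielov–Vorobjov \cite{GV07} (which first reduces the general case to the closed and bounded case via a homotopy equivalence, to which the classical Oleinik–Petrovsky–Thom–Milnor bounds apply). This yields $b(S_n) \leq \bigl(\card(\mathcal{Q}_n) \cdot \max\deg\bigr)^{O(m(n))} \leq 2^{q(n)}$ for a suitable polynomial $q$, which is exactly the claimed inequality.

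For the compact class $\mathbf{PH}^c_\R$ the argument is essentially identical but in fact slightly cleaner: here all variables range over spheres $\Sphere^{k_i(n)}$, each $\phi_n$ already defines a \emph{closed} semi-algebraic set, and the quantifier elimination can be carried out respecting compactness (or one simply intersects with the sphere equations, which adds one polynomial of degree $2$ per block). Since the sets involved are closed and bounded, one can even avoid the Gabrielov–Vorobjov reduction and apply the classical Betti number bounds for closed semi-algebraic sets directly \cite[Theorem 7.38]{BPRbook2}, again obtaining a singly exponential bound on $b(S_n)$. I would remark that the same reasoning simultaneously bounds $b_c(S_n)$ and, more generally, the sum of the Betti numbers of every set appearing in a $\mathcal{Q}_n$-semi-algebraic partition of $\Sphere^{m(n)}$, a fact that will be reused in the proof of the sheaf-theoretic analog (Theorem \ref{thm:topological-complexity-sheaves}).

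The main obstacle is bookkeeping rather than conceptual: one must verify that composing the B-S-S-machine-to-polynomial translation with $p$ rounds of quantifier elimination keeps all parameters singly exponential in $n$. The delicate point is that each quantifier block elimination can raise the degree and number of polynomials by a power depending on the number of variables being eliminated; so one needs $p$ to be a fixed constant (which it is, for a fixed sequence in $\mathbf{PH}_\R$) and one needs to track that the tower of exponentials does not actually form — it does not, because each elimination step is singly exponential \emph{in the number of eliminated variables only} and leaves the dependence on the remaining parameters polynomial, so a constant number of iterations composes to a single singly exponential bound. Making this estimate precise (rather than hand-waving "singly exponential composed with singly exponential a constant number of times is singly exponential") is the one place where care is genuinely required.
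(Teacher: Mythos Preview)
Your proposal is correct and follows essentially the same approach as the paper: translate the $\mathbf{P}_\R$ membership of the quantifier-free part into a singly exponential bound on the number and degrees of the defining polynomials, apply effective quantifier elimination (the paper cites \cite{Gri88,R92} and \cite[Theorem 14.16]{BPRbook2}) to preserve singly exponential bounds after removing the $p$ quantifier blocks, and then invoke the Gabrielov--Vorobjov bound \cite{GV07} on the Betti numbers. Your write-up is in fact more careful than the paper's about the bookkeeping (in particular your observation that $p$ must be fixed for the composition of eliminations to stay singly exponential), and your treatment of the compact case is slightly more explicit.
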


\begin{proof}
Recall that, if $\left(S_n \subset \R^{m(n)}\right)_{n>0}$ belong to the class $\mathbf{PH}_\R$, then
for each $n > 0$ the semi-algebraic set
$S_n$ is described by a 
first order formula
\begin{equation}
(Q_1 \Y^{1} )  \cdots (Q_p\Y^{p} )
\phi_n(X_1,\ldots,X_{k(n)},\Y^1,\ldots,\Y^p),
\end{equation}
with $\phi_n$ a quantifier free formula in the first order theory of the reals,
and for each $i, 1 \leq i \leq p$,
$\Y^i = (Y^i_1,\ldots,Y^i_{k_i(n)})$ is a block of $k_i(n)$ variables,
$Q_i \in \{\exists,\forall\}$, with $Q_j \neq Q_{j+1}, 1 \leq j < p$,
$Q_1 = \exists$,
and 
the sequence of semi-algebraic sets  $(T_n \subset \R^{k(n)+ k_1(n) + \cdots + k_p(n)})_{n >0}$ 
defined by the quantifier-free formulas $(\phi_n)_{n>0}$ 
belongs to the class $\mathbf{P}_\R$.

It follows from the definition of the class $\mathbf{P}_\R$ that the sets $T_n$ have a description by
a quantifier-free first order formula with atoms of the form $P \{>,=,<\}0, P\in \R[X_1,\ldots,X_n]$,
and the number and degrees of the polynomials appearing in the formula are bounded singly
exponentially in $n$. It then follows from the complexity estimates on effective quantifier elimination
in the theory of the reals (see for example  \cite{Gri88,R92},\cite[Theorem 14.16]{BPRbook2}), that the
same is true for the sets $S_n$ as well. It now follows from \cite[Theorem 6.8]{GV07} (see also
\cite[Theorem 7.50]{BPRbook2}) that $b(S_n)$ is bounded singly exponentially in $n$.
\end{proof}

\begin{remark}
Note that for the compact class $\mathbf{PH}^c_\R$, the statement of Theorem \ref{thm:topological-complexity-sets} also follows directly from and Proposition \ref{prop:inclusion} and Theorem \ref{thm:topological-complexity-sheaves}. 
\end{remark}

In the sheaf context, in order to measure the complexity of a given constructible sheaf $\mathcal{F}^\bullet \in \Ob(\constrD^b(X))$ for any semi-algebraic set $X$, we 
we will use the sum of the dimensions of the hypercohomology groups, $\hyperH^i(X,\mathcal{F}^\bullet)$
(Definition \ref{def:hypercohomology}).

\begin{notation}
For a constructible sheaf $\mathcal{F}^\bullet \in \Ob(\constrD^b(X))$, we will denote
\begin{eqnarray*}
b^i(\mathcal{F}^\bullet) &=& \dim_\Q\hyperH^i(X,\mathcal{F}^\bullet), \\
b(\mathcal{F}^\bullet) &=& \sum_i b^i(\mathcal{F}^\bullet).
\end{eqnarray*}
\end{notation}

\begin{definition}
\label{def:topological-complexity-sheaves}
We call $b(\mathcal{F}^\bullet)$ the \emph{topological complexity} of $\mathcal{F}^\bullet$.
\end{definition}

\begin{theorem}
\label{thm:topological-complexity-sheaves}
Let $\left(\mathcal{F}^\bullet_n \in \Ob(\constrD^b(\Sphere^{m(n)})\right)_{n >0}$ be a sequence of constructible sheaves belonging to the class
$\bs{\mathcal{PH}}_\R$.
Then,  there exists a polynomial $q(n)$ (depending on the
sequence $\left(\mathcal{F}^\bullet_n\right)_{n>0}$),  such  that  
\[
b(\mathcal{F}^\bullet_n) \leq  2^{q(n)}.
\]
In other words,  $b(\mathcal{F}^\bullet_n)$ is bounded
singly exponentially as a function of $n$.
\end{theorem}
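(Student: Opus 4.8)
The plan is to prove the bound by induction on the level $p$ of the hierarchy, i.e., for each sequence in $\bs{\Lambda}^{(p)}\bs{\mathcal{P}}_\R$, using the fact that $\bs{\mathcal{PH}}_\R = \bigcup_{p \geq 0}\bs{\Lambda}^{(p)}\bs{\mathcal{P}}_\R$. The base case $p = 0$ is $\bs{\mathcal{P}}_\R$ itself: here I would use Proposition \ref{prop:singly-exponential} to obtain, for each $n$, a family $\mathcal{P}_n \subset \R[X_0,\ldots,X_{m(n)}]$ of singly-exponentially-bounded cardinality and degree such that $\Pi(\mathcal{P}_n,\Sphere^{m(n)})$ is subordinate to $\mathcal{F}^\bullet_n$. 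Combining the Mayer--Vietoris (\v Cech) spectral sequence of Proposition \ref{prop:cech} applied to a cover of $\Sphere^{m(n)}$ by closures of the (singly exponentially many) cells of this partition, together with the bound on the dimensions of the stalks $\HH^j((\mathcal{F}^\bullet_n)_\x) \leq 2^{m_1(n)}$ and the Leray spectral sequence of Proposition \ref{prop:spectral} (since $\mathcal{F}^\bullet_n$ restricted to each cell has locally constant, hence constant on contractible pieces, cohomology sheaves), one bounds $b(\mathcal{F}^\bullet_n)$ by a product of the number of cells, the number of intersection patterns, the stalk dimensions, and the ordinary Betti numbers of realizations of sign conditions on $\mathcal{P}_n$. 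The latter are singly exponentially bounded by the standard estimates (as cited for Proposition \ref{prop:uniform-bound-real} and in \cite[Theorem 7.50]{BPRbook2}), so the product is singly exponential.

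\textbf{The inductive step.} Suppose the bound holds for all sequences in $\bs{\Lambda}^{(p-1)}\bs{\mathcal{P}}_\R$. A sequence in $\bs{\Lambda}^{(p)}\bs{\mathcal{P}}_\R$ is built from sequences of the form $\mathcal{F}^\bullet_n = R\pi_{n,*}\mathcal{G}^\bullet_n$, with $(\mathcal{G}^\bullet_n)_{n>0} \in \bs{\Lambda}^{(p-1)}\bs{\mathcal{P}}_\R$ and $\pi_n: \Sphere^{m_0(n)} \times \Sphere^{m(n)} \to \Sphere^{m(n)}$, by repeatedly applying direct sums, tensor products, truncations, shifts, and pull-backs. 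For the generating case, I would use Corollary \ref{cor:composition} (composition of derived direct images with the Leray-type spectral sequence): the composite of $R\pi_{n,*}$ with the global sections functor on $\Sphere^{m(n)}$ is the global sections functor on $\Sphere^{m_0(n)}\times\Sphere^{m(n)}$, so $\hyperH^*(\Sphere^{m(n)}, R\pi_{n,*}\mathcal{G}^\bullet_n)$ fits into a spectral sequence whose total hypercohomology is $\hyperH^*(\Sphere^{m_0(n)}\times\Sphere^{m(n)}, \mathcal{G}^\bullet_n)$, which is singly exponentially bounded by the inductive hypothesis. Hence $b(\mathcal{F}^\bullet_n) \leq b(\mathcal{G}^\bullet_n)$ is singly exponential. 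For the closure operations: direct sums give $b(\mathcal{F}^\bullet_n \oplus \mathcal{G}^\bullet_n) = b(\mathcal{F}^\bullet_n) + b(\mathcal{G}^\bullet_n)$; shifts and truncations only permute or drop cohomology degrees, so they do not increase $b$; pull-backs along a projection $\Sphere^{m_1(n)}\times\Sphere^{m(n)} \to \Sphere^{m(n)}$ multiply hypercohomology by $\HH^*(\Sphere^{m_1(n)},\Q)$ via the Künneth formula (Proposition \ref{prop:spectral}, using that the $\mathcal{H}^q$ of a pull-back are pulled back and hence constant along fibers), introducing only a factor of $2$; and tensor products are controlled by the Künneth formula combined with the stalk-dimension bounds built into membership in $\bs{\mathcal{PH}}_\R$ (one needs that tensor products of sequences in the hierarchy retain singly-exponential stalk dimensions, which follows as in the proof of Proposition \ref{prop:stability-sheaf-P}(1)). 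Since each operation multiplies the topological complexity by at most a singly-exponential factor and the construction of any fixed sequence uses a bounded (independent of $n$) number of such steps, the final bound remains singly exponential.

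\textbf{Main obstacle.} The principal difficulty is the base case: extracting a \emph{singly exponential} bound on $b(\mathcal{F}^\bullet_n)$ for $(\mathcal{F}^\bullet_n)_{n>0} \in \bs{\mathcal{P}}_\R$ purely from the defining data. The definition of $\bs{\mathcal{P}}_\R$ gives a singly-exponential-sized subordinate partition and polynomially-bounded Poincaré polynomials of stalks, but to assemble these into a global hypercohomology bound one must control the cohomology of the \emph{strata} (not just the stalks) and of their mutual intersections. This is where Proposition \ref{prop:singly-exponential} is essential: it upgrades the combinatorial partition to an algebraic one defined by a family $\mathcal{P}_n$ of controlled size and degree, so that the ordinary Betti numbers of strata are singly exponentially bounded by the classical Oleinik--Petrovsky/Thom/Milnor-type estimates, after which the Mayer--Vietoris and Leray spectral sequences (Propositions \ref{prop:cech} and \ref{prop:spectral}) do the bookkeeping. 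A secondary, more technical point is ensuring that the closure operations in the definition of $\bs{\Lambda}^{(p)}\bs{\mathcal{P}}_\R$ — especially iterated tensor products — do not cause a tower of exponentials; this is handled by observing that any \emph{fixed} sequence in $\bs{\mathcal{PH}}_\R$ is obtained by a finite, $n$-independent number of operations, so the number of exponential factors accumulated is bounded by a constant.
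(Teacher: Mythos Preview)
Your shortcut for the push-forward step --- using $R\Gamma(\Sphere^{m(n)},\cdot)\circ R\pi_{n,*}\cong R\Gamma(\Sphere^{m_0(n)}\times\Sphere^{m(n)},\cdot)$ from Corollary~\ref{cor:composition} to conclude $b(R\pi_{n,*}\mathcal{G}^\bullet_n)=b(\mathcal{G}^\bullet_n)$ --- is correct and genuinely simpler than what the paper does for that isolated step. The gap is that the inductive hypothesis ``$b(\mathcal{F}^\bullet_n)$ is singly exponential'' is too weak to propagate through the closure operations, and the two places where your argument breaks are exactly tensor products and truncations.

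For tensor products, there is no K\"unneth formula expressing $\hyperH^*(X,\mathcal{F}^\bullet\otimes\mathcal{G}^\bullet)$ in terms of $\hyperH^*(X,\mathcal{F}^\bullet)$ and $\hyperH^*(X,\mathcal{G}^\bullet)$ for an \emph{internal} tensor product; you need a common subordinate partition of controlled complexity together with singly-exponential stalk bounds, and only then can Propositions~\ref{prop:cech} and~\ref{prop:spectral} be applied. You write that such stalk bounds are ``built into membership in $\bs{\mathcal{PH}}_\R$'', but they are not: for $p\geq 1$ the class $\bs{\Lambda}^{(p)}\bs{\mathcal{P}}_\R$ is defined purely by closure under the listed operations, with no a~priori control on stalks or on a subordinate partition. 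Those bounds are something one must \emph{prove}. For truncations, your claim that they ``only drop cohomology degrees, so do not increase $b$'' conflates the cohomology \emph{sheaves} $\mathcal{H}^q(\mathcal{F}^\bullet)$ (which truncation does kill) with the hypercohomology $\hyperH^*(X,\mathcal{F}^\bullet)$ (which it need not decrease: from the distinguished triangle one only gets $b(\tau^{\leq k}\mathcal{F}^\bullet)\leq b(\mathcal{F}^\bullet)+b(\tau^{\geq k+1}\mathcal{F}^\bullet)$, which is circular).

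This is why the paper carries a much stronger inductive hypothesis: for every sequence in $\bs{\Lambda}^{(p)}\bs{\mathcal{P}}_\R$ there is a family $\mathcal{P}_n$ of singly-exponential size and degree with $\Pi(\mathcal{P}_n,\Sphere^{m(n)})$ subordinate to $\mathcal{F}^\bullet_n$, singly-exponential stalk dimensions, and a polynomial range of nonvanishing. With that hypothesis the closure operations are handled exactly as in Proposition~\ref{prop:stability-sheaf-P}, and the final $b$-bound follows from one more application of the Mayer--Vietoris/Leray argument on the base. But establishing this stronger hypothesis for $R\pi_{n,*}\mathcal{G}^\bullet_n$ is precisely the hard part: one must produce a singly-exponential family of polynomials on the \emph{base} whose sign partition is subordinate to the push-forward (this is the content of Proposition~\ref{prop:qe}, an effective parametrized ``block elimination''), and one must bound the stalks $(R\pi_{n,*}\mathcal{G}^\bullet_n)_\x\cong \hyperH^*(\pi_n^{-1}(\x),\mathcal{G}^\bullet_n|_{\pi_n^{-1}(\x)})$, which requires the covering-by-contractibles and infinitesimal-perturbation machinery of Propositions~\ref{prop:covering} and~\ref{prop:perturbation}. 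Your composition-of-functors identity bypasses none of this: it gives $b$ of the push-forward but neither a subordinate partition on the base nor stalk bounds, so after one tensor product or truncation you are stuck. In short, your diagnosis of the ``main obstacle'' is inverted: the base case is routine, and the real work is carrying the stronger data through $R\pi_{n,*}$.
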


Before proving Theorem \ref{thm:topological-complexity-sheaves} we need a few preliminaries.

\begin{proposition}
\label{prop:constant}
Let $\K$ be a compact semi-algebraic set, and $\mathcal{F}^\bullet \in \Ob(\constrD^b(\K))$, and suppose that $C \subset \Sphere^n$ is a locally closed and contractible
subset of $\Sphere^n$. Suppose that $\HH^*(\mathcal{F}^\bullet)|_{C}$ is locally constant on $C$. Then, 
$\HH^*(\mathcal{F}^\bullet)|_{C'}$ is 
isomorphic to a constant sheaf in 
for any locally closed subset $C' \subset C$.
\end{proposition}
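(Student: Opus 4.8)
The plan is to reduce the statement to Proposition~\ref{prop:contractibility-implies-constant} by carefully tracking what ``$\HH^*(\mathcal{F}^\bullet)|_C$ is locally constant'' means as a statement about honest sheaves. Recall from the constructibility conditions (Definition~\ref{def:constructible-sheaf}) that for each $j \in \Z$ the assignment $U \mapsto \HH^j(\mathcal{F}^\bullet|_U)$ defines a pre-sheaf whose associated sheaf on $C$ is locally constant; call this sheaf $\mathcal{H}^j_C$. The hypothesis is precisely that $\mathcal{H}^j_C$ is locally constant on $C$ for every $j$. Since $C$ is contractible, Proposition~\ref{prop:contractibility-implies-constant} gives that each $\mathcal{H}^j_C$ is isomorphic to a \emph{constant} sheaf on $C$.

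Next I would restrict to the locally closed subset $C' \subset C$. By Remark~\ref{rem:restriction-to-locally-closed} the inverse-image functor for the inclusion $C' \hookrightarrow C$ is functorial and transitive, so $\mathcal{F}^\bullet|_{C'} \cong (\mathcal{F}^\bullet|_C)|_{C'}$; taking cohomology sheaves and using exactness of $f^{-1}$ (Remark~\ref{rem:exactness}), one checks that the sheaf $\mathcal{H}^j_{C'}$ associated to $U \mapsto \HH^j(\mathcal{F}^\bullet|_U)$ on $C'$ is the restriction $\mathcal{H}^j_C|_{C'}$ of the sheaf on $C$. The restriction of a constant sheaf to any subspace is again constant (it is the inverse image of a constant sheaf under the inclusion, and inverse image of a constant sheaf is constant by the description in Definition~\ref{def:inverse-direct-image} of $f^{-1}$ applied to $M_C$). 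Hence $\mathcal{H}^j_{C'}$ is constant for every $j$, which is the assertion $\HH^*(\mathcal{F}^\bullet)|_{C'}$ is isomorphic to a constant sheaf.

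The one technical point that needs care — and I expect this to be the main obstacle — is the identification of ``the cohomology sheaf of the restriction'' with ``the restriction of the cohomology sheaf'': i.e., that sheafification and $f^{-1}$ commute with passing to $\HH^j$ in the derived category in the way claimed. This is standard (it follows from exactness of $f^{-1}$ together with the compatibility of sheafification with stalks, using \eqref{eqn:stalk} of Remark~\ref{rem:restriction-to-locally-closed}), but it deserves an explicit line or two. Once that bookkeeping is in place the argument is just: local constancy on $C$ $\Rightarrow$ (contractibility) constancy on $C$ $\Rightarrow$ (restriction) constancy on $C'$. I would write the proof in three short steps corresponding to the three paragraphs above, invoking Proposition~\ref{prop:contractibility-implies-constant} for the middle step and Remark~\ref{rem:restriction-to-locally-closed} for the compatibility statements in the first and last steps.
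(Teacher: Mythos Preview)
Your proposal is correct and follows exactly the same approach as the paper's proof, which simply cites Proposition~\ref{prop:contractibility-implies-constant} to pass from locally constant to constant on the contractible $C$, and then observes that the restriction of a constant sheaf to a subspace is constant. Your version is more detailed about the bookkeeping (the compatibility of cohomology sheaves with restriction), which the paper leaves implicit, but the argument is identical.
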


\begin{proof}
The proposition follows from the fact that any locally constant sheaf over a contractible set
is isomorphic to a constant sheaf  (Proposition \ref{prop:contractibility-implies-constant}),
and the fact that the restriction of a constant sheaf to a subspace again yields a constant sheaf. 
\end{proof}

\begin{proposition}(Covering by contractibles)
\label{prop:covering}
Let $\mathcal{P} \subset \R[X_1,\ldots,X_{n+1}]$ be a finite family of polynomials, with $\card(\mathcal{P}) =s$,
and $d = \max_{P \in \mathcal{P}} \deg(P)$. Then, there exists a family of polynomials 
$\mathcal{Q}\subset \R[X_1,\ldots,X_{n+1}]$ having the following properties.
\begin{enumerate}
\item
The degrees of the polynomials in $\mathcal{Q}$, as well as $\card(\mathcal{Q})$ are bounded by $(sd)^{n^{O(1)}}$.
\item
The signs of the polynomials in $\mathcal{P}$ are invariant at all points of any semi-algebraically connected
component $D$ of the realization $\RR(\rho,\Sphere^n)$ of any realizable sign condition $\rho \in \{0,+1,-1\}^{\mathcal{Q}}$.
\item
For each realizable sign condition $\sigma \in \{0,1,-1\}^{\mathcal{P}}$, the realization $S = \RR(\sigma,\Sphere^n)$
is a disjoint union of  locally closed semi-algebraic sets $D$. Each such $D$ is a connected component of some realizable sign condition on $\mathcal{Q}$,  and is contained in a contractible semi-algebraic subset $D' \subset S$ which is closed in $S$.
\end{enumerate}
\end{proposition}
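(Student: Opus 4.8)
\textbf{Proof strategy for Proposition \ref{prop:covering}.}
The plan is to produce $\mathcal{Q}$ in two stages. First I would obtain a singly exponential (in fact, $(sd)^{n^{O(1)}}$) cell decomposition of $\Sphere^n$ adapted to $\mathcal{P}$: using a cylindrical algebraic decomposition would be too expensive (doubly exponential), so instead I would invoke the algorithmic results giving a $\mathcal{P}$-invariant \emph{semi-algebraic triangulation} of $\Sphere^n$ (or, what is essentially the same for our purposes, the construction underlying \cite[Theorem 16.21]{BPRbook2} on building a simplicial complex compatible with a given family of polynomials) whose size and the degrees of the defining polynomials are bounded by $(sd)^{n^{O(1)}}$. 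This already gives a family $\mathcal{Q}_0$ of polynomials, with the stated bounds, such that the signs of $\mathcal{P}$ are fixed on each connected component of each realizable sign condition of $\mathcal{Q}_0$ --- this is exactly item (2). Item (1) is then just bookkeeping on the complexity of the triangulation algorithm.

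For item (3) I would exploit the fact that in such a triangulation each realization $S = \RR(\sigma,\Sphere^n)$ is a sub-complex-like union of open simplices, and each open simplex $D$ is contained in its closed simplex $\bar D$, which is contractible; the only subtlety is that $\bar D$ need not lie in $S$. To fix this I would pass instead to the (open) \emph{star} of $D$ intersected with $S$, or more robustly, refine once more: given the triangulation $h:|K| \to \Sphere^n$, for each simplex $\tau$ of $K$ mapping into $S$, the closed star $\overline{\mathrm{St}}(b_\tau)$ of the barycenter $b_\tau$ in the barycentric subdivision is contractible, and a standard argument shows that the portion of it lying over $S$ is still contractible and closed in $S$. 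Adding to $\mathcal{Q}_0$ the polynomials defining these stars (there are only $(sd)^{n^{O(1)}}$ of them, each of controlled degree) yields the final family $\mathcal{Q}$, and each $D$ --- now a connected component of a realizable sign condition of $\mathcal{Q}$ --- sits inside such a contractible, relatively closed $D' \subset S$.

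The main obstacle is the third item: guaranteeing that the contractible ``patch'' $D'$ around each piece $D$ can be chosen \emph{closed in $S$} while keeping the whole family $\mathcal{Q}$ singly exponential and the patches themselves cut out by sign conditions on $\mathcal{Q}$. The closed-star construction in a barycentric subdivision is the clean way to do this, but one must check (a) that intersecting with $S$ --- which is itself a union of simplices --- preserves contractibility (this follows because $S$ is a subcomplex and the relevant stars deformation-retract onto a vertex within $S$), and (b) that the retraction can be carried out semi-algebraically with controlled complexity. Both are routine once the triangulation is in hand, so I expect the real work to be quoting the correct singly-exponential triangulation result and verifying that its output can be massaged into the ``covering by contractibles that are closed in each stratum'' form; the rest is accounting.
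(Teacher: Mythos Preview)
Your proposal has a genuine gap at the very first step. You write that you will ``invoke the algorithmic results giving a $\mathcal{P}$-invariant semi-algebraic triangulation of $\Sphere^n$ \ldots\ whose size and the degrees of the defining polynomials are bounded by $(sd)^{n^{O(1)}}$.'' No such result is known. The existing semi-algebraic triangulation algorithms (including the one in \cite{BPRbook2}) go through cylindrical decomposition and are doubly exponential in $n$; obtaining a triangulation of singly exponential size is a well-known open problem in algorithmic real algebraic geometry. So the complexity bound in item~(1) would fail for your construction, and the whole second stage (barycentric subdivision, closed stars) --- while perfectly sound once a triangulation is available --- never gets off the ground within the required bound.

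The paper does not attempt to build such a triangulation. Its proof is a one-line appeal to Algorithm~16.14 (Covering by contractible sets) in \cite{BPRbook2}, which is designed precisely to bypass triangulation. That algorithm uses \emph{roadmap} techniques: for each connected component of a realizable sign condition on $\mathcal{P}$ one constructs parametrized connecting paths to a base point, and the images of these parametrized families are the contractible pieces $D'$. The family $\mathcal{Q}$ consists of the polynomials describing those pieces, and the singly exponential bound comes from the complexity analysis of the roadmap construction, not from any cell decomposition. The output is a covering by contractibles that are closed in $S$, but it is \emph{not} a simplicial complex, and there is no star/link structure to exploit. Your barycentric-subdivision idea is thus orthogonal to the actual mechanism; to repair the argument you would need to replace the nonexistent singly-exponential triangulation by the roadmap-based covering, at which point items~(2) and~(3) come out directly from that construction and the second stage of your plan becomes unnecessary.
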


\begin{proof}
The proposition follows by applying Algorithm 16.14 (Covering by contractible sets) in \cite{BPRbook2}
taking the family of polynomials $\mathcal{P}$ as input. We omit the details.
\end{proof}

Following the same notation as in Proposition \ref{prop:covering} we also have:
\begin{corollary}
\label{cor:covering}
Suppose $\mathcal{F}^\bullet \in \Ob(\constrD^b(\Sphere^n))$ such that $\mathcal{H}^*(\mathcal{F}^\bullet)$ is locally constant on each
connected component $C$ of the realization of each realizable sign condition $\sigma \in \{0,1,-1\}^{\mathcal{P}}$. Then, $\mathcal{H}^*(\mathcal{F}^\bullet)|_D$ is a constant sheaf for $D$ a connected
component of the realization of any realizable sign condition of $\mathcal{Q}$.
\end{corollary}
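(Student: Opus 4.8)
The plan is to deduce Corollary~\ref{cor:covering} directly from Proposition~\ref{prop:covering} together with the local-to-constant principle embodied in Proposition~\ref{prop:constant} (equivalently Proposition~\ref{prop:contractibility-implies-constant}). First I would fix a realizable sign condition $\rho \in \{0,1,-1\}^{\mathcal{Q}}$ and let $D$ be a semi-algebraically connected component of $\RR(\rho,\Sphere^n)$. By property~(2) of Proposition~\ref{prop:covering}, the signs of the polynomials in $\mathcal{P}$ are constant on $D$, so there is a (unique) realizable sign condition $\sigma \in \{0,1,-1\}^{\mathcal{P}}$ with $D \subseteq \RR(\sigma,\Sphere^n) =: S$. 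By property~(3), $D$ is itself a connected component of a realizable sign condition of $\mathcal{Q}$ contained in $S$, and is contained in a contractible semi-algebraic set $D' \subseteq S$ that is closed in $S$.

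Next I would invoke the hypothesis on $\mathcal{F}^\bullet$: by assumption $\mathcal{H}^*(\mathcal{F}^\bullet)$ is locally constant on each connected component $C$ of the realization of each realizable sign condition on $\mathcal{P}$; in particular it is locally constant on $S = \RR(\sigma,\Sphere^n)$ — more precisely, on each connected component of $S$, and in particular on the contractible subset $D'$ (which, being contained in a single connected component of $S$, inherits the local constancy via Remark~\ref{rem:restriction-to-locally-closed}). Applying Proposition~\ref{prop:contractibility-implies-constant} (or directly Proposition~\ref{prop:constant} with $C = D'$), each sheaf $\mathcal{H}^j(\mathcal{F}^\bullet)|_{D'}$ is isomorphic to a constant sheaf on $D'$. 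Finally, since $D \subseteq D'$ is a locally closed subset and the restriction of a constant sheaf to any subspace is again constant, $\mathcal{H}^*(\mathcal{F}^\bullet)|_D$ is a constant sheaf, which is exactly the assertion.

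I do not anticipate a genuine obstacle here: the corollary is a bookkeeping consequence of the geometric statement in Proposition~\ref{prop:covering} (that each sign-condition-realization of $\mathcal{P}$ decomposes into pieces $D$ each sitting inside a contractible closed-in-$S$ set $D'$) fed into the topological fact that locally constant sheaves on contractible spaces are constant. The one point that requires a little care — and is the closest thing to a subtlety — is the transfer of the local constancy hypothesis from the components $C$ of $\RR(\sigma,\Sphere^n)$ down to the auxiliary contractible sets $D'$ and then to $D$; this is handled by the functoriality of the inverse image functor (Remark~\ref{rem:restriction-to-locally-closed}), noting that a piece $D$ coming from a connected component of an $\mathcal{Q}$-sign-condition necessarily lies within a single connected component of $\RR(\sigma,\Sphere^n)$ because sign conditions of the refining family $\mathcal{Q}$ refine those of $\mathcal{P}$. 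Once that is observed, the proof is a one-line application of Proposition~\ref{prop:constant}.
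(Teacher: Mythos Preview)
Your proposal is correct and follows exactly the approach the paper takes: the paper's proof is the one-line ``This follows immediately from Propositions~\ref{prop:covering} and~\ref{prop:constant},'' and your write-up simply unpacks that line, using property~(3) of Proposition~\ref{prop:covering} to place each $D$ inside a contractible $D'$ and then invoking Proposition~\ref{prop:constant} (with $C=D'$, $C'=D$) to pass from locally constant to constant. The care you take in noting that $D'$ lies in a single connected component of $\RR(\sigma,\Sphere^n)$ so that the local-constancy hypothesis transfers is exactly the implicit step the paper skips over.
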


\begin{proof}
This follows immediately from Propositions \ref{prop:covering} and  \ref{prop:constant}.
\end{proof}

We will also need some properties regarding the invariance of sheaf cohomology under ``infinitesimal'' ``thickenings'' and ``shrinkings'' of the underlying semi-algebraic sets. 

\begin{remark}
\label{rem:homotopy-invariance}
In the case of ordinary cohomology (or in 
sheaf-theoretic language,  cohomology with values in a constant sheaf) these facts are standard and
follow from homotopy invariance of the cohomology groups and the local conic structure of
semi-algebraic sets \cite[Theorem 9.3.6]{BCR}. However, in general sheaf cohomology
is not  homotopy invariant (see for example \cite[Example 3.1.6]{Dimca-sheaves}), 
and so more care needs to be taken. 
\end{remark}

The following proposition 
follows directly from \cite[Proposition 2.7.1]{KS}.
\begin{proposition}(Shrinking)
\label{prop:shrinking}
Let $\K$ be a compact semi-algebraic set and $\mathcal{F}^\bullet \in \Ob(\constrD^b(\K))$. Let $X$ be a locally
closed semi-algebraic subset of $\K$ and suppose that $(X_n \subset X)_{n \in \N}$ is an increasing family of compact semi-algebraic subsets of $X$ such that,
$X_n \subset Int(X_{n+1)}$ for each $n$, and $X = \cup_{n} X_n$. Then, the natural map induced by restriction
\[
\phi_j: \hyperH^j(X,\mathcal{F}^\bullet|_X) \rightarrow \varprojlim_n \hyperH^j(X_n,\mathcal{F}^\bullet|_{X_n})
\]
is an isomorphism for all $j \in \Z$. In particular, since all the hypercohomology groups,
$\hyperH^j(X_n,\mathcal{F}^\bullet|_{X_n})$ are finite dimensional, and vanish for $|j| > N$ for some $N$, we have
that for all $n$ large enough,
\[
\hyperH^j(X,\mathcal{F}^\bullet|_X) \cong  \hyperH^j(X_n,\mathcal{F}^\bullet|_{X_n}).
\]
\end{proposition}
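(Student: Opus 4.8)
\textbf{Proof proposal for Proposition \ref{prop:shrinking}.}

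The plan is to derive the statement as a direct consequence of a standard continuity result for sheaf cohomology under inverse limits of supports, namely \cite[Proposition 2.7.1]{KS}. First I would observe that the hypothesis that $(X_n)_{n\in\N}$ is an increasing family of compact sets with $X_n \subset \mathrm{Int}(X_{n+1})$ (interior taken in $X$) and $X = \bigcup_n X_n$ means that $X$ is exhausted by a cofinal family of compact subsets whose interiors also exhaust $X$. This is precisely the setup in which the Kashiwara--Schapira continuity statement applies: for $\mathcal{G}^\bullet = \mathcal{F}^\bullet|_X \in \Ob(\constrD^b(X))$, one has a natural isomorphism
\[
\hyperH^j(X, \mathcal{G}^\bullet) \xrightarrow{\ \sim\ } \varprojlim_n \hyperH^j(X_n, \mathcal{G}^\bullet|_{X_n}),
\]
where the transition maps are those induced by restriction along the inclusions $X_n \hookrightarrow X_{n+1}$, provided the pro-object $(\hyperH^{j-1}(X_n,\mathcal{G}^\bullet|_{X_n}))_n$ satisfies the Mittag-Leffler condition so that the $\varprojlim^1$ term vanishes. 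Here the Mittag-Leffler condition is automatic because every $\hyperH^{j}(X_n,\mathcal{G}^\bullet|_{X_n})$ is a finite-dimensional $\Q$-vector space (constructibility of $\mathcal{F}^\bullet$, Definition \ref{def:constructible-sheaf}), so any decreasing chain of images stabilizes.

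Second, once the isomorphism $\phi_j$ is established, I would pass from the inverse limit to a single term. Since $\mathcal{F}^\bullet \in \Ob(\constrD^b(\K))$, its stalks have cohomology vanishing outside a fixed finite range $|j| \le N$ (condition (2)(b) of Definition \ref{def:constructible-sheaf}), and restriction to a locally closed subset preserves stalks by \eqref{eqn:stalk} in Remark \ref{rem:restriction-to-locally-closed}; hence $\hyperH^j(X_n,\mathcal{F}^\bullet|_{X_n}) = 0$ for $|j| > N$ by the Leray spectral sequence (Proposition \ref{prop:spectral}) applied on the compact set $X_n$. Moreover each $X_n$ is compact semi-algebraic and $\mathcal{F}^\bullet|_{X_n}$ is constructible, so each $\hyperH^j(X_n,\mathcal{F}^\bullet|_{X_n})$ is finite-dimensional. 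An inverse limit of a system of finite-dimensional vector spaces with surjective-on-images transition maps (Mittag-Leffler) is isomorphic to the eventual common image, and since the dimensions $\dim_\Q \hyperH^j(X_n,\mathcal{F}^\bullet|_{X_n})$ form a bounded (by constructibility, they are uniformly bounded) sequence of nonnegative integers, the transition maps $\hyperH^j(X_{n+1},\cdot) \to \hyperH^j(X_n,\cdot)$ are eventually isomorphisms for each $j$; combining over the finitely many relevant $j \in [-N,N]$ we get a single index $n_0$ beyond which $\hyperH^j(X,\mathcal{F}^\bullet|_X) \cong \hyperH^j(X_n,\mathcal{F}^\bullet|_{X_n})$ for all $j$ and all $n \ge n_0$.

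I expect the only genuine subtlety — and it is a minor one — to be the verification that the hypotheses of \cite[Proposition 2.7.1]{KS} are met in the semi-algebraic setting, in particular that the exhaustion $X = \bigcup_n X_n$ with $X_n \subset \mathrm{Int}(X_{n+1})$ gives a paracompactifying family of supports whose limit recovers cohomology on $X$; this is where one uses that $X$ is locally closed in the compact set $\K$ (so $X$ is locally compact, and the $X_n$ are genuinely cofinal among compact subsets). The rest is a formal manipulation with inverse limits of finite-dimensional vector spaces and the uniform boundedness coming from constructibility. No step should require any calculation beyond invoking the cited results; the main "content" is simply recognizing that the stated situation is an instance of the Kashiwara--Schapira continuity theorem.
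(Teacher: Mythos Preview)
Your proposal is correct and takes essentially the same approach as the paper: the paper's ``proof'' is simply the one-line remark that the proposition follows directly from \cite[Proposition 2.7.1]{KS}, which is exactly the continuity result you invoke. Your additional discussion of Mittag--Leffler, finite-dimensionality, and the uniform vanishing range is a reasonable elaboration of the ``in particular'' clause, but the paper itself does not spell any of this out.
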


We also have the following.
\begin{proposition}(Thickening)
\label{prop:thickening}
Let $\K$ be a compact semi-algebraic set and $\mathcal{F}^\bullet \in \Ob(\constrD^b(\K))$. Let $Z$ be a 
closed semi-algebraic subset of $\K$ and suppose that $(Z_n \subset X)_{n \in \N}$ is a decreasing family of closed semi-algebraic subsets of $\K$ such that $Z = \cap_{n} Z_n$. Then, the natural map induced by restriction
\[
\phi_j:  \varinjlim_n \hyperH^j(Z_n,\mathcal{F}^\bullet|_{Z_n}) \rightarrow \hyperH^j(Z,\mathcal{F}^\bullet|_Z) 
\]
is an isomorphism for all $j \in \Z$. In particular, we have
that for all $n$ large enough,
\[
\hyperH^j(Z,\mathcal{F}^\bullet|_Z) \cong  \hyperH^j(Z_n,\mathcal{F}^\bullet|_{Z_n}).
\]
\end{proposition}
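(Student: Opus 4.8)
The plan is to prove Proposition \ref{prop:thickening} (Thickening) by reducing it to a standard continuity property of sheaf cohomology, exactly paralleling the proof of Proposition \ref{prop:shrinking} but with the arrows reversed. First I would observe that since $\K$ is compact and each $Z_n$ is closed in $\K$, each $Z_n$ is itself compact, and $Z = \bigcap_n Z_n$ is a decreasing intersection of compacta. The natural restriction maps $\hyperH^j(Z_n,\mathcal{F}^\bullet|_{Z_n}) \rightarrow \hyperH^j(Z_m,\mathcal{F}^\bullet|_{Z_m})$ for $m \geq n$ (coming from the inclusion $Z_m \hookrightarrow Z_n$ and the functoriality of restriction, using Remark \ref{rem:restriction-to-locally-closed} to identify $(\mathcal{F}^\bullet|_{Z_n})|_{Z_m}$ with $\mathcal{F}^\bullet|_{Z_m}$) make $\big(\hyperH^j(Z_n,\mathcal{F}^\bullet|_{Z_n})\big)_{n}$into a direct system, and the compatible family of maps $\hyperH^j(Z_n,\mathcal{F}^\bullet|_{Z_n}) \rightarrow \hyperH^j(Z,\mathcal{F}^\bullet|_Z)$ induces $\phi_j: \varinjlim_n \hyperH^j(Z_n,\mathcal{F}^\bullet|_{Z_n}) \rightarrow \hyperH^j(Z,\mathcal{F}^\bullet|_Z)$.

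The key step is to invoke the appropriate continuity/base-change statement from \cite{KS}: for a decreasing family of closed subsets of a (locally) compact space with intersection $Z$, and $\mathcal{F}^\bullet$ a constructible (or more generally suitably bounded) complex, one has $\hyperH^j(Z,\mathcal{F}^\bullet|_Z) \cong \varinjlim_n \hyperH^j(Z_n,\mathcal{F}^\bullet|_{Z_n})$; this is the dual companion of \cite[Proposition 2.7.1]{KS} that was used for Proposition \ref{prop:shrinking}, and in \cite{KS} it appears in the discussion of the functors $R\Gamma_Z$ and inductive limits of supports (see \cite[\S2.6--2.7]{KS}). Concretely, writing $i_n: Z_n \hookrightarrow \K$ and $i: Z \hookrightarrow \K$ for the inclusions, one has $\hyperH^j(Z_n, \mathcal{F}^\bullet|_{Z_n}) \cong \hyperH^j(\K, i_{n,*} i_n^{-1}\mathcal{F}^\bullet)$ and similarly for $Z$; since $i_{n,*}i_n^{-1}\mathcal{F}^\bullet$ is, up to the constructible stratification refinement, a sheaf supported on $Z_n$ with $Z = \bigcap_n Z_n$, the sheaves $i_{n,*}i_n^{-1}\mathcal{F}^\bullet$ form an inductive system whose "limit" (in the sense of sections/supports) is $i_*i^{-1}\mathcal{F}^\bullet$, and hypercohomology on the compact $\K$ commutes with this filtered colimit. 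I would either cite the relevant statement in \cite{KS} directly or, if a precise reference is awkward, give a short argument: triangulate so that $\mathcal{F}^\bullet$ has locally constant cohomology sheaves on each stratum, apply the hypercohomology spectral sequence (Proposition \ref{prop:spectral}) to reduce to ordinary sheaf cohomology of the constructible sheaves $\mathcal{H}^q(\mathcal{F}^\bullet)$, and then use the classical fact that Čech (= sheaf) cohomology of a decreasing intersection of compacta with coefficients in a fixed sheaf is the colimit of the cohomologies — combined with a covering-by-contractibles argument à la Proposition \ref{prop:covering} to make everything behave like ordinary cohomology of nice pairs.

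Finally, the "in particular" clause follows formally: each $\hyperH^j(Z_n,\mathcal{F}^\bullet|_{Z_n})$ is finite dimensional (constructibility) and vanishes for $|j| > N$ uniformly in $n$ (the bound $N$ comes from the definition of $\constrD^b$, which gives a uniform stalkwise vanishing range that is inherited by restrictions to the $Z_n$); hence each direct system $\big(\hyperH^j(Z_n,\mathcal{F}^\bullet|_{Z_n})\big)_n$ is a direct system of finite-dimensional vector spaces whose colimit is finite dimensional, so the system stabilizes, and for $n$ large enough $\hyperH^j(Z,\mathcal{F}^\bullet|_Z) \cong \hyperH^j(Z_n,\mathcal{F}^\bullet|_{Z_n})$. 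The main obstacle I anticipate is locating (or proving cleanly) the precise colimit-continuity statement for hypercohomology over a decreasing family of closed semialgebraic sets in the constructible setting: unlike the shrinking case (Proposition \ref{prop:shrinking}), where \cite[Proposition 2.7.1]{KS} applies essentially verbatim, the thickening case requires care that the direct system of restrictions genuinely computes the hypercohomology of the intersection, which in full generality uses either paracompactness/compactness of the $Z_n$ together with the fact that $\mathcal{F}^\bullet$ is bounded and constructible (so no pathologies with infinite products of cohomology), or the local conic structure of semialgebraic sets to build cofinal systems of "collar neighborhoods." I expect this to go through, but the bookkeeping of which reference in \cite{KS} or \cite{BCR} to cite for the semialgebraic, constructible version is the delicate part.
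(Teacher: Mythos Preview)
Your proposal is correct and takes essentially the same approach as the paper: the paper's entire proof is the one-line citation ``See \cite[Remark 2.6.9]{KS},'' which is precisely the continuity statement in \S2.6--2.7 of \cite{KS} you were searching for. Your additional sketch (reducing via the hypercohomology spectral sequence and the classical colimit property for cohomology over a decreasing intersection of compacta) is a valid alternative justification, but the paper does not bother with it.
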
  

\begin{proof}
See \cite[Remark 2.6.9]{KS}.
\end{proof}

In the proofs of statements that follow we are going to use the language of infinitesimals. In many works in quantitative
real algebraic geometry (see for example \cite{BPRbook2}) it is customary to consider non-archimedean extensions of a given real
closed field $\mathrm{R}$, by considering the field of algebraic Puiseux series in new variables, which are  ``infinitesimally small'' with respect to the ground field $\mathrm{R}$ (meaning that they are positive but smaller than all positive elements of $\mathrm{R}$). In this paper, we avoid working with non-archimedean extensions which necessitates the following definition.

\begin{definition}
\label{def:infinitesimals}
Let $\bar\eps = (\eps_1,\ldots,\eps_s)$ be a tuple of variables.
  By the phrase ``$0< \bar\eps \ll 1$''  we will mean 
``$0 <\eps_1 \ll \eps_2 \ll \cdots \ll \eps_{s-1} \ll \eps_{s} \ll 1$'' (i.e.,
``for all sufficiently small positive $\eps_{s}$, and
then for all sufficiently small positive $\eps_{s-1}$, etc.'').  
\end{definition}

\begin{notation}
\label{not:misc}
Let $\mathcal{P} = \{P_1,\ldots,P_s\} \subset \R[X_0,\ldots,X_n]$ be a family of
polynomials.
Let $\bar\eps = (\eps_1,\eps_2,\ldots,\eps_{3s})$.

We will denote by $\mathcal{P}_{\bar\eps}$ the family defined by 
\[
\mathcal{P}_{\bar\eps} = \cup_{1 \leq i,j \leq s} \{P_i \pm \eps_{2j-1}\eps_{2s+i}, P_i \pm \eps_{2j}\eps_{2s+i}\}.
\]

For two sign conditions $\sigma_1,\sigma_2 \in \{0,1,-1\}^{\mathcal{P}}$, we denote 
$\sigma_1 \prec \sigma_2$ if and only if $\sigma_2(P) = 0 \Rightarrow \sigma_1(P) = 0$ for all
$P \in \mathcal{P}$.

For a sign condition $\sigma \in \{0,1,-1\}^{\mathcal{P}}$, we denote
\begin{equation}
\label{eqn:level}
\mathrm{level}(\sigma) = \card(\{P \in \mathcal{P} \mid \sigma(P) = 0\}),
\end{equation}
and denote by 
$\sigma_{\bar\eps}$ be the formula defined as the conjunction of the following three
formulas, 
\[
\bigwedge_{1\leq i \leq s, \sigma(P_i) = 0} (-\eps_{2\ell-1}\eps_{2s+i} \leq P_i \leq \eps_{2\ell-1}\eps_{2s+i}), 
\]
\[
\bigwedge_{1\leq i \leq s, \sigma(P_i) > 0} (P_i \geq \eps_{2\ell}\eps_{2s+i}),
\]
\[
\bigwedge_{1\leq i \leq s, \sigma(P_i) < 0} (P \leq -\eps_{2\ell}\eps_{2s+i}),
\]
where $\ell = \level(\sigma)$.
\end{notation}

\begin{remark}
\label{rem:closed-formula}
Note that each $\sigma_{\bar\eps}$ is a $\mathcal{P}_{\bar\eps}$-closed formula defining a 
$\mathcal{P}_{\bar\eps}$-closed semi-algebraic set (cf. Notation \ref{not:P-formula}).
\end{remark}

We will use the following weak notion of ``general position'' (see \cite{BPRbook2}) for a finite 
family of polynomials.
\begin{definition}(General position).
\label{def:general-position}
Given a finite set of polynomials $\mathcal{P} \subset \R[X_1,\ldots,X_n]$, $ 0 \leq \ell \leq n$, 
and a semi-algebraic subset $\K \subset \R^{n}$, we say that $\mathcal{P}$ is in \emph{$\ell$-general position with respect to $\K$}, if no subset of $\mathcal{P}$ of cardinality larger than $\ell$, has a common zero in $\K$.
\end{definition}

The following proposition is easy to check  and we omit its proof 
(see for example \cite[Corollary 1]{BPR95a}.)

With the same notation as in Notation \ref{not:misc} above,
\begin{proposition}
\label{prop:general-position}
Let $\K \subset \R^{n}$ be a real variety of dimension $\leq \ell$.
For $0 < \bar\eps \ll 1$, $\mathcal{P}_{\bar\eps}$ is in $\ell$-general position with respect
to $\K$.
\end{proposition}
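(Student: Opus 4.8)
The statement to prove is Proposition~\ref{prop:general-position}: that for a real variety $\K \subset \R^n$ of dimension at most $\ell$ and for $0 < \bar\eps \ll 1$, the perturbed family $\mathcal{P}_{\bar\eps}$ is in $\ell$-general position with respect to $\K$. The plan is to argue that a suitable generic perturbation of the polynomials in $\mathcal{P}$ removes all ``excess'' common zeros on $\K$, and that the particular structured perturbation used in Notation~\ref{not:misc} — adding distinct infinitesimal multiples $\pm \eps_{2j-1}\eps_{2s+i}$ and $\pm\eps_{2j}\eps_{2s+i}$ to each $P_i$ — is generic enough. First I would recall that a subset $\mathcal{Q}' \subseteq \mathcal{P}_{\bar\eps}$ of cardinality larger than $\ell$ having a common zero on $\K$ would force, by dimension counting, that the $|\mathcal{Q}'|$ hypersurfaces $\{Q = 0\}_{Q \in \mathcal{Q}'}$ together with $\K$ have a nonempty intersection of ``too large'' codimension; the generic perturbation argument shows this cannot happen.

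The key steps, in order, are as follows. First, observe that each polynomial in $\mathcal{P}_{\bar\eps}$ has the form $P_i + c\,\eps_{2s+i}$ where $c$ is a product of an $\eps_{2j-1}$ or $\eps_{2j}$ with a sign. Since the $\eps_k$ are taken successively infinitesimal ($0 < \eps_1 \ll \cdots \ll \eps_{3s} \ll 1$), for each fixed index $i$ the $2s$ perturbations of $P_i$ are, over the real closed field, shifted by $2s$ \emph{distinct} nonzero constants; moreover these constants are algebraically independent over the subfield generated by the coefficients of $\mathcal{P}$ and by $\K$. Second, I would invoke the standard fact (e.g.\ \cite[Corollary 1]{BPR95a}, or the discussion preceding \cite[Proposition 13.2]{BPRbook2}) that if $P - t$ is considered for an indeterminate $t$, then for all but finitely many values of $t$ the hypersurface $\{P = t\}$ meets $\K$ transversally (in the sense relevant to general position), and in particular two different shifts $\{P - t_1 = 0\}$ and $\{P - t_2 = 0\}$ with $t_1 \neq t_2$ are disjoint. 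Third, I would combine these: because the shifts attached to a single $P_i$ are pairwise distinct, no two perturbed polynomials coming from the same original $P_i$ can vanish simultaneously at any point; and because the shift constants across different $i$'s involve the algebraically independent variables $\eps_{2s+i}$, a common zero of $\ell+1$ perturbed polynomials coming from $\ell+1$ \emph{distinct} original indices would, after specializing, produce a point of $\K$ lying on $\ell+1$ hypersurfaces in sufficiently general position, contradicting $\dim \K \le \ell$ by the standard intersection-dimension bound. Handling these two cases — repeated original index versus distinct original indices — and checking that every subset of size $> \ell$ falls into one of them completes the argument.

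The main obstacle I anticipate is the bookkeeping in the ``distinct original indices'' case: one must verify that the genericity is genuinely achieved by the \emph{specific} infinitesimals chosen, rather than by an abstract generic choice of constants. The cleanest route is to work in the real closed field $\R\langle\bar\eps\rangle$ of algebraic Puiseux series (or, in the ``$0 < \bar\eps \ll 1$'' formulation of Definition~\ref{def:infinitesimals}, to reason about all sufficiently small positive specializations), and to use that adding $\eps_{2s+i}$-multiples for \emph{different} $i$ introduces independent transcendentals, so that a Bertini/Sard-type transversality statement applies at each stage of the tower of extensions. I expect that once the field-extension setup is in place, the remaining verification that $\mathcal{P}_{\bar\eps}$ has no common zero of more than $\ell$ of its members on $\K$ reduces to a direct dimension count and can be asserted by citing \cite[Proposition~13.2]{BPRbook2} (or its analogue), so I would keep that part brief, as the authors themselves indicate (``easy to check'', proof omitted).
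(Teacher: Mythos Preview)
Your approach is correct and matches the standard argument; the paper itself omits the proof, deferring to \cite[Corollary~1]{BPR95a}, which proceeds along exactly the lines you describe (two distinct shifts of the same $P_i$ can never vanish simultaneously, and more than $\ell$ shifts coming from distinct $P_i$'s have no common zero on $\K$ by a dimension count together with the successive choice of infinitesimals). One minor slip: for each fixed $i$ there are $4s$, not $2s$, perturbed polynomials, since the shift constants are $\pm\eps_k\eps_{2s+i}$ for $1\le k\le 2s$; this does not affect your argument.
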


We will also use the following notation.

\begin{notation}
\label{not:cc}
Given any semi-algebraic set $S$ we will denote by $\Cc(S)$ the set of connected components of $S$.
Note that $\Cc(S) = \emptyset$ if and only if $S = \emptyset$.
\end{notation}

Also, using the same notation introduced in Notation \ref{not:misc} we have the following proposition.

\begin{proposition}
\label{prop:perturbation}
Let $\K$ be a compact semi-algebraic subset of $\R^n$. 
Suppose $\mathcal{F}^\bullet \in \Ob(\constrD^b(\K))$, and for each $q\in\Z$, let
$\mathcal{H}^q(\mathcal{F}^\bullet)$
be the sheaf associated to the pre-sheaf defined by
$\mathcal{H}^q(\mathcal{F}^\bullet)(V) = \hyperH^q(V,\mathcal{F}^\bullet|_V)$.
Suppose that for each $\sigma \in \{0,1,-1\}^{\mathcal{P}}$ and 
connected component $C$ of 
$\RR(\sigma,\K)$, $\mathcal{H}^q(\mathcal{F}^\bullet)|_C$ is a constant sheaf.

Then for all $0 < \bar\eps \ll 1$ (see Notation \ref{not:misc} above) the following is true.
\begin{enumerate}
\item
\label{item:perturbationA}
If $C \in \Cc(\RR(\sigma,\K))$ for some $\sigma \in \{0,1,-1\}^{\mathcal{P}}$, then there exists a unique connected component, $C_{\bar\eps} \in \Cc(\RR(\sigma_{\bar\eps},\K))$, such that $C \cap C_{\bar\eps} \neq \emptyset$.
\item 
\label{item:perturbationB}
The semi-algebraic set $C_{\bar\eps}$ is closed in $\K$ and homotopy equivalent to $C$.
\item
\label{item:perturbationC}
For ever $q \in \Z$, and $\x\in C$,
\[
\hyperH^q(C,\mathcal{F}^\bullet|_C) \cong \hyperH^q(C_{\bar\eps},\mathcal{F}^\bullet|_{C_{\bar\eps}}) 
\cong \bigoplus_{i+j=q} \HH^i(C,\Q)\otimes\HH^j(\mathcal{F}^\bullet_\x).
\]
\item
\label{item:perturbationD}
More generally, suppose that 
$\sigma_0,\ldots,\sigma_p \in \{0,1,-1\}^\mathcal{P}$,
and  that 
\[
C \in \Cc(\RR(\sigma_{0,\bar\eps},\K) \cap \cdots \cap \RR(\sigma_{p,\bar\eps},\K)).
\]

Then, there exists
a permutation $\pi$ of $\{0,\ldots,p\}$ such that 
$$
\displaylines{
\sigma_{\pi(0)} \prec \cdots \prec \sigma_{\pi(p)}, \mbox{ and }\cr
C \subset \RR(\sigma_{\pi(p),\bar\eps},\K).
}
$$
Moreover, 
\begin{enumerate}
\item
$C$ is closed  in $\K$,
\item
$C$ is homotopy equivalent to $C' = \RR(\sigma_{\pi(p)},\K) \cap C$, and 
\item
for every $q\in\Z$ and $\x \in C'$,
\[
\hyperH^q(C,\mathcal{F}^\bullet|_{C}) \cong \hyperH^q(C',\mathcal{F}^\bullet|_{C'})\cong 
\bigoplus_{i+j=q} \HH^i(C,\Q) \otimes \HH^j(\mathcal{F}^\bullet_\x).
\]
\end{enumerate}
\end{enumerate}
\end{proposition}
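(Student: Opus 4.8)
\textbf{Proof proposal for Proposition \ref{prop:perturbation}.}

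The plan is to reduce everything to two ingredients: (a) the quantitative semi-algebraic geometry of perturbing a family of polynomials into general position (which gives the \emph{combinatorial} statements about connected components and the homotopy equivalences, and which is essentially the content of the construction in Notation \ref{not:misc} together with Proposition \ref{prop:general-position}), and (b) the sheaf-theoretic shrinking/thickening lemmas (Propositions \ref{prop:shrinking} and \ref{prop:thickening}) together with the Leray spectral sequence in the locally constant case (Proposition \ref{prop:spectral}), which upgrade the homotopy statements to the stated isomorphisms on hypercohomology. I would first treat part (\ref{item:perturbationA}) and (\ref{item:perturbationB}): for $0 < \bar\eps \ll 1$ the set $\RR(\sigma_{\bar\eps},\K)$ is a $\mathcal{P}_{\bar\eps}$-closed semi-algebraic subset of $\K$ (Remark \ref{rem:closed-formula}), hence closed in the compact set $\K$; as $\bar\eps \to 0$ the family $\{\RR(\sigma_{\bar\eps},\K)\}$ is (for the appropriate choice of signs of the $\eps_{2\ell-1}\eps_{2s+i}$, $\eps_{2\ell}\eps_{2s+i}$ relative to $\sigma$) a decreasing family of closed sets whose intersection is exactly $\overline{\RR(\sigma,\K)}$ restricted appropriately, and a standard argument using the local conic structure of semi-algebraic sets \cite[Theorem 9.3.6]{BCR} gives that $\RR(\sigma_{\bar\eps},\K)$ deformation retracts onto a set homotopy equivalent to $C$ for each connected component $C$ of $\RR(\sigma,\K)$, and that the correspondence $C \mapsto C_{\bar\eps}$ is a bijection on connected components. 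This is exactly the kind of statement proved in \cite{BPRbook2} in the course of the algorithms for computing the Betti numbers, so I would cite that and indicate the retraction rather than rebuild it.

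Next, for part (\ref{item:perturbationC}), I would combine three things. By Corollary \ref{cor:covering} (or directly by Proposition \ref{prop:constant} applied to a contractible refinement as in Proposition \ref{prop:covering}) the restriction $\mathcal{H}^q(\mathcal{F}^\bullet)|_{C_{\bar\eps}}$ is again a constant sheaf — here one uses that $C_{\bar\eps}$ lies in a single $\RR(\sigma,\K)$ up to passing to the contractible pieces, so local constancy on $\RR(\sigma,\K)$ is inherited; then Proposition \ref{prop:spectral} applies to the compact set $C_{\bar\eps}$ and gives
\[
\hyperH^q(C_{\bar\eps},\mathcal{F}^\bullet|_{C_{\bar\eps}}) \cong \bigoplus_{i+j=q} \HH^i(C_{\bar\eps},\Q) \otimes \HH^j(\mathcal{F}^\bullet_\x)
\]
for $\x \in C_{\bar\eps}$. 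Since $C_{\bar\eps}$ is homotopy equivalent to $C$, we get $\HH^i(C_{\bar\eps},\Q) \cong \HH^i(C,\Q)$, and since by \eqref{eqn:stalk} the stalk $\mathcal{F}^\bullet_\x$ is unchanged between $C$ and $C_{\bar\eps}$ (both are restrictions of $\mathcal{F}^\bullet$), and $\HH^*(\mathcal{F}^\bullet)$ is locally constant on $C$ so $\HH^j(\mathcal{F}^\bullet_\x)$ does not depend on the choice of $\x\in C$, we obtain the stated formula for $\hyperH^q(C_{\bar\eps},\mathcal{F}^\bullet|_{C_{\bar\eps}})$. The remaining isomorphism $\hyperH^q(C,\mathcal{F}^\bullet|_C) \cong \hyperH^q(C_{\bar\eps},\mathcal{F}^\bullet|_{C_{\bar\eps}})$ follows from Proposition \ref{prop:thickening}: $C$ is the intersection of the decreasing family $\{C_{\bar\eps}\}_{\bar\eps}$ of closed semi-algebraic subsets (this requires replacing the tuple $\bar\eps$ by a one-parameter family, or invoking the transfer principle between $\bar\eps \to 0$ and a sequence $\bar\eps_n \to 0$, which is harmless), so for $\bar\eps$ small enough the restriction map is an isomorphism. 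One must double-check that the spectral sequence formula applied to $C$ directly would \emph{not} work (since $C$ need not be compact), which is precisely why the detour through $C_{\bar\eps}$ is needed; this is the structural point I'd highlight.

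Finally, part (\ref{item:perturbationD}) is the genuinely delicate piece and I expect it to be the main obstacle. Here one intersects the perturbations $\RR(\sigma_{0,\bar\eps},\K),\ldots,\RR(\sigma_{p,\bar\eps},\K)$ coming from possibly distinct sign conditions, and the key combinatorial fact — that any nonempty such intersection is contained in $\RR(\sigma_{\pi(p),\bar\eps},\K)$ for a permutation $\pi$ totally ordering the $\sigma_i$ under $\prec$, and is closed in $\K$ and homotopy equivalent to $\RR(\sigma_{\pi(p)},\K)\cap C$ — relies crucially on the \emph{hierarchy of infinitesimals} $\eps_{2\ell-1},\eps_{2\ell}$ indexed by $\level(\sigma)$: a component surviving in $\RR(\sigma_{i,\bar\eps},\K)$ for several $i$ forces those $\sigma_i$ to be totally ordered by inclusion of their zero sets, because the thicknesses at different levels are incomparable in magnitude (this is the weak general position of Proposition \ref{prop:general-position}, carefully propagated through the nested $\eps$'s). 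I would prove the combinatorial claim by induction on $p$, at each stage peeling off the $\sigma_i$ of lowest level and using that its perturbation is the ``fattest'', then transporting the homotopy equivalence and the stalk/cohomology identification exactly as in part (\ref{item:perturbationC}) — applying Propositions \ref{prop:spectral}, \ref{prop:thickening}, and Corollary \ref{cor:covering} to $C$ and $C' = \RR(\sigma_{\pi(p)},\K)\cap C$. Tracking the interaction of the $p+1$ nested perturbations with the local-constancy hypothesis, and verifying that the thickening lemma still applies to the multi-parameter intersection, is where the real care is needed; the rest is bookkeeping with spectral sequences that degenerate because the relevant sheaves are constant on the pieces.
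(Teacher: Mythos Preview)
Your overall architecture matches the paper's, but there is a genuine gap in how you handle part~(\ref{item:perturbationC}), and it propagates into part~(\ref{item:perturbationD}).

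The set $\RR(\sigma_{\bar\eps},\K)$ is \emph{not} a monotone family in $\bar\eps$, and $C$ is \emph{not} the intersection of the $C_{\bar\eps}$. Look at the formula $\sigma_{\bar\eps}$: the equality constraints $P_i=0$ are thickened to $|P_i|\le\eps_{2\ell-1}\eps_{2s+i}$ (so these sets shrink as $\bar\eps\to 0$), while the strict constraints $P_i>0$ are tightened to $P_i\ge\eps_{2\ell}\eps_{2s+i}$ (so these sets grow as $\bar\eps\to 0$). Thus $C_{\bar\eps}$ neither contains $C$ nor is contained in it. Consequently your appeal to Proposition~\ref{prop:thickening} (``$C$ is the intersection of the decreasing family $\{C_{\bar\eps}\}$'') fails outright. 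Equally importantly, $C_{\bar\eps}$ does \emph{not} lie in a single realization $\RR(\sigma,\K)$: as soon as some $\sigma(P_i)=0$, the thickened band $|P_i|\le\eps_{2\ell-1}\eps_{2s+i}$ contains points with $P_i>0$, $P_i=0$, and $P_i<0$, crossing three sign conditions. So your invocation of Proposition~\ref{prop:spectral} on $C_{\bar\eps}$ with a constant cohomology sheaf is not justified by the hypothesis.

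The paper repairs both issues simultaneously by decoupling the two kinds of perturbation. It introduces a single parameter $\gamma$ that only tightens the strict inequalities, producing compact sets $C_\gamma\subset C$ that \emph{increase} to $C$; here Proposition~\ref{prop:shrinking} (not thickening) gives $\hyperH^*(C,\mathcal{F}^\bullet|_C)\cong\hyperH^*(C_\gamma,\mathcal{F}^\bullet|_{C_\gamma})$. Since $C_\gamma\subset C$ lies in a single sign-condition component, the cohomology sheaves are genuinely constant there, so Proposition~\ref{prop:spectral} applies cleanly to $C_\gamma$ and yields the tensor-product formula. Only then does the paper introduce the second parameter $\delta$ that thickens the equalities, producing $C_{\gamma,\delta}\supset C_\gamma$ decreasing to $C_\gamma$ as $\delta\to 0$; now Proposition~\ref{prop:thickening} transfers the isomorphism to $C_{\gamma,\delta}$, and the homotopy equivalence $C_{\gamma,\delta}\simeq C$ (cited from \cite[Lemma~16.17]{BPRbook2}) finishes. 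This two-step $C\leftarrow C_\gamma\rightarrow C_{\gamma,\delta}$ bridge, using \emph{both} shrinking and thickening, is the missing idea in your proposal.

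For part~(\ref{item:perturbationD}) your inductive plan is correct in spirit, and the total ordering of the $\sigma_i$ by level is exactly what the paper uses. One small point: the paper peels off the $\sigma_i$ of \emph{highest} level (which uses the largest $\eps$'s, since $\eps_1\ll\cdots\ll\eps_{2s}$), restricting $\K$ to $\K'=\K\cap\RR(\sigma_{0,\bar\eps})$ and then applying the induction hypothesis to the remaining $\sigma_1,\ldots,\sigma_p$ on $\K'$. Your ``lowest level'' is the wrong end, though this is easily corrected once the base case is fixed.
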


\begin{example}

The following example might be helpful. Let $n=2$, $\K = \{(x_1,x_2) \mid |x_i| \leq R, i=1,2\}$ for some
large $R>0$,  and $\mathcal{P} = \{P_1,P_2\}$, with $P_1=X_1, P_2 =X_2$.
Let $\sigma_0$ be the sign condition $P_1=0,P_2=0$ and $\sigma_1$ the sign condition $P_1>0,P_2=0$.
The  corresponding sets 
\begin{eqnarray*}
\RR(\sigma_{0,\bar{\eps}},\K) &=& \{(x_1,x_2)\;\mid\; 0 \leq|x_1| \leq \eps_3\eps_5, 0 \leq |x_2| \leq \eps_3\eps_6\}, \\
\RR(\sigma_{1,\bar{\eps}},\K) &=& \{(x_1,x_2)\;\mid\;  x_1 \geq  \eps_2\eps_5, 0 \leq|x_2| \leq \eps_1\eps_6,\},
\end{eqnarray*}
where $0 < \eps_1 \ll \cdots \ll \eps_6 \ll 1$,  are depicted schematically in 
Figure \ref{fig:example} as  lightly shaded rectangles, while their intersection is shaded with a darker shade. In this example, the set
$C'$ is the line segment defined by $C' = \{(x_1,0) \;\mid\; \eps_2\eps_5 \leq x_1 \leq \eps_3\eps_5\}$.

\begin{figure}
\label{fig:example}
\begin{picture}(0,0)%
\includegraphics{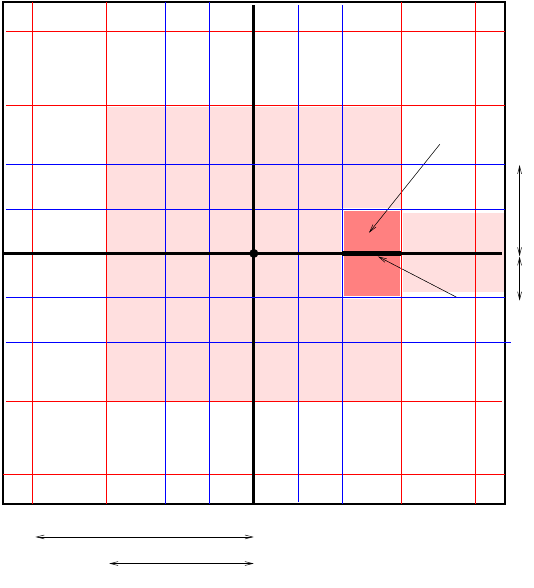}%
\end{picture}%
\setlength{\unitlength}{2072sp}%
\begingroup\makeatletter\ifx\SetFigFont\undefined%
\gdef\SetFigFont#1#2#3#4#5{%
  \reset@font\fontsize{#1}{#2pt}%
  \fontfamily{#3}\fontseries{#4}\fontshape{#5}%
  \selectfont}%
\fi\endgroup%
\begin{picture}(8114,8949)(857,-8527)
\put(1981,-3751){\makebox(0,0)[lb]{\smash{{\SetFigFont{9}{10.8}{\familydefault}{\mddefault}{\updefault}{\color[rgb]{0,0,0}$X_1=0$}%
}}}}
\put(4861,-1501){\makebox(0,0)[lb]{\smash{{\SetFigFont{9}{10.8}{\familydefault}{\mddefault}{\updefault}{\color[rgb]{0,0,0}$X_2=0$}%
}}}}
\put(7606,-1681){\makebox(0,0)[lb]{\smash{{\SetFigFont{9}{10.8}{\familydefault}{\mddefault}{\updefault}{\color[rgb]{0,0,0}$C$}%
}}}}
\put(7606,-4381){\makebox(0,0)[lb]{\smash{{\SetFigFont{9}{10.8}{\familydefault}{\mddefault}{\updefault}{\color[rgb]{0,0,0}$C'$}%
}}}}
\put(5986,-3256){\makebox(0,0)[lb]{\smash{{\SetFigFont{9}{10.8}{\familydefault}{\mddefault}{\updefault}{\color[rgb]{0,0,0}$\RR(\sigma_{1,\bar\eps},\K)$}%
}}}}
\put(3601,-2356){\makebox(0,0)[lb]{\smash{{\SetFigFont{9}{10.8}{\familydefault}{\mddefault}{\updefault}{\color[rgb]{0,0,0}$\RR(\sigma_{0,\bar\eps},\K)$}%
}}}}
\put(1666,-556){\makebox(0,0)[lb]{\smash{{\SetFigFont{9}{10.8}{\familydefault}{\mddefault}{\updefault}{\color[rgb]{0,0,0}$\K$}%
}}}}
\put(1846,-7621){\makebox(0,0)[lb]{\smash{{\SetFigFont{9}{10.8}{\familydefault}{\mddefault}{\updefault}{\color[rgb]{0,0,0}$\varepsilon_4\varepsilon_5$}%
}}}}
\put(8911,-2671){\makebox(0,0)[lb]{\smash{{\SetFigFont{9}{10.8}{\familydefault}{\mddefault}{\updefault}{\color[rgb]{0,0,0}$\varepsilon_2\varepsilon_6$}%
}}}}
\put(8956,-3886){\makebox(0,0)[lb]{\smash{{\SetFigFont{9}{10.8}{\familydefault}{\mddefault}{\updefault}{\color[rgb]{0,0,0}$\varepsilon_1\varepsilon_6$}%
}}}}
\put(3016,-8431){\makebox(0,0)[lb]{\smash{{\SetFigFont{9}{10.8}{\familydefault}{\mddefault}{\updefault}{\color[rgb]{0,0,0}$\varepsilon_3\varepsilon_5$}%
}}}}
\end{picture}%
\caption{Example illustrating Proposition \ref{prop:perturbation}}
\end{figure}
\end{example}

\begin{proof}[Proof of Proposition \ref{prop:perturbation}]
We first prove Parts 
(\ref{item:perturbationA}), (\ref{item:perturbationB},  and (\ref{item:perturbationC})
of the proposition.

Let $0 <\eps_{2s+1} \ll \cdots \ll \eps_{3s} \ll 1$ be chosen sufficiently small.
For $\gamma > 0$ let $\sigma_\gamma$ be the formula 
\[
\bigwedge_{1 \leq i \leq s, \sigma(P_i) > 0} (P_i \geq \gamma\eps_{2s+i}) \wedge
\bigwedge_{1 \leq i \leq s, \sigma(P_i) < 0} (P_i \leq -\gamma\eps_{2s+i}),
\]
and 
for $\delta > 0$, let $\sigma_{\gamma,\delta}$ be the formula defined as the conjunction of the following
formulas:
\[
\bigwedge_{1 \leq i \leq s, \sigma(P_i) = 0} (-\delta\eps_{2s+i} \leq P_i \leq \delta\eps_{2s+i}),
\]
\[
\bigwedge_{1 \leq i \leq s, \sigma(P_i) > 0} (P_i \geq \gamma\eps_{2s+i}) \wedge
\bigwedge_{1 \leq i \leq s, \sigma(P_i) < 0} (P_i \leq -\gamma\eps_{2s+i}).
\]

It is easy to show that (see for example, \cite[Lemma 16.17]{BPRbook2})  that for $0< \delta \ll \gamma \ll 1$, and for each connected component $C$ of $\RR(\sigma,\K)$ there exists a unique
connected component,
 $C_{\gamma}$ of $\RR(\sigma_{\gamma},\K)$ such that $C \cap C_\gamma \neq \emptyset$,  
 and a unique connected component
  $C_{\gamma,\delta}$ of $\RR(\sigma_{\gamma,\delta},\K)$, such that $C_{\gamma} \subset C_{\gamma,\delta}$, and
moreover 
$C_{\gamma,\delta}$ is homotopy equivalent to $C$.

Now choose a sequence $\gamma_i\searrow0$, and for each $j$ choose
$\delta_{i,j}\searrow 0$. Then for all $i$ large enough, and having chosen $i$,  for all $j$ large
enough, $C_{\gamma_i,\delta_j}$ is well defined, and it is clear that each $C_{\gamma_i,\delta_j}$ is compact and
$\cap_{j}C_{\gamma_i,\delta_i} = C_{\gamma_i}$. Using Proposition \ref{prop:thickening} we obtain that
for every $q \in \Z$, and for all $j$ large enough,
\begin{eqnarray}
\label{eqn:congruence1}
\hyperH^q(C_{\gamma_i},\mathcal{F}^\bullet|_{C_{\gamma_i}}) &\cong& \hyperH^q(C_{\gamma_i,\delta_j},\mathcal{F}^\bullet|_{C_{\gamma_i,\delta_j}}).
\end{eqnarray}
Now observe that $\cup_i C_{\gamma_i} = C$, and using Proposition \ref{prop:shrinking} we have for every 
$q \in \Z$,
\begin{eqnarray}
\label{eqn:congruence2}
\hyperH^q(C,\mathcal{F}^\bullet|_C) &\cong& \hyperH^q(C_{\gamma_i},\mathcal{F}^\bullet|_{C_{\gamma_i}}),
\end{eqnarray}
for all $i$ large enough. Moreover, noticing that $\mathcal{H}^\ell(\mathcal{F}^\bullet)|_{C_{\gamma_i}}$ is a constant sheaf for each $\ell \in \Z$
(using the fact that $\mathcal{H}^\ell(\mathcal{F}^\bullet)|_{C_{\gamma_i}}$ is constant and  Proposition \ref{prop:constant}), we have 
using Proposition \ref{prop:spectral} 
that  for every $q\in\Z$ and $\x \in C_{\gamma_i}$,
\begin{eqnarray}
\label{eqn:congruence3}
\hyperH^q(C_{\gamma_i},\mathcal{F}^\bullet|_{C_{\gamma_i}}) & \cong &  
\bigoplus_{i+j=q} \HH^i(C_{\gamma_i},\Q) \otimes \HH^j(\mathcal{F}^\bullet_\x).
\end{eqnarray}
Now using the fact that $C_{\gamma_i}$ is homotopy equivalent to $C$ for $i$ large enough, we have 
for every $q \in \Z$, 
\begin{eqnarray}
\label{eqn:congruence4}
\HH^q(C_{\gamma_i},\Q) &\cong& \HH^q(C,\Q).
\end{eqnarray}
Together \eqref{eqn:congruence1}, (\ref{eqn:congruence2}), (\ref{eqn:congruence3}) and (\ref{eqn:congruence4}) imply that 
for every $q \in \Z$, and $\x\in C$,
\[
\hyperH^q(C,\mathcal{F}^\bullet|_C) \cong \hyperH^q(C_{\gamma_i,\delta_j},\mathcal{F}^\bullet|_{C_{\gamma_i,\delta_j}}) 
\cong \bigoplus_{k+m=q} \HH^k(C,\Q)\otimes\HH^m(\mathcal{F}^\bullet_\x)
\]
for all $i$ large enough, and after having chosen $i$,  for all $j$ large enough. The first part of the proposition
now follows by replacing $\delta_i$ by $\eps_{2\mathrm{level}(\sigma)-1}$ and
$\gamma_i$ by $\eps_{2\mathrm{level}(\sigma)}$.

Part (\ref{item:perturbationD}) of the proposition follows from an induction on $p$ using the first part as base case 
the case $p=0$, which has been proved above.

Assume now by induction  that (\ref{item:perturbationD})
holds for smaller values of $p$.
First observe that if $\sigma \neq \sigma' \in \{0,1,-1\}^{\mathcal{P}}$, with $\level(\sigma) = \level(\sigma')$,
then $\RR(\sigma_{\bar\eps},\K) \cap \RR(\sigma_{\bar\eps},\K) = \emptyset$. Also, if 
$\sigma,\sigma'\in\{0,1,-1\}^{\mathcal{P}}$, and there exists $P \in \mathcal{P}$, such that 
$\sigma(P)\sigma'(P) = -1$, then $\RR(\sigma_{\bar\eps},\K) \cap \RR(\sigma_{\bar\eps},\K) = \emptyset$.
Hence, 
if $\RR(\sigma_{0,\bar\eps}) \cap \cdots \cap \RR(\sigma_{p,\bar\eps}) \neq \emptyset$, 
there exists a unique permutation $\pi$ of $\{0,\ldots,p\}$ such that 
\[
\level(\sigma_p) < \cdots < \level(\sigma_0).
\]
After permuting the indices by $\pi$
we can assume without loss of generality that 
\[
\level(\sigma_p) =\ell_p < \cdots < \level(\sigma_0) = \ell_0,
\]
and also that for $0 \leq i \leq p$, $\sigma_i$ is the sign condition given by 
$P_1 = \cdots = P_{\ell_i} = 0, P_{\ell_i+1} > 0 ,\ldots, P_s > 0$.

Now let $\K' = \K_{\eps_{2 \ell_0}, \eps_{2 \ell_0 -1}} = \K \cap \RR(\sigma_{p,\bar\eps})$. Notice that only $\eps_{2 \ell_0}, \eps_{2 \ell_0 -1}$ enters into
the definition of $\K'$. Applying the induction hypothesis to the set of sign conditions $\sigma_1,\ldots,\sigma_p$,
and the compact set $\K'$, we obtain that for each connected component $C$
of $\RR(\sigma_{1,\bar\eps},\K') \cap \cdots \cap \RR(\sigma_{p,\bar\eps},\K')$, we obtain
that
$C$ is homotopy equivalent to $C' = \RR(\sigma_{p},\K') \cap C$ for $0 < \bar\eps \ll 1$, and
for every $q\in\Z$ and $\x \in C'$,
\[
\hyperH^q(C,\mathcal{F}^\bullet|_{C}) \cong \hyperH^q(C',\mathcal{F}^\bullet|_{C'})\cong 
\bigoplus_{i+j=q} \HH^i(C,\Q) \otimes \HH^j(\mathcal{F}^\bullet_\x).
\]
This completes the induction.
\end{proof}

Before proving Theorem \ref{thm:topological-complexity-sheaves} we need another proposition.

\begin{proposition}
\label{prop:qe}
Let $\mathcal{Q} \subset \R[Y_0,\ldots,Y_m, X_0,\ldots,X_n]$, with $\card(\mathcal{Q}) = s$, and $\max_{Q \in \mathcal{Q}} \deg(Q) \leq d$. Let $\K = \Sphere^m \times \Sphere^n$,  and $\pi:\K \rightarrow \Sphere^n$ the
projection to the second factor. Then, there exists a finite family of polynomials 
$\mathcal{P} \subset \R[X_0,\ldots,X_n]$ satisfying the following.
 \begin{enumerate}
\item For $0 < \bar\eps \ll 1$, and each $\sigma \in \{0,1,-1\}^{\mathcal{Q}}$, and every connected component $C$ of $\RR(\sigma,\Sphere^{n})$,
and $\x,\x' \in C$, there is a semi-algebraic homeomorphism, 
$\phi_{\x,\x',\bar\eps}:  \pi^{-1}(\x) \rightarrow \pi^{-1}(\x')$, such that
for each tuple
$(\sigma_0,\ldots,\sigma_p)$, where for each $i$, $\sigma_i \in \{0,1,-1\}^{\mathcal{Q}}$,
$\phi_{\x,\x',\bar\eps}$ restricts to a homeomorphism between 
\[
\Big(\bigcap_{0\leq i \leq p}\RR(\sigma_{i,\bar\eps}, \K) \Big)_\x,  
\Big(\bigcap_{0\leq i \leq p}\RR(\sigma_{i,\bar\eps}, \K) \Big)_{\x'}
\]
(following the same notation as in Proposition  \ref{prop:perturbation}).
\item
The cardinality of $\mathcal{P}$ as well as $\max_{P \in \mathcal{P}} \deg(P)$ are bounded by $( s d)^{n^{O(1)}}$.
\end{enumerate}
\end{proposition}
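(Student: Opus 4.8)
\textbf{Proof plan for Proposition \ref{prop:qe}.}

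The plan is to invoke the standard machinery of effective quantifier elimination and parametrized cell decomposition from \cite{BPRbook2} applied to a suitably perturbed family of polynomials, and then to combine it with Proposition \ref{prop:perturbation} to control the behaviour of the perturbed realizations. First I would form the perturbed family $\mathcal{Q}_{\bar\eps} \subset \R[Y_0,\ldots,Y_m,X_0,\ldots,X_n]$ as in Notation \ref{not:misc}, viewing the $\eps$'s as additional variables; note $\card(\mathcal{Q}_{\bar\eps}) = O(s^2)$ and the degrees stay $\leq d$. The key point is that for $0 < \bar\eps \ll 1$ the family $\mathcal{Q}_{\bar\eps}$ is, fiberwise over $\Sphere^n$, in general position with respect to $\Sphere^m$ (Proposition \ref{prop:general-position}), so each set of the form $\bigcap_{0\leq i\leq p}\RR(\sigma_{i,\bar\eps},\K)$ is, over each fiber, a closed semi-algebraic subset of $\Sphere^m$ defined by a $\mathcal{Q}_{\bar\eps}$-closed formula of controlled size (Remark \ref{rem:closed-formula}).

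Next I would apply the block-structured parametrized version of the Cylindrical Algebraic Decomposition / quantifier elimination algorithm (e.g.\ \cite[Theorem 14.16, Algorithm 14.21]{BPRbook2}) with $(X_0,\ldots,X_n)$ as the block of parameters and $(Y_0,\ldots,Y_m)$ as the variables to be eliminated, taking as input the family $\mathcal{Q}_{\bar\eps}$ together with the polynomials defining the two spheres. This produces a finite family $\mathcal{P}' \subset \R[X_0,\ldots,X_n,\bar\eps]$ such that the signs of the polynomials in $\mathcal{P}'$ at $(\x,\bar\eps)$ determine the semi-algebraic homeomorphism type of the fiber $\pi^{-1}(\x)$ \emph{stratified} by the traces of all the sets $\RR(\sigma_{\bar\eps},\K)$ for $\sigma \in \{0,1,-1\}^{\mathcal{Q}}$; that is, over a connected component of a realizable sign condition of $\mathcal{P}'$ one has a semi-algebraic trivialization of $\pi$ compatible with all these sets simultaneously. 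Specializing $\bar\eps$ to infinitesimal values (or rather, using the archimedean formulation of Definition \ref{def:infinitesimals} and choosing concrete sufficiently small positive reals) and collecting the resulting polynomials in the $X$-variables gives the desired family $\mathcal{P} \subset \R[X_0,\ldots,X_n]$. The complexity bounds on $\card(\mathcal{P})$ and $\max_{P\in\mathcal{P}}\deg(P)$ follow from the standard singly-exponential (in fact $(sd)^{n^{O(1)}}$) complexity estimates for this algorithm, since the number of parameters is $n+1$ and the number of eliminated variables is $m+1$; here I would be slightly careful that the exponent is polynomial in $n$ uniformly, which is exactly what the cited bounds give.

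Finally I would assemble the statement: given a connected component $C$ of $\RR(\sigma,\Sphere^n)$ for some $\sigma \in \{0,1,-1\}^{\mathcal{Q}}$, I would refine $C$ by the sign conditions of $\mathcal{P}$ — but since we only need the homeomorphism $\phi_{\x,\x',\bar\eps}$ to exist for $\x,\x'$ in the \emph{same} component $C$ of a $\mathcal{Q}$-sign condition, and the family $\mathcal{P}$ refines the $\mathcal{Q}$-sign-condition partition by construction, I would restrict to the case where $C$ is in fact a connected component of a $\mathcal{P}$-sign condition; the general case then follows because Proposition \ref{prop:perturbation}(\ref{item:perturbationD}) guarantees that over each such refined piece the sets $\bigcap_i \RR(\sigma_{i,\bar\eps},\K)$ behave consistently (they are closed, homotopy equivalent to their ``core'' $\RR(\sigma_{\pi(p)},\K)\cap(\cdot)$, and their stalks of $\mathcal{F}^\bullet$ are locally constant), so the fiberwise trivialization produced by the algorithm can be taken compatible with all of them. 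The homeomorphism $\phi_{\x,\x',\bar\eps}$ is then the fiber map of this trivialization over $C$.

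\textbf{Main obstacle.} The delicate point, and the one I expect to require the most care, is ensuring that a \emph{single} parametrized trivialization of $\pi$ over each cell is simultaneously compatible with \emph{all} the sets $\bigcap_{0\leq i\leq p}\RR(\sigma_{i,\bar\eps},\K)$ ranging over arbitrary tuples $(\sigma_0,\ldots,\sigma_p)$ and arbitrary $p$ — there are only finitely many sign conditions $\sigma \in \{0,1,-1\}^{\mathcal{Q}}$, hence the relevant intersections are among a finite (singly-exponentially bounded) collection of $\mathcal{Q}_{\bar\eps}$-closed sets, so one must feed \emph{all} of $\mathcal{Q}_{\bar\eps}$ (not a single formula) into the quantifier-elimination algorithm and use that the algorithm produces a CAD adapted to the whole family at once. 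This is what keeps the bound at $(sd)^{n^{O(1)}}$ rather than something worse, and verifying that the general-position hypothesis of Proposition \ref{prop:general-position} is genuinely available fiberwise — uniformly in the parameters — is where the argument has to be handled carefully.
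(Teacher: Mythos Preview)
Your plan has a genuine gap at the heart of the argument. You invoke the block-elimination / quantifier-elimination algorithms of \cite[Theorem~14.16, Algorithm~14.21]{BPRbook2} and claim that the resulting family $\mathcal{P}'$ has the property that the signs of $\mathcal{P}'$ at $(\x,\bar\eps)$ determine the \emph{semi-algebraic homeomorphism type} of the stratified fiber $\pi^{-1}(\x)$. But block elimination does not give this: it only produces a family over whose sign conditions the \emph{truth value} of each $\mathcal{Q}_{\bar\eps}$-formula is constant. Constancy of truth values in each fiber is far weaker than the existence of a fiber-preserving homeomorphism $\phi_{\x,\x',\bar\eps}$ respecting all the sets $\RR(\sigma_{i,\bar\eps},\K)$. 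If instead you really mean cylindrical algebraic decomposition (whose cylindrical cell structure does yield trivializations), the complexity is doubly exponential in the number of variables and the bound $(sd)^{n^{O(1)}}$ fails. So as written, your plan either does not produce the homeomorphisms or does not meet the complexity bound.

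The paper's proof avoids this dilemma by a different mechanism: it takes $\mathcal{P}'$ to be a family whose zero set contains the \emph{images of the critical points} of $\pi$ restricted to the zero sets $\ZZ(\mathcal{Q}'_{\bar\eps},\K)$ for all subfamilies $\mathcal{Q}'_{\bar\eps}\subset\mathcal{Q}_{\bar\eps}$ of cardinality at most $m+n$ (this suffices by the general-position Proposition~\ref{prop:general-position}). Over each connected component of the complement of this critical-value locus the projection is a locally trivial fibration compatible with the $\mathcal{Q}_{\bar\eps}$-closed sets, by a Thom-isotopy / Morse-theoretic argument (the paper refers to the proof of \cite[Lemma~3.9]{BV06}); this is what actually furnishes $\phi_{\x,\x',\bar\eps}$. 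The singly-exponential bound then comes from counting these subfamilies and bounding the degrees of the resulting critical-value polynomials, and finally taking coefficients with respect to $\bar\eps$ to eliminate the infinitesimals. Your use of Proposition~\ref{prop:perturbation} in the ``assembly'' step is also misplaced: that proposition relates the perturbed sets $\RR(\sigma_{\bar\eps},\K)$ to the original $\RR(\sigma,\K)$ within a single fiber, and says nothing about comparing fibers over different base points.
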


\begin{proof}
Let
\[
\bar{\mathcal{P}}' \subset \R[\bar\eps, X_0,\ldots,X_{m(n)}]
\] 
be defined as a set of polynomials
such that for all $0 < \bar\eps \ll 1$,
\[
\bigcup_{P \in \bar{\mathcal{P}}'} \ZZ(P(\bar\eps,\cdot),\Sphere^{m(n)})
\]
contains the images of the critical points of
the projection  $\pi$ restricted to all varieties 
$\ZZ(\mathcal{Q}_{\bar\eps}'(\bar\eps,\cdot),\K_n)$, for all subsets
$\mathcal{Q}_{\bar\eps}'\subset \mathcal{Q}_{n,\bar\eps}$.  Since 
by Proposition \ref{prop:general-position}, $\mathcal{Q}_{n,\bar\eps}$ is in $m+n$ general position
with respect to $\K_n$, it suffices to consider only subsets  $\mathcal{Q}_{\bar\eps}'$ with
$\card(\mathcal{Q}_{\bar\eps}') \leq m+n$. Now writing the polynomials  in 
$\bar{\mathcal{P}}'$ as
polynomials in the variables $\bar\eps$, let 
$\mathcal{P}'$ denote the union of all the coefficients of all the polynomials in  
$\bar{\mathcal{P}}'$.

Using
arguments similar to those used in the proof of Lemma 3.9 in \cite{BV06}
it follows that for every $\x \in \Sphere^n$, for $0 < \bar\eps \ll 1$, there exists 
a connected component $D_{\bar\eps}$  of  $\Sphere^n \setminus \ZZ(\mathcal{P}'_{\bar\eps})$,
such that for any $\x' \in D_{\bar\eps}$, 
there exists a homeomorphism $\phi_{\x,\x',\bar\eps}: \pi^{-1}(\x) \rightarrow \pi^{-1}(\x')$, such that
for any $\mathcal{Q}_{\bar\eps}$-closed semi-algebraic subset $S \subset \K$, 
$\phi_{\x,\x',\bar\eps}$ restricts to a homeomorphism
$\pi^{-1}(\x)\cap S \rightarrow \pi^{-1}(\x') \cap S$.
In particular, note that 
for each tuple
$(\sigma_0,\ldots,\sigma_p)$, where for each $i$, $\sigma_i \in \{0,1,-1\}^{\mathcal{Q}}$,
$\cap_{0\leq i \leq p}\RR(\sigma_{i,\bar\eps}, \K)$ is a 
$\mathcal{Q}_{\bar\eps}$-closed semi-algebraic subset of $\K$ (see Remark \ref{rem:closed-formula}).

Note that
 the degrees of the polynomials in $\bar{\mathcal{P}}'$ are bounded by $d^{O(m)}$,
 and noting further that each one of them depends on at most 
$m+n$ of the $\eps_i$'s gives a singly exponential bound on the cardinality of $\mathcal{P}$ as well.
\end{proof}

\begin{proof}[Proof of Theorem \ref{thm:topological-complexity-sheaves}]
Let $\Big(\mathcal{F}^\bullet_n \in \Ob(\constrD^b(\Sphere^{m(n)}))\Big)_{n >0}$ be a sequence of constructible sheaves belonging to the class
$\mathbf{\Lambda}^{(p)}(\bs{\mathcal{P}}_\R)$ for some $p \geq 0$. 

We first prove that there exists a polynomial $q_1(n)$ (depending on the sequence $\left(\mathcal{F}_n \right)_{n>0}$) and 
for each $n >0$ a family of polynomials $\mathcal{P}_n \subset \R[X_0,\ldots,X_{m(n)}]$, such that:
\begin{enumerate}
\item
 both the $\card(\mathcal{P}_n)$ and the degrees of the polynomials in $\mathcal{P}_n$ are bounded by $2^{q_1(n)}$;
\item 
the semi-algebraic partition $\Pi(\mathcal{P}_n,\Sphere^{m(n)})$ (cf.  Notation \ref{not:partition-set}) is subordinate to $\mathcal{F}^\bullet_n$;  
\item
moreover, for $\x \in C$, 
\[
\sum_i \dim_\Q \HH^i((\mathcal{F}^\bullet_n)_\x) \leq 2^{q_1(n)},
\]
and
\item
\[
\HH^i((\mathcal{F}^\bullet_n)_\x) = 0,
\]
for $|i|\geq  q_1(n)$.
\end{enumerate}

The proof of the above claim is by induction on $p$. If $p = 0$, then the
claim follows directly from the definition of $\bs{\mathcal{P}}_\R$ and Proposition \ref{prop:singly-exponential}. 
Now suppose that the claim is true for all smaller values of $p$. 

Now a sequence $(\mathcal{F}^\bullet_n)_{n > 0}$ by definition belongs to $\mathbf{\Lambda}^{(p)}(\bs{\mathcal{P}}_\R)$ 
 if either,
\begin{enumerate}
\item
for each $n >0$
\[
\mathcal{F}^\bullet_n = R\pi_{n,*}(\mathcal{G}^\bullet_n),
\]
and the sequence 
\[
\Big(\mathcal{G}^\bullet_n \in \Ob(\constrD(\K_n))\Big)_{n >0} \in \mathbf{\Lambda}^{(p-1)}(\bs{\mathcal{P}}_\R),
\]
where 
$m_1(n) \in \Z[n]$ is a non-negative polynomial,
$\K_n = \Sphere^{m_1(n)} \times \Sphere^{m(n)}$,
and
$\pi_n: \Sphere^{m_1(n)} \times \Sphere^{m(n)} \rightarrow \Sphere^{m(n)}$ 
is the projection to the second factor;  or,
\item
$(\mathcal{F}^\bullet_n)_{n > 0}$
can be obtained from a constant number of sequences of type described in (A) above, by taking tensor products, direct sums, truncations and pull-backs. 
\end{enumerate}
However,
the claim to be proved (i.e., the singly exponential bound on $b(\mathcal{F}^\bullet_n)$)  is easily shown to be preserved under operations of tensor products, 
direct sums, truncations and pull-backs (by the same method
of proof as in the proof of Proposition \ref{prop:stability-sheaf-P}). Thus, it
suffices to consider only the first case. 
In what follows we will the following notation. For $\x \in \Sphere^{m(n)}$, and any $S \subset \K_n$,
we will denote $S_\x = \pi_n^{-1} \cap S$.

By the induction hypothesis there exists polynomial $q_2(n)$ (depending on the sequence $\left(\mathcal{G}^\bullet_n \right)_{n>0}$) and 
for each $n >0$ a family of polynomials 
\[
\mathcal{Q}_n \subset \R[Y_0,\ldots,Y_{m_1(n)},X_0,\ldots, X_{m(n)}],
\] 
such that:
\begin{enumerate}
\item
both  $s_n = \card(\mathcal{Q}_n)$ and the degrees of the polynomials in $\mathcal{Q}_n$ are
bounded by $2^{q_2(n)}$;
\item 
the semi-algebraic partition $\Pi(\mathcal{Q}_n,\K_n)$ is subordinate to  $\mathcal{G}^\bullet_n$;
\item
moreover, for each $\z \in D$, 
$
\sum_i \dim_\Q \HH^i((\mathcal{G}^\bullet_n)_{\z})\leq 2^{q_2(n)},
$ and
\item
$
\HH^i((\mathcal{G}^\bullet_n)_{\z})=0,
$
for $|i| \geq q_2(n)$.
\end{enumerate}

Using Proposition \ref{prop:covering} and Corollary \ref{cor:covering} we first replace $\mathcal{Q}_n$ by another family
(which we will still denote by $\mathcal{Q}_n$) such that over each 
$C \in \Cc(\sigma,\K_n), \sigma \in \{0,1,-1\}^{\mathcal{Q}_n}$, and every $q \in \Z$,
$\mathcal{H}^q(\mathcal{G}^\bullet_n)|_C$ is a constant (not just locally constant sheaf).
Moreover, the cardinality and the degrees of the polynomials appearing in this new family
are still bounded singly exponentially.
 
The proof now is now in several parts.
\begin{enumerate}
\item
We first prove that for each $\x \in \Sphere^{m(n)}$, 
$
\sum_i \dim_\Q \hyperH^i(\pi_n^{-1}(\x), \mathcal{G}^\bullet_n|_{\pi_n^{-1}(\x)})
$ 
is bounded
singly exponentially.
Let $\bar\eps = (\eps_1,\eps_2,\ldots,\eps_{2s_n})$. Consider the family of 
$2s_n^2$ polynomials $\mathcal{Q}_{n,\bar\eps}$ (cf. Notation \ref{not:misc}).

Denoting for each $\sigma \in \{0,1,-1\}^{\mathcal{Q}_n}$, $\RR(\sigma_{\bar\eps},\K_n)$ by 
$D_{\sigma,\bar\eps}$, we have by Proposition \ref{prop:cech}
for each $\x \in \Sphere^{m(n)}$, a spectral sequence, 
\begin{equation}
\label{eqn:spectral-sequence}
E_r^{p,q}(\x),
\end{equation}
whose $E_2$-term is given by
\begin{equation}
\label{eqn:abut}
E_2^{p,q}(\x) \cong  \bigoplus_{\substack
{\sigma_0,\prec \cdots \prec \sigma_p, \\
\sigma_j \in \{0,1,-1\}^{\mathcal{Q}_n}}
}
 \hyperH^q((D_{\sigma_0,\bar\eps} \cap \cdots \cap D_{\sigma_p,\bar\eps})_\x,\mathcal{G}^\bullet_n|_{(D_{\sigma_0,\bar\eps} \cap \cdots \cap D_{\sigma_p,\bar\eps})_\x})
\end{equation}
that abuts to $\hyperH^*(\pi_n^{-1}(\x),\mathcal{G}^\bullet_n|_{\pi_n^{-1}(\x)})$.

By induction hypothesis, we have that  
$
\HH^{q}((\mathcal{G}^\bullet_n)_{(\y,\x)}) = 0,
$ 
for all $q$ with $|q| \geq q_2(n)$, and $\y \in \Sphere^{m_1(n)}$.

It
follows that  for any $\x \in \Sphere^{m(n)}$,
$\hyperH^i(\pi_n^{-1}(\x), \mathcal{G}^\bullet_n|_{\pi_n^{-1}(\x)})=0$ for all $i$ with $|i| > N(n) = q_2(n) + m_1(n)$.

It then follows from the $E_2$-term \eqref{eqn:abut} of the spectral sequence \eqref{eqn:spectral-sequence} 
that
\[
\dim_\Q \hyperH^i(\pi_n^{-1}(\x),\mathcal{G}^\bullet_n|_{\pi^{-1}(\x)}) \leq 
\sum_{p+q=i, |q| \leq N(n)} \sum_{\bar\sigma} \dim_\Q\hyperH^q((D'_{\bar\sigma, \bar\eps})_\x,\mathcal{G}^\bullet_n|_{(D'_{\bar\sigma, \bar\eps})_\x}),
\]
where 
\begin{eqnarray*}
\bar\sigma &=& \sigma_0\prec \cdots \prec\sigma_p, \sigma_j \in \{0,1,-1\}^{\mathcal{Q}_n}, \\
D' _{\bar\sigma,\bar\eps}&=& D_{\sigma_0,\bar\eps} \cap \cdots \cap D_{\sigma_p,\bar\eps}.
\end{eqnarray*}

It follows from Part (\ref{item:perturbationD}) of Proposition \ref{prop:perturbation} that,
for any $\z=(\y,\x) \in (D_{\sigma_p}\cap D'_{\bar\sigma,\bar\eps})_{\x}$,
\begin{eqnarray*}
\hyperH^q((D'_{\bar\sigma,\bar\eps})_\x,\mathcal{G}^\bullet_n|_{(D'_{\bar\sigma,\bar\eps})_\x})
&\cong&
\hyperH^q((D_{\sigma_p}\cap D'_{\bar\sigma,\bar\eps})_\x,
\mathcal{G}^\bullet_n|_{(D_{\sigma_p}\cap D'_{\bar\sigma,\bar\eps})_\x}) \\ 
&\cong&
\bigoplus_{i+j = q} \HH^i(D_{\sigma_p}\cap D'_{\bar\sigma,\bar\eps},\Q) \otimes \HH^{j}(\mathcal{G}^\bullet_n)_{\z})).
\end{eqnarray*}

It follows that 
$\displaystyle{
\sum_i \dim_\Q \hyperH^i(\pi_n^{-1}(\x), \mathcal{G}^\bullet_n|_{\pi_n^{-1}(\x)})
}
$
is bounded
singly exponentially,
since the descriptions of the semi-algebraic sets  
$D_{\sigma_p}\cap D'_{\bar\sigma,\bar\eps}$ have singly exponential complexity,
$\dim(\HH^{*}(\mathcal{G}^\bullet_n)_{\z})$ is bounded singly exponentially using the induction
hypothesis.

\item
We next obtain a covering of $\Sphere^{m(n)}$ by a singly exponentially many compact semi-algebraic sets
of singly-exponential complexity such that over each element $C$ of the covering, and $q \in \Z$,
the sheaf, $\mathcal{H}^q(R\pi_{n,*}\mathcal{G}^\bullet_n)|_C$, 
where $\mathcal{H}^q(R\pi_{n,*}\mathcal{G}^\bullet_n)$ is the sheaf
associated to the pre-sheaf defined by 
$V\mapsto \HH^q(R\pi_{n,*}\mathcal{G}^\bullet_n(V))$,  is a constant
sheaf. 

It follows from Proposition \ref{prop:qe}, that there exists a finite set of
polynomials $\mathcal{P}_n' \subset \R[X_0,\ldots,X_{m(n)}]$ such that:
\begin{enumerate}
\item For $0 < \bar\eps \ll 1$, and each $\sigma \in \{0,1,-1\}^{\mathcal{P}_n'}$, and every  $C \in
\Cc(\RR(\sigma,\Sphere^{n}))$,
and $\x,\x' \in C$, there is a semi-algebraic homeomorphism, 
$\phi_{\x,\x',\bar\eps}:  \pi_n^{-1}(\x) \rightarrow \pi_n^{-1}(\x')$, such that
for each tuple
$(\sigma_0,\ldots,\sigma_p)$, where for each $i$, $\sigma_i \in \{0,1,-1\}^{\mathcal{Q}_n}$,
$\phi_{\x,\x',\bar\eps}$ restricts to a homeomorphism between 
\[
\Big(\bigcap_{0\leq i \leq p}\RR(\sigma_{i,\bar\eps}, \K_n) \Big)_\x,  
\Big(\bigcap_{0\leq i \leq p}\RR(\sigma_{i,\bar\eps}, \K_n) \Big)_{\x'}.
\]
\item
The cardinality of $\mathcal{P}'_n$ as well as $\max_{P \in \mathcal{P}'_n} \deg(P)$ are bounded 
singly exponentially.
\end{enumerate}

The  homeomorphism $\phi_{\x}$ thus induces an isomorphism between 
$E_2^{p,q}(\x)$ and $E_2^{p,q}(\x')$,  and hence 
between the  groups
$\hyperH^*(\pi_n^{-1}(\x),\mathcal{G}^\bullet_n|_{\pi_n^{-1}(\x)})$ and
$\hyperH^*(\pi_n^{-1}(\x'),\mathcal{G}^\bullet_n|_{\pi_n^{-1}(\x')})$. 

Notice that by Theorem \ref{thm:proper-base-change}, for all $\x \in \Sphere^{m(n)}$,
\[
R^q\pi_{n,*}(\mathcal{G}^\bullet_n|_{\pi_n^{-1}(C)})_\x \cong 
\mathcal{H}^q(R\pi_{n,*}\mathcal{G}^\bullet_n)_\x \cong
\hyperH^q(\pi_n^{-1}(\x), \mathcal{G}^\bullet_n|_{\pi_n^{-1}(\x)}).
\]

Thus, we have that for each $\sigma' \in \{0,1,-1\}^{\mathcal{P}_n'}$, and each  
\[
C' \in \Cc(\RR(\sigma',\Sphere^{m(n)})),
\]
$\mathcal{H}^q(R\pi_{n,*}\mathcal{G}^\bullet_n)|_{C'}$ is a \emph{locally constant} sheaf on $C'$.

Moreover, using Proposition \ref{prop:covering}, there exists a family of polynomials 
\[
\mathcal{P}_n \subset \R[X_0,\ldots,X_{m(n)}]
\]
such that the $\card(\mathcal{P}_n)$,
as well as the degrees of the polynomials in $\mathcal{P}_n$, are bounded by $(\card(\mathcal{P}_n') \max_{P \in \mathcal{P}_n'} \deg(P))^{m_1(n)^{O(1)}}$, 
and such that for each connected component $C$  of $\RR(\sigma,\Sphere^{m(n)})$, for  any
$\sigma\in \{0,1,-1\}^{\mathcal{P}_n}$:
\begin{enumerate}
\item
the signs of the polynomials in $\mathcal{P}_n'$ are constant over $C$;
\item
$C$ is contained in some contractible subset of $\RR(\sigma,\Sphere^{m(n)})$ for some 
$\sigma \in \{0,1,-1\}^{\mathcal{P}_n'}$;
\item and as a consequence of the properties (A) and (B) above and Proposition \ref{prop:constant}, 
$\mathcal{H}^q(R\pi_{n,*}\mathcal{G}^\bullet_n)|_{C}$ is a 
\emph{constant} (not just locally constant) sheaf on $C$.
\end{enumerate}
\item
Introduce another vector of variables $\bar\delta = (\delta_1,\ldots,\delta_{2 \card(\mathcal{P}_n}))$, and
consider the family $\mathcal{P}_{n,\bar\delta}$ (cf. Notation \ref{not:misc}). Consider for 
$0 < \bar\delta \ll 1$ the covering of
$\Sphere^{m(n)}$ by the compact sets $C_{\sigma,\bar\delta} = \RR(\sigma_{\bar\delta},\Sphere^{m(n)})$ 
for $\sigma \in \{0,1,-1\}^{\mathcal{P}_n}$ (cf. Notation \ref{not:misc}).

We now observe by Proposition \ref{prop:cech} that there exists a spectral sequence converging to 
$\hyperH^*(\Sphere^{m(n)}, R\pi_{n,*}\mathcal{G}^\bullet_n)$ whose $E_2$-term is given by
\begin{equation}
\label{eqn:total}
  E_2^{p,q} \cong  \bigoplus_{
  \substack
  {\bar\sigma = (\sigma_0\prec \cdots \prec\sigma_p),\\
  \sigma_i  \in \{0,1,-1\}^{\mathcal{P}_n}
  }}
  \hyperH^q(C_{\bar\sigma,\bar\delta}, R\pi_{n,*}\mathcal{G}^\bullet_n|_{C_{\bar\sigma,\bar\delta}}),
 \end{equation}
  where $C_{\bar\sigma,\bar\delta} = C_{\sigma_0,\bar\delta} \cap \cdots \cap C_{\sigma_p,\bar\delta}$.

Also, observe that by Proposition \ref{prop:spectral}, for each $q \in \Z$, 
there exists another spectral sequence converging to  
$\hyperH^*(C_{\bar\sigma,\bar\delta}, R\pi_{n,*}\mathcal{G}^\bullet_n|_{C_{\bar\sigma,\bar\delta}})$
whose $E_2$-term is given by 
\begin{equation}
\label{eqn:contrib1}
E_2^{p,q} \cong \HH^p(C_{\bar\sigma,\bar\delta},\mathcal{H}^q(R\pi_{n,*}\mathcal{G}^\bullet_n|_{C_{\bar\sigma,\bar\delta}})) \cong
 \bigoplus_{C \in  \Cc(C_{\bar\sigma,\bar\delta})} \HH^p(C,\mathcal{H}^q(R\pi_{n,*}\mathcal{G}^\bullet_n|_{C})).
\end{equation}

Now, by part (D) of Proposition \ref{prop:perturbation}, if $C \in \Cc(C_{\bar\sigma,\bar\delta})$, then
\begin{itemize}
\item[(i)]
$C$ is closed;
\item[(ii)]
$C$ is homotopy equivalent to $C' = \RR(\sigma_{p},\Sphere^{m(n)}) \cap C$, and 
\item[(iii)]
for every $p\in\Z$ and $\x \in C'$,
\begin{eqnarray}
\label{eqn:contrib2}
\nonumber
\HH^p(C,\mathcal{H}^q(R\pi_{n,*}\mathcal{G}^\bullet_n)|_{C}) &\cong& 
\HH^p(C',\mathcal{H}^q(R\pi_{n,*}\mathcal{G}^\bullet_n)|_{C'})\\
&\cong& 
\HH^p(C',\Q) \otimes \hyperH^q(\pi_n^{-1}(\x),\mathcal{G}^\bullet_n|_{\pi_n^{-1}(\x)}),
\end{eqnarray}
\end{itemize}
\end{enumerate}
where the last isomorphism is by  Theorem \ref{thm:proper-base-change}.

The theorem follows by adding up the contributions from  \ref{eqn:contrib2},
noting that 
\begin{itemize}
\item[(i)]
the number of different $C'$'s, and the dimensions of the corresponding vector spaces $\HH^*(C',\Q)$ are all 
bounded singly exponentially using 
standard bounds \cite[Theorem 7.50]{BPRbook2}, since each $C'$ has a singly exponential sized description, 
and 
\item[(ii)]
the dimensions of the various
$\hyperH^q(\pi_n^{-1}(\x),\mathcal{G}^\bullet_n|_{\pi_n^{-1}(\x)})$
are bounded
singly exponentially by
part (A) of the proof.
\end{itemize}
\end{proof}

\subsection{Complexity of generalized quantifier elimination} 
\label{subsec:generalized-qe}
The following result which follows directly from the proof of Theorem \ref{thm:topological-complexity-sheaves} above but which
does not refer to any complexity classes could be of independent interest. It is the sheaf-theoretic
analog of an effective singly exponential complexity bound for eliminating one block of quantifiers
in the first order theory of the reals \cite{R92, BPRbook2}. In particular, the implied algorithm in the
following theorem could be viewed as the sheaf-theoretic analog of Algorithm 14.1 (Block Elimination) in \cite{BPRbook2} restricted to the compact situation.  We omit the proof of this theorem which is
embedded in the proof of the intermediate claim inside the the proof of Theorem \ref{thm:topological-complexity-sheaves} above.

\begin{theorem}[Complexity of generalized quantifier elimination]
\label{thm:effective}
Let 
$$
\displaylines{
\mathcal{F}^\bullet \in \Ob(\constrD^b(\Sphere^m \times \Sphere^n)),
}
$$
and let $\mathcal{Q}\subset \R[Y_0,\ldots,Y_m,X_0,\ldots,X_n]$ be a finite set of polynomials, such that the semi-algebraic partition $\Pi(\mathcal{P}, \Sphere^m \times \Sphere^n)$ is subordinate to $\mathcal{F}^\bullet$. Moreover, suppose that $\card(\mathcal{Q}) = s$, and also that the degrees of the polynomials in 
$\mathcal{Q}$ are  all bounded by $d$. Let $\pi: \Sphere^m \times \Sphere^n \rightarrow \Sphere^n$ denote the projection
to the second factor. Then, there exists a family of polynomials $\mathcal{P} \subset \R[X_0,\ldots,X_n]$,
with $\card(\mathcal{P})$, as well as the degrees of the polynomials in $\mathcal{P}$ bounded by 
$(sd)^{(m+n)^{O(1)}}$, such that the semi-algebraic partition 
$\Pi(\mathcal{P}, \Sphere^n)$ is subordinate to the
constructible sheaf $R\pi_*\mathcal{F}^\bullet \in \Ob(\constrD^b(\Sphere^n))$. Moreover, there exists an
algorithm for computing semi-algebraic description of the partition $\Pi(\mathcal{P}, \Sphere^n)$, given
$\mathcal{Q}$ as input,  with complexity bounded by  $(sd)^{(m+n)^{O(1)}}$.
\end{theorem}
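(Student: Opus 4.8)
\textbf{Proof proposal for Theorem \ref{thm:effective}.}

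The plan is to observe that Theorem \ref{thm:effective} is, as the authors say, exactly the ``intermediate claim'' established in the course of proving Theorem \ref{thm:topological-complexity-sheaves}, stripped of all reference to complexity classes and sequences of sheaves. So the strategy is to isolate and rerun that argument for a single constructible sheaf $\mathcal{F}^\bullet \in \Ob(\constrD^b(\Sphere^m \times \Sphere^n))$ with a prescribed subordinate partition $\Pi(\mathcal{Q}, \Sphere^m\times\Sphere^n)$, carefully tracking the degrees and cardinalities of all polynomial families produced, and verifying that the construction is algorithmic. First I would apply Proposition \ref{prop:covering} (Covering by contractibles) to replace $\mathcal{Q}$ by a family (still called $\mathcal{Q}$) with $\card(\mathcal{Q})$ and $\deg$ still bounded by $(sd)^{(m+n)^{O(1)}}$, and such that $\mathcal{H}^q(\mathcal{F}^\bullet)|_C$ is a genuinely \emph{constant} sheaf on every connected component $C$ of every realizable sign condition on $\mathcal{Q}$ (via Corollary \ref{cor:covering} and Proposition \ref{prop:constant}). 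This puts us in the situation required by Propositions \ref{prop:perturbation} and \ref{prop:qe}.

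Next I would invoke Proposition \ref{prop:qe} directly: it produces the family $\mathcal{P}' \subset \R[X_0,\ldots,X_n]$ with $\card(\mathcal{P}')$ and degrees bounded by $(sd)^{n^{O(1)}}$ such that over each connected component $C$ of each realizable sign condition on $\mathcal{P}'$, and for $0<\bar\eps\ll 1$, the semi-algebraic homeomorphisms $\phi_{\x,\x',\bar\eps}:\pi^{-1}(\x)\to\pi^{-1}(\x')$ respect all the perturbed sets $\bigcap_i\RR(\sigma_{i,\bar\eps},\K)$. By the proper base change theorem (Theorem \ref{thm:proper-base-change}) the stalk $(\mathcal{H}^q(R\pi_*\mathcal{F}^\bullet))_\x \cong \hyperH^q(\pi^{-1}(\x),\mathcal{F}^\bullet|_{\pi^{-1}(\x)})$, and the Mayer--Vietoris spectral sequence (Proposition \ref{prop:cech}) applied fiberwise to the covering $\{\RR(\sigma_{\bar\eps},\K)_\x\}$ shows these hypercohomology groups are assembled from the $\hyperH^q$ of the perturbed intersections on the fibers; the homeomorphisms $\phi_{\x,\x',\bar\eps}$ therefore induce isomorphisms on the whole spectral sequence, hence on $\mathcal{H}^q(R\pi_*\mathcal{F}^\bullet)_\x \cong \mathcal{H}^q(R\pi_*\mathcal{F}^\bullet)_{\x'}$. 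This gives that $\mathcal{H}^q(R\pi_*\mathcal{F}^\bullet)$ is \emph{locally constant} on each connected component of each realizable sign condition of $\mathcal{P}'$. Then I would apply Proposition \ref{prop:covering} once more to $\mathcal{P}'$, obtaining $\mathcal{P} \subset \R[X_0,\ldots,X_n]$ with $\card(\mathcal{P})$ and degrees bounded by $(\card(\mathcal{P}')\max\deg)^{n^{O(1)}} = (sd)^{(m+n)^{O(1)}}$, so that each component $C$ of a sign condition on $\mathcal{P}$ sits inside a contractible subset of a sign condition on $\mathcal{P}'$; by Proposition \ref{prop:constant} the restriction $\mathcal{H}^q(R\pi_*\mathcal{F}^\bullet)|_C$ is then \emph{constant}, i.e. $\Pi(\mathcal{P},\Sphere^n)$ is subordinate to $R\pi_*\mathcal{F}^\bullet$. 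For the algorithmic claim I would note that every step above --- the two invocations of the covering-by-contractibles algorithm (Algorithm 16.14 of \cite{BPRbook2}), the critical-point computations underlying Proposition \ref{prop:qe} (following Lemma 3.9 of \cite{BV06}), elimination of the infinitesimals $\bar\eps$ by the standard block-elimination machinery --- has complexity singly exponential in $m+n$ and polynomial in $s,d$, so the total is $(sd)^{(m+n)^{O(1)}}$.

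The main obstacle, as the authors flag in Remark \ref{rem:homotopy-invariance}, is that sheaf cohomology is \emph{not} homotopy invariant, so one cannot simply quote Hardt's triviality theorem or pass freely between a set and a set homotopy equivalent to it; the whole point of Propositions \ref{prop:shrinking}, \ref{prop:thickening} and especially Proposition \ref{prop:perturbation} is to supply the substitute --- controlled infinitesimal thickenings/shrinkings over which $\mathcal{F}^\bullet$ behaves like a product of a constant sheaf with the underlying space. So the delicate bookkeeping is exactly in checking that the homeomorphisms $\phi_{\x,\x',\bar\eps}$ from Proposition \ref{prop:qe} really are compatible with \emph{all} the perturbed intersections $D'_{\bar\sigma,\bar\eps}$ appearing in the $E_2$-term \eqref{eqn:abut}, and that the resulting isomorphisms are natural enough to descend to $R\pi_*\mathcal{F}^\bullet$; but this is precisely what was verified inside the proof of Theorem \ref{thm:topological-complexity-sheaves}, so the proof of Theorem \ref{thm:effective} amounts to extracting that verification verbatim and reading off the degree bounds. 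For this reason I would keep the write-up short, pointing to the relevant portion of the earlier proof rather than reproducing it.
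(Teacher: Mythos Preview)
Your proposal is correct and follows essentially the same approach as the paper: the paper's proof simply lists the three successive constructions --- (1) replace $\mathcal{Q}$ via Corollary \ref{cor:covering} and Algorithm 16.14 of \cite{BPRbook2}, (2) build $\mathcal{P}'$ from $\mathcal{Q}$ as in Proposition \ref{prop:qe}, (3) build $\mathcal{P}$ from $\mathcal{P}'$ --- and checks each has singly exponential complexity, exactly as you outline. Your write-up is more explicit about \emph{why} local constancy of $\mathcal{H}^q(R\pi_*\mathcal{F}^\bullet)$ follows (via proper base change and the fiberwise Mayer--Vietoris spectral sequence), but this is just unpacking what the paper defers to the proof of Theorem \ref{thm:topological-complexity-sheaves}.
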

\begin{proof}

The proof of Theorem \ref{thm:topological-complexity-sheaves} involves successively constructing certain  families of polynomials starting from the family of polynomials constructed in the previous
step beginning with the given family $\mathcal{Q}$. In order to prove Theorem \ref{thm:effective} it suffices to check that each step in this 
process has singly exponential complexity. 

\begin{enumerate}
\item The family $\mathcal{Q}$ is replaced by another family (also denoted by $\mathcal{Q}$) using Corollary
\ref{cor:covering}. The complexity of this step is singly exponential,  and this follows from the complexity
of Algorithm 16.14 (Covering by contractible sets) in \cite{BPRbook2}.

\item Another family, $\mathcal{P}'$  is constructed from $\mathcal{Q}$. The complexity of this step is again bounded singly exponentially, since only subsets of $\mathcal{Q}$ of cardinality at most $m+n$ enters into the construction, and at each step the arithmetic computations using polynomials involve at most $O(m+n)$ infinitesimals.

\item
Finally, the family $\mathcal{P}$ is constructed from $\mathcal{P}'$. The complexity of this step
is also clearly bounded singly exponentially.
\end{enumerate}
\end{proof}  
 
\begin{remark}
The restriction to spheres in Theorem \ref{thm:effective} is to ensure properness of the projection
$\pi$. It is possible that with more work it would be possible to extend the theorem to the non-compact
case and consider projections $\pi:\R^m \times \R^n \rightarrow \R^n$, and consider not just the functor
$R\pi_{*}$ but the derived image functor with proper support, $R\pi_{!}$ as well. We do not undertake
this task in the current paper.
\end{remark}

\begin{remark}
The name  ``generalized quantifier elimination'' in Theorem \ref{thm:effective} is justified as follows.
Let in
Theorem \ref{thm:effective}, 
$\mathcal{F}^\bullet$ be the constant sheaf $\Q_{\Sphere^m \times \Sphere^n}$ 
 (considered as a complex concentrated at degree $0$ as usual).
 
Let $\Phi$ be a $\mathcal{Q}$-closed formula (cf. Notation \ref{not:P-formula}). Let
$\Phi_\exists$ and $\Phi_\forall$ be the (quantified) formulas defined by 
\begin{eqnarray*}
\Phi_\exists(\X) &=& (\exists \y \in \Sphere^m) \Phi(\Y,\X), \\
\Phi_\forall(\X) &=& (\forall \y \in \Sphere^m) \Phi(\Y,\X). \\
\end{eqnarray*}
Let $S_\exists,S_\forall$
be the semi-algebraic subsets of $\Sphere^n$ defined by $\Phi_\exists,\Phi_\forall$ respectively,
and 
$\Pi_\exists(\Phi)$ (respectively, $\Pi_\forall(\Phi)$) denote the partition of $\Sphere^n$ into the sets 
$S_\exists$ and $\Sphere^n \setminus S_\exists$ (respectively, $S_\forall$ and $\Sphere^n \setminus
S_\forall$).
Let $\mathcal{P}$ be the family of polynomials as in Theorem \ref{thm:effective}.
Then, 
\[
\Pi(\mathcal{P},\Sphere^n)\prec \Pi_\exists(\Phi),\Pi_\forall(\Phi)
\]
(cf. Notation \ref{not:partition-set} and Example \ref{eg:forall}).
In other words,
 the partition of $\Sphere^n$ induced by the polynomials
$\mathcal{P}$ is finer than the induced partition obtained by eliminating the block of quantifiers (either existential or universal) from any formula of the form $(\exists \y \in \Sphere^m) \Phi(\Y,\X)$ or $(\forall \y \in \Sphere^m) \Phi(\Y,\X)$, where 
$\Phi$ is a $\mathcal{Q}$-closed formula.
\end{remark}

\begin{remark}
Another important point to note is the following. Designing an algorithm with singly exponential complexity  
that computes all the Betti numbers of a given real
algebraic variety or semi-algebraic set  is a long standing open problem in algorithmic semi-algebraic
geometry. Algorithms with singly exponential complexity are known only for computing the first few 
(i.e.,  a constant number of) Betti numbers \cite{BPRbettione,Bas05-first}, as well as the Euler-Poincar\'e characteristic  \cite{Basu1},  of semi-algebraic sets -- but
no algorithm with singly exponential complexity 
is known for computing all the Betti numbers (even in the case when the number of polynomials appearing in the 
description of the given set, as well as their degrees, are bounded by some constant).  Note that for any compact
semi-algebraic set $S \subset \R^n$, computing the Betti numbers of $S$ is equivalent to computing the 
dimensions of the groups $\HH^*(S,\Q_S)$. 
Theorem \ref{thm:effective} does not say anything about
this problem. If the constructible sheaf $\mathcal{F}^\bullet$ in Theorem \ref{thm:effective} is taken to be the
sheaf $\Q_S$ where 
$S \subset  \Sphere^m \times \Sphere^n$ is a $\mathcal{P}$-closed semi-algebraic subset, then for any element $C$ of the semi-algebraic partition of $\Sphere^n$ constructed by the algorithm, the Poincar\'e polynomial $P_{S_\x}$ stays invariant for $\x \in C$ (denoting $S_\x = S \cap \pi^{-1}(\x)$). However, the algorithm is not able to compute this polynomial. If the algorithm was able to compute this
polynomial (with the same complexity bound), then it would
imply the existence of an algorithm with singly exponential complexity  for computing the Betti numbers of real algebraic sets defined by polynomials having degrees bounded by any fixed constant.
\end{remark}

\section{Constructible functions and sheaves: Toda's theorem}
\label{sec:connection-to-Toda}
In this section, we discuss the connections between the complexities of 
constructible functions and constructible sheaves. We formulate a new
conjecture that that could be seen as analogous to Toda's theorem in
discrete complexity theory \cite{Toda}. Toda's theorem gives an inclusion of 
the polynomial hierarchy $\mathbf{PH}$ in the class $\mathbf{P}^{\#\mathbf{P}}$ 
where the right hand side is the set of languages accepted by a Turing machine in 
polynomial times but with access to an oracle computing functions in $\#\mathbf{P}$.
The class $\#\mathbf{P}$ consists of sequences of functions, $(f_n: \{0,1\}^n \rightarrow \N)_{n > 0}$
which counts the number of points in the fibers of a linear projection of a language in $\mathbf{P}$.
We refer the reader to \cite{BZ09} where this geometric definition of the class $\#\mathbf{P}$ is
elaborated.

In \cite{BZ09} (respectively, \cite{Basu-complex-toda}) a geometric definition was given of a class,
$\#\mathbf{P}_\R^{\dagger}$ (respectively, $\#\mathbf{P}_\C^{\dagger}$)  of sequences
of functions $(f_n: \Sphere^n \rightarrow \Z[T])_{n >0}$ (respectively, $(f_n: \PP^n_\C \rightarrow \Z[T])_{n>0}$) where the functions $f_n$ took values in the Poincar\'e polynomials of the fibers of
a projection of a language (in the B-S-S sense) in $\mathbf{NP}_\R^c$ (respectively, $\mathbf{NP}_\C^c$).
We omit the precise definitions of these classes, but point out that these functions are in fact
constructible functions (or more precisely each component of these functions corresponding to the 
different coefficients of the image polynomial is a constructible function). An analog of Toda's theorem
was proved in \cite{BZ09}.

 \begin{theorem} \cite{BZ09}
 \[
 \mathbf{PH}_\R^c  \subset \mathbf{P}_\R^{\#\mathbf{P}_\R^{\dagger}}.
 \]
 \end{theorem}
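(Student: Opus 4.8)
\textbf{Proof proposal for $\mathbf{PH}_\R^c \subset \mathbf{P}_\R^{\#\mathbf{P}_\R^{\dagger}}$.}

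The plan is to follow the strategy of the classical Toda theorem, adapted to the B-S-S setting as carried out in \cite{BZ09}, and to present here only the skeleton of that argument since the theorem is quoted from \cite{BZ09}. First I would reduce to showing the inclusion $\mathbf{\Sigma}_{\R,p}^c \subset \mathbf{P}_\R^{\#\mathbf{P}_\R^{\dagger}}$ for each fixed $p \geq 0$, proceeding by induction on $p$; since $\mathbf{PH}_\R^c = \bigcup_{p \geq 0} \mathbf{\Sigma}_{\R,p}^c$ and the class $\mathbf{P}_\R^{\#\mathbf{P}_\R^{\dagger}}$ absorbs a constant number of oracle calls, this suffices. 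The base case $p = 0$ is $\mathbf{P}_\R^c \subset \mathbf{P}_\R^{\#\mathbf{P}_\R^{\dagger}}$, which is trivial since the oracle can simply be ignored.

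For the inductive step, the key idea — exactly as in Toda's original proof — is that one quantifier alternation can be ``simulated'' by a call to the counting oracle $\#\mathbf{P}_\R^{\dagger}$ up to a controlled (probabilistic or, in the geometric version, join-based) error, and that the polynomial hierarchy is contained in $\mathbf{BP}\cdot\oplus\mathbf{P}$-style classes whose counting can be read off from the Poincar\'e polynomial of a fiber. Concretely, given a sequence $(S_n \subset \Sphere^{m(n)})_{n>0}$ in $\mathbf{\Sigma}_{\R,p}^c$, described by $(\exists \Y^1)(Q_2 \Y^2)\cdots(Q_p\Y^p)\,\phi_n$, one uses the fibered-join construction $J_{\pi_n}(\cdot)$ of \cite{BZ09} (the same one recalled in Section \ref{subsec:define-PH-sheaves} of this excerpt) to replace the inner $\mathbf{\Pi}_{\R,p-1}^c$-predicate by one whose fibers, after the join, are either empty or have a prescribed top Betti number; membership in $S_n$ is then detected by whether a certain coefficient of the Poincar\'e polynomial $P_{(J(\pi_n)(\cdot))^{-1}(\x)}$ — which is precisely what the $\#\mathbf{P}_\R^{\dagger}$ oracle returns — is nonzero. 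The inductive hypothesis handles the inner predicate: a $\mathbf{P}_\R$-machine with a $\#\mathbf{P}_\R^{\dagger}$ oracle can construct the relevant semi-algebraic sets in polynomial time, and by the closure of $\mathbf{P}_\R^{\#\mathbf{P}_\R^{\dagger}}$ under polynomial-time reductions (and the fact that an oracle for $\#\mathbf{P}_\R^{\dagger}$ relative to a $\mathbf{\Sigma}_{\R,p-1}^c$ predicate collapses into a single level via the same Toda mechanism) one concludes $S_n$-membership is decided in $\mathbf{P}_\R^{\#\mathbf{P}_\R^{\dagger}}$.

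The main obstacle — and the part that genuinely requires the work done in \cite{BZ09} rather than a formal manipulation — is establishing that counting (in the B-S-S sense) captures \emph{both} existential and universal quantifiers simultaneously, i.e., the analog of the identity $\mathbf{PH} \subseteq \mathbf{P}^{\#\mathbf{P}}$ rather than merely $\mathbf{NP} \subseteq \mathbf{P}^{\#\mathbf{P}}$. In the discrete case this rests on Toda's two lemmas ($\mathbf{PH} \subseteq \mathbf{BPP}^{\oplus\mathbf{P}}$ and $\mathbf{BPP}^{\oplus\mathbf{P}} \subseteq \mathbf{P}^{\#\mathbf{P}}$); in the real case the role of $\oplus\mathbf{P}$ and of probabilistic amplification is played by topological constructions (joins, and passing to Poincar\'e polynomials of fibers modulo Betti-number parity/vanishing) that make the relevant maps proper and keep the fibers connected or with controlled homology. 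Since Theorem \ref{thm:topological-complexity-sheaves} and the constructions of Section \ref{subsec:define-PH-sheaves} already package the needed properness and the singly exponential control, and since the statement is explicitly attributed to \cite{BZ09}, the proof here amounts to invoking that argument; I would therefore present it as: ``This is Theorem [\ldots] of \cite{BZ09}; we recall the structure of the proof,'' followed by the inductive outline above, and refer the reader to \cite{BZ09} for the topological lemmas that constitute the real analogs of Toda's two steps.
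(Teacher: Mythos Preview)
The paper does not prove this theorem at all: it is stated purely as a citation of the main result of \cite{BZ09}, with no argument given. Your proposal correctly recognizes this (``the statement is explicitly attributed to \cite{BZ09}'') and, if anything, offers more than the paper does by sketching the inductive join-based reduction; for the purposes of matching the paper, simply citing \cite{BZ09} is exactly what is done.
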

 A similar result was proved in the complex case in \cite{Basu-complex-toda}.

The relationship implicit in Toda's theorem (and its real and complex analog) raises the interesting question of whether such a relationship is also true in the sheaf-theoretic case. In particular, the following 
proposition (see \cite{Schurmann}) is very suggestive.

We first need a new notation.

\begin{notation}
\label{not:CF}
For $X$ a semi-algebraic set we denote by $\CF(X)$ the set of 
constructible functions on $X$.
\end{notation}

\begin{notation}
If $X$ is a semi-algebraic set, and $\mathcal{F}^\bullet \in \Ob(\constrD^b(X))$, then we will denote by 
$\mathrm{Eu}(\mathcal{F}^\bullet)$ the constructible function on $X$ defined by 
\[
\mathrm{Eu}(\mathcal{F}^\bullet)(\x) = \sum_{j} (-1)^j \dim_\Q \HH^j(\mathcal{F}^\bullet_\x).
\]
\end{notation}

\begin{proposition}\cite{Schurmann}
\label{prop:sheaves-to-functions}
Let $X,Y$ be compact semi-algebraic sets, and $f:Y \rightarrow X$ a semi-algebraic continuous
map.
Then, we have the following commutative diagram:
\[
\xymatrix{
\Ob(\constrD^b(Y)) \ar[r]^{Rf_*} \ar[d]^{\mathrm{Eu}}& \Ob(\constrD^b(X)) 
\ar[d]^{\mathrm{Eu}}
\\
\CF(Y)\ar[r]^{f_{*} = \int \;\cdot\; \dd\;\chi} & \CF(X).
}
\]
\end{proposition}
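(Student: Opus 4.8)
\textbf{Proof strategy for Proposition \ref{prop:sheaves-to-functions}.}

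The plan is to verify the commutativity of the square directly on stalks, reducing the global statement to the proper base change theorem and a fiberwise computation of Euler characteristics. First I would recall that for a compact semi-algebraic set $X$ and $\mathcal{G}^\bullet \in \Ob(\constrD^b(X))$, the constructible function $\mathrm{Eu}(\mathcal{G}^\bullet)$ is genuinely constructible: the partition subordinate to $\mathcal{G}^\bullet$ witnesses that $\x \mapsto \sum_j (-1)^j \dim_\Q \HH^j(\mathcal{G}^\bullet_\x)$ is constant on each (locally closed, semi-algebraic) stratum, so it is a $\Z$-linear combination of characteristic functions of these strata. Thus both $\mathrm{Eu}$ arrows are well-defined, and the content of the proposition is the identity of constructible functions
\[
\mathrm{Eu}(Rf_*\mathcal{F}^\bullet) = \int_f \mathrm{Eu}(\mathcal{F}^\bullet)\, \dd\chi
\]
on $X$, where $\int_f$ denotes fiberwise Euler integration, i.e. $\left(\int_f g\right)(\x) = \int_{f^{-1}(\x)} g|_{f^{-1}(\x)}\, \dd\chi$.

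The key steps, in order, are as follows. (1) Fix $\x \in X$ and evaluate the left-hand side at $\x$: by definition $\mathrm{Eu}(Rf_*\mathcal{F}^\bullet)(\x) = \sum_j (-1)^j \dim_\Q \HH^j((Rf_*\mathcal{F}^\bullet)_\x)$. Since $f$ is a continuous semi-algebraic map between compact sets, it is proper, and the proper base change theorem (Theorem \ref{thm:proper-base-change}, applied with $Y' = \{\x\}$ and $g$ the inclusion) gives $\HH^j((Rf_*\mathcal{F}^\bullet)_\x) \cong \hyperH^j(f^{-1}(\x), \mathcal{F}^\bullet|_{f^{-1}(\x)})$. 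Hence the left-hand side equals the \emph{hypercohomology Euler characteristic} $\chi_c\big(f^{-1}(\x), \mathcal{F}^\bullet|_{f^{-1}(\x)}\big) := \sum_j (-1)^j \dim_\Q \hyperH^j(f^{-1}(\x), \mathcal{F}^\bullet|_{f^{-1}(\x)})$ of the restricted complex on the fiber. (2) Evaluate the right-hand side at $\x$: writing $F = f^{-1}(\x)$ and using a partition $\{C_i\}_{i \in I}$ of $F$ (induced from one subordinate to $\mathcal{F}^\bullet$) on each piece of which $\HH^*(\mathcal{F}^\bullet_\z)$ is locally constant, with constant value of Poincar\'e polynomial, one has $\mathrm{Eu}(\mathcal{F}^\bullet)|_F = \sum_{i \in I} e_i \mathbf{1}_{C_i}$ where $e_i = \sum_j (-1)^j \dim_\Q \HH^j(\mathcal{F}^\bullet_{\z_i})$ for any $\z_i \in C_i$; therefore by Definition \ref{def:integral-EP}, $\left(\int_f \mathrm{Eu}(\mathcal{F}^\bullet)\right)(\x) = \int_F \mathrm{Eu}(\mathcal{F}^\bullet)|_F\, \dd\chi = \sum_{i \in I} e_i\, \chi(C_i)$. (3) Reconcile the two: it remains to show
\[
\chi_c(F, \mathcal{F}^\bullet|_F) = \sum_{i \in I} e_i\, \chi(C_i).
\]
This is the local-to-global additivity of the Euler characteristic of a constructible complex. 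I would prove it by refining the partition (using Proposition \ref{prop:covering} / Corollary \ref{cor:covering}, so that $\mathcal{H}^q(\mathcal{F}^\bullet)$ is a genuinely constant sheaf over each contractible piece) and running a Mayer--Vietoris / stratification spectral sequence argument (Proposition \ref{prop:cech} together with Proposition \ref{prop:spectral}): over a contractible stratum $C$ the Leray spectral sequence degenerates and $\chi_c(C, \mathcal{F}^\bullet|_C) = \chi_c(C) \cdot e(C)$ where $e(C)$ is the stalkwise Euler characteristic; additivity of $\chi_c$ over the locally closed pieces in a stratification (the long exact sequence of Theorem \ref{thm:inequality-compact-support}) assembles these into the claimed sum. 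Since $\chi_c(C)$ and $\chi(C)$ agree for locally closed semi-algebraic sets (Remark \ref{rem:alternating-sum}), the bookkeeping closes.

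I expect the main obstacle to be step (3): making the local-to-global Euler characteristic computation clean without getting bogged down in spectral-sequence indices. The honest way to do this is to note that $\chi_c(\cdot, -)$, viewed as a function on pairs (constructible complex, locally closed semi-algebraic piece of its domain), is \emph{additive} under the decomposition $U = X \setminus Z$ with $Z$ closed — this follows from the long exact sequence $\cdots \to \hyperH^p_c(U, \mathcal{F}^\bullet|_U) \to \hyperH^p_c(X, \mathcal{F}^\bullet|_X) \to \hyperH^p_c(Z, \mathcal{F}^\bullet|_Z) \to \cdots$ (the hypercohomology analog of Theorem \ref{thm:inequality-compact-support}, which is standard, e.g. \cite[Remark 2.6.9 and the surrounding material]{KS}) — and \emph{multiplicative} over a stratum where all $\mathcal{H}^q(\mathcal{F}^\bullet)$ are constant. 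Granting these two facts (both of which can be cited or given a one-line spectral sequence proof), step (3) is an immediate induction on the number of strata, and the whole proposition follows by combining steps (1), (2), (3), since $\x$ was arbitrary. Finally, one should remark that for compact $F$ we have $\hyperH^*_c = \hyperH^*$, which is why the proper base change identification in step (1) matches the compactly-supported Euler characteristic used in step (3); this is exactly where the compactness hypothesis on $X$ and $Y$ is used.
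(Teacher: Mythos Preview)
The paper does not give its own proof of this proposition: it simply cites \cite[Lemma 2.3.2]{Schurmann}. Your proposal is a correct and essentially standard direct argument (proper base change to reduce to a fiberwise statement, then additivity of the hypercohomology Euler characteristic over a stratification together with the constant-sheaf computation on each stratum), and it is exactly the kind of proof one finds behind that citation. There is nothing to compare beyond noting that you have unpacked what the paper leaves as a reference.
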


\begin{proof}
See \cite[Lemma 2.3.2]{Schurmann}.
\end{proof}

We now define the sheaf-theoretic analog of the class $\#\mathbf{P}$ and its generalizations.

\begin{definition}
Let $m(n) \in \Z[n]$ be a non-negative polynomial. We say that a sequence of constructible functions
$\left(f_n: \Sphere^{m(n)} \rightarrow \Z\right)_{n > 0}$ is in the class $\#\bs{\mathcal{P}}_\R$, if there exists
a sequence of constructible sheaves $\left(\mathcal{F}^\bullet_n\right)_{n > 0} \in \mathbf{\Lambda}^{(1)}\bs{\mathcal{P}}_\R$ such that for each $n>0$, 
$f_n = \textrm{Eu}(\mathcal{F}^\bullet_n)$.
More generally, we will say that $\left(f_n: \Sphere^{m(n)} \rightarrow \Z\right)_{n > 0}$ is in the class $\mathbf{Eu}(\mathbf{\Lambda}^{(p)}\bs{\mathcal{P}}_\R)$, 
if there exists a sequence of constructible sheaves $\left(\mathcal{F}^\bullet_n\right)_{n > 0} \in \mathbf{\Lambda}^{(p)}\bs{\mathcal{P}}_\R$ such that for each $n>0$, 
$f_n = \textrm{Eu}(\mathcal{F}^\bullet_n)$. We say that 
$\left(f_n: \Sphere^{m(n)} \rightarrow \Z\right)_{n > 0}$ is in the class $\mathbf{Eu}(\bs{\mathcal{PH}}_\R)$, 
if there exists a sequence of constructible sheaves $\left(\mathcal{F}^\bullet_n\right)_{n > 0} \in \bs{\mathcal{PH}}_\R$ such that for each $n>0$, $f_n = \textrm{Eu}(\mathcal{F}^\bullet_n)$.
\end{definition}

 We have the following conjecture which can be seen as a reformulation of Toda's fundamental theorem \cite{Toda}
 in sheaf-theoretic terms.

\begin{conjecture}
\[
\mathbf{Eu}(\bs{\mathcal{PH}}_\R) = \#\bs{\mathcal{P}}_\R.
\]
\end{conjecture}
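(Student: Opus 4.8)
The plan is to prove the two inclusions separately, following the strategy of the proofs of the real and complex analogs of Toda's theorem in \cite{BZ09, Basu-complex-toda}, translated into the sheaf-theoretic language via Proposition \ref{prop:sheaves-to-functions}. For the inclusion $\#\bs{\mathcal{P}}_\R \subseteq \mathbf{Eu}(\bs{\mathcal{PH}}_\R)$, note that by definition a sequence $(f_n)_{n>0} \in \#\bs{\mathcal{P}}_\R$ is already of the form $f_n = \mathrm{Eu}(\mathcal{F}^\bullet_n)$ for some $(\mathcal{F}^\bullet_n)_{n>0} \in \bs{\Lambda}^{(1)}\bs{\mathcal{P}}_\R \subseteq \bs{\mathcal{PH}}_\R$, so this inclusion is essentially immediate from the definitions. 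The content is therefore entirely in the reverse inclusion $\mathbf{Eu}(\bs{\mathcal{PH}}_\R) \subseteq \#\bs{\mathcal{P}}_\R$.

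For the reverse inclusion, I would argue by induction on $p$, showing that every sequence in $\mathbf{Eu}(\bs{\Lambda}^{(p)}\bs{\mathcal{P}}_\R)$ lies in $\#\bs{\mathcal{P}}_\R$. The base case $p \leq 1$ is by definition. For the inductive step, the key observation is the commutative diagram of Proposition \ref{prop:sheaves-to-functions}: if $\mathcal{F}^\bullet_n = R\pi_{n,*}\mathcal{G}^\bullet_n$ with $(\mathcal{G}^\bullet_n)_{n>0} \in \bs{\Lambda}^{(p-1)}\bs{\mathcal{P}}_\R$, then $\mathrm{Eu}(\mathcal{F}^\bullet_n) = \int \mathrm{Eu}(\mathcal{G}^\bullet_n) \, \dd\chi$, the Euler-characteristic push-forward of the constructible function $\mathrm{Eu}(\mathcal{G}^\bullet_n)$. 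By the inductive hypothesis, $(\mathrm{Eu}(\mathcal{G}^\bullet_n))_{n>0} \in \#\bs{\mathcal{P}}_\R$, so it is of the form $\mathrm{Eu}(\mathcal{H}^\bullet_n)$ for $(\mathcal{H}^\bullet_n)_{n>0} \in \bs{\Lambda}^{(1)}\bs{\mathcal{P}}_\R$. The crux is then to show that one further Euler-integration (corresponding to the new direct image) applied to a class that is already a single-level push-forward can be ``absorbed'' back into a single level --- this is precisely the place where Toda's theorem does real work in the discrete case (collapsing the counting hierarchy), and where the B-S-S analog of \cite{BZ09} uses a Mayer--Vietoris / \v{C}ech argument together with the fact that the Euler characteristic is an additive invariant. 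Concretely, one would express the iterated integral $\int \left( \int \mathrm{Eu}(\mathcal{H}^\bullet_{m(n)+m_1(n)})(\cdot,\y,\x)\,\dd\chi \right) \dd\chi$ as a single integral over a product space using the Fubini property of Euler integration (stated in the remark after Definition \ref{def:integral-EP}), and then show the resulting integrand is still the Euler characteristic of a sheaf in $\bs{\mathcal{P}}_\R$ --- using the closure properties in Proposition \ref{prop:stability-sheaf-P} and, crucially, the fibered-join construction $J_\pi$ used in Section \ref{subsec:define-PH-sheaves} to make fibers connected, combined with an argument that the coefficients remain singly-exponentially bounded (via Proposition \ref{prop:uniform-bound-real}) and the relevant functions remain polynomial-time computable.

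The main obstacle I expect is the step collapsing two levels of Euler-integration into one while remaining inside $\bs{\mathcal{P}}_\R$ rather than merely inside $\bs{\Lambda}^{(2)}\bs{\mathcal{P}}_\R$. In the discrete setting, Toda's theorem achieves $\mathbf{PH} \subseteq \mathbf{P}^{\#\mathbf{P}}$ because $\#\mathbf{P}$ computations compose (a polynomial-time machine with a $\#\mathbf{P}$ oracle can simulate the whole hierarchy), and the analogous statement here is that a single Euler-integration of a $\bs{\mathcal{P}}_\R$-sheaf already captures arbitrarily deep alternation once one pushes everything into the integrand. This will require an encoding trick: one must build a single sheaf over a sphere of polynomially-larger dimension whose fiberwise Euler characteristic encodes, via a weighting by powers of a large integer $\Omega_n$ (as in Theorem \ref{thm:VNPR-complete} and Lemma \ref{lem:injective}), the entire nested computation, and then verify that the partition, the index map $i_n$, and the Poincar\'e-polynomial map $p_n$ required by Definition \ref{def:sheaf-P} are all polynomial-time computable. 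Establishing polynomial-time computability of $p_n$ for this encoded sheaf --- which amounts to a singly-exponential effective version of the quantifier-elimination/direct-image computation --- is where Theorem \ref{thm:effective} (complexity of generalized quantifier elimination) and Theorem \ref{thm:topological-complexity-sheaves} would be invoked, but reconciling the \emph{polynomial}-time requirement of $\bs{\mathcal{P}}_\R$ with the merely \emph{singly-exponential} bounds available there is the delicate point, and is likely why the statement is posed as a conjecture rather than a theorem.
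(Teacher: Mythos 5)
The statement you are addressing is posed in the paper as an open conjecture, with no proof given --- there is nothing in the paper to compare your argument against, and you correctly recognize this yourself at the end of your proposal. Your sketch correctly isolates the trivial inclusion $\#\bs{\mathcal{P}}_\R \subseteq \mathbf{Eu}(\bs{\mathcal{PH}}_\R)$, correctly routes the hard direction through Proposition \ref{prop:sheaves-to-functions} and the Fubini property of Euler integration, and correctly identifies the crux: collapsing iterated Euler-integration back to a single level while remaining inside the polynomial-time class $\bs{\mathcal{P}}_\R$. Your closing diagnosis --- that the gap between the polynomial-time requirement of Definition \ref{def:sheaf-P} and the singly-exponential bounds available from Theorem \ref{thm:effective} is where the conjectural difficulty lives --- is the right one.

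One structural point deserves to be made sharper than you make it. If the hierarchy $\bs{\Lambda}^{(p)}\bs{\mathcal{P}}_\R$ were built only from iterated direct images, the collapse would follow essentially formally from Fubini for Euler integration together with $R\pi_* \circ R\sigma_* \cong R(\pi\circ\sigma)_*$ (Corollary \ref{cor:composition}), since a composition of the relevant sphere projections is again such a projection and your inductive step would close cleanly. But $\bs{\Lambda}^{(p)}\bs{\mathcal{P}}_\R$ is also closed under the truncations $\tau^{\leq k}$, $\tau^{\geq k}$, and this is exactly where the naive argument breaks: truncation does not commute with $R\pi_*$, and $\mathrm{Eu}(\tau^{\leq k}\mathcal{F}^\bullet)$ is not recoverable from $\mathrm{Eu}(\mathcal{F}^\bullet)$ alone (one only knows that the two truncations sum to the total). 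Consequently the level-$1$ sheaf $\mathcal{H}^\bullet_n$ your induction produces with $\mathrm{Eu}(\mathcal{H}^\bullet_n)=\mathrm{Eu}(\mathcal{G}^\bullet_n)$ need not be a pure direct image, and $R\pi_{n,*}\mathcal{H}^\bullet_n$ need not collapse to level $1$. Since $\mathrm{Eu}$ behaves cleanly under direct sums, tensor products, and pull-backs, truncation is the single operation that genuinely encodes the depth of the hierarchy, and it is the precise spot your proposed $\Omega_n$-weighting ``encoding trick'' must do its work. You should also note that the conjecture is stated at the function level with no $\mathbf{P}_\R$-wrapper, so it is a strictly sharper collapse than the discrete $\mathbf{PH}\subseteq\mathbf{P}^{\#\mathbf{P}}$ you invoke as the template; what makes such a collapse even plausible is the additivity, multiplicativity, and Fubini properties peculiar to the Euler characteristic as opposed to the Poincar\'e polynomial, a distinction the paper itself flags in Remark \ref{rem:BZvs.BC}.
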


\begin{remark}
\label{rem:BZvs.BC}
Note that the definition of the class $\#\bs{\mathcal{P}}_\R$ as defined above is closer in spirit to the class,
${\#}\mathbf{P}_\R$ defined by 
Burgisser and Cucker in \cite{Burgisser-Cucker06, Burgisser-Cucker-survey} using the Euler-Poincar\'e characteristic 
rather than the one of the class ${\#}\mathbf{P}^{\dagger}_\R$ in \cite{BZ09} which used the
Poincar\'e polynomial instead of the Euler-Poincar\'e polynomial as the ``counting'' function. One motivation behind the definition of  ${\#}\mathbf{P}^{\dagger}_\R$ 
in \cite{BZ09} was precisely the
ability to prove an  analog of Toda's theorem.
\end{remark}

\section{Conclusion and future directions}
In this paper we have begun the study of a complexity theory of constructible functions and sheaves patterned along the
line of the Blum-Shub-Smale theory for constructible/semi-algebraic sets. We have formulated
versions of the $\mathbf{P}$ vs. $\mathbf{NP}$ questions for classes of constructible functions as well as
sheaves.
An immediate goal would be to develop an analog of ``completeness'' results in classical complexity
theory and identify certain constructible sheaves to be complete in their class.  
Another goal is to incorporate the functor $\RHom$ and in particular  Verdier duality in the complexity theory
of sheaves. A related interesting problem is to investigate the role of adjointness of functors in the complexity theory
of sheaves,  noting that the functors $Rf_*$ and $f^{-1}$, and tensor product and $\RHom$, are adjoint pairs of functors (see also Section \ref{subsec:adjoint}).

 As mentioned in the introduction, aside from in semi-algebraic geometry, constructible functions and sheaves appear in many areas of mathematics, in particular
in the theory of linear partial differential equations and micro-local analysis as developed by Kashiwara and Schapira \cite{KS},
motivic integration \cite{Cluckers-Loeser}, as well as in a more applied setting of signal processing \cite{Ghrist2010}, and
computational geometry \cite{Schapira89, Schapira93, Schapira95}. A more distant goal would be to study these applications from the complexity viewpoint.

\section{Acknowledgments}
The author would like to thank the anonymous referees for a very careful reading of a previous version of this paper, and for many comments and suggestions that helped to improve the paper substantially.

\bibliographystyle{abbrv}
\bibliography{master}
\end{document}